\newcommand\QQQ{\mathcal Q}
\newcommand\B{{\bf B}}
\newcommand\CCC{\mathcal C}
\newcommand{\R}{\mathbb{R}}
\newcommand{\C}{\mathbb{C}}
\renewcommand\P{\mathbb{P}}
\newcommand\Z{\mathbb{Z}}
\newcommand\Prym{\mathrm{Prym}}
\newcommand{\N}{\mathbb{N}}
\newcommand{\Q}{\mathbb{Q}}
\renewcommand{\H}{\mathcal{H}}
\newcommand{\SL}{{\rm SL}}
\newcommand{\GL}{{\rm GL}}
\newcommand{\s}{\sigma}
\newcommand{\PrymD}{\Omega E_D}
\newcommand{\XX}{{\mathcal X}}
\newcommand{\DS}{\displaystyle}
\newcommand{\sst}{\scriptstyle}
\newcommand{\ie}{{\em i.e }}
\newcommand{\ol}{\overline}
\newcommand{\dist}{\mathbf{d}}
\newcommand{\Ord}{\mathcal{O}}
\newcommand{\eps}{\varepsilon}
\newcommand{\Mod}{\mathcal{M}}
\newcommand{\Id}{\mathrm{Id}}
\newcommand{\Orb}{\mathcal{O}}
\newcommand{\Aa}{\textrm{Area}}
\newcommand{\lbd}{\lambdaup}
\newcommand{\inv}{\tau}
\newcommand{\ra}{\rightarrow}
\newtheorem{Theorem}{Theorem}[section]
\newtheorem{Corollary}[Theorem]{Corollary}
\newtheorem{Lemma}[Theorem]{Lemma}
\newtheorem{Proposition}[Theorem]{Proposition}
\newtheorem{Remark}[Theorem]{Remark}
\newtheorem{Definition}[Theorem]{Definition}
\newtheorem{Claim}{Claim}
\newtheorem*{Question}{Question}
\begin{document}
\title[$\GL^+(2,\R)$-orbits of Prym eigenforms]  
{$\GL^+(2,\R)$-orbits in Prym eigenform loci}

\author{Erwan Lanneau and Duc-Manh Nguyen}

\begin{abstract}
This paper is  devoted to the classification of $\GL^+(2,\R)$-orbit 
closures of surfaces in the  intersection of the  Prym   eigenform  locus 
with various strata of Abelian differentials. 
We show that the following dichotomy holds: an orbit is either closed or dense in a connected component 
of the Prym eigenform locus.

The proof uses several topological properties of Prym eigenforms,  in particular the tools and the proof are independent  of the
recent results of Eskin-Mirzakhani-Mohammadi.

As an application we obtain a finiteness result for the number of closed $\GL^+(2,\R)$-orbits (not necessarily primitive)
in the Prym eigenform locus $\PrymD(2,2)$ for any fixed $D$ that is not a square.
\end{abstract}

\date{\today}

\address{
Institut Fourier, Universit\'e de Grenoble I, BP 74, 38402 Saint-Martin-d'H\`eres, France
}
\email{erwan.lanneau@ujf-grenoble.fr}
\address{
IMB Bordeaux-Universit\'e de Bordeaux, 351, Cours de la Lib\'eration, 33405 Talence Cedex, France
}
\email{duc-manh.nguyen@math.u-bordeaux1.fr}
\keywords{Real multiplication, Prym locus, Translation surface}

\maketitle
\setcounter{tocdepth}{1}
\tableofcontents

\section{Introduction}

For any $g\geq 1$ and any integer partition $\kappa=(\kappa_1,\dots,\kappa_r)$ of 
$2g-2$ we denote by $\H(\kappa)$ a stratum of the moduli space of pairs 
$(X,\omega)$, where $X$ is a Riemann surface of genus $g$  and $\omega$ is a 
holomorphic $1$-form having $r$ zeros with prescribed multiplicities $\kappa_1,\dots,\kappa_r$.
Analogously, one defines the strata of the moduli space of {\em quadratic differentials} 
$\QQQ(\kappa')$ having zeros and simple poles of multiplicities $\kappa'_1,\dots,\kappa'_s$ with $\sum_{i=1}^s \kappa'_s=4g-4$ (simple poles correspond to zeros of multiplicity $-1$).

The $1$-form $\omega$ defines a canonical flat metric on $X$ with conical singularities at 
the zeros of $\omega$. Therefore we will refer to points of $\H(\kappa)$ as flat surfaces or  
{\em translation surfaces}. The strata admit a natural action of the group $\GL^+(2,\R)$ that can 
be viewed as a generalization of the $\GL^+(2,\R)$ action on the space $\GL^+(2,\R)/\SL(2,\Z)$ of flat tori. 
For an introduction to this subject, we refer to  the excellent surveys~\cite{Masur2002,Zorich:survey}. \medskip

It has been discovered that many topological and dynamical properties of a translation surface 
can be revealed by its $\GL^+(2,\R)-$orbit closure.  The most spectacular example of this phenomenon is the case of {\em Veech 
surfaces}, or {\em lattice surfaces}, that is surfaces whose $\GL^+(2,\R)$-orbit is a closed subset in its stratum; 
for such surfaces, the famous {\em Veech dichotomy} holds: 
the linear flow in any direction is either periodic or uniquely ergodic.  

It follows from the foundation results of Masur and Veech that most of $\GL^+(2,\R)$ orbits are dense in their stratum. 
However, in any stratum there always exist surfaces whose orbits are closed, they arise from coverings of the standard 
flat torus and are commonly known as {\em square-tiled surfaces}.

During the past three decades, much effort has been made in order to obtain the list of possible $\GL^+(2,\R)$-orbit 
closures and to understand their structure as subsets of strata. So far, such a list is only known in genus two by the work 
of  McMullen~\cite{Mc07}, but the problem is wide open in higher genus, even though some breakthroughs have been 
achieved recently (see below).

In genus two the complex dimensions of the connected strata $\H(2)$ and $\H(1,1)$ are, respectively, $4$ and $5$.
In this situation, McMullen proved that if a $\GL^+(2,\R)$-orbit is not dense, then it  belongs to a {\em Prym eigenform locus}, 
which is a submanifold of complex dimension $3$. In this case, the orbit is either closed or dense in the whole 
Prym eigenform locus. These (closed) invariant submanifolds, that we denote by $\Omega E_{D}$, where $D$ is a 
{\em discriminant} (that is $D\in \N, \ D \equiv 0,1 \mod 4$),  are characterized by the following properties:
\begin{enumerate}
\item Every surface $(X,\omega) \in \Omega E_{D}$ has a holomorphic 
involution $\inv: X \rightarrow X$, and
\item The {\em Prym variety} $\Prym(X,\inv)=(\Omega^-(X,\inv))^*/H_1(X,\Z)^-$
admits a real multiplication by some quadratic order $\Ord_D:=\Z[x]/(x^2+bx+c), \ b,c \in \Z,  \ b^2-4c=D$.
\end{enumerate}
(where $\Omega^-(X,\inv)=\{\eta \in \Omega(X): \, \inv^*\eta=-\eta\}$). \medskip

Latter, McMullen proved the existence of similar loci is genus up to $5$, and showed that the intersection of such loci with the minimal strata give rise to some infinite families of primitive Veech surfaces (see~\cite{Mc1,Mc6,Lanneau:Manh:H4} for more details). \medskip

Recently, Eskin-Mirzakhani-Mohammadi~\cite{Eskin:preprint,EskMirMoh12}
have announced a proof of the conjecture that any $\GL^+(2,\R)$-orbit closure is an
affine invariant submanifold of $\H(\kappa)$.  This result is of great importance in view of the classification of 
orbit closures as it provides some very important characterizations of such subsets. 
However {\em a priori} this result does not allow us to construct explicitly such invariant submanifolds.

So far, most of $\GL^+(2,\R)$-invariant submanifolds  of a stratum are obtained from coverings of translation surfaces of lower genera. 
The only known examples of invariant submanifolds {\bf not} arising from this construction belong to one of the following families:
\begin{enumerate}
\item Primitive  Teichm\"uller curves (closed orbits), and
\item Prym eigenforms.
\end{enumerate}

This paper is concerned with the classification of $\GL^+(2,\R)-$orbit closures in the space of Prym eigenforms. 
To be more precise, for any non empty stratum $\QQQ(\kappa')$, there is a (local) affine map
$\phi : \QQQ^{(g')}(\kappa') \rightarrow \H^{(g)}(\kappa)$ that is given by the orientating double covering (here, the superscripts $g$ and $g'$ indicate the  genus of the corresponding Riemann surfaces). When 
$g-g'=2$, following McMullen~\cite{Mc6} we call the image of $\phi$ a Prym locus and denote it by $\Prym(\kappa)$.
Those Prym loci contain $\GL^+(2,\R)$-invariant suborbifolds denoted by $\Omega E_{D}(\kappa)$ (see Section~\ref{sec:background} for more precise definitions).
We will investigate the $\GL^+(2,\R)$-orbit closures in $\Omega E_{D}(\kappa)$. The first main theorem of this paper is the following.
\begin{Theorem}
\label{theo:main}
Let $(X,\omega)\in \Omega E_{D}(\kappa)$ be a Prym eigenform, where $\Omega E_{D}(\kappa)$ 
has complex dimension $3$ ({\em i.e.} $\Omega E_{D}(\kappa)$  is contained in one of the 
Prym loci in Table~\ref{tab:strata:list}). We denote by $\mathcal O$ its orbit under $\GL^+(2,\R)$. Then
\begin{enumerate}
\item Either $\mathcal O$ is closed (i.e. $(X,\omega)$ is a Veech surface), or
\item $\overline{\mathcal O}$ is a connected component of $\Omega E_{D}(\kappa)$.
\end{enumerate}
\end{Theorem}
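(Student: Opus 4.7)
The plan is to adapt McMullen's genus-two dichotomy to each of the three-complex-dimensional loci listed in Table~\ref{tab:strata:list}, bypassing Eskin--Mirzakhani--Mohammadi in favor of direct geometric constructions. Since a connected component of $\Omega E_D(\kappa)$ has complex dimension $3$ and is $\GL^+(2,\R)$-invariant, it suffices to show that if $\mathcal{O}$ is not closed then $\overline{\mathcal{O}}$ contains a non-empty open subset of a component; a standard ergodicity/invariance argument then upgrades this to the full component.

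The core geometric input I would establish first is \emph{complete periodicity} of Prym eigenforms: any direction in which $(X,\omega) \in \Omega E_D(\kappa)$ admits a saddle connection is in fact a periodic direction, decomposing $X$ into a finite union of cylinders. This is a direct consequence of real multiplication by $\Ord_D$ on $\Prym(X,\inv)$, in the spirit of Calta--McMullen, together with the fact that the involution $\inv$ permutes the cylinders in a given direction. Armed with this, I would run a Masur-type compactness/renormalization argument: since $\mathcal{O}$ is not closed, following the Teichm\"uller geodesic in an appropriate direction on $(X,\omega)$ produces an accumulation point $(Y,\eta) \in \overline{\mathcal{O}}$ admitting a horizontal cylinder decomposition $C_1,\dots,C_k$, and which is not itself a Veech surface (else the orbit of $(X,\omega)$ would be forced to accumulate on a closed orbit, and after unwinding, be closed).

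I would then exploit the cylinder decomposition as follows. The horocycle flow $u_t=\bigl(\begin{smallmatrix} 1 & t \\ 0 & 1\end{smallmatrix}\bigr)$ acts on $(Y,\eta)$ by simultaneous twist in the cylinders $C_i$; its orbit closure sits inside the ``twist torus'' $\prod_i (\R/c_i\Z)$ of independent Dehn twists that preserve the locus. The involution $\inv$ pairs the $C_i$ and the $\Ord_D$-module structure on $H_1^-$ constrains their widths and heights, so a dimension count using $\dim_\C \Omega E_D(\kappa)=3$ shows that the twist module has real rank exactly $2$ after quotienting by the $u_t$-direction. Since $(Y,\eta)$ is not Veech, the cylinder moduli cannot all be commensurable, so the $u_t$-orbit closure contains a non-trivial subtorus $T \subset \overline{\mathcal{O}}$. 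Combining $T$ with a local two-real-dimensional piece of the $\SL(2,\R)$-orbit of $(Y,\eta)$ produces a real three-dimensional, hence locally open, subset of the ambient component of $\Omega E_D(\kappa)$, and the invariance argument concludes.

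The main obstacle I anticipate is making the cylinder/twist dimension count precise in each stratum of Table~\ref{tab:strata:list}: in genus two McMullen could rely on the especially simple structure of $H_1^-(X,\inv)$, but in higher genus one must separately analyze how many $\inv$-symmetric cylinders can coexist, bound the rank of the twist module induced by $\Ord_D$, and verify that the resulting subtorus is non-trivial whenever the surface is not Veech. A secondary difficulty is producing the completely periodic accumulation point $(Y,\eta)$ in the orbit closure in a controlled way, without appealing to abstract rigidity results; this seems to require genuinely stratum-by-stratum geometric input, exploiting the Prym involution and the list of admissible cylinder diagrams specific to each case.
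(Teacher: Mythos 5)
Your plan correctly identifies the easy half of the argument, which is essentially the paper's ``weaker version'' (Theorem~\ref{theo:weaker}): if there is a completely periodic direction whose cylinder moduli are \emph{not} all commensurable, then Kronecker's theorem on the twist torus (combined with the rational relation of Lemma~\ref{lm:cyl:dec:3class:rel}) shows that $\overline{U\cdot(X,\omega)}$ is a rank-two subtorus, the horizontal kernel-foliation leaf sits inside it, a transverse application of the same argument gives a full $\R^2$-worth of kernel deformations, and transversality of kernel leaves to $\GL^+(2,\R)$-orbits (Proposition~\ref{prop:local}) then produces an open set in $\overline{\mathcal O}$. Up to that point you are running the same argument as the paper.

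The gap is in your reduction to that case. You assert that since $\mathcal O$ is not closed, a renormalized limit $(Y,\eta)\in\overline{\mathcal O}$ can be taken \emph{not} to be a Veech surface, ``else the orbit of $(X,\omega)$ would be forced to accumulate on a closed orbit, and after unwinding, be closed.'' This inference does not hold: a non-closed $\GL^+(2,\R)$-orbit can perfectly well accumulate on a closed orbit (and on nothing else a priori), just as a non-periodic unipotent orbit on a homogeneous space can accumulate on a periodic one. Nothing in Veech's theorem or in completely periodicity rules this out. Consequently you are not entitled to conclude that the moduli of the cylinders of $(Y,\eta)$ are incommensurable, and the Kronecker step collapses: if the horizontal direction of $(Y,\eta)$ is parabolic, the $u_t$-orbit closure in the twist torus is a one-dimensional circle, not a rank-two subtorus, and you get no new open piece of $\overline{\mathcal O}$.

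This is precisely the case the paper spends Section~\ref{sec:proof:main:th} on, and the resolution is quantitative rather than soft. One writes $(X_n,\omega_n)=(Y,\eta)+v_n\to(Y,\eta)$, arranges (via stability of the decomposition, Propositions~\ref{prop:stable:dec} and~\ref{prop:cyl:dec:unstable}) that every $X_n$ is horizontally periodic with the same combinatorics, and then, assuming all these directions \emph{are} parabolic, chooses a ``good'' horocycle time $s_n$ so that $u_{s_n}\cdot X_n$ coincides with a horizontal kernel-foliation deformation $X_n+(x_n,0)$ (Claim~\ref{claim:2}); the relations of Lemma~\ref{lm:cyl:dec:3class:rel} between twists and moduli are exactly what makes such an $s_n$ exist. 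The punchline (Claim~\ref{claim:3}) is that, because the ratio $\mu_1^{(n)}/\mu_3^{(n)}=p^{(n)}/q^{(n)}$ cannot be eventually constant as $v_n\to0$ (the coefficients $\alpha_1,\alpha_3$ governing how the kernel foliation moves heights have opposite signs), the denominators $q^{(n)}\to\infty$, so the attainable values $x_n$ become dense in a fixed interval. Passing to the limit gives $(Y,\eta)+(x,0)\in\overline{\mathcal O}$ for a whole interval of $x$, at which point Corollary~\ref{cor:1} takes over and the rest of your outline applies. Without some substitute for this Diophantine approximation step, your proof does not close the case where every periodic direction encountered is parabolic, and that case is not vacuous.
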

\begin{table}[htb]
$$
\begin{array}{cc}
\begin{array}{|llc|}
\hline
\QQQ(\kappa')    & \Prym(\kappa) & g(X)  \\ 
\hline
\QQQ^{(0)}(-1^6,2)  & \Prym(1,1) \simeq \H(1,1)  &  2 \\
\QQQ^{(1)}(-1^3,1,2)  & \Prym(1,1,2)  &  3 \\
\QQQ^{(1)}(-1^4,4)  & \Prym(2,2)^\mathrm{odd}  &  3\\
\QQQ^{(2)}(-1^2,6)  & \Prym(3,3) \simeq \H(1,1) &  4 \\
\hline
\end{array} & 
\begin{array}{|llc|}
\hline
\QQQ(\kappa')    & \Prym(\kappa) & g(X)  \\ 
\hline
\QQQ^{(2)}(1^2,2)  & \Prym(1^2,2^2) \simeq \H(0^{2},2)  &  4 \\
\QQQ^{(2)}(-1,2,3)  & \Prym(1,1,4)  &  4 \\
\QQQ^{(2)}(-1,1,4)  & \Prym(2,2,2)^\mathrm{even}  &  4 \\
\QQQ^{(3)}(8)  & \Prym(4,4)^\mathrm{even}  &  5 \\
\hline
\end{array}
\end{array}
$$
\caption{
\label{tab:strata:list}
Prym loci for which the corresponding stratum of quadratic differentials has (complex) 
dimension $5$. The Prym eigenform loci $\PrymD(\kappa)$ has complex dimension $3$. Observe that
the stratum $\H(1,1)$ in genus $2$ is a particular case of Prym loci.}
\end{table}
\begin{Remark}\hfill
\begin{itemize}
\item[$\bullet$] The case $\PrymD(1,1)$ is  part of McMullen's classification in genus two, which is obtained via decompositions of translation surfaces of genus two into connected sums of two tori.

\item[$\bullet$] The classification of connected components of $\Omega E_{D}(2,2)$ and 
$\Omega E_{D}(1,1,2)$ will be addressed in a forthcoming paper~\cite{Lanneau:Manh:composantes} 
(see also~\cite{Lanneau:Manh:H4} for related work). The statement is the following:
for any discriminant $D\geq 8$ and $\kappa \in \{(2,2),(1,1,2)\}$, 
the locus $\Omega E_{D}(\kappa)$ is non-empty if and only if $D\equiv 0,1,4 \mod 8$,
and it is connected if $D\equiv 0,4 \mod 8$, and has two connected components otherwise.
\end{itemize}
\end{Remark}
Even though Theorem~\ref{theo:main} is a particular case of the results of Eskin-Mirzakhani-Mohammadi~\cite{Eskin:preprint, EskMirMoh12}, our proof is independent from these work, it is based essentially on a careful investigation of the geometric and topological properties of Prym eigenforms. It is also likely to us that the method introduced here can be generalized to yield Eskin-Mirzakhani-Mohammadi's result in invariant submanifolds which possess the {\em complete periodic} property (see Section~\ref{sec:complete:periodic}), for instance, the intersections of the Prym eigenform loci with other strata with higher dimension. \medskip

We will also prove a finiteness result for Teichm\"uller curves in the locus $\PrymD(2,2)^{\rm odd}$; 
this is our second main result:
\begin{Theorem}
\label{theo:main:finiteness}
If $D$ is not a square then there exist only finitely many closed $\GL^+(2,\R)$-orbits in $\PrymD(2,2)^{\rm odd}$.
\end{Theorem}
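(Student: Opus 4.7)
\textbf{Proof proposal for Theorem~\ref{theo:main:finiteness}.}

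The plan is to reduce the finiteness statement to a diophantine problem on cylinder decompositions at cusps of Teichm\"uller curves, and to exploit the non-triviality of the Galois involution of $\Q(\sqrt{D})$ (which holds precisely because $D$ is not a square) to conclude that only finitely many solutions can arise. The previous dichotomy of Theorem~\ref{theo:main} already guarantees that any non-closed orbit is dense in a component, so we really only need a global bound on the closed orbits themselves.

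First I would invoke the complete periodicity of Prym eigenforms (the property to be developed in Section~\ref{sec:complete:periodic}). Every closed $\GL^+(2,\R)$-orbit in $\PrymD(2,2)^{\mathrm{odd}}$ is a Teichm\"uller curve with finitely many cusps, and by complete periodicity each cusp is represented, after rotation, by a Prym eigenform that decomposes into horizontal cylinders. Since a Teichm\"uller curve is determined up to $\GL^+(2,\R)$ by the data at any single cusp, it suffices to bound the number of such cylinder decompositions that can arise.

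Next I would enumerate the combinatorial types of horizontal cylinder decompositions on a genus-$3$ surface in $\mathcal{H}(2,2)^{\mathrm{odd}}$ compatible with a Prym involution (with prescribed action on the two zeros of order $2$). The underlying topology is fixed, so this set is finite. For each combinatorial type, real multiplication by $\Ord_D$ on the $\Z$-lattice spanned by the core curves of the horizontal cylinders is encoded by an explicit integer matrix $M$, and the Prym eigenform condition translates into the requirement that the vector of widths $(w_1,\dots,w_k)$ be a $\lambda$-eigenvector of $M$, where $\lambda\in\Ord_D$ satisfies $\lambda^2+b\lambda+c=0$ with $b^2-4c=D$. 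Since $D$ is not a square, $\lambda\notin\Q$, and therefore $\lambda$ is a simple root of the characteristic polynomial of $M$; the $\lambda$-eigenspace is one dimensional (when non-empty) and pins down $(w_1,\dots,w_k)$ up to a positive real scalar. A parallel analysis for the height vector $(h_1,\dots,h_k)$, using the Veech condition that moduli $h_i/w_i$ are rationally commensurable combined with a second eigenform relation, fixes the heights up to scale as well, and normalizing the total area pins down the decomposition entirely.

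Summing over the finitely many combinatorial types then yields finitely many Veech surfaces in $\PrymD(2,2)^{\mathrm{odd}}$ up to the $\GL^+(2,\R)$-action, proving the theorem. The main obstacle will be the careful algebraic bookkeeping: one must describe explicitly how real multiplication by $\Ord_D$ acts on the homology generated by the horizontal cylinder cores, use the Prym involution to cut the dimension in half, and transfer the analysis to the height and twist parameters through the unipotent element of the Veech group that fixes the horizontal direction. It is precisely in this step that the assumption that $D$ is not a square is used to guarantee that the eigenspace of $\lambda$ is one dimensional; when $D$ is a square, $\lambda\in\Q$ may have higher-dimensional eigenspaces, which is consistent with the well-known existence of infinitely many square-tiled Veech surfaces in that case.
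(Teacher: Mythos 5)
Your proposal takes a genuinely different route from the paper, and it contains a gap that is precisely where the hard work of the paper lies. You model the argument on McMullen's prototype analysis for $\H(2)$: fix a combinatorial type of horizontal cylinder decomposition, note that the eigenform condition pins down the width vector $(w_1,\dots,w_k)$ up to scale because $\lambda$ is a simple eigenvalue, and then assert that a parallel analysis ``fixes the heights up to scale as well.'' This last assertion is the gap. In $\PrymD(2,2)^{\rm odd}$ the kernel foliation provides a two-real-parameter family of eigenforms with the \emph{same} absolute periods, hence the same widths, the same combinatorics, and the same real multiplication; the heights vary continuously along the vertical kernel foliation by $h_i\mapsto h_i+\alpha_i t$ (Lemma~\ref{lm:ker:ver:height}). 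The only constraint imposed by the eigenform condition on the heights in a given periodic direction is the single linear relation $\sum\beta_i N(w_i)\mu_i=0$ of Corollary~\ref{cor:parabolic}, and adding the parabolicity (commensurability) condition $h_i/w_i\in\Q\cdot\mu$ still leaves infinitely many admissible rational ratios. So for a fixed combinatorial type and fixed discriminant, there is \emph{a priori} an infinite family of candidate Veech surfaces along a single horizontal kernel foliation leaf, and a single parabolic direction cannot rule this out. (This is exactly why $\PrymD(2,2)$ differs from $\H(2)$: there the kernel foliation is absent, so fixing the combinatorics and discriminant really does reduce to finitely many integral prototypes.)

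The paper circumvents this by a compactness and rigidity argument rather than by a direct diophantine count. It first shows (Theorem~\ref{thm:prot:unstable:dec}) that, up to $\GL^+(2,\R)$, every surface with an unstable horizontal cylinder decomposition lies on the horizontal kernel foliation leaf of one of finitely many prototypical surfaces, and (Theorem~\ref{thm:unstable:dec:collapse}) that the parameter on each such leaf runs over a bounded interval whose endpoints are degenerations into $\PrymD(0,0,0)$, $\PrymD(4)$, or $\Omega E_{D'}(2)^*$. If there were infinitely many closed orbits, compactness of that interval forces an accumulation point; then the by-product of the density dichotomy (Theorem~\ref{thm:byproduct}, extracted from the proof of Theorem~\ref{theo:main}) implies the union of those closed orbits is dense in a component, contradicting Theorem~\ref{thm:exist:open:noVeech}, which exhibits an open set free of Veech surfaces using the three-tori splitting and the non-squareness of $D$. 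To repair your argument you would need to explain why infinitely many Veech surfaces cannot occur on a single kernel foliation leaf; doing this directly with a diophantine analysis would require playing off the commensurability constraints from \emph{infinitely many} periodic directions simultaneously, which is a substantially harder task than what your outline suggests and essentially amounts to rediscovering the density/accumulation argument the paper uses.
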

We end with a few remarks.
\begin{Remark}[On Theorem~\ref{theo:main:finiteness}]\hfill
\begin{itemize}
\item[$\bullet$] To the authors' knowledge, such finiteness results are not direct consequences of the work by Eskin-Mirzakhani-Mohammadi.
\item[$\bullet$] Our techniques allow us to get a similar result for the loci $\PrymD(1,1,2) \subset \Prym(1,1,2)$, but we will not 
include the proof in the present paper.
\item[$\bullet$]  In $\Prym(1,1)$ a stronger statement holds: there exist only finitely many $\GL^+(2,\R)$-closed orbits in $\underset{D \text{ not a square}}{\sqcup}\PrymD(1,1)$ (see~\cite{Mc5,Mc6bis}). We also notice that the same result for $\Prym(1,1,2)$  is proved in a forthcoming paper by the first author and M.~M\"oller (see~\cite{Lanneau:Moeller}). However, this is no longer true 
in $\Prym(2,2)^{\rm odd}$ as we will see in Theorem~\ref{thm:inf:Veech}.
\item[$\bullet$] As  by products of our approach, we obtain some evidences supporting the prediction that those Prym eigenform loci are quasiprojective varieties. 
\item[$\bullet$] Other finiteness results on Teichm\"uller curves have been obtained in other situations by different methods, see for instance \cite{Moller08, Bainbridge:Moeller12,Matheus:Wright13}.
\end{itemize}
\end{Remark}
\subsection*{Outline of the paper}

Here below we give a sketch of our  proofs of Theorem~\ref{theo:main} and 
Theorem~\ref{theo:main:finiteness}.
Before going into the details, we single out the relevant properties of 
$\Omega E_{D}(\kappa)$ for our purpose. In what 
follows $(X,\omega)$ will denote a surface in $\Omega E_{D}(\kappa)$.
%
\begin{enumerate}
\item \label{prop:kernel}
Each locus is preserved by the {\em kernel foliation}, that we will denote by 
$X+v$ for a sufficiently small vector $v\in \R^2$ (see Section~\ref{sec:kernel}).
In particular, up to action of $\textrm{GL}^+(2,\R)$, a neighborhood of $(X,\omega)$ in 
$\Omega E_D(\kappa)$ can be identified with the set
$$
\left\{(X,\omega)+v,\ v\in \B(\varepsilon)\right\}.
$$
\item \label{prop:cp}
Every surface in $\Omega E_D(\kappa)$ is completely periodic in the sense of Calta: the directions
of simple closed geodesics are completely periodic, and thus the surface is decomposed into cylinders in those directions. The number of cylinders is bounded by $g+|\kappa|-1$, where $|\kappa|$ is the number of zeros of $\omega$ (see Section~\ref{sec:background}).
\item  \label{prop:eq}
Assume that $(X,\omega)$ decomposes into cylinders in the horizontal direction, then the moduli of those cylinders are related by some equations with rational coefficients (see Corollary~\ref{cor:parabolic} and Lemma~\ref{lm:cyl:dec:3class:rel}).
\item \label{prop:stable}
The cylinder decomposition in a completely periodic direction is said to be {\em  stable} if there 
is no saddle connection connecting two different zeros in this direction. The stable periodic directions are {\em generic}
for the kernel foliation in the following sense: if the horizontal direction is stable for $(X,\omega)$ then there exists 
$\varepsilon >0$ such that for any $v$ with $v\in \B(\varepsilon)$, 
the horizontal direction is also periodic and stable on $X+v$. \\
If the horizontal direction is {\em unstable} then there exists $\varepsilon >0$ such that
for any $v=(x,y)$ with $v\in \B(\varepsilon)$ and $y\not =0$ 
the horizontal direction  is periodic and stable on $X+v$.
\end{enumerate}

The properties \eqref{prop:kernel}-\eqref{prop:cp}-\eqref{prop:eq} are explained in~\cite{Lanneau:Manh:cp}
(see Section 3.1 and Corollary 3.2, Theorem~1.5, Theorem~7.2, respectively). We will give more details on Property~\eqref{prop:stable}
in Section~\ref{sec:stable}.\medskip

We now give a sketch of the proof of our results. The first part of the paper (Sections~\ref{sec:kernel}-\ref{sec:proof:main:th})
is devoted to the proof of Theorem~\ref{theo:main}, while the second part (Sections~\ref{sec:surgeries}-\ref{sec:proof:finiteness}) 
is concerned with Theorem~\ref{theo:main:finiteness}.

\subsubsection*{Sketch of proof of Theorem~\ref{theo:main}.} Let $(X,\omega)\in \PrymD(\kappa)$ be a Prym eigenform and 
let ${\mathcal O}:=\GL^+(2,\R)\cdot(X,\omega)$ be the corresponding $\GL^+(2,\R)-$orbit. We will show that if $\mathcal{O}$ is not a closed subset in $\PrymD(\kappa)$ then it is dense in a connected component of $\PrymD(\kappa)$.

We first prove a  weaker version of Theorem~\ref{theo:main} (see Section~\ref{sec:weaker}) under the additional condition 
that there exists a completely periodic direction $\theta$ on $(X,\omega)$ that is not parabolic.  We start by applying the horocycle flow in that periodic direction, and use the classical Kronecker's theorem to show that the orbit closure contains the set $(X,\omega)+x\vec{v}$, where $\vec{v}$ is the unit vector in direction $\theta$, and $x \in (-\eps,\eps)$ with $\eps >0$ small enough. Next, we look at another periodic direction transverse to $\theta$, and apply the same argument to the surfaces $(X,\omega)+x\vec{v}$. It follows that $\overline{\mathcal{O}}$ contains a neighborhood of $(X,\omega)$, and hence for any $g\in \GL^+(2,\R)$, $\overline{\mathcal O}$ contains a neighborhood of  $g\cdot(X,\omega)$. Using this fact, we show that for a surface $(Y,\eta)$ in $\overline{\Orb}$ but not in $\Orb$, $\overline{\Orb}$ also contains a neighborhood of $(Y,\eta)$, from which we deduce that $\overline{\mathcal{O}}$ is an open subset of $\PrymD(\kappa)$. Hence $\overline{\Orb}$ must be a connected component of 
$\PrymD(\kappa)$. \medskip

In full generality, (see Section~\ref{sec:proof:main:th}) we show that if the orbit is not closed and all the periodic directions 
are parabolic, then it is also dense in a component of $\PrymD(\kappa)$. For this, we consider a surface $(Y,\eta) \in \overline{\mathcal{O}} \setminus \mathcal{O}$ for which the horizontal direction is periodic. From Property~\eqref{prop:kernel}, we see that there is a sequence $((X_n,\omega_n))_{n \in \N}$ of surfaces in $\Orb$ converging to $(Y,\eta)$ such that we can write $(X_n,\omega_n)=(Y,\eta)+(x_n,y_n)$, where $(x_n,y_n) \longrightarrow (0,0)$. Property~\eqref{prop:stable} then implies that the horizontal direction is also periodic for $(X_n,\omega_n)$, moreover, we can assume that the corresponding cylinder decomposition in $(X_n,\omega_n)$ is stable.

For any  $x \in (-\eps,\eps)$, where $\eps >0$ small enough, we show that, by choosing a suitable time, the orbit of the horocycle flow though $(X_n,\omega_n)$ contains a surface $(X_n,\omega_n)+(x_n,0)$ such that the sequence $(x_n)$ converges to $x$. As a consequence, we see that $\overline{\Orb}$ contains $(Y,\eta)+(x,0)$ for every $x \in (-\eps,\eps)$. We can now conclude that $\overline{\Orb}$ is a component of $\PrymD(\kappa)$ by the weaker version, which is proved previously.

\subsubsection*{Sketch of proof of Theorem~\ref{theo:main:finiteness}.} We first show a finiteness result up 
to the (real) kernel foliation for surfaces in $\PrymD(2,2)^{\rm odd}$ (see Theorem~\ref{thm:prot:unstable:dec}):
If $D$ is not a square then there exists a finite family $\mathbb P_D \subset \PrymD(2,2)^{\rm odd}$ 
such that for any $(X,\omega)\in \PrymD(2,2)^{\rm odd}$ with an unstable cylinder decomposition, 
up to rescaling by $\GL^+(2,\R)$, we have the following
$$
(X,\omega) = (X_k,\omega_k)+ (x,0) \qquad \textrm{for some } (X_k,\omega_k)\in \mathbb P_D.
$$
Compare to~\cite{Mc4,Lanneau:Manh:H4} where a similar result is established.

Now let us assume that there exists an infinite family, say ${\mathcal Y}=\bigcup_{i\in I} \GL^+(2,\R)\cdot(X_i,\omega_i)$,
of closed $\GL^+(2,\R)$-orbits, generated by Veech surfaces $(X_i,\omega_i), \, i \in I$.

By previous finiteness result, up to taking a subsequence, we assume that $(X_i,\omega_i)=(X,\omega)+(x_i,0)$
for some $(X,\omega)\in\mathbb{P}_D$, where $x_i$ belongs to a finite open interval $(a,b)$ which is independent of $i$ (see Theorem~\ref{thm:unstable:dec:collapse}). Up to taking a subsequence, one can assume that the sequence $(x_i)$ converges to some $x\in [a,b]$.
Hence the sequence $(X_i,\omega_{i})=(X,\omega)+(x_{i},0)$ converges to $(Y,\eta):=(X,\omega)+(x,0)$.\smallskip

\noindent If $x \in (a,b)$ then $(Y,\eta)$ belongs to $\PrymD(2,2)^{\rm odd}$, otherwise, that is $x\in \{a,b\}$, $(Y,\eta)$ belongs to one of the following loci $\PrymD(0,0,0), \PrymD(4)$, or $\Omega E_{D'}(2)^*$, with $D' \in \{D,D/4\}$ (see Section~\ref{sec:degenerating:surfaces}). Then by using a by-product of the proof of Theorem~\ref{theo:main}, replacing $\Orb$ by $\mathcal Y$
(see Theorem~\ref{thm:byproduct}) we obtain that $\mathcal Y$ is dense in a component 
of $\PrymD(2,2)^{\rm odd}$. We conclude with Theorem~\ref{thm:exist:open:noVeech} which asserts that  
the set of closed $\GL^+(2,\R)-$orbits  is not dense in any component of $\PrymD(2,2)^{\rm odd}$ when $D$ is not a square. \medskip

\subsection*{Acknowledgments}

We would like to thank Corentin Boissy, Pascal Hubert, John Smillie, and Barak Weiss for useful discussions.
We would also like to thank the Universit\'e de Bordeaux and Institut Fourier in Grenoble for the hospitality during 
the preparation of this work. Some of the research visits which made this collaboration possible were supported by 
the ANR Project GeoDyM. The authors are partially supported by the ANR Project GeoDyM.

\section{Background}\label{sec:background}

For an introduction to translation surfaces, and a nice survey on this
topic, see {\em e.g.}~\cite{Zorich:survey, Masur2002}. In this section we recall necessary background 
and relevant properties of $\Omega E_{D}(\kappa)$ for our purpose. For a general
reference on Prym eigenforms, see~\cite{Mc6} (the main properties are reminded below).
We will also review the kernel foliation, and complete periodicity.  \medskip

\noindent We will use the following notations along the paper: \\
$\B(\eps)=\{v \in \R^2, \: ||v||<\eps \}$, and \\
$\omega(\gamma):=\int_\gamma \omega$, for any $\gamma\in H_1(X,\Z)$.

\subsection{Prym loci and Prym eigenforms}
Let $X$ be a compact Riemann surface, and $\inv: X \rightarrow X$ be a holomorphic involution of $X$. 
We define  the {\em Prym variety} of $X$:
$$
\Prym(X,\inv)=(\Omega^-(X,\inv))^*/H_1(X,\Z)^-,
$$
\noindent where $\Omega^-(X,\inv)=\{\eta \in \Omega(X): \, \inv^*\eta=-\eta\}$. It is a sub-Abelian variety of the 
Jacobian variety $\mathrm{Jac}(X):=\Omega(X)^*/H_1(X,\Z)$. \medskip

For any integer vector $\kappa=(k_1,\dots,k_n)$ with nonnegative entries, we denote by 
$\Prym(\kappa) \subset \H(\kappa)$ the subset of pairs 
$(X,\omega)$ such that there exists an involution $\inv:  X \ra X$ satisfying  $\inv^*\omega=-\omega$, and 
$\dim_\C\Omega^-(X,\inv)=2$. Following McMullen~\cite{Mc6}, we will call an element of $\Prym(\kappa)$ a {\em Prym form}. For instance, in genus two, one has $\Prym(2) \simeq \H(2)$ and $\Prym(1,1) \simeq \H(1,1)$ (the Prym involution being the hyperelliptic involution). \medskip

Let $Y$ be the quotient of $X$ by the Prym involution (here $g(Y)=g(X)-2$) and $\pi$ the corresponding (possibly ramified) double covering from 
$X$ to $Y$. By push forward, there exists a meromorphic quadratic differential $q$ on 
$Y$ (with at most simple poles) so that $\pi^*q=\omega^2$. Let $\kappa'$ be 
the integer vector that records the orders of the zeros and poles of $q$. Then there is a  $\GL^+(2,\R)$-equivariant bijection
between $\QQQ(\kappa')$ and $\Prym(\kappa)$~\cite[p.~6]{Lanneau04}. \medskip

All the strata of quadratic differentials of dimension $5$ are recorded in Table~\ref{tab:strata:list}. 
It turns out that  the corresponding Prym varieties have complex  dimension two (\ie if $(X,\omega)$ 
is the orientating double covering of $(Y,q)$ then $g(X)-g(Y)=2$).  \medskip

We now give the definition of Prym eigenforms. Recall that a quadratic order 
is a ring isomorphic to $\Ord_D = \Z[X]/(X^2+bX+c)$, where $D = b^2-4c>0$
(quadratic orders being classified by their discriminant $D$).
\begin{Definition}[Real multiplication]
\label{def:Real:mult}
Let $A$ be an Abelian variety of dimension $2$. We say that $A$ admits a real 
multiplication by $\Ord_D$ if there exists an injective homomorphism 
$\mathfrak{i}: \Ord_D \ra \mathrm{End}(A)$, such that $\mathfrak{i}(\Ord_D)$ 
is a self-adjoint, proper subring of $\mathrm{End}(A)$ ({\em i.e.} for any $f\in \mathrm{End}(A)$, if there exists 
$n\in \Z\backslash \{0\}$ such that $nf\in \mathfrak{i}(\Ord_D)$ then $f \in \mathfrak{i}(\Ord_D)$).
\end{Definition}

\begin{Definition}[Prym eigenform]
\label{def:Prym:eig:form}
For any quadratic discriminant $D>0$,  we denote by $\PrymD(\kappa)$ the set of 
$(X,\omega) \in \Prym(\kappa)$ such that  $\dim_{\C}\Prym(X,\inv)=2$, $\Prym(X,\inv)$ admits a multiplication by $\Ord_D$, 
and $\omega$  is an eigenvector of $\Ord_D$. Surfaces in $\PrymD(\kappa)$ are called {\em Prym eigenforms}.
\end{Definition}
Prym eigenforms do exist in each Prym locus described in Table~\ref{tab:strata:list}, as  real multiplications arise 
naturally with pseudo-Anosov homeomorphisms commuting  with  $\inv$ 
(see~\cite{Mc6}).  

\subsection{Periodic directions and Cylinder decompositions}
\label{sec:cylinders}

We collect here several results concerning surfaces having a decomposition into 
periodic cylinders.

Let $(X,\omega)$ be a translation surface. A {\em cylinder} is a
topological annulus embedded in $X$, isometric to a flat cylinder
$\R / w \Z \times (0,h)$. In what follows all cylinders are supposed to be {\em maximal}, that is, they are not properly contained in a larger one. If $g\geq 2$, the boundary of a maximal cylinder is a finite union of saddle connections. If $\CCC$ is a cylinder, we will denote by $w(\CCC), h(\CCC),t(\CCC),\mu(\CCC)$ the width, height, twist, and modulus of $\CCC$ respectively.
 
A direction $\theta$ is {\em completely periodic} or simply {\em periodic} on $X$ if all regular
geodesics in this direction  are closed. This means that $X$
is the closure of a finite number of cylinders in
direction $\theta$, we will say that $X$ admits a {\em cylinder decomposition} in this direction. 

We can associate to any cylinder decomposition a separatrix diagram which encodes the way the cylinders are glued together, see~\cite{Kontsevich2003}). Given such a diagram, one can reconstruct the surface $(X,\omega)$ (up to a rotation) from the widths, heights, and twists of the cylinders (see Section~\ref{sec:stable}). 

\subsection{Complete periodicity}
\label{sec:complete:periodic}
A translation surface $(X,\omega)$ is said to be {\em completely periodic} if  it satisfies the following property: let $\theta\in \R\P^1$ be a direction, if 
the linear flow $\mathcal F_\theta$ in direction $\theta$ has a regular closed orbit on $X$, then $\theta$  is a periodic direction.  Flat tori and their ramified coverings are completely periodic, as well as  Veech surfaces. 
\smallskip

Completely periodicity is a very particular property. Indeed, when the genus is at least two, the Lebesgue measure of the set of surfaces having this property is zero, this is because complete periodicity is locally expressed via proportionality of a non-empty set of relative periods, and thus is defined by some quadratic equations in the period coordinates. This property has been initiated by Calta~\cite{Calta04} (see also~\cite{CaSm}) where she proved that any surface in $\PrymD(2)$ and $\PrymD(1,1)$ is completely periodic. Latter the authors extended this property to any Prym eigenform given by Table~\ref{tab:strata:list}. This property is also proved by  A.~Wright~\cite{Wright2013} by a different argument.

\begin{Theorem}[\cite{Lanneau:Manh:cp},\cite{Wright2013}]
\label{thm:cp}
Any Prym eigenform in the loci $\PrymD(\kappa) \subset \Prym(\kappa)$ given by the cases $(4)-(5)-(6)-(7)-(8)$ of 
Table~\ref{tab:strata:list} is completely  periodic.
\end{Theorem}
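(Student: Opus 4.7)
The plan is to adapt Calta's approach from genus $2$ to the higher-genus Prym setting, using the real multiplication on $\Prym(X,\inv)$ to promote a single horizontal closed geodesic into a full cylinder decomposition. After applying an element of $\GL^+(2,\R)$, we may assume that $(X,\omega)\in\PrymD(\kappa)$ has a regular horizontal closed leaf, and we must show that the entire horizontal foliation consists of closed leaves and saddle connections. Let $\inv$ denote the Prym involution and let $T$ be a self-adjoint generator of $\Ord_D\subset\mathrm{End}(\Prym(X,\inv))$ with $T^*\omega=\lambda\omega$, $\lambda\in\R$. Since $T$ is self-adjoint with respect to the symplectic intersection form on $H_1(X,\R)^-$, one obtains a decomposition $H_1(X,\R)^- = V_\lambda\oplus V_{\lambda'}$ into two real $2$-dimensional eigenspaces, with $\omega$ restricting to an $\R$-linear isomorphism $V_\lambda\xrightarrow{\sim}\C$ and vanishing on $V_{\lambda'}$.

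Next, I would exploit the horizontal cylinder to constrain the periods. Let $C$ be a maximal horizontal cylinder with core curve $\gamma$, and set $\gamma^-:=\tfrac{1}{2}([\gamma]-\inv_*[\gamma])\in H_1(X,\Q)^-$. Because $\inv^*\omega=-\omega$ one has $\omega(\gamma^-)=\omega(\gamma)\in\R_{>0}$, so $\gamma^-\neq 0$, and applying $T$ yields $\omega(T\gamma^-)=\lambda\,\omega(\gamma^-)\in\R$. Assuming $D$ is not a square (the square case reduces to a torus-covering argument), one has $\lambda\notin\Q$, so $\omega(\gamma^-)$ and $\omega(T\gamma^-)$ are $\Q$-linearly independent real numbers, and $\gamma^-$ together with $T\gamma^-$ span a saturated rank-$2$ $\Ord_D$-sublattice $L\subset H_1(X,\Z)^-$ with $\omega(L)\subset\R$. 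Consequently every anti-invariant horizontal absolute cycle has $\omega$-period in the rank-$2$ group $\omega(L)\subset\R$.

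The third and most delicate step is to convert this algebraic rigidity into the geometric statement that no horizontal minimal component exists. Suppose for contradiction that such a minimal component $Z$ is present. Masur's criterion applied inside $\bar Z$ produces horizontal saddle connections whose relative $\omega$-periods are $\Q$-linearly independent of the absolute horizontal periods of $X$; after symmetrizing by $\inv$, these give rise to anti-invariant relative cycles whose horizontal periods cannot lie in $\omega(L)$, contradicting the preceding paragraph. Hence no minimal component exists and the horizontal direction is completely periodic. The hard part will be precisely this last step, which requires lifting the $\Ord_D$-action from absolute to relative anti-invariant cohomology so as to constrain the periods of saddle connections bounding $\bar Z$; this lifting is available in the loci of Table~\ref{tab:strata:list} because the hypothesis $\dim_\C\Omega^-(X,\inv)=2$ keeps the relative anti-invariant period map controlled by the same quadratic order, and the classical combinatorial bound $g+|\kappa|-1$ on the number of cylinders terminates the iteration.
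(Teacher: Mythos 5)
The paper does not prove Theorem~\ref{thm:cp}; it is quoted from \cite{Lanneau:Manh:cp} and \cite{Wright2013}. So there is no in-paper proof to compare your sketch against, but the sketch itself has a genuine gap which I describe below.

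Your first two steps are sound and match the standard starting point: the eigenvalue decomposition $H_1(X,\R)^-=V_\lambda\oplus V_{\lambda'}$, the fact that $\omega$ restricts to an isomorphism on $V_\lambda$ and vanishes on $V_{\lambda'}$, and the observation that $\gamma^-$ and $T\gamma^-$ have $\Q$-independent real periods are all correct (and indeed one can push this to show that all anti-invariant \emph{absolute} horizontal periods lie in a rank-two $\Z$-module). The problem is your third step, which you rightly flag as the hard part and which is where the argument breaks. You have only constrained absolute periods, yet you propose to derive a contradiction from relative periods of saddle connections bounding a putative minimal component $Z$. Real multiplication by $\Ord_D$ is an endomorphism of the Prym variety and therefore acts on absolute (co)homology $H^1(X,\C)^-$; it has no canonical extension to the relative group $H^1(X,\Sigma;\C)^-$. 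Quite the opposite: the kernel foliation (Section~\ref{sec:kernel}, Proposition~\ref{prop:preserves}) moves freely in exactly the relative directions while \emph{preserving} the $\Ord_D$-eigenform condition, so relative periods of a Prym eigenform are essentially unconstrained by real multiplication. Hence ``lifting the $\Ord_D$-action to relative anti-invariant cohomology'' cannot supply the desired constraint. Moreover, ``Masur's criterion'' (on unique ergodicity vs.\ recurrence of the Teichm\"uller geodesic) does not produce saddle connections with prescribed linear-independence properties for their periods; that is not its content. As stated, a minimal component is perfectly compatible with the rank-two constraint on absolute horizontal periods, and some genuinely dynamical or geometric input is needed to rule it out.

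For comparison, the two cited proofs supply exactly what your step~3 is missing, by very different mechanisms. The argument of \cite{Lanneau:Manh:cp} relies on the vanishing of the complex flux $\int_X\omega\wedge\omega'=\int_X\omega\wedge\bar\omega'=0$ (sketched here in the proof of Theorem~\ref{theo:Cal:eq}), combined with a degeneration argument: one shears along the known cylinder and tracks the flux, obtaining a contradiction if a minimal component were present. Wright's proof in \cite{Wright2013} uses the Cylinder Deformation Theorem together with the field-of-definition of the affine invariant submanifold: stretching a free horizontal cylinder stays in the orbit closure, and the quadratic field of definition forbids this unless the direction is already fully periodic. Either route gives you the dynamical input you need; neither is a small fix to your sketch, since both replace the attempted ``relative-period contradiction'' with a structurally different argument.
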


\section{Kernel foliation on Prym loci}
\label{sec:kernel}

The notion of kernel foliation already appeared in several papers (see~\cite{EMZ,Masur:Zorich,Calta04,Lanneau:Manh:cp}). For a proper overview on
the properties of the kernel foliation, we refer to \cite{Zorich:survey}, Section 9.6. Here below, we recall the (local) construction of this foliation which will be used throughout the paper.  In all of this section, we fix a translation surface $(X,\omega)$ with {\em several distinct} zeros. \medskip

We take some $\eps>0$ small enough so that, for every zero $P$ of $\omega$, the set
$D(P,\eps)=\{x\in X, \dist(P,x) < \eps\}$ is an embedded disc in $X$. For any direction $\theta$, 
it is a classical result that $D(P,\eps)$ can be constructed from $2(k+1)$ half-discs 
(where $k$ is the multiplicity of the zero $P$) all glued together in such a way that their centers are 
identified with $P$~\cite[Figure~3]{EMZ}. \medskip

The kernel foliation is a local action of $\C$ defined as follows: pick a complex number $w \in \C$ with $0< |w| < \eps$.
We then cut $D(P,\eps)$ into several half-discs in the direction of $w$. We will modify the flat metric of the 
polydisc $D(P,\eps)$ without changing the metric outside: on the diameter of each half-disc, there is a unique point $P'$ such that $\overrightarrow{PP'}=w$, we can glue the half-discs in such a way that all the points $P'$ are identified. Let us denote by $D'$ the  domain obtained from this gluing. We can glue $D'$ to $X\setminus D(P,\eps)$ along $\partial D'=\partial D(P,\eps)$, what we  get is a new translation surface $(X',\omega')$ which has the same absolute periods as $(X,\omega)$, and given any path $c$ in $X$  joining $P$ to another zero of $\omega$, and $c'$ the corresponding  path in $X'$, we have  $\omega(c)=\omega(c')+w$. We will say that  $(X',\omega')$ lies in the {\em kernel foliation leaf} through $(X,\omega)$. \smallskip

Remark that the Prym forms in the  Prym loci in Table~\ref{tab:strata:list} have two or three zeros. If such a Prym form has two zeros, then   the zeros are permuted by the Prym involution, if it has three zeros, then two of them are permuted, and the third one is fixed.  We also have a kernel foliation in Prym loci in Table~\ref{tab:strata:list} as follows: let $P_1,P_2$ be the pair of zeros of $\omega$ which are permuted by the Prym involution $\inv$,  given $\eps$ and $w$ as above, to get a surface $(X',\omega')$ in the same Prym locus, it suffices to move $P_1$ by $w/2$ and move $P_2$ by $-w/2$. Indeed, by assumptions, the Prym involution exchanges $D(P_1,\eps)$ and $D(P_2,\eps)$. Let $D'_1$ and $D'_2$ denote the new domains we obtain from $D(P_1,\eps)$ and $D(P_2,\eps)$ after modifying the metric. One can check that $D'_1$ and $D'_2$ are symmetric, thus the involution in $X\setminus(D(P_1,\eps)\sqcup D(P_2,\eps))$ can be extended to $D'_1 \sqcup D'_2$. Therefore we have an involution $\inv'$  on $X'$ 
such that ${\inv'}^*\omega'=-\omega'$, which implies that $(X',\omega')$ also belongs to the same Prym locus as $(X,\omega)$. We will write $(X',\omega') = (X,\omega) + w$, or simply by $X'=X+w$. \smallskip

It is worth noticing that we do not have a global action of $\C$ on each leaf of the kernel foliation, \ie even $(X,\omega)+w_1$ and $(X,\omega)+w_2$ exist, $(X,\omega)+w_1+w_2$ may not be  well defined. Nevertheless,  there still exists a local action of $\C$, namely, in a neighborhood of $(X,\omega)$ on which a local chart (by period mappings) can be defined. This is because in such a neighborhood there exists a {\em unique} surface that has the same absolute periods as $(X,\omega)$, and the relative periods different from the ones of $(X,\omega)$ by a small complex number. Therefore, if $|w_1|$ and $|w_2|$ are small enough then $(X,\omega)+(w_1+w_2)=((X,\omega)+w_1)+w_2=((X,\omega)+w_2)+w_1$. \smallskip

\noindent {\bf Convention :} Throughout this paper, we only consider the intersection of kernel foliation leaves with a neighborhood of $(X,\omega)$ on which this local action of $\C$ is well-defined, and by $(X,\omega)+w$ we will mean the surface obtained from $(X,\omega)$ by the construction described above. \smallskip

The next lemma follows from the above construction (see 
Figure~\ref{fig:decomposition:5:cylinders} for an example in $\Prym(1,1,2)$).

\begin{Lemma}
Let $c$ be a path on $X$ joining two zeros of $\omega$, and $c'$ be the corresponding path on 
$X'$. Then
\begin{enumerate}
\item If the two endpoints of $c$ are exchanged by $\inv$ then $\omega'(c')-\omega(c)=\pm w$.
\item If one endpoint of $c$ is fixed by $\inv$, but the other is not, then $\omega'(c')-\omega(c)=\pm w/2$.
\end{enumerate}
The sign of the difference is determined by the orientation of $c$.
\end{Lemma}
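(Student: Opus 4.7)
The plan is to localize the difference $\omega'(c')-\omega(c)$ to contributions coming from small neighborhoods of the two moved zeros. Recall from the construction of the Prym kernel foliation that $(X,\omega)$ and $(X',\omega')$ are flat-isometric outside $D(P_1,\eps)\sqcup D(P_2,\eps)$, while inside these discs the conical singularities are respectively displaced from $P_1$ to $P'_1=P_1+w/2$ and from $P_2$ to $P'_2=P_2-w/2$, the boundaries $\partial D(P_i,\eps)$ being pointwise preserved together with their flat structure. I would then decompose $c$ into three consecutive arcs (initial, middle, final) according to whether they lie inside $D(P_1,\eps)$, outside both discs, or inside $D(P_2,\eps)$, and define $c'$ similarly; the middle arc lies in the unchanged region and contributes identically to both $\omega(c)$ and $\omega'(c')$. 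When an endpoint of $c$ is a fixed zero of $\inv$, the disc around it is not modified, so no contribution arises at that end.

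The key local computation takes place inside a modified disc. The disc $D(P_i,\eps)$ is built from $2(k_i+1)$ Euclidean half-discs glued along their diameters, all meeting at the center; on each half-disc $\omega$ is the standard form $dw$. The modification simply displaces the common center from $P_i$ to $P'_i$ without changing the boundary or the identifications along the diameters, so the arc of $c'$ corresponding to the initial arc of $c$ exits the disc at the same boundary point $Q_i\in\partial D(P_i,\eps)$. Since $\omega=dw$ integrates along a segment of a half-disc to the difference of its endpoints, and since concatenating contributions across successive half-discs telescopes the intermediate diameter-crossing points, the period of the initial arc of $c'$ minus that of $c$ equals $\overrightarrow{Q_iP'_i}-\overrightarrow{Q_iP_i}=-\overrightarrow{P_iP'_i}=\mp w/2$.

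To finish, I would sum the contributions from the two endpoints. In case (1) both endpoints are exchanged by $\inv$, so both contribute (with displacements $+w/2$ and $-w/2$) and the total change equals $\pm w$; in case (2) only the non-fixed endpoint contributes, giving $\pm w/2$. The overall sign reflects both the orientation of $c$ (reversing $c$ swaps the roles of initial and terminal endpoint) and the arbitrariness of the labeling $P_1\leftrightarrow P_2$, which is precisely the $\pm$ allowed in the statement. The only point requiring care is the half-disc computation and the bookkeeping of signs, but no serious obstacle is expected since the integrand is exact on each half-disc and the displacements are prescribed by the Prym-symmetric construction.
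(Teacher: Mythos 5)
Your proposal is essentially correct, and since the paper leaves this lemma as an immediate consequence of the kernel foliation construction (referring only to the construction and to Figure~\ref{fig:decomposition:5:cylinders}), your argument is a sound elaboration of exactly the argument the authors have in mind: split $c$ into the part in the unmodified region and the parts inside the two modified discs, and observe that only the displacement of the conical point within each disc contributes, yielding $\mp w/2$ per moved endpoint, hence $\pm w$ when both endpoints move and $\pm w/2$ when only one does.

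Two small remarks. First, there is a sign bookkeeping slip in the local computation: with the arc oriented \emph{inward} from $Q_i$ to $P_i$, as you wrote it, $\overrightarrow{Q_iP'_i}-\overrightarrow{Q_iP_i}=\overrightarrow{P_iP'_i}$, not $-\overrightarrow{P_iP'_i}$; with the arc oriented \emph{outward} from $P_i$ to $Q_i$ one gets $-\overrightarrow{P_iP'_i}$. This is harmless here since the statement already carries the $\pm$ ambiguity, but you should fix the orientation convention once and for all before computing. Second, your decomposition of $c$ into three arcs tacitly assumes $c$ meets each modified disc in a single arc terminating at the center. Since $\omega$ and $\omega'$ are closed and the period depends only on the homotopy class of $c$ rel endpoints, one may first homotope $c$ so that it enters each disc $D(P_i,\eps)$ exactly once along a radial segment; stating this reduction explicitly would close the remaining gap, as otherwise multiple crossings of $\partial D(P_i,\eps)$ need a separate (trivial) cancellation argument.
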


\begin{figure}[htbp]

\begin{minipage}[t]{0.4\linewidth}
\centering
\begin{tikzpicture}[scale=1]
\def\s{0.50}
\def\t{0.50}
\fill[fill=blue!20] (0,0) -- (1,0) -- (1,1) -- (0,1);
\fill[fill=gray!20] (0,1) -- (3,1) -- (3.25,2) -- (0.25,2);
\fill[fill=gray!20] (1,0) -- (-2,0) -- (-2.25,-1) -- (0.75,-1) ;
\fill[fill=red!10] (1.75,2) -- (3.25,2) -- (3.75,2.5) -- (2.25,2.5);
\fill[fill=red!10] (-2.25,-1) -- (-0.75,-1) -- (-1.25,-1.5) -- (-2.75,-1.5);

\draw (0,0) -- (1,0) -- (1,1) -- (3,1) -- (3.25,2) -- (3.75,2.5) -- (2.25,2.5) -- (1.75,2) -- (0.25,2) --
(0,1) -- (0,0);

\draw (0,0) -- (-2,0) -- (-2.25,-1) -- (-2.75,-1.5) -- (-1.25,-1.5) -- (-0.75,-1) -- (0.75,-1) --
(1,0);

\draw (0,1) -- (1,1);
\draw (1.75,2) -- (3.25,2);
\draw (-2.25,-1) -- (-0.75,-1);

\foreach \x in {(0,0),(0,1),(1,0),(1,1),(3,1),(-2,0)} \filldraw[fill=white] \x circle(1.5pt);
\foreach \x in {(0.25,2),(1.75,2),(3.25,2),(-2.75,-1.5),(-1.25,-1.5)} \filldraw[fill=black] \x circle(1.5pt);
\foreach \x in {(-2.25,-1),(-0.75,-1),(0.75,-1),(2.25,2.5),(3.75,2.5)} \filldraw[fill=red] \x circle(1.5pt);
\draw (0.5,0.5) node {$\scriptstyle \mathcal C_2$};
\draw (1.75,1.5) node {$\scriptstyle\mathcal C_3$};
\draw (-0.75,-0.5) node {$\scriptstyle\inv(\mathcal C_3)$};
\draw (2.75,2.25) node {$\scriptstyle\mathcal C_1$};
\draw (-1.75,-1.25) node {$\scriptstyle\inv(\mathcal C_1)$};
\draw (2,1) node[below] {$\scriptstyle C$};
\draw (-1,0) node[above] {$\scriptstyle C$};
\draw (3,2.5) node[above] {$\scriptstyle B$};
\draw (0,-1) node[below] {$\scriptstyle B$};
\draw (1,2) node[above] {$\scriptstyle A$};
\draw (-2,-1.5) node[below] {$\scriptstyle A$};
\draw (0,-3) node[below] {$(X,\omega)$};
\end{tikzpicture}
\end{minipage}
\begin{minipage}[t]{0.4\linewidth}
\centering
\begin{tikzpicture}[scale=1]
\def\s{0.50}
\def\t{0.50}

\fill[fill=blue!20] (0,0) -- (1,0) -- (1,1) -- (0,1);
\fill[fill=gray!20] (0,1) -- (3,1) -- (3.25-\s,2-\t) -- (0.25-\s,2-\t);
\fill[fill=gray!20] (1,0) -- (-2,0) -- (-2.25+\s,-1+\t) -- (0.75+\s,-1+\t) ;
\fill[fill=red!10] (1.75-\s,2-\t) -- (3.25-\s,2-\t) -- (3.75+\s,2.5+\t) -- (2.25+\s,2.5+\t);
\fill[fill=red!10] (-2.25+\s,-1+\t) -- (-0.75+\s,-1+\t) -- (-1.25-\s,-1.5-\t) -- (-2.75-\s,-1.5-\t);

\draw (0,0) -- (1,0) -- (1,1) -- (3,1) -- (3.25-\s,2-\t) -- (3.75+\s,2.5+\t) -- (2.25+\s,2.5+\t) -- (1.75-\s,2-\t) -- (0.25-\s,2-\t) --
(0,1) -- (0,0);

\draw (0,0) -- (-2,0) -- (-2.25+\s,-1+\t) -- (-2.75-\s,-1.5-\t) -- (-1.25-\s,-1.5-\t) -- (-0.75+\s,-1+\t) -- (0.75+\s,-1+\t) --
(1,0);

\draw (0,1) -- (1,1);
\draw (1.75-\s,2-\t) -- (3.25-\s,2-\t);
\draw (-2.25+\s,-1+\t) -- (-0.75+\s,-1+\t);

\foreach \x in {(0,0),(0,1),(1,0),(1,1),(3,1),(-2,0)} \filldraw[fill=white] \x circle(1.5pt);
\foreach \x in {(0.25-\s,2-\t),(1.75-\s,2-\t),(3.25-\s,2-\t),(-2.75-\s,-1.5-\t),(-1.25-\s,-1.5-\t)} \filldraw[fill=black] \x circle(1.5pt);
\foreach \x in {(-2.25+\s,-1+\t),(-0.75+\s,-1+\t),(0.75+\s,-1+\t),(2.25+\s,2.5+\t),(3.75+\s,2.5+\t)} \filldraw[fill=red] \x circle(1.5pt);
\draw (0.5,0.5) node {$\scriptstyle\mathcal C_2$};
\draw (1.75-\s/2,1.5-\t/2) node {$\scriptstyle\mathcal C_3$};
\draw (-0.75+\s/2,-0.5+\t/2) node {$\scriptstyle\inv(\mathcal C_3)$};
\draw (2.75,2.25) node {$\scriptstyle\mathcal C_1$};
\draw (-1.75,-1.25) node {$\scriptstyle\inv(\mathcal C_1)$};
\draw (2,1) node[below] {$\scriptstyle C$};
\draw (-1,0) node[above] {$\scriptstyle C$};
\draw (3+\s,2.5+\t) node[above] {$\scriptstyle B$};
\draw (0+\s,-1+\t) node[below] {$\scriptstyle B$};
\draw (1-\s,2-\t) node[above] {$\scriptstyle A$};
\draw (-2-\s,-1.5-\t) node[below] {$\scriptstyle A$};

\draw (0,-3) node[below] {$(X,\omega) + (s,t)$};
\end{tikzpicture}
\end{minipage}

\caption{Decomposition of a surface $(X,\omega)\in \Prym(1,1,2)$. 
The cylinders $\mathcal C_2$ is fixed by the Prym involution $\inv$, while 
the cylinders $\mathcal C_i$ and $\inv(\mathcal C_i)$ are exchanged for $i=1,3$. 
Along a kernel foliation leaf $(X,\omega) + (s,t)$ the twists and heights change as
follows: $t_1(s) = t_1-s$, $t_2(s) = t_2 $, $t_3(s) = t_3+s/2$ and 
$h_1(t) = h_1-t$, $h_2(t) = h_2$, $h_3(t) = h_3+t/2$.
\label{fig:decomposition:5:cylinders}
}
\end{figure}
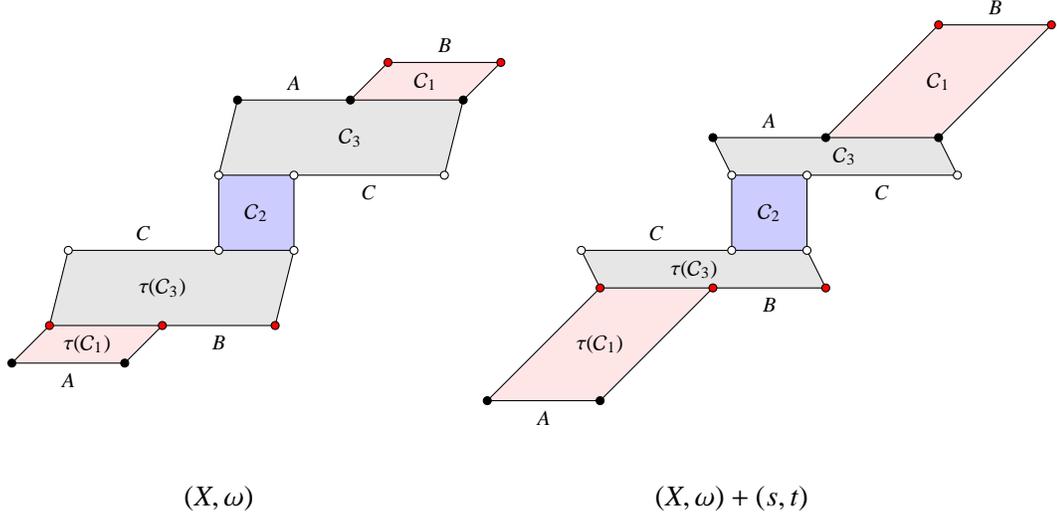


We have seen that the kernel foliation preserves the Prym locus; moreover it also preserves 
the real multiplication locus as it is shown in the next proposition.

\begin{Proposition}
\label{prop:preserves}
For any $(X,\omega)\in \Omega E_D(\kappa)$,  if $(X',\omega')=(X,\omega)+w$ is a Prym form in the same 
Prym locus as $(X,\omega)$ then $(X',\omega')\in \Omega E_D(\kappa)$.
\end{Proposition}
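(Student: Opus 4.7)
The plan is to exploit the fact that the kernel foliation is a local surgery concentrated in small neighborhoods of the zeros of $\omega$: absolute periods on the anti-invariant homology are preserved, so the real multiplication structure can be transported verbatim from $(X,\omega)$ to $(X',\omega')$. First, the construction of $(X',\omega') = (X,\omega) + w$ yields a natural homeomorphism $X \to X'$ which is the identity outside the surgery discs $D(P_i,\eps)$ and conjugates $\inv$ to $\inv'$. This induces a symplectic isomorphism $\phi \colon H_1(X,\Z) \to H_1(X',\Z)$ which restricts to an isomorphism $\phi^- \colon H_1(X,\Z)^- \to H_1(X',\Z)^-$ on the anti-invariant parts. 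Every absolute cycle $\gamma \in H_1(X,\Z)$ can be represented by a loop avoiding the surgery discs, hence $\int_{\phi(\gamma)} \omega' = \int_\gamma \omega$; in other words, the kernel foliation preserves absolute periods.

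Next I would transport the real multiplication. Since $(X,\omega) \in \PrymD(\kappa)$, there exists a self-adjoint endomorphism $T \in \mathrm{End}(H_1(X,\Z)^-)$ with $T^2 + bT + c = 0$ (where $D = b^2 - 4c$) generating the action of $\Ord_D$ on $\Prym(X,\inv)$, together with a real eigenvalue $\lambda$ satisfying $\int_{T\gamma} \omega = \lambda \int_\gamma \omega$ for every $\gamma \in H_1(X,\Z)^-$ (the eigenform equation). Define $T' := \phi^- \circ T \circ (\phi^-)^{-1}$ on $H_1(X',\Z)^-$. Because $\phi^-$ preserves the intersection form, $T'$ is self-adjoint and satisfies the same minimal polynomial, so it still generates a copy of $\Ord_D$ inside $\mathrm{End}(H_1(X',\Z)^-)$. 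Combining the preceding steps, for every $\gamma' = \phi^-(\gamma) \in H_1(X',\Z)^-$ one has
$$
\int_{T'\gamma'} \omega' = \int_{T\gamma}\omega = \lambda \int_\gamma \omega = \lambda \int_{\gamma'} \omega',
$$
so $\omega'$ satisfies the eigenform equation for $T'$ with the same eigenvalue $\lambda$.

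The step I expect to be the main obstacle is the verification that $T'$ actually descends to a holomorphic endomorphism of $\Prym(X',\inv')$, equivalently that $T'^*$ preserves the Hodge subspace $\Omega^-(X',\inv') \subset H^1(X',\C)^-$, because this subspace is genuinely modified by the kernel foliation. The crucial input is that $\omega'$ is a nonzero $\lambda$-eigenvector of $T'^*$ lying in $\Omega^-(X',\inv')$; combined with the facts that $T'^*$ acts on the four-dimensional space $H^1(X',\C)^-$ with two real eigenvalues $\lambda, \lambda'$ of multiplicity two, and that the eigenspaces $V_\lambda, V_{\lambda'}$ are defined over $\Q$ and hence stable under complex conjugation, one deduces that $\Omega^-(X',\inv')$ splits as $(\Omega^-(X',\inv') \cap V_\lambda) \oplus (\Omega^-(X',\inv') \cap V_{\lambda'})$, the first summand being spanned by $\omega'$. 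With $T'$ thus realized as a real multiplication by $\Ord_D$ on $\Prym(X',\inv')$ admitting $\omega'$ as an eigenform, we conclude $(X',\omega') \in \PrymD(\kappa)$.
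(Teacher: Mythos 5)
Your proposal follows the same route as the paper: transport the generator $T$ of $\Ord_D$ across the kernel-foliation surgery to an endomorphism $T'$ of $H_1(X',\Z)^-$, observe that absolute periods are unchanged so $\omega'$ remains a $\lambda$-eigenvector, and then face the one non-trivial point, namely that the real-linear $T'$ is in fact $\C$-linear (i.e.\ descends to an endomorphism of the abelian variety $\Prym(X',\inv')$). The paper disposes of this last step with a citation to McMullen's result that a self-adjoint $\R$-linear endomorphism of a $2$-dimensional abelian variety admitting a holomorphic $1$-form as an eigenvector is automatically holomorphic; you attempt to reprove it, which is where your sketch gets thin. Two small issues: the eigenspaces $V_\lambda, V_{\lambda'}$ are not ``defined over $\Q$'' when $D$ is not a square (they are defined over $\Q(\sqrt D)$); the correct reason they are stable under complex conjugation is simply that $T'$ is real and $\lambda,\lambda'$ are real. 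More substantively, the asserted splitting $\Omega^-(X',\inv') = (\Omega^-\cap V_\lambda)\oplus(\Omega^-\cap V_{\lambda'})$ does not follow just from conjugation-stability; one needs the symplectic orthogonality $V_\lambda \perp V_{\lambda'}$ (coming from self-adjointness of $T'$) together with the isotropy of $\Omega^-$: writing $\alpha\in\Omega^-$ as $a\omega'+b\overline{\omega'}+\alpha_{\lambda'}$, the pairing $\langle\alpha,\omega'\rangle=0$ forces $b=0$, hence $\alpha_{\lambda'}\in\Omega^-$. With that detail supplied your argument closes; as written it asserts the conclusion of McMullen's lemma without quite deriving it.
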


\begin{proof}[Sketch of the proof]
The proof is classical and details are left to the reader (see~\cite{Lanneau:Manh:cp}).
By construction, $(X',\omega')$ and $(X,\omega)$ share the same absolute periods. Let $T$ be a generator of the quadratic order $\mathcal{O}_D$ in $\mathrm{End}(\Prym(X,\inv))$. Let $T'$ be the $\R-$linear endomorphism of $H_1(X',\Z)^-$
corresponding to $T$. Since $\Prym(X',\inv')$ has complex dimension $2$, $T'$ is $\C-$linear~\cite{Mc6}. Hence $T' \in \mathrm{End}(\Prym (X',\inv'))$,
and since $\omega'$ is an eigenform of $T'$, one has $(X',\omega')\in \Omega E_D(\kappa)$.
\end{proof}
We end this section by giving a description of a neighborhood of a Prym eigenform: 
up to the action of $\GL^+(2,\R)$ a neighborhood of a point $X$ in $\PrymD(\kappa)$ can be identified with 
the ball $\left\{X+w, |w| < \varepsilon \right\}$.

\begin{Proposition}[\cite{Lanneau:Manh:cp}]
\label{prop:local}
For any $(X,\omega)\in \Omega E_D(\kappa)$,  if $(X',\omega')$ is a Prym eigenform in $\Omega E_D(\kappa)$ close enough to $(X,\omega)$, then there exists a unique pair $(g,w)$, where $g\in \GL^+(2,\R)$ close to $\Id$, and $w\in \R^2$ with $|w|$ small, such that $(X',\omega')=g\cdot(X,\omega)+w$.
\end{Proposition}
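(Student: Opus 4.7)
The plan is to prove the proposition by viewing $\Phi : (g,w) \longmapsto g \cdot (X,\omega) + w$ as a local diffeomorphism between spaces of equal real dimension, and applying the inverse function theorem at $(\Id,0)$. First I would note that $\Phi$ takes values in $\Omega E_D(\kappa)$ in a neighborhood of $(\Id,0)$: the $\GL^+(2,\R)$-action clearly preserves $\Omega E_D(\kappa)$ (it just rescales the eigenform $\omega$ linearly, preserving the Prym involution and the real multiplication), and the kernel foliation does so by Proposition~\ref{prop:preserves}. Moreover, the source $\GL^+(2,\R) \times \R^2$ has real dimension $4+2 = 6$, which matches $\dim_\R \Omega E_D(\kappa) = 2 \cdot 3 = 6$.

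Next I would compute the differential of $\Phi$ at $(\Id,0)$ in period coordinates. A tangent vector $A \in T_{\Id}\GL^+(2,\R) \simeq \mathfrak{gl}(2,\R)$ changes every absolute period $\omega(\gamma)$, $\gamma \in H_1(X,\Z)^-$, to $A \cdot \omega(\gamma)$ (acting on $\R^2 \simeq \C$), and leaves relative periods free to transform consistently; a kernel foliation tangent vector $w \in \R^2$ leaves all absolute periods fixed and modifies any relative period $\omega(c)$ between distinct zeros by $\pm w$ or $\pm w/2$, according to the lemma just proved. To check injectivity of $d\Phi_{(\Id,0)}$, suppose $d\Phi_{(\Id,0)}(A,w)=0$. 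The absolute-period part yields $A \cdot \omega(\gamma) = 0$ for every $\gamma \in H_1(X,\Z)^-$. Since $\omega$ is a nonzero holomorphic $1$-form, one can exhibit two classes $\alpha,\beta \in H_1(X,\Z)^-$ with $\omega(\alpha),\omega(\beta)$ linearly independent over $\R$; this forces $A=0$. The relative-period part then reads $\pm w = 0$ (or $\pm w/2 = 0$) along a path $c$ joining two distinct zeros, giving $w=0$.

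Since $d\Phi_{(\Id,0)}$ is injective between spaces of the same real dimension, it is an isomorphism, so by the inverse function theorem $\Phi$ is a local diffeomorphism. The existence and uniqueness of $(g,w)$ for $(X',\omega')$ close enough to $(X,\omega)$ follow immediately. The main point requiring care is the verification that the $\GL^+(2,\R)$-action and the kernel foliation furnish independent directions whose sum exhausts the tangent space to $\Omega E_D(\kappa)$; this is guaranteed by the dimension count above together with the fact that the Prym eigenform condition cuts $\Prym(\kappa)$ (of complex dimension $5$) down to a submanifold of complex dimension $3$, with the absolute-period directions contributing $2$ complex dimensions (exhausted by the $\GL^+(2,\R)$-orbit) and the kernel foliation contributing the remaining one.
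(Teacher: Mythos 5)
Your proof is correct and follows essentially the same route as the paper's: both arguments boil down to observing that the kernel foliation directions live in the kernel of the projection from relative to absolute cohomology, that the $\GL^+(2,\R)$-directions project nontrivially (equivariantly) to absolute cohomology, and then concluding by a dimension count ($4+2=6=\dim_\R\Omega E_D(\kappa)$). The only difference is in packaging: the paper phrases the transversality in terms of the map $\rho: H^1(X,\Sigma;\C)^- \to H^1(X,\C)^-$ and stops there, whereas you make the same point explicit by computing $d\Phi_{(\Id,0)}$ in period coordinates and invoking the inverse function theorem; your injectivity check of the differential is precisely the paper's transversality statement. One small point you could tighten: the existence of $\alpha,\beta\in H_1(X,\Z)^-$ with $\omega(\alpha),\omega(\beta)$ $\R$-independent deserves a word — it follows because the area $\tfrac{\imath}{2}\int_X\omega\wedge\ol{\omega}>0$ is computed entirely from the restriction of $\omega$ to $H_1(X,\Z)^-$, so these periods cannot all lie on a real line.
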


\begin{proof}
For completeness we include the proof here (see~\cite[Section~3.2]{Lanneau:Manh:cp}).\\
Let $(Y,\eta)=(X,\omega)+w$, with $|w|$ small, be a surface in the leaf of the kernel foliation through $(X,\omega)$. 
We denote by $[\omega]$ and $[\eta]$ the classes of $\omega$ and $\eta$ in $H^1(X,\Sigma;\C)^-$. Then we have 
$$
[\eta]-[\omega] \in \ker\rho,
$$
where $\rho: H^1(X,\Sigma;\C)^- \rightarrow H^1(X,\C)^-$ is the natural surjective linear map.
On the other hand, the action of $g\in \GL^+(2,\R)$ on $H^1(X,\Sigma;\C)^-$ satisfies 
$$
\rho(g\cdot[\omega])=g\cdot\rho([\omega]).
$$
\noindent Therefore the leaves of the kernel foliation and the orbits of $\GL^+(2,\R)$ are transversal. Since their dimensions are complementary, the proposition follows.
\end{proof}

\section{Stable an Unstable cylinder decompositions}
\label{sec:stable}

\subsection{Definitions}
We call a geodesic ray emanating from a zero of $\omega$ a {\em separatrix}. It is a well-known fact that a direction is periodic if and only if all the separatrices in this direction are saddle connections. The following definition will be useful for us.

\begin{Definition}\label{def:stable:cyl:dec}
 A cylinder decomposition of $(X,\omega)$ is said to be {\em stable} if every separatrix joins a zero of $\omega$ to itself. The decomposition is said to be {\em unstable} otherwise.
\end{Definition}

Obviously,  a stable cylinder decomposition only makes sense when $\omega$ has more than one zero. In $\H(1,1)$, a cylinder decompositions may have one, two, or three cylinders, and stable decompositions are the ones with three cylinders. 

\begin{Lemma}\label{lm:max:nb:cyl}
If the genus of $X$ is $g$ then any direction $\theta$ that decomposes $(X,\omega)\in \H(\kappa)$ into 
$g+|\kappa|-1$ cylinders is stable ($|\kappa|$ is the number of zeros of $\omega$).
\end{Lemma}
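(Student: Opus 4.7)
My plan is to convert the bound $n \le g + |\kappa| - 1$ into an exact Euler-characteristic identity on the graph of horizontal saddle connections, and then read off the equality case directly. I would start by introducing the graph $\Gamma \subset X$ whose vertices are the zeros of $\omega$ and whose edges are the saddle connections in the periodic direction $\theta$, so $V = |\kappa|$. At a zero of order $k_i$ there are $2(k_i+1)$ separatrices in the directions $\pm\theta$, and, since $\theta$ is periodic, each such separatrix is the endpoint of a saddle connection; counting separatrix-ends gives $2E = \sum_i 2(k_i+1) = 4g-4+2|\kappa|$, whence $E = 2g-2+|\kappa|$ and $\chi(\Gamma) = V - E = 2-2g$.

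Next I would fatten $\Gamma$ to a small regular neighborhood $T \subset X$. For $\eps$ small enough, $T$ is a compact orientable surface with boundary that deformation-retracts onto $\Gamma$, and the complement $X \setminus T$ is a disjoint union of $n$ open annuli, one per cylinder of the decomposition; in particular $\partial T$ has exactly $2n$ connected components, two per cylinder. Writing $T_1, \dots, T_c$ for the connected components of $T$ (so that $c$ is also the number of connected components of $\Gamma$), with genera $g_i \ge 0$ and numbers of boundary circles $F_i$ satisfying $\sum_i F_i = 2n$, summing the identity $\chi(T_i) = 2 - 2g_i - F_i$ yields
$$
2 - 2g \;=\; \chi(\Gamma) \;=\; \sum_{i=1}^{c}\chi(T_i) \;=\; 2c - 2\sum_i g_i - 2n,
$$
which I rearrange to
$$
n \;=\; (c+g-1) - \sum_i g_i \;\le\; c + g - 1.
$$

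To conclude, each connected component of $\Gamma$ contains at least one vertex, so $c \le V = |\kappa|$; equality $c = |\kappa|$ holds precisely when every component of $\Gamma$ reduces to a single vertex, i.e.\ when every edge of $\Gamma$ is a loop based at one zero. Chaining the two inequalities gives $n \le g+|\kappa|-1$, and if $n = g + |\kappa| - 1$ then both are saturated, forcing $c = |\kappa|$. That is exactly the assertion that no horizontal saddle connection joins two distinct zeros, so the cylinder decomposition is stable in the sense of Definition~\ref{def:stable:cyl:dec}. The only step requiring a little care is verifying that $T$ is genuinely an orientable surface whose complement consists of $n$ open annuli, but I expect no serious obstacle: this is immediate from the existence of a cylinder decomposition in the direction $\theta$ once $\eps$ is chosen smaller than the minimum cylinder height.
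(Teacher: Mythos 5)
Your argument is correct and takes a genuinely different route from the paper. The paper takes the bound $n\le g+|\kappa|-1$ as known, then argues the equality case by contradiction: if the decomposition were unstable one could collapse a saddle connection joining two distinct zeros without destroying any cylinder, producing a surface of genus $g$ in a stratum $\H(\kappa')$ with $|\kappa'|<|\kappa|$ but still $g+|\kappa|-1$ cylinders, contradicting the bound applied to $\H(\kappa')$. You instead run a self-contained Euler-characteristic count on the ribbon graph $\Gamma$ of horizontal saddle connections: $\chi(\Gamma)=|\kappa|-(2g-2+|\kappa|)=2-2g$, a regular neighborhood $T$ of $\Gamma$ has $\chi(T)=\chi(\Gamma)$ and exactly $2n$ boundary circles (two per complementary annulus), and summing $\chi(T_i)=2-2g_i-F_i$ over components gives $n=(c+g-1)-\sum g_i$ where $c$ is the number of components of $\Gamma$; since $c\le|\kappa|$ this simultaneously proves the bound and shows that $n=g+|\kappa|-1$ forces $c=|\kappa|$, i.e.\ every component of $\Gamma$ is a wedge of loops at a single zero, which is stability. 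Your version has the advantage of proving the bound rather than invoking it, and of making the deficiency from the maximum transparent as $\big(|\kappa|-c\big)+\sum_i g_i$; the paper's collapse argument is shorter once the bound is granted and ties in with the degeneration picture used later. The only point worth making fully explicit in your write-up is that in a periodic direction every separatrix \emph{is} a saddle connection of finite length (not merely an endpoint of one), which is what justifies the prong count $2E=\sum_i 2(k_i+1)$; this is standard and the paper uses it implicitly too.
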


\begin{proof}
We begin by observing that any periodic direction decomposes the surface $X$ into {\it at most}
$g+|\kappa|-1$ cylinders. Now if the direction $\theta$ is not stable then there
exists necessarily a saddle connection between two different zeros that we can
collapse to a point without destroying any cylinder. But in this way we get a surface $(X',\omega')\in \H(\kappa')$
of genus $g$ where $|\kappa'| < |\kappa|$, and having $g+|\kappa|-1$ cylinders. This is a contradiction.
\end{proof}

The proof of the following lemma is elementary and left to the reader.

\begin{Lemma}\label{lm:nbr:cyl:max3}
Let $(X,\omega)\in\Prym(\kappa)$ be a surface in the strata given by Table~\ref{tab:strata:list}. Let us assume that 
$\kappa \not =(1,1,2,2)$. If the horizontal direction is periodic for $(X,\omega)$, with $n$ horizontal cylinders 
counted up to the Prym involution, then $n\leq 3$. Moreover, if $n=3$ then the cylinder decomposition is stable. 
\end{Lemma}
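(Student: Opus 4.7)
The plan is to combine a Riemann--Hurwitz fixed-point count for the Prym involution $\sigma$ with the general bound on the number of horizontal cylinders from Lemma~\ref{lm:max:nb:cyl}. Partition the $m$ horizontal cylinders of $X$ into the $a$ cylinders that are fixed (as sets) by $\sigma$ and the $b$ pairs of cylinders swapped by $\sigma$, so that $m = a+2b$ and $n = a+b$; in particular $m + a = 2n$. Since Lemma~\ref{lm:max:nb:cyl} gives $m \leq m_{\max}:=g(X)+|\kappa|-1$, it will suffice to bound $a$ from above.

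The key observation is that a $\sigma$-fixed horizontal cylinder $C\simeq \R/w\Z\times(0,h)$ contributes exactly two fixed points of $\sigma$, and a $\sigma$-fixed horizontal saddle connection contributes exactly one. Indeed, using $\omega = dz$ on $C$, the condition $\sigma^*\omega=-\omega$ forces $\sigma|_C$ to have the form $(x,y)\mapsto(-x+c,h-y)$, whose two fixed points both lie on the core curve $y=h/2$; similarly $\sigma$ reverses a fixed saddle connection and hence fixes only its midpoint. Any fixed point of $\sigma$ is thus either a $\sigma$-fixed zero of $\omega$, the midpoint of a $\sigma$-fixed saddle connection, or an interior point of a $\sigma$-fixed cylinder, so
\[
f \;=\; z_{\mathrm{fix}}+f_s+2a, \qquad \text{hence} \qquad a\;\leq\; a_{\max}:=\bigl\lfloor(f-z_{\mathrm{fix}})/2\bigr\rfloor,
\]
where $f$ is the total number of $\sigma$-fixed points on $X$, $z_{\mathrm{fix}}$ the number of $\sigma$-fixed zeros of $\omega$, and $f_s\geq 0$ the number of $\sigma$-fixed horizontal saddle connections. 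Riemann--Hurwitz applied to the degree two cover $\pi\colon X\to Y$ (with $g(X)-g(Y)=2$ for every entry of Table~\ref{tab:strata:list}) gives $f=6-2g(Y)$, while $z_{\mathrm{fix}}$ is read off from the quadratic differential $q$ on $Y$: since $\pi$ is ramified precisely over the odd-order singularities of $q$, odd-order zeros of $q$ lift to $\sigma$-fixed zeros of $\omega$, even-order zeros lift to $\sigma$-orbits of two zeros, and simple poles lift to regular fixed points of $\sigma$.

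A case-by-case verification across the seven Prym loci in Table~\ref{tab:strata:list} other than $\Prym(1,1,2,2)$ shows that $m_{\max}+a_{\max}=6$ in each case, whence $n = (m+a)/2 \leq 3$. When $n=3$, the identity $m+a=6$ together with $m\leq m_{\max}$ and $a\leq a_{\max}$ forces $m=m_{\max}$, and Lemma~\ref{lm:max:nb:cyl} then implies that the decomposition is stable. The stratum $\Prym(1,1,2,2)$ is excluded because it is the unique locus in Table~\ref{tab:strata:list} with $m_{\max}+a_{\max}=7$: there $n=3$ can be realised with $m=6<m_{\max}$, so stability is not forced. The only delicate step in turning this sketch into a proof is the accurate bookkeeping of the pair $(f,z_{\mathrm{fix}})$ in the seven cases, which ultimately amounts to reading off the parity of the orders of the singularities of $q$ from the second column of Table~\ref{tab:strata:list}.
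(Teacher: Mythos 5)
Your proof is correct, and it supplies a systematic argument where the paper offers only a bare assertion: the paper's one-line proof merely says that in each relevant stratum the equality $k=|\kappa|+g-1$ can be "easily checked" when $n=3$ (and doesn't even separately address $n\leq 3$). The structure of your proof is different and genuinely illuminating: by writing $m=a+2b$, $n=a+b$ so that $m+a=2n$, and then combining the cylinder bound $m\leq m_{\max}=g+|\kappa|-1$ with a uniform bound $a\leq a_{\max}=\lfloor(f-z_{\mathrm{fix}})/2\rfloor$ coming from a Riemann--Hurwitz fixed-point count, you reduce both assertions to the single identity $m_{\max}+a_{\max}=6$. The fixed-point accounting ($f=z_{\mathrm{fix}}+f_s+2a$, two fixed points per $\inv$-invariant cylinder, one per $\inv$-invariant saddle connection) is exactly right, and the reading of $(f,z_{\mathrm{fix}})$ from the parities of the zero orders of $q$ is the standard dictionary for the orienting double cover. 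The values across the seven loci do check out (e.g.\ $(m_{\max},a_{\max})=(3,3),(5,1),(4,2),(5,1),(6,0),(6,0),(6,0)$, each summing to $6$, versus $(7,0)$ for $\Prym(1,1,2,2)$), and the final rigidity step (equality in $m+a=6$ forces $m=m_{\max}$, hence stability via Lemma~\ref{lm:max:nb:cyl}) is clean. Two small caveats worth stating explicitly if you turn this into a full write-up: (a) the bound $m\leq m_{\max}$ is established in the proof of Lemma~\ref{lm:max:nb:cyl} rather than in its statement, and (b) your remark about $\Prym(1,1,2,2)$ only shows that the argument fails to force stability there, not that the conclusion actually fails; the latter is asserted separately in the paper's subsequent remark.
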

\begin{proof}
One can easily check that, in all cases, if $n=3$ then the number  $k$ of horizontal cylinders satisfies $k=|\kappa|+g-1$.
\end{proof}

\medskip

\begin{Remark}
Let $\H(0,0,2)$ be the space of quadruplets $(X,\omega,P_1,P_2)$ where $(X,\omega) \in \H(2)$ and 
$P_1,P_2$ are two regular points of $X$ that are exchanged by the hyperelliptic involution. The above lemma is false for the stratum $\Prym(1^2,2^2)$. However, using the identification $\Prym(1^2,2^2) \simeq \H(0,0,2)$, Lemma~\ref{lm:nbr:cyl:max3} becomes true with the convention that a cylinder decomposition of $(X,\omega)\in\Prym(1^2,2^2)$ is stable/unstable, if the decomposition of the corresponding surface in $\H(0,0,2)$ is.
\end{Remark}

\begin{Remark}
For $\Prym(1,1) \simeq \H(1,1)$, $|\kappa|+g-1=3$, and all stable cylinder decompositions have $3$  cylinders.  However, in the other Prym loci, there exist stable decompositions with less than $|\kappa|+g-1$ cylinders. 
\end{Remark}

\subsection{Combinatorial data}
Given a surface $(X,\omega)$ for which the horizontal direction is periodic, since each saddle connection is contained in the upper (resp. lower) boundary of a unique cylinder, we can associate to the cylinder decomposition the following data

\begin{itemize}
 \item[$\bullet$] two partitions of the set of saddle connections into $k$ subsets, where $k$ is the number of cylinders, each subset in these partitions is equipped with a cyclic ordering, and
 \item[$\bullet$] a pairing of subsets in these two partitions.
\end{itemize}

We will call these data the {\em combinatorial data} or {\em topological model} of the cylinder decomposition. Note that while there exists only one topological model for cylinder decompositions with maximal number of cylinders in $\Prym(1,1)$, in general, there are several topological models for such decompositions in other Prym loci in Table~\ref{tab:strata:list}.

\subsection{Kernel foliation and stable decomposition}
The next two propositions will play an important role in the sequel.
\begin{Proposition}
\label{prop:stable:dec}
Let $(X,\omega)\in \PrymD(\kappa)$, where $\kappa$ is one of the strata in Table~\ref{tab:strata:list}. If $(X,\omega)$ 
admits a stable cylinder decomposition then there exists $\eps>0$ such that for every $v\in \R^2$, with $|v| < \eps$, 
$(X,\omega)+v$ admits a stable cylinder decomposition (in the same direction) with the same combinatorial 
data and the same widths of cylinders.
\end{Proposition}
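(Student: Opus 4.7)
The plan is to exploit the defining feature of stability: since every horizontal saddle connection is a loop based at a single zero, its homology class lies in $H_1(X,\Z)^-$ rather than in the relative homology $H_1(X,\Sigma;\Z)^-$. In particular, its horizontal length equals an absolute period of $\omega$, and the kernel foliation by construction preserves all absolute periods. Thus the lengths of all horizontal saddle connections appearing in the decomposition of $(X,\omega)$ are preserved on $(X,\omega)+v$ for every admissible $v$. Since the width of each cylinder $\CCC_i$ is a sum of such lengths (equivalently, the absolute period of the core curve of $\CCC_i$), the widths $w(\CCC_i)$ are preserved.

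The second step is to construct the decomposition on $(X,\omega)+v$ explicitly, matching the combinatorial data. First I would fix the separatrix diagram of the horizontal decomposition of $(X,\omega)$ and note that each cylinder has a top boundary consisting of loops at a single zero, and likewise a bottom boundary at a single zero (possibly the same, possibly a different one). I would then perform the kernel foliation surgery inside small discs $D(P_i,\delta)$ around the zeros, with $\delta$ chosen smaller than the distance from any zero to any non-adjacent cylinder boundary, and smaller than $\min_i h(\CCC_i)/2$. This surgery leaves the surface intact outside these discs, and because the two boundaries of each cylinder consist of loops at single zeros, the horizontal foliation structure outside the discs extends naturally across the surgery: the loops at $P_i$ remain horizontal loops of the same length (by the absolute-period argument above), with both endpoints translated coherently. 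Consequently one recovers a horizontal cylinder decomposition on $(X,\omega)+v$ with exactly the same separatrix diagram, the same widths, and heights and twists differing from the original by quantities of order $|v|$ (computable from the rules exhibited in Figure~\ref{fig:decomposition:5:cylinders}).

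The final step is to choose $\eps$ so that this perturbed decomposition is valid and stable. Choose $\eps$ smaller than half the minimum height $\min_i h(\CCC_i)$ and smaller than the distance, measured in the transverse direction, between any zero and the nearest horizontal separatrix emerging from a different zero. The first bound guarantees that all heights stay positive, and the second guarantees that no new horizontal saddle connection joining two distinct zeros is created by the surgery; equivalently, the perturbed decomposition remains stable with the prescribed combinatorial data.

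The only place where one has to be a little careful is the verification that the surgery glues consistently around each zero, i.e.\ that after moving $P_i$ by $v/2$ (or $\pm v/2$ depending on whether $P_i$ is fixed or exchanged by $\inv$), the half-discs around $P_i$ still close up to form a flat disc compatible with the rest of the surface. This is precisely the content of the local construction of the kernel foliation recalled in Section~\ref{sec:kernel}; the Prym symmetry is preserved by Proposition~\ref{prop:preserves}, and the stability hypothesis is exactly what ensures that this local surgery does not interact with the global cylinder combinatorics. I expect this bookkeeping step to be the main (mild) obstacle; everything else is a direct continuity argument in period coordinates.
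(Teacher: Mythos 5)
Your proposal is correct and proceeds by essentially the same route as the paper: you perform the local kernel-foliation surgery in small discs around the zeros and argue that, because all horizontal separatrices are loops, the surgery leaves the horizontal foliation outside the discs intact, preserving the separatrix diagram and the widths. The paper's proof is the same in substance, merely more explicit: it carries out the half-disc bookkeeping (the permutation $\piup$ of half-discs that encodes how separatrices close up) to verify concretely that the displaced separatrices from $P'$ and $Q'$ still terminate at themselves, and it reduces first to the case of a vertical displacement $v$ since the horizontal kernel foliation visibly preserves a stable decomposition. One small inaccuracy in your write-up: a loop-based saddle connection through a zero $P$ has an absolute homology class in $H_1(X,\Z)$, but it need not lie in the Prym eigenspace $H_1(X,\Z)^-$ (since $\inv$ may carry $P$ to a different zero). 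This does not affect your argument, because the kernel foliation preserves all absolute periods, not only the anti-invariant ones; but the phrasing should say "$H_1(X,\Z)$" rather than "$H_1(X,\Z)^-$".
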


\begin{proof} 
We only give the proof for $\kappa=(2,2)^{\rm odd}$ since the arguments for the other cases are completely similar.
As usual $\theta$ is assumed to be the horizontal direction. We begin with the following observation:
since the horizontal direction is stable, the horizontal kernel foliation is well defined for all time. Thus the proposition 
is clear if $v$ is horizontal. Hence we need to prove the proposition for {\em vertical} vectors only.
We recall here the vertical kernel foliation (in the direction of $v$) in more details (see~\cite{EMZ}).

We consider two embedded discs $D_P$ and $D_Q$, centered at the zeroes $P$ and $Q$, of radius $\eps$
that misses all other zeros. In a more concrete way, each disc $D_P$ and $D_Q$ is constructed from the union 
of $3$ pairs of Euclidian half-discs, $(D^{-}_i,D^{+}_i)$ 
where $D^-_i=\{z\in {\bf B}(\eps),\ -\eps\leq \mathrm{Re}(z) \leq 0 \}$ and 
$D^+_i=\{z \in {\bf B}(\eps),\ 0\leq \mathrm{Re}(z) \leq \eps\}$ whose the boundaries are isometrically glued together in a ``circular fashion''. More specifically, to get $D_P$, we glue $D^\pm_i, \, i=1,2,3$, as follows
\begin{itemize}
 \item $D^+_i$ is glued to $D^-_{i}$ along the segment $\{\mathrm{Re}(z)=0, \, 0 \leq \mathrm{Im}(z) < \eps\}$, and
 \item $D^-_i$ is glued to $D^+_{i+1}$ along the segment $\{\mathrm{Re}(z)=0, \, -\eps < \mathrm{Im}(z) \leq 0\}$.
\end{itemize}
To get $D_Q$, we glue $D^\pm_i, \, i=4,5,6$, as follows
\begin{itemize}
\item $D^-_i$ is glued to $D^+_i$ along the segment $\{{\rm Re}(z)=0, -\eps < {\rm Im}(z) \leq 0\}$, and
\item $D^+_i$ is glued to $D^-_{i+1}$ along the segment $\{{\rm Re}(z)=0, \, 0 \leq {\rm Im}(z) <  \eps\}$.
\end{itemize}
(with the dummy conventions $D^{\pm}_4=D^{\pm}_1$ and $D^{\pm}_7=D^{\pm}_4$). Note that the gluings for $D_P$ and $D_Q$ are different, and one can assume that $\inv(D^+_i)=D^-_{i+3}, \ \inv(D^-_i)=D^+_{i+3}, \, i=1,2,3$.  \medskip

We now make a local surgery of the flat structure of $(X,\omega)$, {\em i.e.} we do not change the flat structure 
outside the union of the discs $D_P$ and $D_Q$. This is carried out as follows (see Figure~\ref{fig:unstable:ker:fol}): we fix some $0\leq h\leq \eps$, we then replace $D_P$ and $D_Q$ by discs $D'_P$ and $D'_Q$ that are constructed from the same pairs of half discs 
$(D^-_i, D^+_i)_{i=1,\dots,6}$ but with different gluings. Namely, for $D'_P$ ($i=1,2,3$):
\begin{itemize}
\item $D^+_i$ is glued to $D^-_{i}$ along the segment $\{\mathrm{Re}(z)=0, \, -h \leq \mathrm{Im}(z) < \eps\}$, and
\item $D^-_i$ is glued to $D^+_{i+1}$ along the segment $\{\mathrm{Re}(z)=0, \, -\eps < \mathrm{Im}(z) \leq -h\}$,
\end{itemize}
and similarly for $D'_Q$ ($i=4,5,6$):
\begin{itemize}
\item $D^-_i$  is glued to $D^+_i$ along the segment $\{{\rm Re}(z)=0, -\eps < {\rm Im}(z) \leq h \}$,
\item $D^+_i$ is glued to $D^-_{i+1}$ along the segment $\{{\rm Re}(z)=0, \, h \leq {\rm Im}(z) < \eps\}$,
\end{itemize}
By construction the new surface we get is $(X',\omega')=(X,\omega)+(0,2h)$. \medskip

Now if $\{\gamma_j\}_{j=1,\dots,k}$ denote the core curves of the horizontal cylinders in $X$
(whose distances to the two boundary components are equal), we can always choose 
$\eps>0$  small enough so that the embedded discs $D_P$ and $D_Q$ are also disjoint from 
$\DS{ \cup_{j=1}^k\gamma_j}$. 

For each $i=1,2,3$, let $a^+_i$ (respectively, $a^-_i$) be the intersection of $D^+_i$ (respectively, of $D^-_i$) 
with the horizontal saddle connections emanating from $P$. Since any saddle connections emanating from $P$ terminates at $P$, there is a permutation $\piup$ of the set $\{1,2,3,4,5,6\}$, which preserves the subsets $\{1,2,3\}$ and $\{4,5,6\}$, such that  $a^+_i$ and $a^-_{\piup(i)}$ belong to the same saddle connection. 

We perform the same construction for the surface $(X',\omega')$: the corresponding segments are 
$b^+_i = \{z \in D^+_i, \, {\rm Im}(z)=-h\}$ and $b^-_i = \{z \in D^-_i, \, {\rm Im}(z)=-h\}$. 
By construction, $b^+_i$ and $b^-_{\piup(i)}$ also belong to the same 
horizontal saddle connection in $X'$, which therefore joins the zero $P'$ of $\omega'$, corresponding to $P$, to itself.
Since the same surgery applies for the disc $D_Q$, one concludes that the horizontal direction on 
$(X',\omega')$ is completely periodic and stable.

It remains to show that the combinatorial data are the same. First notice that the curves $\gamma_j$ are 
core curves of the cylinders in $X'$ (since they are preserved along the surgery). Thus 
the number of cylinders  and the widths of the cylinders are the same. Since the gluings are the 
same along the surgery, the combinatorics of the gluings are also preserved as well.
The  proposition is then proved.
\end{proof}
 
\begin{Proposition}
\label{prop:cyl:dec:unstable}
Let $(X,\omega)\in \PrymD(\kappa)$, where $\kappa$ is one of the strata in Table~\ref{tab:strata:list}. If $(X,\omega)$ 
admits an unstable cylinder decomposition in the horizontal direction then there exists $\eps>0$ such that for every 
$v=(x,y) \in \R^2$, with $|v| < \eps$ and $y\neq 0$, $(X,\omega)+v$ admits a stable cylinder decomposition in the 
horizontal direction. Moreover, the combinatorial data of the decomposition and the widths of the cylinders  depend only on the sign of $y$.
\end{Proposition}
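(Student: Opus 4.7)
The proof follows the local-surgery template of Proposition~\ref{prop:stable:dec}: at each zero $P$ exchanged by $\inv$ we apply the same surgery, so that the new singular point inside $D'_P$ sits at height $+y/2$ and the one inside $D'_{\inv(P)}$ at height $-y/2$, while any zero of $\omega$ fixed by $\inv$ remains at height $0$. The new task is to verify that the resulting horizontal decomposition is \emph{stable}.

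A preliminary reduction: we may assume $v = (0,y)$ with $y > 0$. Indeed a purely horizontal kernel-foliation move simply translates zeros along horizontal lines and therefore preserves the horizontal cylinder decomposition together with its combinatorial data and widths; the case $y < 0$ follows from $y > 0$ by a vertical reflection.

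Next, analyse the horizontal separatrices of $(X,\omega)+(0,y)$. For a shifted zero, say new $P$ at height $y/2$, each outgoing separatrix exits $D'_P$ at height $y/2$ and enters a cylinder $\mathcal{C}$ of the original decomposition. All zeros on $\partial\mathcal{C}$ lie at height $0$ or at the full height of $\mathcal{C}$, so for $y$ small the horizontal line at height $y/2$ inside $\mathcal{C}$ is a regular closed curve; it re-enters $D'_P$ at height $y/2$ and terminates at new $P$. The picture for new $\inv(P)$ at height $-y/2$ is symmetric. For a zero $P_0$ fixed by $\inv$, its separatrices stay at height $0$; upon entering a modified disc $D'_P$ they proceed along a horizontal chord at height $0$, which exists because the singular point in $D'_P$ has been shifted to height $\pm y/2 \neq 0$. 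Such a concatenated trajectory can only terminate at a zero sitting at height $0$, i.e.\ at a fixed zero of $\inv$. A direct inspection of Table~\ref{tab:strata:list} (using the identification $\Prym(1^2,2^2)\simeq\H(0^2,2)$ in that line, under which the two relevant fixed points become regular marked points and are not counted as zeros for stability) shows that in each locus there is at most one fixed zero of $\inv$ to consider, so the chain must close up at $P_0$, proving stability.

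For the last claim: for every $y \in (0,\eps)$ the new closed saddle connections sit at height $+y/2$ inside fixed cylinders of the original decomposition and subdivide them in the same combinatorial way, so the combinatorial data are constant on $\{y > 0\}$; widths of the new cylinders are absolute periods of $\omega$, which are preserved by the kernel foliation. The argument for $y < 0$ is symmetric, with possibly different combinatorial data. The \emph{main difficulty} is the fixed-zero step: one must verify that the chain of straight-line continuations through the modified discs actually closes up at $P_0$ and does not produce a new unstable connection between two fixed zeros of $\inv$; this is precisely what the case-by-case count of Table~\ref{tab:strata:list} ensures.
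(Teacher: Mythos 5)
Your argument is correct and takes essentially the same route as the paper's proof: a local surgery at the two shifted zeros, followed by tracing each separatrix through the modified discs and observing that its ``height'' (measured against the original level set containing the zeros) is an invariant that forces it to close up at the zero from which it emanated. The paper encodes this same observation more combinatorially --- introducing a permutation $\piup$ of the half-discs, noting that instability means $\piup(\{1,2,3\})\neq\{1,2,3\}$, and tracing the orbit sequences $(i_0,\dots,i_k)$ --- and it carries out the details only for $\kappa=(2,2)^{\rm odd}$, declaring the other cases ``completely similar''; your height argument reaches the same conclusion and has the small merit of handling the fixed-zero strata and the $\Prym(1^2,2^2)\simeq\H(0^2,2)$ case explicitly.
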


\begin{proof}
Again, we only  give the proof for the case $\kappa=(2,2)^{\rm odd}$. We keep the same conventions 
as in the proof of the preceding proposition. Clearly, we only need to consider the case $v=(0,2h), h \neq 0$. Let us assume that $h>0$. For each of the half-discs $D^\pm_i, \, i=1,\dots,6$, we define
$$
\begin{array}{l}
a^\pm_i=\{z\in D^\pm_i, \, {\rm Im}(z)=0\}, \\
b^\pm_i=\{z\in D^\pm_i, \, {\rm Im}(z)=-h\},\ \mathrm{and}\\
c^\pm_i= \{z\in D^\pm_i, \, {\rm Im}(z)=h\}.
\end{array}
$$


Since all  the separatrices in the horizontal direction are saddle connections (the horizontal direction is periodic) 
there is a permutation $\piup$ of the set $\{1,\dots,6\}$ such that $a^+_i$ and $a^-_{\piup(i)}$  belong to the same 
saddle connection. Hence for each $i$,  $b^+_i$ and $b^-_{\piup(i)}$ (respectively,  $c^+_i$ and $c^-_{\piup(i)}$) belong to 
the same horizontal leaf. Moreover, from the kernel foliation construction, and since $h>0$, one has (see Figure~\ref{fig:unstable:ker:fol})
\begin{itemize}
\item[$\bullet$] $a^-_i$ and $a^+_i$ belong to the same horizontal leaf  for $i=1,\dots,6$.
\item[$\bullet$] $b^-_i$ and $b^+_i$ belong to the same horizontal leaf  for $i=4,5,6$,  
\item[$\bullet$] $c^-_i$ and $c^+_i$ belong to the same horizontal leaf  for $i=1,2,3$. 
\end{itemize}
The assumption that the decomposition of $(X,\omega)$ is not stable means that $\piup(\{1,2,3\}) \neq \{1,2,3\}$. For every $i\in \{1,2,3\}$, there exists a unique sequence $(i=i_0,i_1,\dots,i_k)$, where $i_{j+1}=\piup(i_j)$, $i_j\in \{4,5,6\}$, for $j=1,\dots,k-1$, and $i_k\in\{1,2,3\}$. Remark that such a sequence corresponds to a saddle connection joining $P'$ to itself, $P'$ is the zero of $\omega'$ corresponding to $P$, this saddle connection contains the segments $b^+_{i_0},b^-_{i_1},b^+_{i_1},\dots,b^-_{i_{k-1}}, b^+_{i_{k-1}}, b^-_{i_k}$. Similarly, for every $i\in \{4,5,6\}$, there exists a unique sequence $(i=i_0,i_1,\dots,i_k)$, where $i_{j+1}=\piup(i_j)$, $i_j\in \{1,2,3\}$, for $j=1,\dots,k-1$, and $i_k\in \{4,5,6\}$. Such a sequence corresponds a saddle connection joining $Q'$, the zero of $\omega'$ corresponding to $Q$, to itself, this saddle connection contains the segments $c^+_{i_0},c^-_{i_1}, c^+_{i_1}, \dots, c^-_{i_{k-1}}, c^+_{i_{k-1}}, c^-_{i_k}$. It follows that $(X',\omega')$ also 
admits a cylinder decomposition in the horizontal direction, and this decomposition is stable. 

By construction, $a^+_i$ and $a^-_{\piup(i)}$ are contained in the same horizontal leaf of $(X',\omega')$, it follows that each cycle of $\piup$ corresponds to a simple closed geodesic in $X'$.  Let  $\hat{\gamma}_j, \, j=1,\dots,m$, denote the simple closed geodesics corresponding to the cycles of $\piup$, and  $\hat{C}_j$ denote the cylinder associated to $\hat{\gamma}_j$.

Since the curves $\gamma_j, \, j=1,\dots,k,$ are disjoint from $D_P$ and $D_Q$, they are closed geodesics in $(X',\omega')$. Let $C'_j$ denote the cylinder associated to $\gamma_j$.
It is clear that the combinatorial data of the cylinder decomposition of $(X',\omega')$, which consists of $C'_j,\; j=1,\dots,k$, and $\hat{C}_j, \; j=1,\dots,m$,  are determined by $\piup$ and stay unchanged as long as $h>0$. 



\begin{figure}[htb]
\centering
\subfloat{
\begin{tikzpicture}[scale=0.4]
 \draw (-8,5) -- (-8,2) -- (-6,2) -- (-6,0) -- (-2,0) -- (-2,2) -- (-4,2) -- (-4,5) -- (-2,5) -- (-2,7) -- (-6,7) -- (-6,5) -- cycle; \draw (-6,5) -- (-4,5) (-6,2) -- (-4,2);
 \foreach \x in {(-8,5), (-8,2), (-4,7), (-4,5), (-4,2), (-4,0)} \filldraw[fill=black] \x circle (3pt);
 \foreach \x in {(-6,7), (-6,5), (-6,2), (-6,0),  (-2,7), (-2,5), (-2,2), (-2,0)} \filldraw[fill=white] \x circle (3pt);
 
 \draw (-4,6) node {$\sst C_1$} (-6,3.5) node {$\sst C_2$}  (-4,1) node {$\sst C_3$};

 \fill[yellow!50!black!30!blue!20] (4,7) -- (4,6.5) -- (6,6.5) -- cycle;
 \fill[yellow!50!black!30!blue!20] (6,6.5) -- (8,6.5) -- (8,7) -- cycle;
 \fill[yellow!50!black!30!blue!20] (2,4.5) -- (6,4.5) -- (8,5) -- (4,5) -- cycle;
 \fill[yellow!50!black!30!blue!20] (2,2) -- (2,1.5) -- (4,2) -- cycle;
 \fill[yellow!50!black!30!blue!20] (4,2) -- (4,1.5) -- (6,1.5) -- (6,2) -- cycle;
 \fill[yellow!50!black!30!blue!20] (6,1.5) -- (8,1.5) -- (8,2) -- cycle;
 \fill[yellow!50!black!30!blue!20] (4,0) -- (6,-0.5) -- (8,0);

 \draw (2,4.5) -- (2,1.5) -- (4,2) -- (4,0) -- (6,-0.5) -- (8,0) -- (8,2) -- (6,1.5) -- (6,4.5) -- (8,5) -- (8,7) -- (6,6.5) -- (4,7) -- (4,5) -- cycle; \draw (4,6.5) -- (8,6.5) (4,5) -- (8,5) (2,4.5) -- (6,4.5) (2,2) -- (6,2) (4,1.5) -- (8,1.5) (4,0) --(8,0);
 
 \foreach \x in {(2,4.5), (2,1.5), (6,6.5), (6,4.5), (6,1.5), (6,-0.5)} \filldraw[fill=black] \x circle (3pt);
 \foreach \x in {(4,7), (4,5),(4,2), (4,0), (8,7),(8,5),(8,2),(8,0)} \filldraw[fill=white] \x circle (3pt);
 
 \draw (6,5.5) node {$\sst C'_1$} (4,3) node[above] {$\sst C'_2$} (6,0.5) node {$\sst C'_3$}; 
\end{tikzpicture}

}

\smallskip

\subfloat{
\begin{tikzpicture}[scale=0.5]
\foreach \x in {(-12.5,0),(-7.5,0),(-2.5,0), (2.5,0), (7.5,0), (12.5,0)} {\draw[red] \x +(-0.1,0) -- +(-2.1,0) \x +(0.1,0) -- +(2.1,0); \draw[red] \x ++(-0.1,0) +(0,1) -- +(150:2) +(0,-1) -- +(210:2); \draw[red] \x ++(0.1,0) +(0,1) -- +(30:2) +(0,-1) -- +(-30:2); }

\foreach \x in {(-12.5,0), (-7.5,0), (-2.5,0)} {\draw[red] \x +(-0.1,1) -- +(0.1,1) +(-0.1,0) -- +(0.1,0); \draw \x +(-0.1,2) -- +(0.1,2) \x +(0,-1) -- +(0,2) \x +(-0.1,-1) -- +(-0.1,-2) \x +(0.1,-1) -- +(0.1,-2); \filldraw[fill=black] \x +(0,-1) circle (3pt);  }

\foreach \x in {(2.5,0),(7.5,0), (12.5,0)} {\draw[red] \x +(-0.1,-1) -- +(0.1,-1) +(-0.1,0) -- +(0.1,0); \draw \x +(-0.1,1) -- +(-0.1,2) \x +(0.1,1) -- +(0.1,2); \draw \x +(0,1) -- +(0,-2); \filldraw[fill=white] \x +(0,1) circle (3pt); }

\draw (-13.5,1) node[fill=white] {$\scriptstyle c^-_1$}  (-13.5,-1) node[fill=white] {$\scriptstyle b^-_1$} (-11.5,1) node[fill=white] {$\scriptstyle c^+_1$} (-11.5,-1) node[fill=white] {$\scriptstyle b^+_1$} (-13.5,0) node[fill=white] {$\scriptstyle a^-_1$} (-11.5,0) node[fill=white] {$\scriptstyle a^+_1$} ;

\draw (-8.5,1) node[fill=white] {$\scriptstyle c^-_2$} (-8.5,-1) node[fill=white] {$\scriptstyle b^-_2$} (-6.5,1) node[fill=white] {$\scriptstyle c^+_2$} (-6.5,-1) node[fill=white] {$\scriptstyle b^+_2$} (-8.5,0) node[fill=white] {$\scriptstyle a^-_2$} (-6.5,0)  node[fill=white] {$\scriptstyle a^+_2$};

\draw (-3.5,1) node[fill=white] {$\scriptstyle c^-_3$}  (-3.5,-1) node[fill=white] {$\scriptstyle b^-_3$} (-1.5,1) node[fill=white] {$\scriptstyle c^+_3$} (-1.5,-1) node[fill=white] {$\scriptstyle b^+_3$} (-3.5,0) node[fill=white] {$\scriptstyle a^-_3$} (-1.5,0) node[fill=white] {$\scriptstyle a^+_3$};

\draw (1.5,0) node[fill=white] {$\scriptstyle a^-_4$} (3.5,0) node[fill=white] {$\scriptstyle a^+_4$} (1.5,1) node[fill=white] {$\scriptstyle c^-_4$}  (1.5,-1) node[fill=white] {$\scriptstyle b^-_4$} (3.5,1) node[fill=white] {$\scriptstyle c^+_4$} (3.5,-1) node[fill=white] {$\scriptstyle b^+_4$};

\draw (6.5,1) node[fill=white] {$\scriptstyle c^-_5$}  (6.5,-1) node[fill=white] {$\scriptstyle b^-_5$} (8.5,1) node[fill=white] {$\scriptstyle c^+_5$} (8.5,-1) node[fill=white] {$\scriptstyle b^+_5$} (6.5,0) node[fill=white] {$\scriptstyle a^-_5$} (8.5,0) node[fill=white] {$\scriptstyle a^+_5$};

\draw  (11.5,1) node[fill=white] {$\scriptstyle c^-_6$}  (11.5,-1) node[fill=white] {$\scriptstyle b^-_6$} (13.5,1) node[fill=white] {$\scriptstyle c^+_6$} (13.5,-1) node[fill=white] {$\scriptstyle b^+_6$} (11.5,0) node[fill=white] {$\scriptstyle a^-_6$} (13.5,0) node[fill=white] {$\scriptstyle a^+_6$};

\foreach \x in {(-12.5,0), (-7.5,0), (-2.5,0)} {\draw \x +(0.1,-2) arc (-90:90:2) \x +(-0.1,2) arc (90:270:2); \draw \x +(-0.1,2) -- +(0.1,2);}
\foreach \x in {(2.5,0), (7.5,0), (12.5,0)} {\draw \x +(0.1,-2) arc (-90:90:2) \x +(-0.1,2) arc (90:270:2); \draw \x +(-0.1,-2) -- +(0.1,-2);}

\end{tikzpicture}
}
\caption{An example of kernel foliation near an unstable decomposition, in this case all the horizontal rays starting from $P$ terminate at $Q$, and $(X,\omega)$ has $3$ horizontal cylinders, $\piup=(1,4,3,5,2,6)$, $h>0$, $m=1$, thus $(X',\omega')$ has $4$ cylinders, the new cylinder is colored. The saddle connections emanating from $P'$ correspond to the sequences: $\{1,4,3\}, \{2,6,1\}, \{3,5,2\}$, and those starting from $Q'$ correspond to $\{4,3,5\}, \{5,2,6\}, \{6,1,4\}$.}
\label{fig:unstable:ker:fol}
\end{figure}
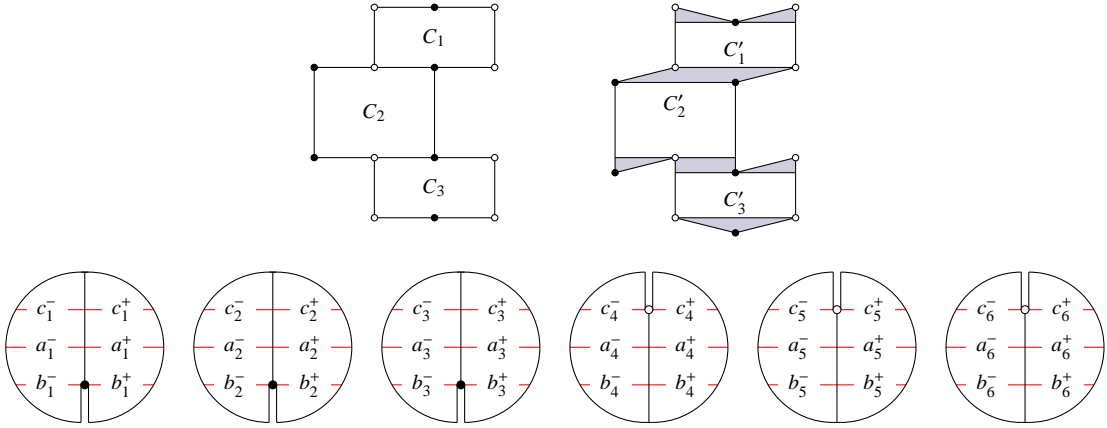

It is also clear from the construction that $C'_j$ and $C_j$ have the same width, while the width of $\hat{C}_j$ is determined by the lengths of the horizontal saddle connections of $(X,\omega)$ and the permutation $\piup$. Thus the widths  of the cylinders in $(X',\omega')$ only depends on the sign of $h$. The proof of the proposition is now complete.
\end{proof}


\subsection{Action of the kernel foliation on cylinders}
\label{sec:kernel:periodic:surface}

\subsubsection{Horizontal kernel foliation}
Let $(X,\omega)\in \PrymD(\kappa)$ be a Prym eigenform with a stable cylinder decomposition in the horizontal
direction. For any $s \in \R$, the kernel foliation $(X,\omega)+(s,0)$ is well defined, and also admits a cylinder 
decomposition in the horizontal direction with the same topological properties as the decomposition of 
$(X,\omega)$. Let $C_i(s,0)$ denote the horizontal cylinder in $(X,\omega)+(s,0)$ corresponding to 
$C_i, \, i=1,\dots,k$. Let $w(C_i(s,0)), h(C_i(s,0)), t(C_i(s,0))$ denote the width, height, and twist of $C_i(s,0)$. Since the cylinder decomposition is stable, the upper (resp. lower) boundary of $C_i$ contains only one zero of $\omega$. Thus, the twist of $C_i$ is well defined up to an absolute period of $\omega$. \smallskip

By construction, we have
$$
\begin{array}{l}
w(C_i(s,0))=w(C_i)=w_i,\\
h(C_i(s,0))=h(C_i)=h_i,
\end{array}
$$
for any $s$. However, in general $t(C_i(s,0))$ is a non-constant function of $s$. 

\begin{Lemma}\label{lm:ker:hor:twist}
We have $t(C_i(s,0))=t_i +\alpha_i s$, where
$$
\alpha_i=\left\{
\begin{array}{ll}
 0 & \hbox{ if the zeros in the upper and lower boundaries of $C_i$ are the same,}\\
 \pm 1 & \hbox{ if the zeros are exchanged by the Prym involution,}\\
 \DS{\pm 1/2} & \hbox{ if one zero is fixed, the other is mapped to the third one by the Prym involution.}\\
\end{array}
\right.
$$
\end{Lemma}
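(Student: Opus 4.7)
The plan is to realize the twist $t(C_i)$ as $\mathrm{Re}(\omega(c_i))$ modulo $w_i$ for a suitable saddle connection $c_i \subset \overline{C_i}$, and then to read off the linear dependence on $s$ directly from the lemma describing how relative periods transform under the kernel foliation.

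First I would observe that stability of the decomposition forces each boundary component of $C_i$ to contain a single zero of $\omega$; call them $P$ (lower) and $Q$ (upper), possibly with $P=Q$. Pick any saddle connection $c_i \subset \overline{C_i}$ joining $P$ to $Q$, so that $\omega(c_i)=t_i+\sqrt{-1}\,h_i$ modulo $w_i\Z$. By Proposition~\ref{prop:stable:dec}, for $|s|$ small the surface $(X,\omega)+(s,0)$ admits a stable horizontal cylinder decomposition with the same combinatorial data and the same widths; under this identification $c_i$ corresponds to a saddle connection $c'_i$ inside $C_i(s,0)$, and $t(C_i(s,0))\equiv \mathrm{Re}(\omega'(c'_i)) \pmod{w_i}$.

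The next step is to apply the preceding lemma with $w=(s,0)$. If $P=Q$, then both endpoints of $c_i$ are displaced by the same vector under the kernel foliation (either $0$ if $P$ is $\inv$-fixed, or $\pm(s,0)/2$ otherwise), so $\omega'(c'_i)=\omega(c_i)$ and $\alpha_i=0$. If $P\neq Q$ and $\inv(P)=Q$, the lemma gives $\omega'(c'_i)-\omega(c_i)=\pm(s,0)$, whence $\alpha_i=\pm 1$. If $P\neq Q$ and exactly one of $P,Q$ is $\inv$-fixed (so the other lies in the $\inv$-orbit of the remaining non-fixed zero), the lemma yields $\omega'(c'_i)-\omega(c_i)=\pm(s,0)/2$, giving $\alpha_i=\pm 1/2$. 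Taking real parts and combining with the fact that the heights and widths are preserved produces $t(C_i(s,0))=t_i+\alpha_i s$, with the sign of $\alpha_i$ dictated by the orientation of $c_i$ relative to the sign conventions of the kernel foliation construction.

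I do not anticipate any real obstacle: the argument is essentially a translation of the preceding lemma into twist coordinates. The only mildly delicate point is the case $P=Q$, which is not literally covered by the statement of the cited lemma; however it is immediate from the construction, since both endpoints of $c_i$ sit at the same zero and therefore move in unison, so $\omega(c_i)$ is unchanged.
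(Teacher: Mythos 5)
The paper declares this proof ``elementary and left to the reader,'' so there is no written proof in the paper to compare against. Your argument is correct and is the natural one: identify the twist with the real part of the relative period of a crossing saddle connection $c_i\subset\overline{C_i}$ (modulo the width $w_i$), then read off the change in that period under the kernel foliation surgery via the unnamed lemma in Section~\ref{sec:kernel}. You rightly flag the only case that lemma omits, namely $P=Q$, and your fix is exactly right: the crossing curve is then a closed loop, hence an absolute cycle, and absolute periods are unchanged by the kernel foliation, so $\alpha_i=0$. Two minor remarks. First, you invoke Proposition~\ref{prop:stable:dec} to get stability only for $|s|$ small, but the proof of that proposition already observes that for purely horizontal deformations the stable decomposition persists for all $s\in\R$; since the displacement of endpoints is linear in $s$, the formula therefore holds globally, as the surrounding discussion in Section~\ref{sec:kernel:periodic:surface} requires. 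Second, the identification $t(C_i(s,0))\equiv \mathrm{Re}\,\omega'(c'_i)\pmod{w_i}$ is valid only up to an additive constant depending on the choice of reference points on the two boundary components, but that constant is the same for all $s$, so the difference formula $t(C_i(s,0))-t_i=\alpha_i s$ is unaffected.
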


\subsubsection{Vertical kernel foliation}
If $v=(0,t)$, then by Proposition~\ref{prop:stable:dec}, $(X,\omega)+(0,t)$ is well defined whenever $|t| < \min\{h_i, \; i=1,\dots,k\}$. Let $C_i(0,t)$ denote the cylinder in $(X,\omega)+(0,t)$ that corresponds to $C_i$.  The widths (as they are absolute periods) and the twists of the cylinders $C_i(0,t)$ are unchanged, only their heights vary. Namely,
\begin{Lemma}\label{lm:ker:ver:height}
We have $h(C_i(0,t))=h_i+\alpha_it$, where
$$
\alpha_i =\left\{
\begin{array}{ll}
0 & \textrm{ if the zeros in the upper and lower boundaries are the same,}\\
\pm 1 & \textrm{ if the zeros are exchanged by the Prym involution,}  \\
\DS{\pm 1/2} & \textrm{ if one zero is fixed, the other is mapped to the third one by the Prym involution.}\\
\end{array}
\right.
$$
\end{Lemma}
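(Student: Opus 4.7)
My plan is to prove this lemma by the same strategy as Lemma~\ref{lm:ker:hor:twist}, but applied to a path crossing the cylinder \emph{vertically} (which measures its height) rather than along its waist (which measures the twist). First, by Proposition~\ref{prop:stable:dec}, provided $|t| < \min_j h_j$, the surface $(X',\omega') := (X,\omega)+(0,t)$ admits a horizontal cylinder decomposition with the same combinatorial data and widths as $(X,\omega)$, so each $C_i(0,t)$ is well defined and corresponds to $C_i$ under the surgery.

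For each cylinder $C_i$, I would choose an oriented path $\gamma_i$ inside the closure of $C_i$ from the unique zero $P_{i,-}$ on its lower boundary to the unique zero $P_{i,+}$ on its upper boundary. Uniqueness follows from stability of the decomposition: each horizontal saddle connection is a loop at a single zero, so each boundary component of $C_i$ contains exactly one zero. By the very definition of the height, $\mathrm{Im}\,\omega(\gamma_i)=h_i$. The explicit surgery described in the proof of Proposition~\ref{prop:stable:dec} alters the flat metric only inside small discs around the permuted zeros $P$ and $\inv(P)$, so $\gamma_i$ deforms canonically to a path $\gamma'_i$ lying in $C_i(0,t)$ and joining the zeros of $\omega'$ corresponding to $P_{i,-}$ and $P_{i,+}$; in particular $h(C_i(0,t))=\mathrm{Im}\,\omega'(\gamma'_i)$.

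The three cases of the lemma then follow at once by applying the lemma stated just before Proposition~\ref{prop:preserves} to $c=\gamma_i$ with $w=(0,t)$, and taking imaginary parts. Namely, if $P_{i,-}=P_{i,+}$ then $\omega'(\gamma'_i)=\omega(\gamma_i)$ and $\alpha_i=0$; if $P_{i,-}$ and $P_{i,+}$ are exchanged by $\inv$ then $\omega'(\gamma'_i)-\omega(\gamma_i)=\pm(0,t)$, giving $\alpha_i=\pm 1$; and if exactly one of $P_{i,-},P_{i,+}$ is fixed by $\inv$ then $\omega'(\gamma'_i)-\omega(\gamma_i)=\pm(0,t)/2$, giving $\alpha_i=\pm 1/2$. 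The sign depends only on the orientation of $\gamma_i$ (equivalently, on which endpoint is declared lower versus upper), and is irrelevant to the statement.

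The only genuine subtlety — and it is very mild — is checking that for $|t| < \min_j h_j$ the deformed path $\gamma'_i$ really remains inside $C_i(0,t)$ and still connects the correct zeros; both assertions are immediate from the local surgery picture recalled in Proposition~\ref{prop:stable:dec}, which is precisely why the bound $|t|<\min_j h_j$ appears. No other step presents a real obstacle: all the genuine content is already encoded in the lemma preceding Proposition~\ref{prop:preserves}, and the present lemma is simply the imaginary-part specialization of that statement to a height-measuring path.
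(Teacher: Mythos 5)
Your proof is correct and uses exactly the tools the paper provides for this purpose: the paper states this lemma is ``elementary and left to the reader,'' and the natural argument is, as you do, to realize $h_i$ as the imaginary part of a relative period of a vertical path across $C_i$ and then apply the un-numbered lemma preceding Proposition~\ref{prop:preserves} with $w=(0,t)$, together with Proposition~\ref{prop:stable:dec} to guarantee the cylinder (and hence the path) persists. The only case not covered verbatim by that lemma — both endpoints of $\gamma_i$ at the same zero — you handle correctly by observing the period is then absolute, hence unchanged by the kernel foliation.
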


The proofs of Lemma~\ref{lm:ker:hor:twist} and Lemma~\ref{lm:ker:ver:height} are elementary and left to the reader.

\subsection{Action of the horizontal horocycle flow on cylinders}
\label{sec:horo:action}

The (horizontal) horocycle flow is defined as the action of the one parameter subgroup $U=\{u_s,\quad s \in \R\}$ of 
$\GL^+(2,\R)$, where $u_s = \left( \begin{array}{ll} 1 & s \\ 0 & 1 \end{array} \right).$ If the horizontal direction on $(X,\omega)$ is completely periodic, then obviously the action of $u_s$ on $(X,\omega)$  preserves the cylinder decomposition topologically. Moreover each 
cylinder $C_i$ with parameters $(w_i,h_i,t_i~\mod w_i)$ is mapped to a cylinder 
$C_i(s):= u_s(C_i)$ of $u_s\cdot (X,\omega)$ with the same width and height, while the twist is given by
\begin{equation}\label{eq:horo:twist}
t(C_i(s))= t_i + sh_i \mod w_i.
\end{equation}

\subsection{Cylinders decomposition: relation of moduli}
\label{sec:moduli:equation}
We first recall the following result.
\begin{Theorem}[McMullen~\cite{Mc2}]
\label{theo:Cal:eq}
Let $K\subset \R$ be a real quadratic field and let $(X,\omega)\in \Omega E_D(\kappa)$ be a Prym eigenform such that
all the absolute periods of $\omega$ belong to $K(\imath)$. We assume 
that the horizontal direction is completely  periodic with $k$ cylinders. If we cannot normalize  by $\GL^+(2,K)$ so
that all the absolute periods of $\omega$ belong to $\Q(\imath)$ then the following equation holds
$$
\sum_{i=1}^{k}w'_ih_i=0,
$$
\noindent where $w_i,h_i$ are respectively the width and the height of the $i$-th cylinder, and 
$w'_i$ is the Galois conjugate of $w_i$ in $K$.
\end{Theorem}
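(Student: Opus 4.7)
The plan is to exploit the real multiplication structure on $\Prym(X,\inv)$ to produce a \emph{conjugate eigenform} $\omega^\vee$ on $X$, and then derive the identity from an orthogonality relation between $\omega$ and $\omega^\vee$, computed cylinder by cylinder.

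First, I would construct $\omega^\vee$. Since $\Prym(X,\inv)$ has complex dimension two and admits real multiplication by $\Ord_D$, a generator of $\Ord_D \otimes_{\Z} \Q = K$ acts as a self-adjoint endomorphism $T$ on $\Prym(X,\inv)$. The induced action on $\Omega^-(X,\inv)$ (which is $2$-dimensional over $\C$) diagonalizes with two Galois-conjugate eigenvalues $\lambda,\lambda' \in K$. By definition $\omega$ spans the $\lambda$-eigenline; let $\omega^\vee$ generate the $\lambda'$-eigenline. Because $H_1(X,\Q)^-$ is free of rank two over $K$, a suitable normalization of $\omega^\vee$ ensures that for every $\gamma \in H_1(X,\Z)^-$ the period $\omega^\vee(\gamma)$ is the Galois conjugate of $\omega(\gamma)$; in particular $\omega^\vee(\gamma_i) = w_i'$ for each horizontal core curve $\gamma_i$. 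The hypothesis that $(X,\omega)$ cannot be normalized by $\GL^+(2,K)$ so that its absolute periods lie in $\Q(\imath)$ is precisely what prevents $\omega^\vee$ from being proportional to $\omega$ and makes this Galois conjugation nontrivial.

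Next comes the orthogonality. The Hodge pairing $\langle \alpha,\beta\rangle = \frac{\imath}{2}\int_X \alpha \wedge \overline{\beta}$ is a positive definite Hermitian form on $\Omega^-(X,\inv)$, and the definition of real multiplication forces $T$ (hence its adjoint action on cohomology) to be self-adjoint with respect to it. Since $\lambda \neq \lambda'$, the eigenlines $\C\cdot\omega$ and $\C\cdot\omega^\vee$ are orthogonal, giving
\begin{equation*}
\int_X \omega \wedge \overline{\omega^\vee} = 0.
\end{equation*}
Now I would evaluate this integral cylinder by cylinder. On $C_i$, in the flat coordinate $z = x + \imath y \in (\R/w_i\Z) \times (0,h_i)$ in which $\omega|_{C_i} = dz$, holomorphicity of $\omega^\vee$ yields a Fourier expansion $\omega^\vee|_{C_i} = f_i(z)\,dz$ with $f_i(z) = \sum_{n\in\Z} c_n^{(i)} e^{2\pi\imath n z/w_i}$; integrating around the core curve pins down $c_0^{(i)} = w_i'/w_i$. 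Since $\omega \wedge \overline{\omega^\vee} = -2\imath\,\overline{f_i(z)}\,dx\wedge dy$ on $C_i$ and all nonzero modes of $f_i$ average out in $x$, only $c_0^{(i)}$ survives, yielding
\begin{equation*}
\int_{C_i} \omega \wedge \overline{\omega^\vee} = -2\imath\, w_i'\, h_i.
\end{equation*}
Summing over $i$ and combining with the orthogonality then gives $\sum_{i=1}^k w_i' h_i = 0$.

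The step I expect to be most delicate is the identification of the periods of $\omega^\vee$ with the literal Galois conjugates of the periods of $\omega$: this requires a careful normalization of $\omega^\vee$, exploiting the $K$-module structure on $H_1(X,\Q)^-$, and invoking the $\GL^+(2,K)$-hypothesis precisely to exclude the degenerate case $\omega^\vee \in \C\cdot\omega$, in which the argument would instead produce the contradictory identity $\sum w_i h_i = 0$.
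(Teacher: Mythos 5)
Your overall strategy is sound and close in spirit to the paper's: both arguments exploit self-adjointness of the real multiplication to obtain an orthogonality relation between the two eigenspaces of a generator $T$ of $\Ord_D$, and then compute the resulting pairing cylinder by cylinder. The orthogonality $\int_X \omega\wedge\overline{\omega^\vee}=0$ is correct (self-adjointness of $T$ with respect to the intersection form passes to self-adjointness for the Hodge Hermitian form, and $\lambda\neq\lambda'$ because $D>0$), and your Fourier computation on each cylinder is also correct.

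However, the step you yourself flagged as delicate is in fact not correct as stated. The cohomology class $\omega'\in H^1(X,\C)^-$ defined by $\omega'(\gamma)=(\omega(\gamma))'$ is an eigenvector for $T$ with eigenvalue $\lambda'$, but there is no reason for it to lie in $H^{1,0}$: it lives in the two-dimensional space $S'\otimes\C=\C\,\omega^\vee\oplus\C\,\overline{\omega^\vee}$ and may a priori have both components nonzero. Equivalently, in the Hilbert modular parametrization of the real-multiplication locus, the periods of $\omega$ and of $\omega^\vee$ are governed by two independent complex moduli; requiring that $\omega$-periods lie in $K(i)$ constrains only the first and does not make the second equal to its Galois conjugate. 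So no single complex rescaling of $\omega^\vee$ will in general force $\omega^\vee(\gamma)=(\omega(\gamma))'$ for \emph{all} $\gamma\in H_1(X,\Z)^-$.

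The good news is that your proof only uses the periods of $\omega^\vee$ on the core curves $\gamma_i$, and for these the identity you need does hold, up to a single nonzero scalar $c$ independent of $i$. Indeed $\gamma_i\in H_1(X,\Q)^-$ and $\omega(\gamma_i)=w_i$ is real, so the rationality of $\gamma_i$ forces $\omega'(\gamma_i)=w_i'$ and $\overline{\omega'}(\gamma_i)=\overline{w_i'}=w_i'$ to agree. Writing $\omega^\vee=p\,\omega'+q\,\overline{\omega'}$ (since $\omega',\overline{\omega'}$ also span $S'\otimes\C$), one gets $\omega^\vee(\gamma_i)=(p+q)\,w_i'$, and $p+q\neq0$, for otherwise $\omega^\vee$ would be a complex multiple of the real class $\mathrm{Im}\,\omega'$, which is impossible for a nonzero holomorphic form. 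Thus $c_0^{(i)}=c\,w_i'/w_i$ with a single $c\neq0$, your cylinder sum gives $\bar c\sum w_i'h_i=0$, and the conclusion follows. The paper sidesteps this subtlety entirely by working with $\omega'$ purely as a cohomology class in $S'\otimes\C$ (never requiring it to be of type $(1,0)$), deducing $\int_X\mathrm{Im}\,\omega\wedge\mathrm{Re}\,\omega'=0$ from the real orthogonality $S\perp S'$, and integrating that over cylinders; you may prefer that formulation since it removes any normalization issue.
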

\begin{proof}[Sketch of proof]
A  remarkable property of Prym eigenform  is that the {\em complex flux} vanishes. Namely (see~\cite[Theorem 9.7]{Mc2})
$$
\int_X \omega\wedge\omega'=\int_X\omega\wedge \ol{\omega}'=0.
$$
\noindent Here $\ol{\omega}$ and $\omega'$ are respectively the complex conjugate and the Galois conjugate of $\omega$.  The argument is as follows:
let $T$ be a generator of the order $\Ord_D$, we have a pair of $2$-dimensional 
eigenspaces $S\oplus S' = H^{1}(X,\mathbb R)^-$ on which $T$ acts by multiplication by a scalar, where 
$S$ is spanned by $\mathrm{Re}(\omega)$ and $\mathrm{Im}(\omega)$, and $S'$ is spanned by ${\rm Re}(\omega')$ and ${\rm Im}(\omega')$. Since $T$ is self-adjoint, $S$ and $S'$ are
orthogonal with respect to the cup product. This shows the equalities above. Now 
since 
$$
\int_{\CCC_i} {\rm Im}(\omega)\wedge \mathrm{Re}(\omega')= w'_ih_i,
$$
where $\CCC_1,\dots,\CCC_k$ are the horizontal cylinders in $X$, and since the surface $X$ is covered by those cylinders:
$$
\sum_{i=1}^k w_i'h_i = \sum_{i=1}^k  \int_{\CCC_i}{\rm Im}(\omega)\wedge {\rm Re}(\omega') =\int_X \mathrm{Im}(\omega) \wedge \mathrm{Re} (\omega') =
\frac{1}{4\imath}\int_X (\omega - \ol{\omega})  \wedge (\omega' + \ol{\omega}') = 0. 
$$
Theorem~\ref{theo:Cal:eq} is proved.
\end{proof}

\begin{Corollary}
\label{cor:parabolic}
Let $(X,\omega)$ be a Prym eigenform in some locus $\PrymD(\kappa)$, where $D$ is not a square, and $K=\Q(\sqrt{D})$. Assume that $(X,\omega)$ is periodic in the horizontal direction.
Let $n$ be the number of horizontal cylinders up to Prym involution, then the following equation holds:
\begin{equation}
\label{eq:CAP:mod:rel}
\sum_{i=1}^{n}\beta_i\mu_iN(w_i)=0,
\end{equation}
\noindent  where $N(w_i)=w_iw'_i\in \Q$, $\mu_i$ is the modulus of $\CCC_i$, and $\beta_i=1$ if $\mathcal C_i$ is preserved by the Prym involution, and $\beta_i=2$ otherwise.\\
In particular, in the case  $n\leq 2$, Equation~\eqref{eq:CAP:mod:rel} implies that all the cylinders are 
commensurable, {\em i.e.} the horizontal direction is parabolic.
\end{Corollary}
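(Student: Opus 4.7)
The plan is to derive the corollary by carefully repackaging the flux equation of Theorem~\ref{theo:Cal:eq} in terms of orbits of the Prym involution $\inv$ and then reading off the consequence when $n\leq 2$.

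First I would verify that Theorem~\ref{theo:Cal:eq} applies. Since $D$ is not a square, $K=\Q(\sqrt D)$ is a genuine real quadratic field and the generator $T$ of $\Ord_D$ acting on $\Prym(X,\inv)$ has two distinct real eigenvalues. The form $\omega$ lies in one eigenspace and its Galois conjugate $\omega'$ in the other, so $\omega'\neq\omega$; consequently the absolute periods of $\omega$ cannot be normalized by $\GL^+(2,K)$ to lie in $\Q(\imath)$, because such a normalization would force $\omega=\omega'$. Hence Theorem~\ref{theo:Cal:eq} yields
$$
\sum_{j=1}^{k}w'_{j}h_{j}=0,
$$
where the sum runs over \emph{all} horizontal cylinders $\CCC_{1},\dots,\CCC_{k}$.

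Next I would organize this sum by $\inv$-orbits. Since $\inv^{*}\omega=-\omega$, the involution $\inv$ sends horizontal cylinders to horizontal cylinders, preserving both widths and heights (hence moduli). Each orbit has size $1$ if $\CCC$ is $\inv$-stable and size $2$ if $\CCC$ and $\inv(\CCC)$ are distinct; in either case the two (or one) cylinders contribute identical terms $w'h$. Writing $n$ for the number of such orbits and choosing one representative $\CCC_{i}$ per orbit, the flux equation becomes
$$
\sum_{i=1}^{n}\beta_{i}\,w'_{i}h_{i}=0,
$$
with $\beta_{i}\in\{1,2\}$ as in the statement. Finally, using $h_{i}=\mu_{i}w_{i}$ and $w_{i}w'_{i}=N(w_{i})\in\Q$, one rewrites $w'_{i}h_{i}=\mu_{i}N(w_{i})$, which gives exactly \eqref{eq:CAP:mod:rel}.

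For the parabolicity conclusion in the case $n\leq 2$, the key observation is that $N(w_{i})\neq 0$: indeed, if $w_{i}=a+b\sqrt D$ with $a,b\in\Q$ and $w'_{i}=0$, then $\sqrt D=-a/b\in\Q$, contradicting the hypothesis that $D$ is not a square (the case $b=0$ gives $N(w_{i})=w_{i}^{2}>0$). When $n=1$ the relation reads $\beta_{1}\mu_{1}N(w_{1})=0$, which is impossible; when $n=2$ it yields
$$
\frac{\mu_{1}}{\mu_{2}}=-\frac{\beta_{2}N(w_{2})}{\beta_{1}N(w_{1})}\in\Q.
$$
Commensurability of the two moduli then gives a common multiple $N$ such that the $N$-th powers of the individual Dehn twists glue into a global affine parabolic automorphism of $(X,\omega)$ fixing the horizontal direction, so the direction is parabolic. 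The only routine point to justify carefully is the identification of the McMullen flux summands with $\mu_{i}N(w_{i})$ after the $\inv$-grouping; no serious obstacle arises, since all widths being absolute periods lie in $K$ and the Prym involution acts by an isometry on each horizontal cylinder.
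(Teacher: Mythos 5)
Your proof is correct, and it fills in the details in the natural way. The paper states Corollary~\ref{cor:parabolic} without a proof, treating it as an immediate consequence of Theorem~\ref{theo:Cal:eq}; you have spelled out exactly the two steps the authors leave implicit. In particular, (1) your justification that the normalization to $\Q(\imath)$-periods is impossible when $D$ is nonsquare is sound — if $g\in\GL^+(2,K)$ moved all periods of $g\omega$ into $\Q(\imath)$, then $(g')^{-1}g\in\GL(2,\R)$ would send $\omega$ to $\omega'$, forcing the eigenspaces $S$ and $S'$ of the generator $T$ to coincide, contradicting $S\oplus S'=H^1(X,\R)^-$; and (2) the $\inv$-grouping, the rewriting $w'_i h_i=\mu_i N(w_i)$, and the observation $N(w_i)\ne 0$ (a nonzero element of $K^{\times}$ has nonzero norm) are all exactly as needed. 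Two trivial slips that do not affect validity: the sign in your extraction of $\sqrt{D}$ from $w'_i=0$ should be $\sqrt{D}=a/b$ rather than $-a/b$, and in the final paragraph the positivity of $\mu_1/\mu_2$ forces $N(w_1)N(w_2)<0$ rather than being something to worry about. The $n\leq 2$ conclusion (in fact the impossibility of $n=1$ and the rationality of $\mu_1/\mu_2$ for $n=2$) is derived correctly.
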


\medskip

Corollary~\ref{cor:parabolic} implies that when $D$ is not a square, there is always a rational relation between the moduli of the cylinders (in a cylinder decomposition). We will now prove the same statement for the case $D$ is a square, that is $\Q(\sqrt{D})=\Q$. In what follows $(X,\omega)$ will be a Prym eigenform in one of the loci in Table~\ref{tab:strata:list}, and $D$ will be the discriminant of the Prym eigenform locus that contains $(X,\omega)$. We also assume that $(X,\omega)$ decomposes into $k$ cylinders, denoted by $\CCC_1,\dots,\CCC_k$, in the horizontal direction. The width, height, and modulus of $\CCC_i$ are denoted by  $w_i,h_i$, and $\mu_i$ respectively. If the corresponding cylinder decomposition is stable, then the coefficient associated to $\CCC_i$ (see Lemma~\ref{lm:ker:hor:twist} and Lemma~\ref{lm:ker:ver:height}) will be denoted by $\alpha_i$.    Let us start by

\begin{Lemma}\label{lm:cyl:h:abs:per}
For every $i\in \{1,\dots,k\}$ either $h_i$ is an absolute period, or there exists $j\neq i$ and some integers $x_i,x_j\in \{1,2\}$ such that $x_ih_i+x_jh_j$ is an absolute period. Moreover, if the cylinder decomposition is stable, and $\alpha_i,\alpha_j$ are the coefficients associated to $\CCC_i$ and $\CCC_j$ respectively, then  $x_i\alpha_i+x_j\alpha_j=0$. 
\end{Lemma}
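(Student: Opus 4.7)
The plan is to realize each $h_i$ as the imaginary part of the $\omega$-period of a relative cycle, and then to use the combinatorics of the cylinder decomposition together with the Prym involution to close this cycle up into an absolute one. For each horizontal cylinder $\CCC_i$, fix a vertical cross-cylinder cycle $\beta_i$ going from the bottom zero $B_i$ to the top zero $T_i$; then $\omega(\beta_i) = \imath h_i$ and $\partial \beta_i = T_i - B_i$. If $B_i = T_i$, the cycle $\beta_i$ is already closed, so $h_i$ is an absolute period and the first alternative holds.

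Suppose from now on that $B_i \neq T_i$. In the loci of Table~\ref{tab:strata:list} the set of zeros is either $\{P, Q\}$ with $\inv(P) = Q$, or $\{P, Q, R\}$ with $\inv(P) = Q$ and $\inv(R) = R$. Letting $n_{AB}$ denote the number of cylinders with bottom $A$ and top $B$, the cone-angle balance at each zero gives $\sum_B n_{AB} = \sum_B n_{BA}$, and the involution (which rotates each cylinder by $\pi$ and hence swaps its top and bottom boundaries) yields $n_{AB} = n_{\inv(B)\inv(A)}$. The key observation is that $h_{\inv(\CCC)} = h_{\CCC}$, so a term of the form $\beta_\CCC + \beta_{\inv(\CCC)}$ contributes $2 h_\CCC$ to the imaginary part. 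A case analysis on the type of $\CCC_i$ now produces the required pairing: either a cylinder $\CCC_j$ of ``opposite type'' to $\CCC_i$ exists and $\beta_i + \beta_j$ closes up into an absolute cycle with $\Im \omega = h_i + h_j$ (giving $x_i = x_j = 1$), or the balance equations force a configuration in which either $\beta_i + \beta_{\inv(\CCC_i)} + \beta_j$ or $\beta_i + \beta_j + \beta_{\inv(\CCC_j)}$ is an absolute cycle, yielding $2 h_i + h_j$ or $h_i + 2 h_j$ respectively. For instance, when $\CCC_i$ is of type $P$-to-$R$ and no $R$-to-$P$ cylinder exists, balance at $P$ forces the existence of a $Q$-to-$P$ cylinder $\CCC_j$; the triple $\beta_i + \beta_{\inv(\CCC_i)} + \beta_j$ (with $\inv(\CCC_i)$ of type $R$-to-$Q$) is then an absolute cycle with imaginary $\omega$-period $2 h_i + h_j$.

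The stable addendum $x_i \alpha_i + x_j \alpha_j = 0$ will follow from kernel-foliation invariance: by Lemma~\ref{lm:ker:ver:height} the heights evolve linearly as $h_k \mapsto h_k + \alpha_k t$ under the vertical kernel foliation, while absolute periods remain unchanged. Since the combinatorial decomposition and the pairings constructed above persist under small kernel foliation (Proposition~\ref{prop:stable:dec}), the same topological absolute cycle realizes $x_i h_i(t) + x_j h_j(t)$ on $(X,\omega) + (0,t)$; constancy of this quantity in $t$ forces $x_i \alpha_i + x_j \alpha_j = 0$. The main technical step is the balance analysis in the three-zero strata: one must verify that whenever the direct ``opposite type'' pairing is unavailable, the involution together with the balance at each zero always produces a suitable three-cylinder absolute cycle realizing $x_i h_i + x_j h_j$ with $x_i, x_j \in \{1, 2\}$.
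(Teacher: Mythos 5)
Your overall strategy is the same as the paper's: realise $h_i$ as $\mathrm{Im}\,\omega(\beta_i)$ for a cross-cylinder arc $\beta_i$ from $B_i$ to $T_i$, close $\beta_i$ into an absolute cycle by concatenating one or two further arcs using the Prym involution, and obtain the addendum on the $\alpha_i$'s from Lemma~\ref{lm:ker:ver:height} together with constancy of absolute periods along the kernel foliation.

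However, the pivotal step rests on a balance identity that is false. You define $n_{AB}$ as the number of \emph{cylinders} with bottom zero $A$ and top zero $B$ and assert $\sum_B n_{AB} = \sum_B n_{BA}$ from ``cone-angle balance.'' This fails already in $\Prym(1,1)\simeq\H(1,1)$: the standard stable three-cylinder decomposition consists of two cylinders of type $P\to Q$ sitting on a single wide cylinder of type $Q\to P$ (with $w_3=w_1+w_2$), so $\sum_B n_{PB}=2\neq 1=\sum_B n_{BP}$, even though this configuration is compatible with the Prym involution. The cone-angle argument only equalises the number of \emph{appearances} of $A$ on bottom versus top boundaries counted with multiplicity (each side equals the order of $A$ plus one), and a single boundary component can contribute several such appearances. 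What is true, and suffices, is the width-weighted balance $\sum_{T(\CCC_i)=A} w_i=\sum_{B(\CCC_i)=A} w_i$: each horizontal saddle connection at $A$ lies on exactly one top and one bottom boundary, so both sums equal the total length of horizontal saddle connections at $A$. It is this weighted identity, not the cylinder count, that forces the existence of the partner cylinder $\CCC_j$ in your case analysis (for instance, if $\CCC_i$ is of type $P$-to-$R$ and there is no $R$-to-$P$ nor $Q$-to-$P$ cylinder, the weighted balance at $P$ would give $\sum_{P\to P}w_i\geq\sum_{P\to P}w_i+w_i$, a contradiction), and it is the precise content of the paper's informal appeals to ``connectedness of $X$.'' With that replacement the rest of your sketch matches the paper's argument.
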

\begin{proof}
If there is a zero of $\omega$ that is contained in both top and bottom border of $\CCC_i$, then $h_i$ is an absolute period. Let us suppose that this does not occur. We have two cases:

\begin{itemize}
  \item[(a)]{\bf Case 1:} $\omega$ has two zeros $P_1,P_2$. Note that in this case $P_1$ and $P_2$ are exchanged by the Prym involution $\inv$. We can assume that the bottom border of $\CCC_i$ contains $P_1$, and its top border contains $P_2$. By connectedness of $X$, there must exist a cylinder $\CCC_j$ whose bottom border contains $P_2$ and top border contains $P_1$. Remark that we must have $i\neq j$ otherwise $P_1$ is contained in both top and bottom borders of $\CCC_i$. Let $\s_i$ and $\s_j$ be  respectively  some saddle connections in $\CCC_i$ and $\CCC_j$ which join $P_1$ to $P_2$. Then $c=\s_i\cup\s_j$ is a simple closed curve in $X$, and we have $h_1+h_2={\rm Im}\omega(c)$. 
 

 \item[(b)]{\bf Case 2:} $\omega$ has $3$ zeros. In this case two zeros are permuted by $\inv$, we denote them by $P_1,P_2$, the third one is fixed by $\inv$, let us denote this one by $Q$. We can always assume that $P_1$ is contained in the bottom border of $\CCC_i$, but not in the top border of $\CCC_i$.  
 
 Assume that the top border of $\CCC_i$ contains $P_2$, and let $\s_i$ be saddle connection in $\CCC_i$ which joins $P_1$ to $P_2$. If there exists another cylinder whose bottom border contains $P_2$ and top border contains $P_1$ then we are done. Otherwise, there must exists a cylinder $\CCC_j$ whose bottom border contains $P_2$ and top border contains $Q$. Let $\CCC_{j'}$ be the cylinder which is permuted with $\CCC_{j}$ by $\inv$, then the top border of $\CCC_{j'}$ contains $P_1$ and the bottom border of $\CCC_{j'}$ contains $Q$. In particular, we have $\CCC_{j'}\neq \CCC_i$.

If $\CCC_{j'}=\CCC_j$, then the top border of $\CCC_j$ contains $P_1$ contradicting our hypothesis. Thus we have $\CCC_{j'} \neq \CCC_j$. Let $\s_j$ be a saddle connection in $\CCC_j$ which joins $P_2$ to $Q$, then $\inv(\s_j)$ is a saddle connection in $\CCC_{j'}$ that joins $Q$ to $P_1$. Consequently, $c:=\inv(\s_j)\cup\s_j\cup \s_i$ is a simple closed curve in $X$, and ${\rm Im}\omega(c)=h_i+h_j+h_{j'}=h_i+2h_j$.
 
We are left with the case where the top border of $\CCC_i$ contains $Q$. Let $\CCC_{i'}$ be the cylinder which is permuted with $\CCC_i$ by $\inv$, then the top border of $\CCC_{i'}$ contains $P_2$ and the bottom border contains $Q$. By assumption, we have $\CCC_{i'} \neq \CCC_i$. By connectedness of $X$, there exists a cylinder $\CCC_j \neq \CCC_i$ which contains $P_1$ in the top border, and $P_2$ or $Q$ in the bottom border. If $P_2$ is contained in the bottom border of $\CCC_j$ then $h_j+h_i+h_{i'}=h_j+2h_i$ is an absolute period. If $Q$ is an contained in the bottom border of $\CCC_j$ then $h_i+h_j$ is an absolute period. 
\end{itemize}

Since $x_ih_i+x_jh_j$ is an absolute period, it is unchanged by the kernel foliation, Lemma~\ref{lm:ker:ver:height} then implies that $x_i\alpha_i+x_j\alpha_j=0$. 
\end{proof}

\begin{Lemma}\label{lm:cyl:dec:3class:rel}
 Assume that $\CCC_1,\CCC_2,\CCC_3$ are distinct up to permutation by the Prym involution $\inv$. Then there exists $(r_1,r_2,r_3) \in \Q^3 \setminus \{(0,0,0)\}$ such that  
 \begin{equation}\label{eq:rat:rel:mod}
  r_1\mu_1+r_2\mu_2+r_3\mu_3=0
 \end{equation}
\noindent and
\begin{equation}\label{eq:rat:rel:coeff}
 r_1\frac{\alpha_1}{w_1} +r_2\frac{\alpha_2}{w_2} + r_3\frac{\alpha_3}{w_3}=0.
\end{equation} 
\end{Lemma}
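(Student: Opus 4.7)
The plan is to repackage the basic relations supplied by Lemma~\ref{lm:cyl:h:abs:per}---which come pre-equipped with a companion equation on the twist coefficients---into a single $\Q$-linear combination whose rational right-hand side vanishes, and then rescale by the widths $w_i$ to convert the relation on heights into one on moduli.

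\emph{Step 1.} For each $i\in\{1,2,3\}$, Lemma~\ref{lm:cyl:h:abs:per} furnishes either an absolute period equality $h_i\in\Q$ (in which case $\alpha_i=0$ by Lemma~\ref{lm:ker:hor:twist}, encoded as $c^{(i)}=e_i$, $q^{(i)}=h_i$), or integers $x_i,x_{j_i}\in\{1,2\}$ with $j_i\neq i$ such that $x_ih_i+x_{j_i}h_{j_i}=q^{(i)}\in\Q$ and, crucially, $x_i\alpha_i+x_{j_i}\alpha_{j_i}=0$; in the latter case set $c^{(i)}=x_ie_i+x_{j_i}e_{j_i}$. By Lemma~\ref{lm:nbr:cyl:max3} the number of cylinders modulo the Prym involution is at most three, and since we already have three distinct such classes, any such $j_i$ lies in $\{1,2,3\}$ (if Lemma~\ref{lm:cyl:h:abs:per} delivered $\CCC_{j_i}=\inv(\CCC_i)$, then $h_{j_i}=h_i$ would force $h_i\in\Q$ and we use the degenerate form).

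\emph{Step 2.} Let $\Lambda\subset\Q^3\times\Q$ be the $\Q$-span of $\ell^{(i)}:=(c^{(i)},q^{(i)})$ for $i=1,2,3$. Since the companion equation $\sum_j c_j\alpha_j=0$ is homogeneous linear in $c$, it passes to arbitrary $\Q$-linear combinations: every $(c,q)\in\Lambda$ satisfies simultaneously $\sum_j c_jh_j=q$ and $\sum_j c_j\alpha_j=0$. Each $c^{(i)}$ has its $i$-th entry nonzero and at most two nonzero entries overall, so no single nonzero vector of $\Q^3$ having at most two nonzero entries can be proportional to all three of them (such a common vector would need a nonzero entry in each of the three positions). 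Consequently $\dim_\Q\langle c^{(1)},c^{(2)},c^{(3)}\rangle\geq 2$ and hence $\dim_\Q\Lambda\geq 2$.

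\emph{Step 3.} The projection $(c,q)\mapsto q$ sends $\Lambda$ into the one-dimensional space $\Q$, so its kernel inside $\Lambda$ has $\Q$-dimension at least $\dim_\Q\Lambda-1\geq 1$. Pick any nonzero $(c_0,0)$ in this kernel and set $r_i:=c_{0,i}\,w_i$; this is a nonzero element of $\Q^3$ (as the $w_i$ are positive and $c_0\neq 0$), and
\[
\sum_i r_i\mu_i=\sum_i c_{0,i}h_i=0,\qquad \sum_i r_i\frac{\alpha_i}{w_i}=\sum_i c_{0,i}\alpha_i=0,
\]
which are exactly \eqref{eq:rat:rel:mod} and \eqref{eq:rat:rel:coeff}. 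The only conceptual point requiring attention is the transfer of the companion equation to $\Q$-linear combinations of basic relations; everything else is a short dimension count, and no further input (such as the Galois-flux identity of Theorem~\ref{theo:Cal:eq}, which is vacuous here since $D$ is a square) is needed.
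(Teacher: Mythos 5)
Your argument addresses only the case where $D$ is a square, as you acknowledge explicitly in the closing parenthesis (``which is vacuous here since $D$ is a square''). But Lemma~\ref{lm:cyl:dec:3class:rel} carries no such hypothesis, and it is invoked later (inside the proof of Lemma~\ref{lm:key}, and again in the proof of Theorem~\ref{theo:main}) for arbitrary discriminant $D$, square or not. For $D$ not a square one can normalize so that absolute periods lie in $K(\imath)$ with $K=\Q(\sqrt{D})$, but then the $q^{(i)}$ in your Step~1 live only in $K$, not in $\Q$, and the widths $w_i$ are irrational, so neither the lattice $\Lambda\subset\Q^3\times\Q$ nor the final rescaling $r_i=c_{0,i}w_i$ makes sense. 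Even if one tries $\Lambda\subset\Q^3\times K$, the target of the projection $(c,q)\mapsto q$ is now two-dimensional over $\Q$, so the dimension count in Step~3 yields nothing. The paper's proof treats $D$ non-square by a genuinely different mechanism: it invokes Corollary~\ref{cor:parabolic}, i.e.\ the vanishing of the Galois flux $\sum w_i' h_i=0$, to get $\sum\beta_i N(w_i)\mu_i=0$ with $r_i=\beta_i N(w_i)\in\Q$, and then derives \eqref{eq:rat:rel:coeff} by applying the Galois conjugation in $K$ to the area identity $\sum\alpha_i\beta_i w_i=0$. That input is not ``vacuous''; it is exactly what fills the hole.

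For the square case your reorganization is sound and arguably cleaner than the paper's case~(a), which proceeds by picking particular index pairs and an explicit $\lambda$. Two smaller points should still be made explicit: (i) you must first normalize by $\GL^+(2,\R)$ so that absolute periods lie in $\Q(\imath)$, since without this neither $q^{(i)}\in\Q$ nor $w_i\in\Q$ holds; (ii) when Lemma~\ref{lm:cyl:h:abs:per} returns an index $j>3$, you reduce to an index in $\{1,2,3\}$ using $\CCC_j=\inv(\CCC_{i'})$, which requires noting both $h_j=h_{i'}$ and $\alpha_j=\alpha_{i'}$ (the latter because $\inv$ reverses the vertical direction while swapping the zeros, so the upper/lower boundary labels are carried over unchanged); you only discuss the sub-case $i'=i$. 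Once the normalization is stated and the non-square case is supplied via Corollary~\ref{cor:parabolic}, the proof would be complete.
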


\begin{proof}
 By Lemma~\ref{lm:nbr:cyl:max3}, we know that the cylinder decomposition is stable. Thus we can associate to each cylinder $\CCC_i$ a coefficient $\alpha_i\in \{0,\pm 1/2,\pm 1\}$. We first observe that moving in the leaves of the kernel foliation does not change the area of the surface, therefore 
 $$
 \Aa(X,\omega)=\Aa((X,\omega)+(0,s)) \quad \Rightarrow \quad \sum_{i=1}^k w_ih_i = \sum_{i=1}^k w_i(h_i+\alpha_is)
 $$ 
 which implies 
 \begin{equation}\label{eq:w:relation}
 \sum_{i=1}^k \alpha_iw_i=\sum_{i=1}^3\alpha_i\beta_iw_i=0  
 \end{equation}
where $\beta_i=1$ if $\CCC_i$ is fixed by $\inv$, and $\beta_i=2$ otherwise. We have two cases:

\begin{itemize}
\item[(a)] $D$ is a square. In this case we can normalize, using $\GL^+(2,\R)$, so that all the absolute periods of $\omega$ belong to $\Q(\imath)$. By Lemma~\ref{lm:cyl:dec:3class:rel}, there exist $j \in \{1,\dots,k\}$ and $a\in \{1,2\}, b\in \{0,1,2\}$ such that $ah_1+bh_j$ is an absolute period. Since $\CCC_j$ is permuted with one of the cylinders $\CCC_1,\CCC_2,\CCC_3$, we can assume that $ah_1+bh_3$ is an absolute period. Similarly, there exist $j\in \{1,3\}$ and $c,d\in \N, \, c \neq 0$ such that $ch_2+dh_j$ is an absolute period.  Let us assume that $j=3$. Since all the absolute periods are in $\Q$, there exists $\lbd\in \Q, \lbd >0$, such that $ah_1+bh_3=\lbd(ch_2+dh_3)$. Thus we have
$$
aw_1\mu_1-\lbd c w_2\mu_2+(b-\lbd d)w_3\mu_3=0.
$$
\noindent Set $r_1=aw_1,r_2=-\lbd c w_2, r_3=(b-\lbd d)w_3$. We have $(r_1,r_2,r_3)\in \Q^3$ and $r_1r_2\neq 0$. Since $(X,\omega)$ and $(X,\omega)+(0,s)$ have the same absolute periods, we have $a(h_1+\alpha_1s)+b(h_3+\alpha_3s)=\lbd(c(h_2+\alpha_2s)+d(h_3+\alpha_3s))$ which implies $a\alpha_1+b\alpha_3=\lbd(c\alpha_2+d\alpha_3)$. Consequently 
$$
r_1\frac{\alpha_1}{w_1}+r_2\frac{\alpha_2}{w_2} +r_3\frac{\alpha_3}{w_3}= a\alpha_1-\lbd c \alpha_2+(b-\lbd d)\alpha_3=0.
$$
\item[(b)] $D$ is not a square. In this case $K=\Q(\sqrt{D})$ is a quadratic field. It follows from Corollary~\ref{cor:parabolic} that we have 
$$
\sum_{i=1}^3\beta_i N(w_i)\mu_i=0
$$
\noindent where $N(w_i)=w_iw'_i$, and $w'_i$ is the Galois conjugate of $w_i$ in $K$. Set $r_i=\beta_iN(w_i)=\beta_iw_iw'_i\in \Q$. Clearly, $r_i\neq 0, \, i=1,2,3$. We have 
$$
\sum_{i=1}^3r_i\frac{\alpha_i}{w_i}=\sum_{i=1}^3\beta_iw'_i\alpha_i.
$$ 
Since $\alpha_i \in \Q$ and $\beta_i\in \Q$, it follows
$$
\sum_{i=1}^3\beta_iw'_i\alpha_i=\left( \sum_{i=1}^3 \alpha_i\beta_iw_i\right)'=0
$$
\noindent where the last equality follows from \eqref{eq:w:relation}. The lemma is then proved.
\end{itemize}
\end{proof}

\medskip

By Corollary~\ref{cor:parabolic}, we know that, when $D$ is not a square, if the cylinder decomposition is unstable, then the direction is parabolic. Let us now prove the same statement for the case $D$ is a square.

\begin{Lemma}\label{lm:Dsq:2class:unstable:par}
Suppose that $D$ is a square. Then if the cylinder decomposition is unstable, then the horizontal direction is parabolic.
\end{Lemma}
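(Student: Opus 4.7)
My approach mirrors the strategy of Lemma~\ref{lm:cyl:dec:3class:rel}(a), combined with the perturbation trick of Proposition~\ref{prop:cyl:dec:unstable} to manufacture an auxiliary stable decomposition. Since $D$ is a square, I would first normalize by $\GL^+(2,\Q)$ so that every absolute period of $\omega$ lies in $\Q(\imath)$; then all widths $w_i$ are rational and parabolicity of the horizontal direction amounts to commensurability of the heights $h_i$ over $\Q$. Lemma~\ref{lm:nbr:cyl:max3} together with the unstable hypothesis forces the number of cylinder classes to be $n\leq 2$; the case $n=1$ is immediate. For $n=2$, I would apply Lemma~\ref{lm:cyl:h:abs:per} to each class: each height is either rational, or satisfies a pairing $x_ih_i+x_jh_j\in\Q$ with $x_i,x_j\in\{1,2\}$. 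A short case analysis shows that if either $h_1$ or $h_2$ is rational then both are: an inner-class pairing gives $(x_i+x_j)h_i\in\Q$ directly, while an outer-class pairing propagates the rationality of one height to the other.

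The heart of the proof is the remaining case $h_1,h_2\notin\Q$, in which Lemma~\ref{lm:cyl:h:abs:per} furnishes only one relation of the form $x_1h_1+x_2h_2=c\in\Q$. To obtain a second, independent relation I would use Proposition~\ref{prop:cyl:dec:unstable}: for any small $t>0$ the perturbation $(X',\omega'):=(X,\omega)+(0,t)$ lies in $\Omega E_D(\kappa)$ by Proposition~\ref{prop:preserves} and has a stable horizontal decomposition. All widths of $(X',\omega')$ remain rational: the two old widths $w_1,w_2$ persist unchanged, while every new cylinder has width equal to a cycle-sum of horizontal saddle connections of $(X,\omega)$ (via the permutation $\piup$ of the proof of Proposition~\ref{prop:cyl:dec:unstable}), hence an absolute period and so a rational number. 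If $(X',\omega')$ happens to have only $n'\leq 2$ classes, the first step applied to $(X',\omega')$ gives $h_i+\alpha_it\in\Q$ for every small $t$, whence $h_i\in\Q$ — a contradiction. Hence $n'=3$ and Lemma~\ref{lm:cyl:dec:3class:rel} yields a nontrivial rational relation $r'_1\mu'_1+r'_2\mu'_2+r'_3\mu'_3=0$ on $(X',\omega')$. Rewriting each new modulus $\mu'_i$ via the combinatorics of $\piup$ as an explicit affine function of $t$ with coefficients rational in $h_1,h_2$ produces an identity $a_1h_1+a_2h_2+b=\lbd\, t$ valid for all small $t>0$; letting $t\to 0^+$ yields a second $\Q$-linear relation which, together with $x_1h_1+x_2h_2=c$, forces $h_1,h_2\in\Q$.

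The main obstacle will be the verification that the two $\Q$-linear relations on $(h_1,h_2,1)$ obtained above are genuinely independent. This is a combinatorial check depending on the permutation $\piup$ arising from the specific unstable configuration; Prym symmetry together with the rationality of widths should reduce it to a finite case analysis over the strata of Table~\ref{tab:strata:list} in which an unstable two-class decomposition can occur. A secondary point requiring care is the case in which $\omega$ has three zeros, so that the coefficients supplied by Lemma~\ref{lm:cyl:h:abs:per} may be $1$ or $2$: the same strategy goes through, but one must carefully track which coefficient combinations appear in each Prym locus.
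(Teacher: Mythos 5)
Your proposal takes a genuinely different route from the paper's, but the route has a fatal structural flaw and also leaves the key step explicitly unverified.

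The central problem is that after invoking Lemma~\ref{lm:nbr:cyl:max3} to get $n\leq 2$ cylinder classes, you never use the \emph{unstable} hypothesis again. If your argument worked, it would therefore prove the stronger assertion: \emph{$D$ a square and $n\leq 2$ implies the horizontal direction is parabolic}. But that assertion is false for \emph{stable} two-class decompositions — indeed, the paper's proof of Lemma~\ref{lm:key} handles exactly the case ``$D$ a square, stable, $n=2$, direction not parabolic'' and runs a Kronecker argument there. So any proof that discards the unstable hypothesis after the reduction to $n\leq 2$ cannot be correct. Consistent with this, your intermediate claim ``if $n'\leq 2$, the first step applied to $(X',\omega')$ gives $h_i+\alpha_it\in\Q$'' is circular: the ``first step'' only propagates rationality of one height to the other, it does not establish rationality of either height to begin with, and for the stable perturbation there is no reason either height should be rational. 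You also explicitly concede that the ``combinatorial check'' needed for the $n'=3$ sub-case is not done; given the structural objection above, it cannot be made to work in the generality you need.

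What the paper does instead, and what your proposal misses entirely, is to exploit the unstable hypothesis directly as a source of \emph{absolute} periods. Instability means there is a horizontal saddle connection $\gamma$ joining two distinct zeros. In the two-zero case, concatenating $\gamma$ with a cross-saddle-connection inside $\CCC_1$ gives a closed curve $c$ with $\mathrm{Im}\,\omega(c)=h_1$; in the three-zero case one concatenates $\gamma\cup\inv(\gamma)$ with one or two cross-saddle-connections and gets $\mathrm{Im}\,\omega(c)=h_1$ or $2h_1$. Either way $h_1\in\Q$ after normalization, and then Lemma~\ref{lm:cyl:h:abs:per} forces $h_2\in\Q$. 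This is where the unstable hypothesis is load-bearing, and it is the ingredient your approach lacks.
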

\begin{proof}
If there are $3$ distinct cylinders up to permutation by the Prym involution then the decomposition is stable. Therefore, we can assume that $\CCC_1$ and $\CCC_2$ are not permuted by $\inv$, and any other cylinder is permuted with either $\CCC_1$ or $\CCC_2$. We can normalize so that all the absolute periods of $\omega$ are in $\Q(\imath)$. 
 
If both $h_1,h_2$ are absolute periods then we are done, because all the moduli are rational numbers. Thus, without loss of generality, let us assume that $h_1$ is not an absolute period. By Lemma~\ref{lm:cyl:h:abs:per}, there exists $x_1,x_2 \in \N$ such that $x_1h_1+x_2h_2 $ is an absolute period. In particular, $x_1h_1+x_2h_2\in \Q$. By assumption, both $x_1,x_2$ are none-zero. We have two cases: 
\begin{itemize}
\item[(a)]{\bf Case 1:} $\omega$ has two zeros $P_1,P_2$. We can assume that $P_1$ is contained in the bottom border of $\CCC_1$ and $P_2$ is contained in the top border of $\CCC_1$. Let $\s$ be a saddle connection in $\CCC_1$ which joins $P_1$ to $P_2$. Since the cylinder decomposition is unstable, there exists a horizontal saddle connections $\gamma$ from $P_2$ to $P_1$. Thus $c:=\gamma\cup \s$ is a simple closed curve in $X$ and $h_1={\rm Im}\omega(c)$. Thus $h_1\in \Q$, which implies that $h_2\in \Q$, and the horizontal direction is parabolic.
  
\item[(b)]{\bf Case 2:} $\omega$ has $3$ zeros. Let $P_1,P_2$ denote the zeros which are permuted, and $Q$ be the zero fixed by $\inv$. We first observe that there exists a path from $P_1$ and $P_2$ which is a union of horizontal saddle connection. Indeed, by assumption there exists a horizontal saddle connection $\gamma$ which joins two different zeros. If $\gamma$ joins $P_1$ to $P_2$ then we are done. Otherwise, $\gamma$ joins $Q$ to either $P_1$ or $P_2$. In both case cases, the union of $\gamma$ and $\inv(\gamma)$ is the desired path. Let us denote this path by $\eta$.
  
Without loss of generality, let us assume that $P_1$ is contained in the bottom border of $\CCC_1$. If the top border of $\CCC_1$ contains $P_2$, then the union of $\eta$ and a saddle connection in $\CCC_1$ joining $P_1$ to $P_2$ is a simple closed curve $c$ such that ${\rm Im}\omega(c)=h_1$. Therefore $h_1\in \Q$, and the lemma follows.
 
If the top border of $\CCC_1$ contains $Q$, then let $\CCC_3$ be the cylinder which is permuted  with $\CCC_1$ by $\inv$. Note that the bottom border of $\CCC_3$ contains $Q$, and the top border of $\CCC_3$ contains $P_2$ (in particular $\CCC_3\neq \CCC_1$, by assumption). Let $\s_1$ be a saddle connection in $\CCC_1$ joining $P_1$ to $Q$, and $\s_3$ be the image of $\s_1$ by $\inv$. The union $c:=\eta\cup \s_3\cup \s_1$ is then a closed curve such that ${\rm Im}\omega(c)=2h_1\in \Q$. Hence the lemma follows from the same argument.
\end{itemize}
\end{proof}

\section{Proof of a weaker version of Theorem~\ref{theo:main}}
\label{sec:weaker}

In this section, we prove a weaker version of Theorem~\ref{theo:main}. We say that 
$(X,\omega)$ is not a Veech surface (or the orbit is not closed) for ``{\em the most obvious reason}'' if
there exists a completely periodic direction on $(X,\omega)$ that is not parabolic
(it is a theorem of Veech~\cite{Veech1989} that if the orbit is closed then any completely periodic direction is parabolic). \medskip

We will prove a weaker version of Theorem~\ref{theo:main} under this additional assumption:

\begin{Theorem}
\label{theo:weaker}
Let $(X,\omega)\in \Omega E_{D}(\kappa)$ and let us denote by $\mathcal O$ its 
$\textrm{GL}^+(2,\R)$-orbit. If $\mathcal O$ is not closed for the most obvious reason
then $\overline{\mathcal O}$ is a connected component of $\Omega E_{D}(\kappa)$.
\end{Theorem}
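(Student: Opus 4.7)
Normalize by $\GL^+(2,\R)$ so that the non-parabolic completely periodic direction is horizontal. Let $\CCC_1,\dots,\CCC_k$ be the horizontal cylinders, with widths $w_i$, heights $h_i$, twists $t_i$ and moduli $\mu_i=h_i/w_i$; by hypothesis, not all the $\mu_i$ are pairwise commensurable. The plan is to show first that $\overline{\mathcal O}$ contains a horizontal kernel-foliation arc through $(X,\omega)$, then a transverse arc, and finally that $\overline{\mathcal O}$ is open (and closed) in $\Omega E_D(\kappa)$, hence a connected component.

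\smallskip

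\noindent\emph{Step 1 (horocycle flow and Kronecker).} Under the horocycle flow, formula~\eqref{eq:horo:twist} shows that the normalized twist vector $(t_i/w_i)_i$ evolves on the torus $\prod_i\R/w_i\Z$ along a line of slope $(\mu_i)_i$; by Kronecker's theorem its closure is the subtorus $T$ defined by all integer linear relations satisfied by the $\mu_i$. On the other hand, Lemma~\ref{lm:ker:hor:twist} says the horizontal kernel foliation $x\mapsto(X,\omega)+(x,0)$ shifts the twists by $x(\alpha_i)_i$ with $\alpha_i\in\{0,\pm 1/2,\pm 1\}$. The crucial algebraic input is Corollary~\ref{cor:parabolic} (when $D$ is not a square) together with Lemma~\ref{lm:cyl:dec:3class:rel} (when $D$ is a square): every rational linear relation $\sum n_i\mu_i=0$ is matched by the parallel relation $\sum n_i\alpha_i/w_i=0$. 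Hence the ray $\{x(\alpha_i/w_i)\}_{x\in\R}$ lies in $T$ modulo $\Z^k$, and for every $|x|<\eps$ one can find $s_n\to+\infty$ such that $u_{s_n}\cdot(X,\omega)$ has the same combinatorial data, widths and heights as $(X,\omega)+(x,0)$, with twists converging. This forces
\[
\overline{\mathcal O}\supset\{(X,\omega)+(x,0)\,:\,|x|<\eps\}.
\]

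\smallskip

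\noindent\emph{Step 2 (a transverse arc and a neighborhood).} By complete periodicity (Theorem~\ref{thm:cp}), $(X,\omega)$ admits many completely periodic directions $\theta$. I want to select such a $\theta$, transverse to the horizontal, and some $x_0\in(-\eps,\eps)$ such that $\theta$ is non-parabolic on $(X_0,\omega_0):=(X,\omega)+(x_0,0)\in\overline{\mathcal O}$. Step~1 applied to $(X_0,\omega_0)$ in direction $\theta$ then supplies a second kernel-foliation arc inside $\overline{\mathcal O}$, transverse to the first; combining both via the local chart of Proposition~\ref{prop:local} produces a $2$-disk $\{(X,\omega)+v:v\in\B(\eps')\}\subset\overline{\mathcal O}$. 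Since $\overline{\mathcal O}$ is $\GL^+(2,\R)$-invariant and Proposition~\ref{prop:local} locally identifies $\Omega E_D(\kappa)$ with the product of a $\GL^+(2,\R)$-orbit and a kernel-foliation disk, $\overline{\mathcal O}$ then contains a neighborhood of every point of $\mathcal O$.

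\smallskip

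\noindent\emph{Step 3 (openness at limit points and conclusion).} For $(Y,\eta)\in\overline{\mathcal O}\setminus\mathcal O$, pick $(X_n,\omega_n)\in\mathcal O$ with $(X_n,\omega_n)\to(Y,\eta)$; by Step~2 each $(X_n,\omega_n)$ has a uniform-size neighborhood inside $\overline{\mathcal O}$, and passing to the limit via Proposition~\ref{prop:local} yields a neighborhood of $(Y,\eta)$ in $\overline{\mathcal O}$. Hence $\overline{\mathcal O}$ is open; being also closed and connected (it contains the connected set $\mathcal O$), it is a connected component of $\Omega E_D(\kappa)$. The main obstacle is Step~2: producing a transverse periodic direction that is non-parabolic on \emph{some} surface of the horizontal arc. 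The strategy is that the moduli of the cylinders in a direction $\theta$ transverse to the horizontal vary non-trivially with the kernel-foliation parameter $x$ (they involve relative periods), so even if $\theta$ is parabolic on $(X,\omega)$ itself, parabolicity in direction $\theta$ is a codimension-one condition in $x$, and genericity together with the Prym-specific algebra of Lemma~\ref{lm:cyl:dec:3class:rel} furnishes the required non-parabolic occurrence from which Step~1 applies verbatim.
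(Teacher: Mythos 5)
Your three-step outline mirrors the paper's own proof almost exactly: Step~1 is Lemma~\ref{lm:key}, Step~2 is Corollary~\ref{cor:1}, and Step~3 is the openness argument in the proof of Theorem~\ref{theo:weaker}. Steps~1 and~3 are sound as you state them (in Step~1, the single rational relation supplied by Lemma~\ref{lm:cyl:dec:3class:rel} suffices because in a non-parabolic direction the space of rational relations among $(\mu_1,\mu_2,\mu_3)$ is one-dimensional, so all of them are scalar multiples of it and automatically satisfy the parallel relation on the $\alpha_i/w_i$).

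The one place where your write-up is thinner than what is actually required is the closing paragraph of Step~2. You correctly identify the obstacle (finding $x_0$ with $\theta$ non-parabolic on $(X,\omega)+(x_0,0)$) and the right idea (moduli in direction $\theta$ vary with $x$), but the invocation of ``codimension-one $+$ genericity $+$ Lemma~\ref{lm:cyl:dec:3class:rel}'' is not quite the justification that makes this work. What is actually needed is: (i) Propositions~\ref{prop:stable:dec} and~\ref{prop:cyl:dec:unstable}, which guarantee that the $\theta$-cylinder decomposition persists for all $(X,\omega)+(x,0)$ with $|x|$ small, with combinatorial data that is constant on each side of $x=0$; and (ii) the observation that the $\theta$-heights are affine in $x$, $h_i(x)=h_i+\alpha_i x$, where the $\alpha_i$ are not all equal (this follows from $\sum_i\alpha_i\beta_i w_i=0$ together with $w_i>0$ and the fact that not all $\alpha_i$ vanish). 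Point (ii) forces at least one ratio $\mu_i(x)/\mu_j(x)$ to be a non-constant rational function of $x$, so the set of $x$ where $\theta$ is parabolic is countable, and a.e.\ $x$ works. Lemma~\ref{lm:cyl:dec:3class:rel} is not the right tool here; it enters in Step~1, not Step~2. With that repair, your argument coincides with Corollary~\ref{cor:1} and the rest of the proof goes through.
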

We begin with the following key lemma. The proof is classical, but is 
included here for completeness.
\begin{Lemma}
\label{lm:key}
Let $(X,\omega) \in \Omega E_D(\kappa)$ be a Prym eigenform. We assume 
that the horizontal direction is completely periodic but not parabolic. 
Then for all $s \in \R$, the surface $(X,\omega)+(s,0)$ is well defined, and one has:
$$
(X,\omega) +(s,0) \in \overline{U\cdot (X,\omega)}.
$$
\end{Lemma}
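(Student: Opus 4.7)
The key reduction is to work in the torus of twist coordinates of the horizontal cylinder decomposition and to compare the horocycle direction with the horizontal kernel-foliation direction via Kronecker's theorem. Let $\CCC_1,\ldots,\CCC_n$ be representatives of the $\langle\inv\rangle$-orbits among the horizontal cylinders, with widths $w_i$, heights $h_i$ and twists $t_i\in\R/w_i\Z$; by Lemma~\ref{lm:nbr:cyl:max3} one has $n\leq 3$. Both flows preserve the combinatorial data, the widths and the heights: by \eqref{eq:horo:twist}, $u_\tau$ sends $t_i\mapsto t_i+\tau h_i\pmod{w_i}$, while by Lemma~\ref{lm:ker:hor:twist} the kernel foliation $(X,\omega)+(s,0)$ sends $t_i\mapsto t_i+\alpha_i s\pmod{w_i}$. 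Since widths and heights are fixed and the twists are already circle-valued, the latter formula also shows that $(X,\omega)+(s,0)$ stays in a fixed combinatorial stratum and is defined for every $s\in\R$. The lemma therefore reduces to showing that for every $s\in\R$ the vector $s(\alpha_1,\ldots,\alpha_n)$ lies in the closure of $\{\tau(h_1,\ldots,h_n):\tau\in\R\}$ inside the twist torus $\prod_i\R/w_i\Z$.

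Dividing the $i$-th coordinate by $w_i$, this becomes the question of whether $s(\alpha_1/w_1,\ldots,\alpha_n/w_n)$ lies in the closure of the line spanned by $(\mu_1,\ldots,\mu_n)$ in $(\R/\Z)^n$, where $\mu_i=h_i/w_i$. Kronecker's theorem identifies this closure with a subtorus whose Lie algebra $V$ is the smallest $\Q$-rational subspace of $\R^n$ containing $(\mu_1,\ldots,\mu_n)$, so the task becomes to verify that every $\Q$-linear relation $\sum r_i\mu_i=0$ forces $\sum r_i\alpha_i/w_i=0$.

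At this stage the non-parabolicity hypothesis enters: the $\mu_i$ are not all commensurable, so $\dim_\Q\langle\mu_1,\ldots,\mu_n\rangle\geq 2$ and in particular $n\geq 2$. If $n=2$ then there is no nontrivial rational relation among the $\mu_i$, hence $V=\R^2$ and the inclusion is automatic. If $n=3$, non-parabolicity forces the space of rational relations among the $\mu_i$ to be one-dimensional, and Lemma~\ref{lm:cyl:dec:3class:rel} produces a nonzero generator $(r_1,r_2,r_3)$ of this space which also annihilates $(\alpha_1/w_1,\alpha_2/w_2,\alpha_3/w_3)$; so every rational relation on the moduli annihilates the kernel coefficients as well. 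The main conceptual obstacle — transferring the rational relation on the moduli coming from real multiplication to the matching relation on the kernel-foliation coefficients coming from area preservation in the Prym quotient — is exactly what Lemma~\ref{lm:cyl:dec:3class:rel} is designed to handle; the remainder is a textbook application of Kronecker's theorem.
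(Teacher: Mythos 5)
Your strategy matches the paper's: pass to the torus of twist coordinates, apply Kronecker's theorem, and use the rational relation from Lemma~\ref{lm:cyl:dec:3class:rel} to show that the kernel-foliation direction satisfies the same $\Q$-linear constraints as the horocycle direction. The $n=3$ case and the overall reduction are handled correctly.

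The gap is in the $n=2$ case: you never establish that the cylinder decomposition is \emph{stable}, and both halves of your argument depend on it. First, stability is what guarantees $(X,\omega)+(s,0)$ is well-defined for every $s\in\R$; if the decomposition were unstable, the horizontal kernel foliation would shrink a horizontal saddle connection joining the two distinct zeros to zero at some finite $s$, leaving the stratum. Second, the twist formula $t_i\mapsto t_i+\alpha_i s$ you quote is Lemma~\ref{lm:ker:hor:twist}, which is stated only for stable decompositions. Your sentence claiming that this very formula ``shows that $(X,\omega)+(s,0)$ stays in a fixed combinatorial stratum and is defined for every $s\in\R$'' is therefore circular: the formula's validity presupposes exactly the well-definedness it is being used to derive. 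For $n=3$, stability is free by Lemma~\ref{lm:nbr:cyl:max3}, which you cite; but for $n=2$ it is not automatic. The paper closes this by observing that Corollary~\ref{cor:parabolic} forces $D$ to be a square when $n\leq 2$ and the direction is non-parabolic, and then Lemma~\ref{lm:Dsq:2class:unstable:par} shows that for $D$ a square an unstable decomposition must be parabolic — hence non-parabolicity forces stability. With that step supplied, the rest of your argument is the paper's argument.
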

Before proving the lemma, let us state the following corollary:
\begin{Corollary}
\label{cor:1}
Let $(X,\omega) \in \Omega E_D(\kappa)$ be a Prym eigenform. We assume 
that there exists $(Y,\eta)\in\overline{\GL^+(2,\R)\cdot (X,\omega)}$ and $\eps >0$ such that 
$(Y,\eta) +(s,0) \in \overline{\GL^+(2,\R)\cdot (X,\omega)}$
for all $s \in \R$ with $|s| < \eps$. Then there exists $\eps' >0$ such that
$$
(Y,\eta) +v \in \overline{\GL^+(2,\R)\cdot (X,\omega)}
$$
for any  $v \in \R^2$ and  $v\in \B(\varepsilon')$.
\end{Corollary}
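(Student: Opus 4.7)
The plan is to upgrade the given $1$-parameter horizontal family $\{(Y,\eta)+(s,0):|s|<\eps\}\subset\overline{\Orb}$ to a $2$-parameter family that fills a full neighborhood of $(Y,\eta)$ inside its kernel foliation leaf. Since by Proposition~\ref{prop:local} the kernel leaf is $2$-dimensional and transverse to the $\GL^+(2,\R)$-orbit, it will suffice to produce in $\overline{\Orb}$ a second $1$-parameter family of kernel translates through a point of the horizontal family, in a direction not parallel to $(1,0)$.

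First I would fix some $s_0\in(-\eps,\eps)$, set $(Y_0,\eta_0):=(Y,\eta)+(s_0,0)\in\overline{\Orb}$, and try to exhibit on $(Y_0,\eta_0)$ a completely periodic, non-parabolic direction $\theta$ transverse to the horizontal (so $\sin\theta\neq 0$). Once such $\theta$ is in hand, I would conjugate Lemma~\ref{lm:key} by the rotation $r_{-\theta}$ and use the equivariance $g\cdot((X,\omega)+v)=g\cdot(X,\omega)+g\cdot v$ of the kernel foliation under $\GL^+(2,\R)$ to deduce
\[
(Y_0,\eta_0)+t\vec v_\theta\;\in\;\overline{U_\theta\cdot(Y_0,\eta_0)}\;\subset\;\overline{\Orb}\qquad\text{for every }t\in\R,
\]
where $\vec v_\theta=(\cos\theta,\sin\theta)$ and $U_\theta=r_\theta U r_{-\theta}$ is the horocycle subgroup in direction $\theta$. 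Translated back in terms of $(Y,\eta)$, this reads $(Y,\eta)+((s_0,0)+t\vec v_\theta)\in\overline{\Orb}$.

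Letting $t$ vary in a small interval and $s_0$ in $(-\eps,\eps)$, the affine map $(s_0,t)\mapsto(s_0,0)+t\vec v_\theta$ is a linear isomorphism of $\R^2$ since $(1,0)$ and $\vec v_\theta$ are independent, so its image contains a ball $\B(\eps')$ around the origin for some $\eps'>0$. Every surface $(Y,\eta)+v$ with $v$ in this ball then belongs to $\overline{\Orb}$, which is the desired conclusion.

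The hard part will be Step~1: producing the transverse, non-parabolic, periodic direction $\theta$. My strategy is to first argue that the $1$-parameter family $\{(Y,\eta)+(s,0):|s|<\eps\}$ cannot lie entirely in the union of closed $\GL^+(2,\R)$-orbits of $\PrymD(\kappa)$. Indeed, each such Veech orbit is a $4$-dimensional submanifold while the kernel leaf is $2$-dimensional, and by the transversality exhibited in Proposition~\ref{prop:local} their intersection is discrete; since there are only countably many Veech orbits, only countably many $s\in(-\eps,\eps)$ can land in one of them. Hence for some $s_0$ the surface $(Y_0,\eta_0)$ is not Veech, so by Veech's criterion it possesses at least one non-parabolic completely periodic direction, and combining complete periodicity (Theorem~\ref{thm:cp}) with a small perturbation of $s_0$ if necessary should force this direction to be transverse to horizontal. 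Once this step is secured, the rest of the proof is a clean bookkeeping exercise combining Lemma~\ref{lm:key} with Proposition~\ref{prop:preserves} and the $\GL^+(2,\R)$-equivariance of the kernel foliation.
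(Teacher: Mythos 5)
Your overall strategy is the same as the paper's: produce, on a surface of the form $(Y,\eta)+(s_0,0)$ with $|s_0|<\eps$, a completely periodic \emph{and non-parabolic} direction transverse to the horizontal, feed it to Lemma~\ref{lm:key} (conjugated by a rotation), and conclude that the two $1$-parameter families of kernel translates span a $2$-ball in $\overline{\Orb}$. The final bookkeeping step via Proposition~\ref{prop:local} and $\GL^+(2,\R)$-equivariance of the kernel foliation is fine.

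The problem lies in Step~1. You deduce the existence of a non-parabolic periodic direction on $(Y_0,\eta_0)$ from the assertion ``$(Y_0,\eta_0)$ is not a Veech surface, so \emph{by Veech's criterion} it possesses at least one non-parabolic completely periodic direction.'' That is the \emph{converse} of Veech's theorem. Veech's theorem gives ``closed orbit $\Rightarrow$ every periodic direction is parabolic''; the converse --- ``non-closed orbit $\Rightarrow$ some periodic direction is non-parabolic,'' i.e.\ non-closed implies ``not closed for the most obvious reason'' --- is precisely the open Question stated by the authors right after Theorem~\ref{theo:main} in Section~\ref{sec:proof:main:th}. So even if your cardinality/transversality argument correctly produces an $s_0$ with $(Y_0,\eta_0)$ not Veech, you cannot conclude from this that $(Y_0,\eta_0)$ admits a non-parabolic periodic direction; that step is unjustified. (There is a secondary, minor hand-wave in ``a small perturbation of $s_0$ should force this direction to be transverse to horizontal,'' but that point is moot given the main gap.)

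The paper avoids this entirely by a direct computation rather than by appealing to any converse of Veech's theorem. It fixes once and for all a periodic direction $\theta$ on $(Y,\eta)$ transverse to the horizontal (Theorem~\ref{thm:cp}), normalizes $\theta=(0{:}1)$, and observes (Propositions~\ref{prop:stable:dec} and~\ref{prop:cyl:dec:unstable}) that $(0{:}1)$ stays periodic --- with stable decomposition for $s\neq 0$ --- along the whole horizontal kernel segment. Then, by Lemma~\ref{lm:ker:ver:height}, the vertical cylinder widths are constant in $s$ on each subinterval while the heights vary linearly, $h_i(s)=h_i+\alpha_i s$, with at least two of the coefficients $\alpha_i$ distinct. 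Therefore the moduli $\mu_i(s)=h_i(s)/w_i$ are non-constant rational-affine functions of $s$, so the commensurability of the moduli can hold only on a measure-zero set of $s$; for almost every $s$ the vertical direction is periodic and non-parabolic, and Lemma~\ref{lm:key} applies. If you replace your ``not Veech'' detour with this moduli computation, the rest of your write-up goes through.
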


\begin{proof}[Proof of Lemma~\ref{lm:key}]
Let $\CCC_1,\dots,\CCC_k$ denote the horizontal cylinders  of $X$. Let $n$ be the number of equivalence classes 
of cylinders that are permuted by  the Prym involution $\inv$. For all the cases in Table~\ref{tab:strata:list}, 
we have $n \leq 3$. 

Let us consider the case $n=3$. Lemma~\ref{lm:nbr:cyl:max3} implies in particular that the cylinder decomposition is stable. 
Hence the horizontal kernel foliation is well defined for all time $s$.

The surface is encoded by the topological gluings of the cylinders $\CCC_i$, and 
the width, height, and twist of $\CCC_i$  (which will be denoted by $w_i,h_i,t_i$ respectively). We choose the 
numbering so that $\CCC_1,\CCC_2,\CCC_3$ are distinct up to Prym involution. The set of surfaces admitting a cylinder 
decomposition in the horizontal direction with the same topological gluings, and the same widths and heights of the cylinders, 
are parameterized by the three dimensional torus 
$$
\XX=N(\R)\times N(\R)\times N(\R)/N(w_1\Z)\times N(w_2\Z)\times N(w_3\Z),
$$ 
where $N(A)=\{u_s;\  s \in A \}$.

The horocycle flow $u_s$ acts on $(X,\omega)$ by preserving the topological decomposition as
well as all the parameters, but the twists $t_i$: the new twists $\widetilde{t_i}$ are given by 
$\widetilde{t_i}=t_i+sh_i \mod w_i$. Hence surfaces in the $U$-orbit of 
$(X,\omega)$ are parameterized by the line $\{(t_1,t_2,t_3)+(h_1,h_2,h_3)s, \ s \in \R\}$. 

By Kronecker's theorem, the orbit closure $\overline{U\cdot (X,\omega)}$ is a subtorus of $\XX$. 
Since the moduli are not commensurable (the horizontal direction is not parabolic) the dimension of 
this subtorus is at least two. More precisely, the orbit closure $\overline{U\cdot (X,\omega)}$ consists of the set of all twists
$(\widetilde{t_1},\widetilde{t_2},\widetilde{t_3})$ such that the normalized twists $\cfrac{\widetilde{t_i}-t_i}{w_i}$ 
verify all non-trivial homogeneous linear relations with rational coefficients that are satisfied by the moduli $\mu_i = h_i/w_i$. 
Let ${\mathbb P}$ be the subspace of $\R^3$ which is defined by all of such rational relations. By assumption, we have $\dim_\R{\mathbb P} \geq 2$.
But we know from Lemma~\ref{lm:cyl:dec:3class:rel} that there exists $(r_1,r_2,r_3)\in \Q^3\setminus\{(0,0,0)\}$ such that 
$ \sum_{i=1}^{n} r_i\mu_i = 0$ (Equation~\eqref{eq:rat:rel:mod}). Therefore, we have $\dim_\R{\mathbb P}=2$ and 
\begin{equation}
\label{eq:calta:twist}
\sum_{i=1}^{3} r_i\left(\cfrac{\widetilde{t_i}-t_i}{w_i}\right)= 0.
\end{equation}

\noindent It follows that $\ol{U\cdot(X,\omega)}$ is the projection to $\XX$ of the plane ${\mathbb P} \subset \R^3$ defined 
by Equation~\eqref{eq:calta:twist}. Hence, all surfaces constructed from the cylinders with the same widths and heights as those of $(X,\omega)$ (by the same gluings),
and with the twists $\widetilde{t_i}$ satisfying Equation~\eqref{eq:calta:twist} above belong to $\overline{U\cdot (X,\omega)}$. \medskip

Recall that in the horizontal kernel foliation leaf, a surface $(X,\omega)+(s,0)$ is still completely periodic 
(for the horizontal direction), and all the data: topological gluings of the cylinders, widths, heights are preserved, except the twists (see Lemma~\ref{lm:ker:hor:twist}). 
To be more precise, if $C_i^s$ is the horizontal cylinder in $(X,\omega)+(s,0)$ corresponding to 
$C_i=C^0_i$, then $t_i(s)=t_i+\alpha_is$ (where the range of $\alpha_i$ is $\{-1,0,1\}$ or 
$\{-1,-1/2,0,1/2,1\}$ depending whether $\omega$ has $2$ or $3$ zeros, respectively). It remains to show that $(t_1+\alpha_1s,t_2+\alpha_2s,t_3+\alpha_3s)=(t_1,t_2,t_3)+(\alpha_1,\alpha_2,\alpha_3)s$ belongs to ${\mathbb P}$. But this is an immediate consequence of Equation~\eqref{eq:rat:rel:coeff}. Thus the lemma is proved for the case $n=3$.

\medskip

Let us now consider the case $n=2$. Note that if $D$ is not a square then the horizontal direction is parabolic in this case (see Corollary~\ref{cor:parabolic}). Therefore,  $D$ must be a square. By Lemma~\ref{lm:Dsq:2class:unstable:par} we know that the cylinder decomposition is stable, which implies that $(X,\omega)+(s,0)$ is defined for all $s$. Without loss of generality, we can assume that $\CCC_1$ and $\CCC_2$ are distinct up to permutation by $\inv$. In this case, the closure of $U\cdot(X,\omega)$ can be identified with the torus
$$
\XX'=N(\R)\times N(\R) / N(w_1\Z)\times N(w_2\Z)
$$
Using this identification, the horizontal kernel foliation leaf through $(X,\omega)$ corresponds to the projection of the affine line $\{(t_1,t_2)+(\alpha_1,\alpha_2)s, \, s \in \R\}$.  
Hence
$$
(X_s,\omega_s) = (X,\omega) + (s,0) \in \overline{U\cdot (X,\omega)},
$$
which concludes the proof of Lemma~\ref{lm:key}.
\end{proof}

\begin{proof}[Proof of Corollary~\ref{cor:1}]
We will apply Lemma~\ref{lm:key} to a transverse direction to $(1:0)$. By Theorem~\ref{thm:cp}, let $\theta$ be a completely periodic direction on $Y$  which is transverse to the horizontal direction. Up to action of $\GL^+(2,\R)$, we can assume that $\theta=(0:1)$.  

By Proposition~\ref{prop:stable:dec} and Proposition~\ref{prop:cyl:dec:unstable}, there exists $\eps>0$ such that the direction $(0:1)$ is  still completely periodic on $(Y,\eta)+(s,0)$ for all $|s| < \varepsilon$, and if $s\neq 0$ the cylinder decomposition of $(Y,\eta)+(s,0)$ in the direction of $(0:1)$ is stable. Moreover, the combinatorial data of this decomposition is unchanged when $s$ varies in the intervals $(-\eps,0)$ and $(0,\eps)$, if the decomposition of $(Y,\eta)$ is stable, then we have the same combinatorial data for all $s \in (-\eps,\eps)$. 

Let $\left\{w_i(s)\right\}_{i=1,\dots,k}$ and $\left\{h_i(s)\right\}_{i=1,\dots,k}$ be the widths and heights of the cylinders 
in the {\em vertical} direction of $(Y,\eta)+(s,0), \; s \neq 0$. Note that the functions  $w_i(s)$ are constant on each of intervals $(-\eps,0)$ and $(0,\eps)$. However, the set of heights $h_i(s)$ define non constant continuous functions of $s$. To be more precise, $h_i(s) = h_i + \alpha_i s$, where $\alpha_i \in \{-1,0,1\}$ or $\alpha_i \in \{-1,-1/2,0,1/2,1\}$ depending on whether $\eta$ has two or three zeros. Obviously, at least two of $\alpha_i$ are different. Hence the set of moduli 
$$
\mu_i(s) = \frac{h_i+s\alpha_i}{w_i}
$$ 
of cylinders (in the vertical direction) define also non constant continuous functions of $s$. In particular for almost every $s$ in $(-\eps,0)$ (resp. $(0,\eps)$), the direction 
$(0:1)$ is completely periodic and not parabolic on $(Y,\eta)+(s,0)$. Hence Lemma~\ref{lm:key} applies in that vertical direction: for any $t \in \R$ one has $(Y,\eta)+(s,t)\in \overline{\GL^+(2,\R)\cdot ((Y,\eta)+(s,0))}$. It follows immediately that we have 
$(Y,\eta)+v\in  \overline{\GL^+(2,\R)\cdot (X,\omega)}$ 
for every $v=(s,t)$ with $|s| < \eps$ and $|t| < c_{\rm min}$, where $c_{\rm min}$ is the length of the smallest vertical saddle connections in $(Y,\eta)$ joining two different zeros. This ends the proof of Corollary~\ref{cor:1}.
\end{proof}

One can now prove the main result of this section.

\begin{proof}[Proof of Theorem~\ref{theo:weaker}]
We will show that any $(Y,\eta) \in \overline{\GL^+(2,\R)\cdot (X,\omega)}=\ol{\Orb}$ has 
an open neighborhood in $\overline{\GL^+(2,\R)\cdot (X,\omega)}$. \medskip

We first show that claim for surfaces in $\GL^+(2,\R)\cdot(X,\omega)$, that is for $(Y,\eta)=g\cdot(X,\omega), \, g \in \GL^+(2,\R)$. By assumption, there exists a periodic direction for $(X,\omega)$ which is not parabolic. Lemma~\ref{lm:key} and Corollary~\ref{cor:1} then imply that there exists $\eps >0$ such that $(X,\omega)+v \in \ol{\Orb}$ for any vector $v \in \R^2$ with $v\in \B(\varepsilon)$. It follows that for all $g \in \GL^+(2,\R)$, $g\cdot(X,\omega)+v \in \ol{\Orb}$ if $||v|| < \varepsilon||g^{-1}||^{-1}$. Thus there exist $\eps_0>0$ and a neighborhood ${\mathcal U}$ of $\Id$ in $\GL^+(2,\R)$  such that $g\cdot(X,\omega)+v \in \ol{\Orb}$, for any $(g,v)\in {\mathcal U}\times \B (\eps_0)$. 
But by Proposition~\ref{prop:local} the set $\{g\cdot(X,\omega) +v, \; (g,v) \in {\mathcal U}\times\B(\eps_0)\}$ is a neighborhood of $(X,\omega)$ in $\PrymD(\kappa)$. The claim is then proved for $(X,\omega)$ and hence for all $(Y,\eta)\in \Orb = \GL^+(2,\R)\cdot(X,\omega)$. \medskip

We now assume that $(Y,\eta)$ is not in the $\GL^+(2,\R)$-orbit of  $(X,\omega)$ and 
we let $(X_n,\omega_n) = g_n \cdot(X,\omega)$ be a sequence converging to $(Y,\eta)$ with $g_n \in\GL^+(2,\R)$.
For $n$ large enough by Proposition~\ref{prop:local} there exists a pair $(a_n,w_n)$, 
where $a_n\in \GL^+(2,\R)$ close to $\Id$, and $w_n\in \R^2$ with $|w_n|$ small, such that 
$(X_n,\omega_n)=a_n(Y,\eta) + w_n$. Hence, up to replacing $g_n$ by $a_n^{-1}g_n$, and up to taking a subsequence, 
we can assume that for $(X_n,\omega_n)=(Y,\eta) + v_n$ where $v_n = a_n^{-1}w_n$  satisfy $v_n \rightarrow 0$ as $n \rightarrow \infty$. Without loss of generality, we also assume that the horizontal direction is completely periodic on $Y$. 

By Propositions~\ref{prop:stable:dec} and~\ref{prop:cyl:dec:unstable}, we can choose $\eps >0$ such that for all 
$v=(s,t) \in \B(\eps)$ the surface $(Y,\eta)+v$ also admits a cylinder decomposition in the horizontal direction.
When $t\neq 0$ this decomposition is stable with combinatorial data  depending only on the sign of $t$. 
We can assume that $v_n\in \B(\eps)$.

Now, since $(X_n,\omega_n) \in \GL^+(2,\R)\cdot(X,\omega)$, we know that $\ol{\Orb}$ contains a neighborhood of $(X_n,\omega_n)$ by the argument above, in particular, for each $n$ there exists $\eps_n>0$ such that $(X_n,\omega_n)+ v \in \overline{\Orb}$  for any 
$v\in \B(\varepsilon_n)$. Note that $(X_n,\omega_n)+v=(Y,\eta)+v_n+v$. For each $n$ choose a $\delta_n \in (0,\eps_n)$ small enough such that
\begin{itemize}
 \item[(a)] $u_n=v_n+(0,\delta_n) \in \B(\eps)$.
 \item[(b)] If $v_n=(s_n,t_n)$ with $t_n \neq 0$, then $t_n+\delta_n$ and $t_n$ have the same sign.
 \item[(c)] The horizontal direction is not parabolic for $(X'_n,\omega'_n)=(X_n,\omega_n)+(0,\delta_n)=(Y,\eta)+u_n$.
 \item[(d)] $\delta_n \rightarrow 0$ as $n \rightarrow \infty$.
\end{itemize}
By definition, we have $(X'_n,\omega'_n) \in \ol{\Orb}$, and $(X'_n,\omega'_n)$ converges to $(Y,\eta)$. Since the horizontal direction is not parabolic for $(X'_n,\omega'_n)$, it follows from Lemma~\ref{lm:key} that $(X'_n,\omega'_n)+ (s,0) \in \ol{\Orb}$ for any $s \in (-\eps,\eps)$. Hence passing to the limit as $n$ tends to infinity, we get that
$$
(Y,\eta)+(s,0) \in \overline{\GL^+(2,\R)\cdot (X,\omega)} \qquad \textrm{ for all }|s| < \varepsilon.
$$ 
Corollary~\ref{cor:1} then implies the theorem.
\end{proof}

\section{Proof of Theorem~\ref{theo:main}}
\label{sec:proof:main:th}

In this section we complete the proof of Theorem~\ref{theo:main} in full generality, 
namely without the assumption that the orbit $\mathcal O := \GL^+(2,\R)\cdot (X,\omega)$ 
is not closed ``for the most obvious reason''. However our proof says nothing about the 
converse of this assumption, {\em i.e.} the following question remains open in our setting:

\begin{Question}
For an orbit $\mathcal O := \GL^+(2,\R)\cdot (X,\omega)$, does the property of being not closed 
is equivalent to be not closed ``for the most obvious reason''?
\end{Question}


\begin{proof}[Proof of Theorem~\ref{theo:main}]
We first begin by fixing some notations and  normalization. As usual, 
let $(X,\omega)\in \Omega E_D(\kappa)$ and let us assume that $\mathcal O := \GL^+(2,\R)\cdot (X,\omega)$ 
is not closed.  Let $(Y,\eta)\in \overline{\mathcal O}\ \backslash\ \mathcal O$ be some translation surface in the orbit closure, but not in the orbit itself.
\begin{Claim}
\label{claim:1}
There exists a sequence $(X_n,\omega_n)$, where $(X_n,\omega_n)= (Y,\eta) + v_n \in \mathcal O$ and $v_n=(x_n,y_n)$, 
that converges to $Y$ so that $y_n\not =0$ for every $n$.
In addition one can always make the assumption that the horizontal direction on $Y$ is 
completely periodic.
\end{Claim}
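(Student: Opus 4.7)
The plan is to exploit the $\GL^+(2,\R)$-invariance of both $\overline{\mathcal O}$ and $\mathcal O$, which renders the difference $\overline{\mathcal O}\setminus\mathcal O$ invariant as well, so that we may freely replace $(Y,\eta)$ by any rotation $r_{\phi}(Y,\eta)\in\overline{\mathcal O}\setminus\mathcal O$ without losing the hypotheses. This flexibility will allow us to achieve both conclusions of the claim simultaneously.

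The first step is to secure the periodicity of the horizontal direction: by Theorem~\ref{thm:cp}, $(Y,\eta)$ admits a completely periodic direction $\theta_0$, and after the substitution $(Y,\eta)\leftarrow r_{-\theta_0}(Y,\eta)$ this direction becomes horizontal. Next, since $(Y,\eta)\in \overline{\mathcal O}$, there exists $p_n\in \mathcal O$ with $p_n\to (Y,\eta)$; Proposition~\ref{prop:local} then writes uniquely $p_n=a_n\cdot(Y,\eta)+w_n$ with $(a_n,w_n)\to (\Id,0)$. Putting $(X_n,\omega_n):=a_n^{-1}\cdot p_n\in\mathcal O$ gives $(X_n,\omega_n)=(Y,\eta)+v_n$ with $v_n:=a_n^{-1}w_n\to 0$. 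If the second coordinate $y_n$ of $v_n$ is nonzero along a subsequence, that subsequence finishes the argument.

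The hard part is the opposite case, in which every sequence from $\mathcal O$ converging to $(Y,\eta)$ does so along the horizontal kernel foliation direction; equivalently, the set $S:=\{v\in\B(\eps)\ :\ (Y,\eta)+v\in\mathcal O\}$ is contained in $\R\times\{0\}$ near the origin. To handle this I would use the commutation identity $g\cdot((Z,\zeta)+u)=g\cdot(Z,\zeta)+g\cdot u$ between the $\GL^+(2,\R)$-action and the kernel foliation, which is an immediate consequence of the period interpretation used in the proof of Proposition~\ref{prop:local}. It implies that for $(Y',\eta'):=r_{-\phi}(Y,\eta)$ the analogous set transforms by $S'=r_{-\phi}S$. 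Combining Theorem~\ref{thm:cp} with the classical density in $S^1$ of directions carrying a closed geodesic on any translation surface of genus at least one, the surface $(Y,\eta)$ admits infinitely many periodic directions; picking $\phi\not\equiv 0\pmod{\pi}$ among them and replacing $(Y,\eta)$ by $r_{-\phi}(Y,\eta)\in\overline{\mathcal O}\setminus\mathcal O$ yields a point whose horizontal direction is periodic and whose $S'$ is contained in a line through $0$ distinct from $\R\times\{0\}$. Every nonzero element of $S'$ then has nonzero second coordinate, so any sequence in $S'\setminus\{0\}$ accumulating at $0$ provides the desired $(X_n,\omega_n)=(Y',\eta')+v_n$ with $y_n\neq 0$.
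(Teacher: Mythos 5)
Your proof is correct and follows essentially the same route as the paper's. Both arguments (i) produce a sequence $(X_n,\omega_n)=(Y,\eta)+v_n\to (Y,\eta)$ via Proposition~\ref{prop:local}, (ii) note that if the $y$-component vanishes eventually then $v_n=(x_n,0)$ with $x_n\neq 0$ (else $(Y,\eta)\in\mathcal O$), and (iii) rotate so that a second periodic direction becomes horizontal, after which the rotated $v_n$'s acquire a nonzero $y$-component; the only cosmetic difference is that you package step (iii) through the set $S$ and the $\GL^+(2,\R)$-equivariance of the kernel foliation rather than writing out the rotation of the sequence directly, and you pick a periodic direction at angle $\phi\not\equiv 0\ (\mathrm{mod}\ \pi)$ rather than first normalizing the transverse periodic direction to be vertical and rotating by $R_{\pi/2}$, which is what the paper does.
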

\begin{proof}[Proof of the claim]
We choose a sequence $(X_{n},\omega_n)\in \mathcal O$ converging to $(Y,\eta)$. As in the proof of Theorem~\ref{theo:weaker} 
we can assume that $(X_{n},\omega_n) = (Y,\eta)+v_{n}$ where $v_n=(x_n,y_n)$ and $v_{n}\in \B(\varepsilon)$.

Again, up to replace $Y$ by $R_\theta \cdot Y$ for some suitable $\theta$, without loss of generality, we will 
also assume that the horizontal direction is completely periodic on $Y$. If $y_n \not = 0$ infinitely 
often then the claim follows by taking a subsequence. Otherwise we assume that $y_n=0$ for every $n>N$.
We choose another (transverse) completely periodic direction on $Y$ (that we can assume to be vertical,
up to the action by some matrix $R_\theta$). Then 
up to replace $(Y,\eta)$ and $(X_n,\omega_n)$ respectively by $R_{\pi/2} \cdot (Y,\eta)$ and $R_{\pi/2} \cdot (X_n,\omega_n)$ the claim
is proved (otherwise $x_n= 0$ for $n$ large enough, thus $(Y,\eta)=(X_n,\omega_n)\in \mathcal O$ that is a contradiction 
to our assumption).
\end{proof}

We choose some $\eps>0$ so that for any $v=(x,y) \in \R^2$, if $v\in \B(\varepsilon)$ then the horizontal direction on $(Y,\eta)+v$ is periodic, and the cylinder decomposition is stable if $y\neq 0$. 
We can assume that  $v_n\in \B(\varepsilon)$ and $y_n>0$ for all $n$, which implies that the combinatorial data of the cylinder decomposition in the horizontal direction of $(X_n,\omega_n)$ are the same for all $n$. Finally we also assume that {\it all} the horizontal directions on $X_n$ are parabolic (otherwise we are done by
Theorem~\ref{theo:weaker}). \medskip

We sketch the idea of the proof. It makes use of the horocycle flow $u_s$ acting on
$X_n$. The key is to show that the actions of the kernel foliation and $u_{s}$ coincide for a subsequence.
\begin{enumerate}
\item Since {\em all} the horizontal directions on $X_n$ are parabolic, we will show that  it is always possible to 
find a ``good time'' $s_{n}$ so that $u_{s_{n}}\cdot X_{n} = X_{n} + (x_n,0)$ for some vector $(x_n,0)\in \R^2$.
\item One can arrange that $(x_{n},0)$ converges to some arbitrary  vector, say $(x,0)\in \R^{2}$, with $|x|$ small.
\end{enumerate}
These two facts correspond, respectively, to Claim~\ref{claim:2} and Claim~\ref{claim:3} below. Once we achieve this, passing to the limit as $n\to \infty$, we get
$$
u_{s_{n}} \cdot (X_n,\omega_n) = (X_{n},\omega_n) + (x_{n},0) \longrightarrow (Y,\eta) + (x,0).
$$
In other words $(Y,\eta) + (x,0) \in \overline{\mathcal O}$ for all  $x\in (-\eps',\eps')$.  Then Corollary~\ref{cor:1} applies and this gives some $\eps'' >0$ so that 
$(Y,\eta) +v \in \overline{\mathcal O}$ for any $v\in \B(\varepsilon'')$ which proves the theorem. \medskip

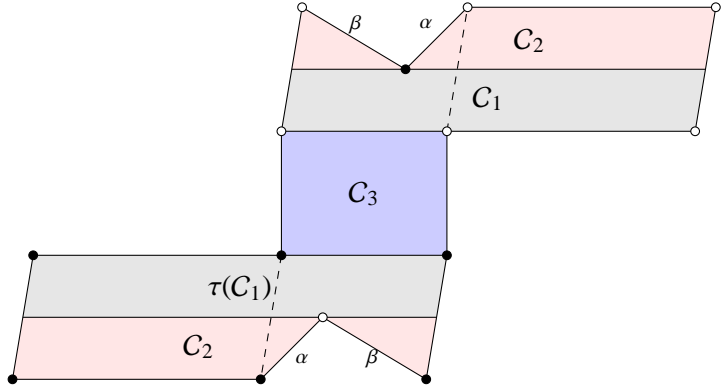
\begin{figure}[htbp]

\begin{minipage}[t]{0.9\linewidth}
\centering
\begin{tikzpicture}[scale=1.1]

\fill[fill=blue!20] (0,0.5) -- (2,0.5) -- (2,2) -- (0,2) -- (0,0);
\fill[fill=gray!20] (0,2) -- (5,2) -- (5.125,2.75) -- (0.125,2.75) -- (0,2);
\fill[fill=gray!20] (2,0.5) -- (-3,0.5) -- (-3.125,-0.25) -- (1.875,-0.25) -- (2,0.5);
\fill[fill=red!10] (0.125,2.75) -- (1.5,2.75) -- (0.25,3.5) -- (0.125,2.75);
\fill[fill=red!10] (1.5,2.75) -- (5.125,2.75) -- (5.25,3.5) -- (2.25,3.5) -- (1.5,2.75);
\fill[fill=red!10] (-3.125,-0.25) -- (0.5,-0.25) -- (-0.25,-1) -- (-3.25,-1) -- (-3.125,-0.25);
\fill[fill=red!10] (1.875,-0.25) -- (0.5,-0.25) -- (1.75,-1) -- (1.875,-0.25);

\draw (0,0.5) -- (2,0.5) -- (2,2) -- (5,2) -- (5.25,3.5) -- (2.25,3.5) -- (1.5,2.75) -- (0.25,3.5) -- (0,2) -- (0,0.5)
-- (-3,0.5) -- (-3.25,-1) -- (-0.25,-1) -- (0.5,-0.25) -- (1.75,-1) -- (2,0.5);

\draw (0.125,2.75) -- (5.125,2.75);
\draw (-3.125,-0.25) -- (1.875,-0.25);
\draw[-, dashed] (2,2) -- (2.25,3.5);
\draw[-, dashed] (0,0.5) -- (-0.25,-1);
\draw (0,2) -- (2,2);
\foreach \x in {(0,2),(2,2),(5,2),(5.25,3.5),(2.25,3.5),(0.25,3.5),(0.5,-0.25)} \filldraw[fill=white] \x circle(1.5pt);
\foreach \x in {(1.5,2.75),(0,0.5),(2,0.5),(-3,0.5),(-3.25,-1),(-0.25,-1),(1.75,-1)} \filldraw[fill=black] \x circle(1.5pt);

\draw (1,1.25) node {$\mathcal C_3$};
\draw (2.50,2.40) node {$\mathcal C_1$};
\draw (-0.5,0.1) node {$\inv(\mathcal C_1)$};
\draw (3,3.1) node {$\mathcal C_2$};
\draw (-1,-0.6) node {$\mathcal C_2$};

\foreach \x in {(1.75,3.3),(0.25,-0.75)} \draw \x node {$\scriptstyle \alpha$};
\foreach \x in {(0.90,3.3),(1.10,-0.75)} \draw \x node {$\scriptstyle \beta$};
\end{tikzpicture}
\end{minipage}

\caption{Complete periodic decomposition into four cylinders of $(X_n,\omega_n)=(Y,\eta)+v_n$ near $(Y,\eta)\in \PrymD(2,2)$
where $v_n=\int_\alpha \omega$.
The cylinders $\mathcal C_2$ and $\mathcal C_3$ are fixed by the Prym involution $\inv$, while 
the cylinders $\mathcal C_1$ and $\inv(\mathcal C_1)$ are exchanged. When $v_n \rightarrow 0$ the 
cylinder $\mathcal C_2$ is destroyed, while $\CCC_3$ is remains in the limit (here 
we have assumed that $h_3 > h_2$).
\label{fig:decomposition}
}
\end{figure}


Remark that a stable cylinder decomposition may have $3$ or $2$ cylinders up to permutation by the Prym involution, where the latter case only occurs when $D$ is a square. In what follows, we will only give the proof for the case where we have $3$ cylinders since the other case can be proved with similar ideas and simpler arguments. \medskip

We now explain how to construct the sequence $(s_n)_{n\in \N}$. As usual, the cylinders on $X_n$ are
denoted by $\CCC^{(n)}_i, i=1,\dots,k$ (the numbering is such that for every $i\in\{1,2,3\}$, 
$\CCC^{(n)}_j = \inv(\CCC^{(n)}_i)$ implies $j=i$ or $j>3$). The width, height, twist, and modulus of $\CCC^{(n)}_i$ are denoted by $w^{(n)}_i,h^{(n)}_i,t^{(n)}_i,\mu^{(n)}_i$ respectively. Recall that by Proposition~\ref{prop:stable:dec} and Proposition~\ref{prop:cyl:dec:unstable}, we have $ w^{(n)}_i$ does not depend on $n$, therefore we can write $w^{(n)}_i=w_i$. Let us define
$$
h_i^\infty=\lim_{n\rightarrow \infty} h^{(n)}_i.
$$
Since the cylinder decomposition of $X_n$ is stable, we can associate to  each family of cylinders $(\CCC^{(n)}_i)_n$ a coefficient $\alpha_i \in \{0,\pm 1/2,\pm 1\}$.
Recall that the kernel foliation action of a vector  $v=(x,y)$ changes the height $h^{(n)}_i$ of $\CCC^{(n)}_i$ to $h^{(n)}_i+\alpha_i y$, hence we can write 
$$
h^{(n)}_i=h^\infty_i+\alpha_i y_n.
$$
Note that the horizontal direction on $Y$ is not necessarily stable, some horizontal cylinders on 
$X_n$ can be destroyed in the limit (as $n$ tends to infinity). Therefore, some of the limits $h^\infty_i$ may be zero. However, there is at least one 
cylinder that remains in the limit, say it is $\mathcal C^{(n)}_3$ (see Figure~\ref{fig:decomposition}
where the cylinder $\mathcal C^{(n)}_2$ is destroyed when performing the kernel foliation). 
Actually, since $(X_n,\omega_n)$ stays in a neighborhood of $(Y,\eta)$, all the cylinders of $(Y,\eta)$ persist in $(X_n,\omega_n)$. Thus, the number of horizontal cylinders of $(X_n,\omega_n)$ is always greater than $(Y,\eta)$. We  denote by $\CCC_3$ the cylinder on $Y$ corresponding to $\CCC^{(n)}_3$
on $X_n$, then the height of $\CCC_3$ is $h^\infty_3$. In particular, we have $h^\infty_3>0$. 

From Lemma~\ref{lm:cyl:dec:3class:rel}, Equation~\eqref{eq:w:relation}, we have
$$
\sum_{i=1}^3 \beta_iw_i\alpha_i=0.
$$
Since all the $\alpha_i$ can not vanish (otherwise for all $i \in \{1,\dots,k\}$ the upper and lower boundaries of $\CCC^{(n)}_i$ contain the same zero, which means that $\omega$ has only one zero), Equation \eqref{eq:w:relation} implies that there exist $i,j$ in $\{1,2,3\}$ such that $\alpha_i$ and $\alpha_j$ are non zero and have opposite signs. In particular, there exists $i \in \{1,2,3\}$ such that $\alpha_i\neq 0$ and $\alpha_i$ has the opposite sign to $\alpha_3$ if $\alpha_3\neq 0$.   In what follows we suppose that $\alpha_1$ satisfies this condition. By a slight abuse of language, we will say that $\alpha_1$ and $\alpha_3$ have opposite signs. In particular, $(t_1^{(n)},h_1^{(n)})$ is a relative coordinate.  For the surface in Figure~\ref{fig:decomposition:5:cylinders}, $\omega$ has three zeros and $(\alpha_1,\alpha_3) =(-1, 1/2)$, and for the one in Figure~\ref{fig:decomposition}, $\omega$ has two zeros and $(\alpha_1,\alpha_3) =(-1, 1)$. \medskip

Recall that, by Lemma~\ref{lm:cyl:dec:3class:rel}, we know that there exists $(r_1,r_2,r_3)\in \Q^3\setminus \{(0,0,0)\}$ such that
$$
r_1\mu^{(n)}_1+r_2\mu^{(n)}_2+r_3\mu^{(n)}_3=0 \quad \text{ and } \quad r_1\frac{\alpha_1}{w_1}+r_2\frac{\alpha_2}{w_2}+r_3\frac{\alpha_3}{w_3}=0.
$$
Obviously, we can assume that $(r_1,r_2,r_3)\in \Z^3$. Note that $(r_1,r_2,r_3)$ does not depend on $n$. Set $\mu^\infty_i=h^\infty_i/w_i$, by continuity we have
$$
r_1\mu^\infty_1+r_2\mu^\infty_2+r_3\mu^\infty_3=0.
$$
\begin{Claim}
 \label{claim:1b}
 We have $r_2 \neq 0$.
\end{Claim}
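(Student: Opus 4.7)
The plan is to argue by contradiction. Suppose $r_2=0$; then the two relations supplied by Lemma~\ref{lm:cyl:dec:3class:rel} reduce to
$$
r_1\mu_1^{(n)} + r_3\mu_3^{(n)} = 0 \qquad \text{and} \qquad \frac{r_1\alpha_1}{w_1} + \frac{r_3\alpha_3}{w_3} = 0,
$$
and the goal is to show that these together force $(r_1,r_3)=(0,0)$, contradicting the non-triviality of $(r_1,r_2,r_3)$.

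First I would pass to the limit $n\to\infty$ in the first relation. Since the widths $w_i$ are independent of $n$ (by Propositions~\ref{prop:stable:dec} and~\ref{prop:cyl:dec:unstable}) and $h_i^{(n)}\to h_i^\infty$, this yields $r_1\mu_1^\infty + r_3\mu_3^\infty = 0$ with $\mu_i^\infty=h_i^\infty/w_i$. The crucial input at this point is that the cylinder $\CCC_3$ persists in the limit, so $\mu_3^\infty=h_3^\infty/w_3>0$, while $\mu_1^\infty\geq 0$.

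I would then split into two cases according to the value of $\mu_1^\infty$. If $\mu_1^\infty=0$, the limit relation forces $r_3=0$; substituting into the second relation gives $r_1\alpha_1/w_1=0$, and since $\alpha_1\neq 0$ this forces $r_1=0$, so $(r_1,r_2,r_3)=(0,0,0)$, contradicting Lemma~\ref{lm:cyl:dec:3class:rel}. If on the other hand $\mu_1^\infty>0$, then both $r_1$ and $r_3$ must be non-zero with opposite signs, i.e.\ $r_1/r_3<0$. If in addition $\alpha_3=0$, the second relation reads $r_1\alpha_1/w_1=0$, forcing $r_1=0$, a contradiction. Otherwise $\alpha_3\neq 0$ and, by the choice of $\alpha_1$ made just before the statement of the claim, $\alpha_1\alpha_3<0$; solving the second relation gives $r_1/r_3 = -(\alpha_3 w_1)/(\alpha_1 w_3)>0$, contradicting $r_1/r_3<0$.

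No genuine obstacle appears here: the argument is really just a combination of the two linear relations of Lemma~\ref{lm:cyl:dec:3class:rel} with the single topological piece of information we have about the degeneration, namely that at least one horizontal cylinder, labelled $\CCC_3$, survives in $(Y,\eta)$. The only point that requires mild care is the degenerate possibility $\alpha_3=0$, which must be disposed of separately to avoid dividing by zero; once that case is handled, the sign analysis of $r_1/r_3$ closes the argument.
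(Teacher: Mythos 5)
Your proof is correct, but it is more roundabout than the paper's. The paper works directly with the $2\times 2$ linear system at any fixed $n$: since $\mu_1^{(n)}>0$ and $\mu_3^{(n)}>0$ (both cylinders are genuine cylinders on $X_n$) and $\alpha_1\alpha_3\le 0$ with $\alpha_1\neq 0$, the coefficient matrix $\left(\begin{smallmatrix}\mu_1^{(n)}&\mu_3^{(n)}\\ \alpha_1/w_1&\alpha_3/w_3\end{smallmatrix}\right)$ has nonzero determinant, so $(r_1,r_3)=(0,0)$ immediately. You instead pass to the limit $n\to\infty$, which introduces the unnecessary possibility $\mu_1^\infty=0$ and thus forces a case split that the paper avoids entirely by staying at finite $n$. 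The extra input you invoke (persistence of $\CCC_3$ in the limit, i.e.\ $\mu_3^\infty>0$) is true and established earlier in the proof, but it is not needed here; positivity of the moduli at any fixed $n$ already suffices. One small expository gap: in your case $\mu_1^\infty>0$ you assert that $r_1$ and $r_3$ ``must be non-zero with opposite signs,'' but of course $(r_1,r_3)=(0,0)$ also solves the first relation --- you should say that this case yields the contradiction directly, and then treat the nonzero case. With that caveat, your sign analysis (including the careful handling of $\alpha_3=0$) is sound.
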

\begin{proof}
Suppose that $r_2=0$, we have then 
$$
\left\{ \begin{array}{ccl} r_1\mu^{(n)}_1+r_3\mu^{(n)}_3 & = & 0 \\ r_1 \cfrac{\alpha_1}{w_1} + r_3\cfrac{\alpha_3}{w_3} & = & 0 \end{array} \right.
$$
\noindent Since $\mu^{(n)}_i>0, w_i>0$, and $\alpha_1\alpha_3\leq 0$, this system with unknowns $(r_1,r_3)$ has a unique solution $r_1=r_3=0$. Thus we have a contradiction. 
\end{proof}

From now on, we fix an integral vector $(r_1,r_2,r_3) \in \Z^3$ satisfying Equation~\eqref{eq:rat:rel:mod} and Equation~\eqref{eq:rat:rel:coeff}, with $r_2\neq 0$.


\begin{Claim}
\label{claim:2}
Let $(X,\omega)\in \Omega E_D(\kappa)$ be a surface which admits the same cylinder decomposition as $X_n$ in the horizontal 
direction. We denote by $\CCC_i$ the cylinder in $X$ which corresponds to the cylinder $\CCC^{(n)}_i$ of $X_n$. Let $w_i,h_i,t_i,\mu_i$ be the parameters of $\CCC_i$.
With the notations as above, given two integers $k_1,k_3$, if the real numbers $s$ and $x(s)$ satisfy
\begin{equation}
\label{eq:t(s)}
x(s): = \frac1{\alpha_3}(sh_{3}-r_2k_{3}w_{3}) = \frac1{\alpha_1}( sh_{1} - r_2k_{1}w_{1})
\end{equation}
then $ u_{s} \cdot X = X + (x(s),0)$. 
\end{Claim}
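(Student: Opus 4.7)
The approach is to compare, cylinder by cylinder, how $u_s$ and the horizontal kernel foliation $X\mapsto X+(x(s),0)$ act on the flat parameters. Both operations preserve the topological gluing, the widths, and the heights of the cylinders (for $u_s$ this is standard, and for the kernel foliation it follows from Lemma~\ref{lm:ker:hor:twist} since the translation is horizontal). They differ only on the twists: by \eqref{eq:horo:twist}, $u_s$ shifts $t_i$ by $s h_i\pmod{w_i}$, while the horizontal kernel foliation shifts $t_i$ by $\alpha_i x(s)\pmod{w_i}$. Thus the equality $u_s\cdot X=X+(x(s),0)$ reduces to verifying the congruences
\[ sh_i-\alpha_i x(s)\in w_i\Z \qquad\text{for every cylinder }\CCC_i. \]

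First I would dispatch the cylinders $\CCC_i$ with $i>3$: each is the image under $\inv$ of one of $\CCC_1,\CCC_2,\CCC_3$, so it shares the same width, height and coefficient~$\alpha_i$ as its partner. Since $\inv$ is compatible both with the horocycle flow (which acts through $\GL^+(2,\R)$) and with the kernel foliation (Proposition~\ref{prop:preserves}), the congruence for such $\CCC_i$ is automatic once it holds for the corresponding cylinder in $\{\CCC_1,\CCC_2,\CCC_3\}$. So the whole claim reduces to the three congruences for $i=1,2,3$.

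Next, the cases $i=1$ and $i=3$ are tautological: the defining equations \eqref{eq:t(s)} can be rewritten as $sh_1-\alpha_1 x(s)=r_2 k_1 w_1\in w_1\Z$ and $sh_3-\alpha_3 x(s)=r_2 k_3 w_3\in w_3\Z$, which is precisely the shape of Equation~\eqref{eq:t(s)}; this is the reason $x(s)$ is defined the way it is. The only genuine content of the claim is therefore the middle cylinder $\CCC_2$, and here the idea is to extract the required congruence from the two rational relations provided by Lemma~\ref{lm:cyl:dec:3class:rel}. Concretely, I would take $\frac{r_1}{w_1}$ times the $i=1$ equation plus $\frac{r_3}{w_3}$ times the $i=3$ equation; the $s$-coefficient of the sum is $r_1\mu_1+r_3\mu_3$, which by \eqref{eq:rat:rel:mod} equals $-r_2\mu_2$, and the $x(s)$-coefficient is $r_1\tfrac{\alpha_1}{w_1}+r_3\tfrac{\alpha_3}{w_3}$, which by \eqref{eq:rat:rel:coeff} equals $-r_2\tfrac{\alpha_2}{w_2}$. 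Invoking Claim~\ref{claim:1b} to divide by $r_2\neq 0$ and then multiplying by $w_2$ yields
\[ sh_2-\alpha_2 x(s)=-w_2(r_1 k_1+r_3 k_3)\in w_2\Z, \]
as required.

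The main delicate point, rather than an obstacle, is simply making sure that the same integral triple $(r_1,r_2,r_3)$ from Lemma~\ref{lm:cyl:dec:3class:rel} works for the surface $X$: this is legitimate because the two relations used above depend only on the moduli, widths and coefficients $\alpha_i$ attached to the combinatorial decomposition, and $X$ is assumed to carry exactly the same decomposition as $X_n$. No further analysis is needed; the whole argument is a direct two-equations-two-unknowns manipulation once the reduction to $i=1,2,3$ is in place.
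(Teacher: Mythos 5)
Your proof is correct and follows essentially the same route as the paper's: verify the twist congruence $sh_i-\alpha_i x(s)\in w_i\Z$, noting $i=1,3$ hold by definition of $x(s)$, and derive $i=2$ by combining the two rational relations \eqref{eq:rat:rel:mod}--\eqref{eq:rat:rel:coeff} and dividing by $r_2\neq0$ via Claim~\ref{claim:1b}. The only addition you make — explicitly dispatching cylinders with $i>3$ by $\inv$-symmetry — is something the paper leaves implicit but is a reasonable clarification.
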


\begin{Remark}
 If $\alpha_3=0$, we replace Equation~\eqref{eq:t(s)} by the following system
 $$
 \left\{ \begin{array}{ccl} sh_3 & = & r_2k_3w_3 \\ x(s) & = &\cfrac{sh_1-r_2k_1w_1}{\alpha_1}. \end{array} \right.
 $$
\end{Remark}

\begin{proof}[Proof of the claim]
On one hand, the kernel foliation $X+(x,0)$, for small values of $x$, maps the twist of the cylinder 
$\mathcal C_i$ to $t_i(x)=t_i  + \alpha_ix$. On the other hand, the action of $u_s$ on the cylinder 
$\mathcal C_i$ maps the twist $t_i$ to the twist $\widetilde{t}_i=t_i + sh_i \mod w_i$.  Equation~\eqref{eq:t(s)} implies
$$
sh_1=\alpha_1x(s)+r_2k_1w_1 \quad \text{ and } \quad sh_3=\alpha_3x(s) + r_2k_3w_3
$$
which is equivalent to
$$\left\{ \begin{array}{ccl}
 s\mu_1 & = & \cfrac{\alpha_1}{w_1}x(s) +r_2k_1\\
 s\mu_3 & = & \cfrac{\alpha_3}{w_3}x(s) +r_2k_3
\end{array}
\right.
$$
Hence,  the twist of the first cylinder of $u_s\cdot X$ is $\widetilde{t}_i=t_i + \alpha_ix(s) \mod w_i$, for $i\in\{1,3\}$. It remains to show that $sh_2=\alpha_2x(s) \mod w_2$. Using Equation~\eqref{eq:rat:rel:mod} and Equation~\eqref{eq:rat:rel:coeff}, we have
$$
-r_2s\mu_2 =-r_2\frac{\alpha_2}{w_2}x(s)+r_2(r_1k_1+r_3k_3).
$$
It follows
$$
sh_2=\alpha_2 x(s)-(r_1k_1+r_3k_3)w_2.
$$
Thus we can conclude that $u_s\cdot(X,\omega)=(X,\omega)+(x(s),0)$.
\end{proof}
Equation~\eqref{eq:t(s)} above reads
\begin{equation}\label{eq:define:sn}
s =r_2\frac{w_{1}k_{1}\alpha_3 - w_{3}k_{3}\alpha_1}{h_{1}\alpha_3-h_{3}\alpha_1}. 
\end{equation}
Note that since $\alpha_1$ and $\alpha_3$ have opposite signs, Equation~\eqref{eq:define:sn} always has a solution.
Reporting this last equation into~\eqref{eq:t(s)}, we derive the new relation:
$$
x(s) =\frac{r_2}{\alpha_3} \left( \frac{w_{1}k_{1}\alpha_3 - w_{3}k_{3}\alpha_1}{h_{1}\alpha_3-h_{3}\alpha_1} h_{3}-k_{3}w_{3} \right) 
= ... = \cfrac{r_2h_{3}w_{1}}{h_{1}\alpha_3 - h_{3}\alpha_1} \left( k_{1} - \cfrac{\mu_{1}}{\mu_{3}}\ k_{3} \right).
$$
We now make the additional assumption  that the horizontal direction is parabolic, \ie the moduli $\mu_i$ are 
all commensurable. We thus write the last expression as:
$$
x(s) = \cfrac{r_2h_{3}w_{1}}{h_{1}\alpha_3 - h_{3}\alpha_1} \left( k_{1} - \cfrac{p}{q}\ k_{3} \right),  \textrm{ where }\
\cfrac{p}{q} = \cfrac{\mu_{1}}{\mu_{3}}\in \Q.
$$
We perform this calculation for each surface $X_n$, so that we get a sequence
\begin{equation}
\label{eq:defining:t}
x_{n} = \cfrac{r_2h^{(n)}_{3}w^{(n)}_{1}}{h^{(n)}_{1}\alpha_3 - h^{(n)}_{3}\alpha_1} \left( k^{(n)}_{1} - \cfrac{p^{(n)}}{q^{(n)}}\ k^{(n)}_{3} \right),
\end{equation}
where $(p^{(n)},q^{(n)})\in \Z^2$ and $\gcd(p^{(n)},q^{(n)})=1$. 
We want to choose suitable pair of integers $(k^{(n)}_{1},k^{(n)}_{3})\in \Z^2$ in order 
to make the sequence $(x_n)_n$ converging to some arbitrary $x$. Let $c_{\rm min}$ be the length of the smallest horizontal saddle connection in $(Y,\eta)$

\begin{Claim}
\label{claim:3}
For any $x \in (-c_{\rm min}, c_{\rm min})$, there exists $(k^{(n)}_{1},k^{(n)}_{2})\in \Z^2$ such that if $x_n$ 
is defined by~\eqref{eq:defining:t} then
$$
\left| x_n - x \right|  <  \cfrac{C}{q^{(n)}},
$$
where $C$ is a constant independent of $n$.
\end{Claim}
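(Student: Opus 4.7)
The plan is to reduce Claim~\ref{claim:3} to a one-dimensional Diophantine approximation, exploiting the coprimality $\gcd(p^{(n)},q^{(n)})=1$. Introduce the shorthand
\[
A_n \;:=\; \frac{r_2\, h_3^{(n)} w_1}{h_1^{(n)}\alpha_3 - h_3^{(n)}\alpha_1},
\]
so that equation~\eqref{eq:defining:t} reads $x_n = A_n \bigl(k_1^{(n)} - \tfrac{p^{(n)}}{q^{(n)}}\, k_3^{(n)}\bigr)$.

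The first step is to check that $(A_n)$ is uniformly bounded. The numerator tends to the finite limit $r_2 h_3^\infty w_1$. For the denominator, recall that $\alpha_1\neq 0$ and that $\alpha_1,\alpha_3$ have opposite signs in the weak sense $\alpha_1\alpha_3 \le 0$. Since $h_1^{(n)}, h_3^{(n)}\ge 0$ with $h_3^{(n)} \to h_3^\infty > 0$, the two terms $h_1^{(n)}\alpha_3$ and $-h_3^{(n)}\alpha_1$ share the same (weak) sign; their absolute values add, and the denominator is therefore bounded below in modulus by $|h_3^{(n)}\alpha_1|$, which converges to $h_3^\infty |\alpha_1|>0$. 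Thus $A_n$ converges to a finite nonzero limit, and in particular $|A_n| \le 2C$ for some constant $C$ independent of $n$.

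The second step is the Diophantine approximation itself. Since $\gcd(p^{(n)},q^{(n)})=1$, Bézout's identity implies that the map $(k_1,k_3) \longmapsto q^{(n)}k_1 - p^{(n)}k_3$ is a surjection $\Z^2 \twoheadrightarrow \Z$. Given the target $x \in (-c_{\rm min}, c_{\rm min})$, I choose $(k_1^{(n)}, k_3^{(n)}) \in \Z^2$ so that the integer $m_n := q^{(n)}k_1^{(n)} - p^{(n)}k_3^{(n)}$ is a nearest integer to $q^{(n)}x/A_n$, which gives $|m_n - q^{(n)}x/A_n| \le 1/2$. Dividing by $q^{(n)}$ and multiplying by $|A_n|$ then yields
\[
|x_n - x| \;=\; |A_n|\cdot\Bigl|\tfrac{m_n}{q^{(n)}} - \tfrac{x}{A_n}\Bigr| \;\le\; \tfrac{|A_n|}{2q^{(n)}} \;\le\; \tfrac{C}{q^{(n)}},
\]
which is the desired bound.

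No part of this argument is difficult; the only point requiring genuine care is the lower bound on the denominator of $A_n$, which rests crucially on the sign-opposite property of $\alpha_1$ and $\alpha_3$ established earlier via Equation~\eqref{eq:w:relation}. The exceptional case $\alpha_3 = 0$ (handled via the Remark following Claim~\ref{claim:2}) fits the same template: the equation $sh_3^{(n)} = r_2 k_3^{(n)} w_3$ pins down $s$, $A_n$ reduces to the constant $-r_2 w_1/\alpha_1$, and the identical Bézout argument applies.
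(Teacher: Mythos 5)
Your argument is correct and follows the same strategy as the paper's: exploit $\gcd(p^{(n)},q^{(n)})=1$ to approximate the target by an integer linear combination $k_1^{(n)}-\tfrac{p^{(n)}}{q^{(n)}}k_3^{(n)}$ to within $O(1/q^{(n)})$, then bound the prefactor $A_n$ using the opposite-sign property of $\alpha_1$ and $\alpha_3$ and the convergence $h_3^{(n)}\to h_3^\infty>0$. Your rendering via Bézout and the nearest-integer estimate is a slightly more explicit (and marginally sharper, by a factor $1/2$) version of the paper's one-line choice of $(k_1^{(n)},k_3^{(n)})$, and the only cosmetic point is that you obtain $\leq C/q^{(n)}$ rather than a strict inequality, which is absorbed by enlarging $C$.
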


\begin{proof}[Proof of the claim]
Let $x$ be as in the hypothesis. 
For each $n\in \N$, since $p^{(n)}$ and $q^{(n)}$ are co-prime, we can choose $(k^{(n)}_{1},k^{(n)}_{3})\in \Z^2$ 
such that
\begin{equation}
\label{eq:1}
\left| k^{(n)}_{1} - \cfrac{p^{(n)}}{q^{(n)}}\ k^{(n)}_{3}  - 
\cfrac{h^{(n)}_{1}\alpha_3 - h^{(n)}_{3}\alpha_1}{r_2h^{(n)}_{3}w^{(n)}_{1}}\ x \right| < \cfrac{1}{q^{(n)}}.
\end{equation}
As $n$ tends to infinity, the sequence $(h^{(n)}_{3})_n$ converges to $h^\infty_3$, $w^{(n)}_{1}$ is constant, $h^{(n)}_{1}\alpha_3 - h^{(n)}_{3}\alpha_1$ converges to a non-zero constant (since $\alpha_1$ and $\alpha_3$ have opposite signs), hence there exists some constant $C>0$ such that
\begin{equation}
\label{eq:2}
\cfrac{r_2h^{(n)}_{3}w^{(n)}_{1}}{h^{(n)}_{1}\alpha_3 - h^{(n)}_{3}\alpha_1} < C.
\end{equation}
From~\eqref{eq:1} and~\eqref{eq:2} we draw 
$$
\left| x_n - x \right|  <  \cfrac{C}{q^{(n)}}
$$
that is the desired inequality. The claim is proved.
\end{proof}
In order to conclude the proof of Theorem~\ref{theo:main}, one needs to show that $q^{(n)} \rightarrow \infty$.
Indeed, we then have that $x_{n} \longrightarrow x$ and since $x$ was arbitrary, by Claim~\ref{claim:2} 
this shows
$$
(Y,\eta) + (x,0) \in \overline{\Orb},\ \textrm{ for any } x \in (-c_{\rm min}, c_{\rm min}).
$$
Then Corollary~\ref{cor:1} applies and $Y$ has an open neighborhood in $\overline{\mathcal O}$,
which proves the theorem. \medskip

We now prove that $q^{(n)} \rightarrow \infty$. Recall that
$$
\cfrac{p^{(n)}}{q^{(n)}} = \cfrac{\mu^{(n)}_{1}}{\mu^{(n)}_{3}} = 
\cfrac{w^{(n)}_{3}}{w^{(n)}_{1}} \cdot \cfrac{h^{(n)}_{1}}{h^{(n)}_{3}} = 
\cfrac{w_{3}}{w_{1}} \cdot  \cfrac{h^\infty_1+\alpha_1y_n}{h^\infty_3+\alpha_3y_n}
$$
and $\gcd(p^{(n)},q^{(n)})=1$. Note that since $\alpha_1$ and $\alpha_3$ have opposite signs, $\cfrac{p^{(n)}}{q^{(n)}}$ cannot be a stationary sequence as $y_n$ tends to $0$. 
As $n$ tends to infinity, $p^{(n)}/q^{(n)}$ converges to $p^{\infty}/q^{\infty}=\cfrac{w_3h^\infty_1}{w_1h^\infty_3}$. But as we have seen $\DS{\cfrac{p^{(n)}}{q^{(n)}}}$ cannot be stationary, therefore there are infinitely many $n$ such that $p^{(n)}/q^{(n)} \not = p^{\infty}/q^{\infty}$ which implies that $q^{(n)} \rightarrow \infty$.
\end{proof}

In the remaining of this paper, we will apply Theorem~\ref{theo:main} (more precisely, the techniques used in the proof) 
to show that, for any $D$ which is not a square, there are at most finitely many closed $\GL^+(2,\R)$-orbits in 
$\PrymD(2,2)^{\rm odd}$. Even though, we only prove the result for this case,  
it seems very likely that one can also obtain similar results  for all strata listed in Table~\ref{tab:strata:list}.
In higher ``complexity'' (genus and number singularities) the difficulty comes from the increasing number of 
degenerated surfaces. Along the way, we give a partial proof that the compactification of $\mathbb{P}\PrymD(2,2)^{\rm odd}$ in $\mathbb{P}\Omega \overline{\Mod}_3$ is an algebraic variety. In the case of genus two, this result was proved by McMullen~\cite{Mc5, Mc6} and Bainbridge~\cite{Bai:07,Bai:10}. \medskip

We end this section with a by-product of the proof of Theorem~\ref{theo:main} that will be used in the sequel.

\begin{Theorem}
\label{thm:byproduct}
Let $(Y,\eta) \in \PrymD(\kappa)$ be a Prym eigenform (where $\Omega E_{D}(\kappa)$
has complex dimension $3$) satisfying the following properties:
\begin{enumerate}
\item The horizontal direction is completely periodic,
\item There exists a sequence $(X_n,\omega_n)= (Y,\eta) + (x_n,y_n)$ converging to $(Y,\eta)$ where $y_n\not =0,\ \forall n$,
\item For every $n$, the combinatorial data of the cylinder decomposition in the horizontal direction of $(X_n,\omega_n)$ are the same.
\item \label{ref:ass} The horizontal directions on $X_n$ are parabolic.
\end{enumerate}
Then there exists $\eps>0$ such that $(Y,\eta) + (x,0) \in \overline{\mathcal O}$ for all  $x\in (-\eps,\eps)$, 
where $\Orb =\bigcup_{n} \GL^+(2,\R)\cdot(X_n,\omega_n)$.
\end{Theorem}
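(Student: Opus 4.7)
The plan is to reuse verbatim the machinery developed in the proof of Theorem~\ref{theo:main} from Claim~\ref{claim:2} onwards. The four hypotheses of the theorem are tailored to be exactly what is needed: hypothesis (1) gives a horizontal cylinder decomposition on $(Y,\eta)$; hypothesis (2) together with $y_n \neq 0$ and Proposition~\ref{prop:cyl:dec:unstable} forces the horizontal decomposition of each $(X_n,\omega_n)$ to be stable; hypothesis (3) ensures that the widths $w_i$, the combinatorial coefficients $\alpha_i \in \{0,\pm 1/2,\pm 1\}$ of Lemma~\ref{lm:ker:hor:twist}/Lemma~\ref{lm:ker:ver:height}, and the rational relation $(r_1,r_2,r_3)$ from Lemma~\ref{lm:cyl:dec:3class:rel} (with $r_2 \neq 0$ by Claim~\ref{claim:1b}) can be taken independent of $n$; and hypothesis (4) is precisely the parabolicity used to turn the kernel-foliation shift into a horocycle time.

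First I would select indices $1,3$ (among the three equivalence classes of cylinders under $\inv$) so that $\alpha_1,\alpha_3$ have opposite signs, using Equation~\eqref{eq:w:relation} exactly as in the proof of Theorem~\ref{theo:main}, and argue that at least one cylinder persists in $(Y,\eta)$, so that $h_3^\infty := \lim_n h_3^{(n)} > 0$. Then I would apply Claim~\ref{claim:2} to each $(X_n,\omega_n)$: for any integers $k_1^{(n)},k_3^{(n)}$, the time
\[
s_n \;=\; r_2\,\frac{w_1 k_1^{(n)} \alpha_3 - w_3 k_3^{(n)} \alpha_1}{h_1^{(n)}\alpha_3 - h_3^{(n)}\alpha_1}
\]
satisfies $u_{s_n}\cdot (X_n,\omega_n) = (X_n,\omega_n) + (x_n',0)$, where
\[
x_n' \;=\; \frac{r_2 h_3^{(n)} w_1}{h_1^{(n)}\alpha_3 - h_3^{(n)} \alpha_1}\Bigl(k_1^{(n)} - \tfrac{p^{(n)}}{q^{(n)}} k_3^{(n)}\Bigr),
\]
with $p^{(n)}/q^{(n)} = \mu_1^{(n)}/\mu_3^{(n)} \in \Q$ in lowest terms (this is where parabolicity is used).

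Next, exactly as in Claim~\ref{claim:3}, given any target $x$ in a small interval $(-\eps,\eps)$ (with $\eps$ bounded by the length of the shortest horizontal saddle connection on $(Y,\eta)$ joining distinct zeros), Dirichlet/coprimality lets me choose $(k_1^{(n)},k_3^{(n)}) \in \Z^2$ so that $|x_n' - x| < C/q^{(n)}$ for a constant $C$ independent of $n$. Once I verify $q^{(n)} \to \infty$, I conclude $x_n' \to x$ and hence
\[
(X_n,\omega_n) + (x_n',0) \;=\; u_{s_n}\cdot (X_n,\omega_n) \;\in\; \Orb,
\]
and passing to the limit yields $(Y,\eta) + (x,0) \in \overline{\Orb}$, which is the conclusion.

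The one point requiring care (and the main obstacle) is proving $q^{(n)} \to \infty$, since the argument for it in the proof of Theorem~\ref{theo:main} implicitly uses the continuity of the cylinder data through the kernel foliation. I would argue: the ratio $p^{(n)}/q^{(n)} = (w_3/w_1)\cdot(h_1^\infty + \alpha_1 y_n)/(h_3^\infty + \alpha_3 y_n)$ depends nontrivially on $y_n$ because $\alpha_1\alpha_3 < 0$ and $h_3^\infty > 0$, so it cannot be eventually constant as $y_n \to 0$ (with $y_n \neq 0$ by hypothesis (2)); hence $p^{(n)}/q^{(n)}$ takes infinitely many distinct rational values converging to a single limit, forcing $q^{(n)} \to \infty$ along a subsequence, which is all that is needed. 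With this in hand, Corollary~\ref{cor:1} is \emph{not} invoked here (the statement only asks for the horizontal slice), so the proof terminates directly.
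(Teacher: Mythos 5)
Your proposal is correct and coincides with the paper's intent: Theorem~\ref{thm:byproduct} is stated without a separate proof precisely because, as you observe, it is the portion of the proof of Theorem~\ref{theo:main} from Claim~\ref{claim:2} onwards, with the four hypotheses abstracting what was established there (via Claim~\ref{claim:1} and the subsequent normalizations). Your handling of the $q^{(n)}\to\infty$ step is if anything slightly more careful than the paper's phrasing: passing to the subsequence where $p^{(n)}/q^{(n)}\neq p^\infty/q^\infty$ (which exists since $\alpha_1\alpha_3\le 0$ with $\alpha_1\neq 0$, $h_3^\infty>0$ makes the rational function of $y$ nonconstant and $y_n\neq 0$) suffices, since density is a limiting statement.
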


Remark that assumption~\eqref{ref:ass} is not necessary.

\section{Preparation of a surgery toolkit}
\label{sec:surgeries}

In this section we will describe several useful surgeries for Prym eigenforms.
More precisely let us fix a surface $(X_0,\omega_0)$ in the following list of strata $\PrymD(\kappa)$:
\begin{itemize}
\item $\PrymD(0,0,0)$ (space a triple tori, Section~\ref{sec:triple:tori}),
\item $\PrymD(4)$ (Section~\ref{sec:Collapsing:prym4}),
\item $\PrymD(2)^\ast$ (set of $(M,\omega) \in \Omega E_{D}(2)$ with a marked Weierstrass point, 
Section~\ref{sec:collapsing:prym2}).
\end{itemize}
For each case, we will construct a continuous locally injective map $\Psi: \mathring{D}(\eps) \rightarrow \PrymD(2,2)^{\rm odd}$, 
where $\mathring{D}(\eps)= \{z\in \C, \, 0< |z|< \eps \}$, such that it induces an embedding of $\mathring{D}(\eps)/(z\sim -z)$ 
into $\PrymD(2,2)^{\rm odd}$. Up to action $\GL^+(2,\R)$, the set $\Psi(\mathring{D}(\eps))$ will be identified to a neighborhood of 
$(X_0,\omega_0)$ in $\PrymD(2,2)^{\rm odd}$. \medskip

\noindent We now describe these surgeries in details (observe that the second one already appears 
in~\cite{Kontsevich2003}  as ``Breaking up a zero'').

\subsection{Space of triple tori}
\label{sec:triple:tori}
\ \smallskip

We say that $(X,\omega)\in\Prym(2,2)^{\rm odd}$ admits a {\em three tori decomposition} 
if there exists a triple of homologous saddle connections $\{\s_0,\s_1,\s_2\}$ on $X$ joining the two 
distinct zeros of $\omega$. It turns out that $(X,\omega)$ can be viewed as a connected sum of three 
tori $(X_j,\omega_j), \, j=0,1,2,$ which are glued together along  the slits corresponding to $\s_j$
(this can be seen by letting the length of saddle connections $\{\s_0,\s_1,\s_2\}$ going to zero in the 
kernel foliation leaf: the limit surface is then a union of three tori which are joint at unique common point $P$).
We will always assume that $X_0$ is preserved and $X_1,X_2$ are exchanged by the Prym involution $\inv$.

Recall that $\H(0)$ is the space of triples $(Y,\eta,P)$ where $Y$ is an elliptic curve, $\eta$ an Abelian differential
on $Y$, and $P$ is a marked point of $Y$. We denote by  $\Prym(0,0,0)$ the space of triples $\{(X_j,\omega_j,P_j), \ j=0,1,2\}$ where $(X_j,\omega_j,P_j) \in \H(0)$ such that $(X_1,\omega_1,P_1)$ and $(X_2,\omega_2,P_2)$ are isometric. The geometric object corresponding to such a triple is the union of the three tori, where we identify 
$P_0,P_1,P_2$ to a unique common point. Note that by construction, there exists an involution $\inv$ on the ``surface'' $X:=\{(X_j,\omega_j,P_j), \ j=0,1,2\}$ which preserves $X_0$ and exchanges $X_1$ and $X_2$, we will call $\inv$ the Prym involution.

We define $\PrymD(0,0,0) \subset \Prym(0,0,0)$ to be the space of all triples 
$\{(X_j,\omega_j,P_j), \, j=0,1,2\}$, obtained by limit in the kernel foliation leaf of surfaces in $\PrymD(2,2)^{\rm odd}$
with a three tori decomposition.  According to above discussion, the aim of this section is to show:
\begin{Proposition}
\label{prop:3tori:neighbor}
For any triple tori $\{(X_j,\omega_j,P_j), \, j=0,1,2\}$ in $\PrymD(0,0,0)$, there exist $\eps>0$ and a continuous 
locally injective map $\Psi: \mathring{D}(\eps) \rightarrow \PrymD(2,2)^{\rm odd}$ satisfying:
\begin{enumerate}
\item $\forall z \in \mathring{D}(\eps)$, the surface $(X,\omega)=\Psi(z)$ has a triple of homologous 
saddle connections $\{\s_0,\s_1,\s_2\}$ with distinct endpoints and $\omega(\s_j)=z$,
\item The map $\Psi$ is two to one and it induces an embedding of $\mathring{D}(\eps)/(z \sim -z)$ into $\PrymD(2,2)^{\rm odd}$, 
\item Up to action $\GL^+(2,\R)$, the set $\Psi(\mathring{D}(\eps))$ can be viewed as the neighborhood of 
$\{(X_j,\omega_j), \, j=0,1,2\}$ in $\PrymD(2,2)^{\rm odd}$.
\end{enumerate}
\end{Proposition}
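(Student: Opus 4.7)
The plan is to define $\Psi$ by an explicit opening-up-a-node (slit) construction, inverse to the degeneration that realizes the triple tori as a kernel foliation limit. Fix witnesses of the Prym structure on the limit: the elliptic involution $\tau_0$ on $X_0$ fixing $P_0$, and an isomorphism $\phi\colon X_1\to X_2$ with $\phi(P_1)=P_2$ and $\phi^*\omega_2=-\omega_1$. For $z\in\mathring{D}(\eps)$, with $\eps$ smaller than the injectivity radius at each $P_j$, cut a straight slit of holonomy $z$ on each torus: on $X_0$, the segment centered at $P_0$ of total holonomy $z$ (so that it is setwise preserved by $\tau_0$); on $X_1$, a segment with an endpoint at $P_1$ and holonomy $z$; on $X_2$, its image under $\phi$. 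Each slit has a top and a bottom side; glue cyclically, top of slit $j$ to bottom of slit $j{+}1\pmod 3$. The resulting surface $\Psi(z)$ is closed, and at each common endpoint of the slits the cone angle is $3\cdot 2\pi=6\pi$, so $\Psi(z)$ has exactly two zeros of order two joined by three homologous saddle connections $\s_0,\s_1,\s_2$ with $\omega(\s_j)=z$.

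Then I would verify that $\Psi(z)\in\PrymD(2,2)^{\rm odd}$. The involution $\inv$ is assembled piecewise as $\tau_0$ on the slitted $X_0$ and as $\phi^{\pm 1}$ between the slitted $X_1$ and $X_2$; the cyclic gluing is arranged exactly so that $\inv$ extends continuously across the three slits and swaps the two zeros of $\omega$. Since $X_0$ is $\inv$-invariant and contributes a Weierstrass-type fixed locus, the parity computation places $\Psi(z)$ in the odd component $\Prym(2,2)^{\rm odd}$. For the eigenform condition, $\Psi(z)$ and the limit $\Psi(0)$ share the same absolute periods by construction and lie in a common kernel foliation leaf; since $\Psi(0)$ arises as a limit of eigenforms in $\PrymD(2,2)^{\rm odd}$, Proposition~\ref{prop:preserves} gives $\Psi(z)\in\PrymD(2,2)^{\rm odd}$.

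The remaining items are formal. Continuity is immediate from the description of $\Psi$ in period coordinates. For the two-to-one property, note that $z=\omega(\s_0)$ is recovered from $\Psi(z)$ together with a choice of one of the three homologous saddle connections and an orientation on it; reversing the orientation of $\s_0$, which is the same as applying $\inv$ (exchanging the two zeros), swaps $z$ with $-z$, while any other relabeling of $\{\s_0,\s_1,\s_2\}$ produces the same translation surface. This gives exactly the $z\sim -z$ identification and the injectivity of the induced map on $\mathring{D}(\eps)/(z\sim -z)$. Finally, that $\Psi(\mathring{D}(\eps))$ is, up to $\GL^+(2,\R)$, a neighborhood of $\{(X_j,\omega_j,P_j)\}$ in $\PrymD(2,2)^{\rm odd}$ follows from Proposition~\ref{prop:local} applied at the degenerate point: any nearby eigenform decomposes uniquely as $g\cdot \Psi(z)$ for some small $g\in\GL^+(2,\R)$, and the complex parameter $z$ exactly plays the role of the kernel foliation displacement (here given by the common slit period).

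The main obstacle I expect is the compatibility of the piecewise involution with the slit gluings: I need to check that the particular centering of the slit on $X_0$ and the $\phi$-matching of slits on $X_1,X_2$ is the unique choice (up to the $z\sim -z$ ambiguity) for which $\inv$ extends to a holomorphic involution of $\Psi(z)$ satisfying $\inv^*\omega=-\omega$ and produces the odd parity. All other steps reduce to standard period-coordinate and kernel-foliation bookkeeping.
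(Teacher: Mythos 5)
Your proposal is essentially the paper's construction: open the node by slitting each torus along a segment of holonomy $z$ near $P_j$ and glue the slits cyclically, then read off the involution and the eigenform structure from the limit data. The one place your argument is a hair loose is the appeal to Proposition~\ref{prop:preserves} at $\Psi(0)$, which is a boundary (degenerate) point rather than an element of $\Omega E_D(\kappa)$; instead apply it between $\Psi(z)$ and $\Psi(z')$ for small $z,z'\neq 0$, using that $\PrymD(0,0,0)$ is by definition a kernel-foliation limit of genuine elements of $\PrymD(2,2)^{\rm odd}$, so at least one such $\Psi(z')$ lies in $\PrymD(2,2)^{\rm odd}$.
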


We postpone the proof of Proposition~\ref{prop:3tori:neighbor} and first provide a description of the space 
of triples $\PrymD(0,0,0)$ (compare with~\cite[Theorem~8.3]{Mc07}).

\begin{Proposition}
\label{prop:3tori:decomp}
Let $\{(X_j,\omega_j,P_j), \, j=0,1,2\}$ be a triple tori  in $\PrymD(0,0,0)$
(where $X_1,X_2$ are exchanged by the Prym involution $\inv$). Then 
there exist $(e,d)\in\Z^2$, with $d>0$, and a covering  $p: X_1\rightarrow X_0$ of degree $d$ such that 
\begin{itemize}
\item $D=e^2+8d$,
\item $\gcd(e,p_{11},p_{12},p_{21},p_{22})=1$, where $(p_{ij})$  is the matrix of $p$ in some symplectic bases of $H_1(X_0,\Z)$ and $H_1(X_1,\Z)$.
\item $p^*\omega_0=\cfrac{\lbd}{2}\omega_1$, where $\lbd$ satisfies $\lbd^2=e\lbd +2d$.
\end{itemize}
\end{Proposition}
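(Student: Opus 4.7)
The approach is to analyze the endomorphism ring of the Prym variety of the triple-tori configuration through its natural block decomposition, imposing self-adjointness under the intersection form. First, since the Prym involution $\inv$ acts as $[-1]$ on $X_{0}$ (forced by $\inv^{*}\omega_{0}=-\omega_{0}$) and exchanges $X_{1}\leftrightarrow X_{2}$, one identifies $\Omega^{-}(X,\inv)=\C\omega_{0}\oplus\C\omega_{1}$ and $H_{1}(X,\Z)^{-}=H_{1}(X_{0},\Z)\oplus H_{1}(X_{1},\Z)$, where the second summand embeds as the anti-diagonal $c\mapsto(c,-c)$; hence $\Prym(X,\inv)\cong\mathrm{Jac}(X_{0})\times\mathrm{Jac}(X_{1})$. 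A direct computation yields the intersection form $J_{0}\oplus 2J_{1}$ in this basis: the factor $2$ arises because anti-diagonal classes contribute intersections on both $X_{1}$ and $X_{2}$, so the induced polarization on the Prym has type $(1,2)$.

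Next, I would write a self-adjoint generator $T\in\mathrm{End}(\Prym)$ of $\Ord_{D}$ as a $2\times 2$ block matrix with integer blocks $A,\Phi,\Psi,D$ acting on $H_{1}(X_{0},\Z)\oplus H_{1}(X_{1},\Z)$. Self-adjointness $T^{\top}E=ET$ for $E=J_{0}\oplus 2J_{1}$ forces $A=aI_{2}$, $D=\delta I_{2}$ with $a,\delta\in\Z$ (since $2\times 2$ integer matrices self-adjoint under $J$ are scalars), together with $\Psi=\tfrac{1}{2}\mathrm{adj}(\Phi)$; integrality of $\Psi$ then forces $\Phi=2\Phi_{0}$ for some $\Phi_{0}\in M_{2}(\Z)$, and $\Psi=\mathrm{adj}(\Phi_{0})$. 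Because $\Phi\Psi=\Psi\Phi=2\det(\Phi_{0})\,I_{2}$, the characteristic polynomial of $T$ reads $t^{2}-(a+\delta)t+(a\delta-2\det\Phi_{0})$, so its discriminant satisfies the identity $D=(a-\delta)^{2}+8\det(\Phi_{0})$. Replacing $T$ by the generator $T':=T-a\,\Id$ shifts $(a,\delta)\mapsto(0,\delta-a)$ while leaving $\Phi_{0}$ fixed; setting $e:=\delta-a$ and $d:=\det(\Phi_{0})>0$ (positive because $\Phi_{0}$ represents a non-zero holomorphic isogeny $p:X_{1}\to X_{0}$), I obtain $T'^{2}=eT'+2d$ and therefore $D=e^{2}+8d$, with $\deg p=d$.

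Finally, expressing $T'$ as $T'(x_{0},x_{1})=(2p(x_{1}),\hat{p}(x_{0})+ex_{1})$ and differentiating the eigenform equation $T'^{*}\omega=\lambda\omega$ for $\omega=\omega_{0}\oplus\omega_{1}$ yields the two scalar relations $\hat{p}^{*}\omega_{1}=\lambda\omega_{0}$ and $2p^{*}\omega_{0}+e\omega_{1}=\lambda\omega_{1}$, giving $p^{*}\omega_{0}=\tfrac{\lambda-e}{2}\omega_{1}=-\tfrac{\lambda'}{2}\omega_{1}$ where $\lambda'$ is the Galois conjugate root of $\lambda$; since $\lambda'$ satisfies the same quadratic, replacing $p$ by $-p$ (which preserves the full block structure) and relabeling the root puts the relation in the stated form $p^{*}\omega_{0}=\tfrac{\lambda}{2}\omega_{1}$ with $\lambda^{2}=e\lambda+2d$. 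For the primitivity claim, properness of $\Ord_{D}\subset\mathrm{End}(\Prym)$ is equivalent to the non-existence of $n\geq 2$ and $k\in\Z$ with $\tfrac{1}{n}(T'-k\Id)\in\mathrm{End}(\Prym)$; inspecting block entries of $T'-k\Id$, such a divisibility holds exactly when $n\mid e$ and $n\mid p_{ij}$ for all $i,j$, so the condition becomes $\gcd(e,p_{11},p_{12},p_{21},p_{22})=1$. I expect the main obstacle to be the careful self-adjointness analysis, in particular tracking the factor $2$ in $\Phi=2\Phi_{0}$ forced by the type-$(1,2)$ polarization; this factor is precisely what produces the coefficient $8$ in $D=e^{2}+8d$, distinguishing this case from the two-tori setup in $\Prym(1,1)\simeq\H(1,1)$ where the principal polarization yields $D=e^{2}+4d$.
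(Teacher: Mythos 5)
Your proof follows essentially the same route as the paper: pick a symplectic basis of $H_1(X,\Z)^-$ adapted to the triple-tori splitting, observe that the intersection form becomes $J\oplus 2J$ (type $(1,2)$ polarization), impose self-adjointness on a generator of $\Ord_D$ to get the block shape, read off the isogeny from the off-diagonal block, and compute $D$ from the minimal polynomial. You supply the self-adjointness bookkeeping in more detail than the paper (which simply asserts the block form $\bigl(\begin{smallmatrix} eI_2 & 2B \\ B^* & 0 \end{smallmatrix}\bigr)$), and your derivation $\Psi=\tfrac12\,\mathrm{adj}(\Phi)$, hence $\Phi=2\Phi_0$, is correct; incidentally it also flags what appears to be a transposition typo in the paper's displayed formula $B^*=JBJ^{-1}$, which should read $B^*=J^{-1}B^{T}J=\mathrm{adj}(B)$.

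The one place you diverge is the normalization: you shift $T$ so that the $H_1(X_0)$-block vanishes, whereas the paper shifts so that the $(\hat a,\hat b)$-block vanishes. With your choice the eigenform equation produces $p^*\omega_0=\tfrac{\lambda-e}{2}\,\omega_1$, and the patch via ``replace $p$ by $-p$ and relabel the root'' is a little loose: since $\lambda>0$ and $d>0$ force the Galois conjugate $\lambda'$ to be negative, relabeling $\lambda'$ as $\lambda$ is inconsistent with positivity. The cleaner statement is that $\mu:=\lambda-e>0$ satisfies $\mu^2=(-e)\mu+2d$, so your computation lands on the prototype $(-e,d)$ rather than $(e,d)$; since $D=e^2+8d$ is insensitive to the sign of $e$, this is purely a relabeling of $e$ and matches the paper's conclusion. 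The simplest fix is to shift $T$ the other way, as the paper does, which gives $p^*\omega_0=\tfrac{\lambda}{2}\omega_1$ directly. The primitivity argument via $\gcd$ and the identification $\deg p=\det\Phi_0$ are fine.
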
  

\begin{proof}
Recall that the Prym involution preserves $X_0$ and exchanges $X_1,X_2$. Let $(a_j,b_j)$ be a symplectic basis of $H_1(X_j,\Z)$, where 
$a_2=-\inv(a_1), b_2=-\inv(b_1)$, and set $\hat{a}=a_1+a_2, \, \hat{b}=b_1+b_2$. Then 
$(a_0,b_0,\hat{a},\hat{b})$ is a symplectic basis of $H_1(X,\Z)^-$ ($X$ is the surface obtained by identifying $P_0\sim P_1 \sim P_2$). 
There exists a unique generator $T$ of $\Ord_D$ such that the matrix of $T$ in the basis $(a_0,b_0,\hat{a},\hat{b})$ is of the form 
$T=\left(\begin{smallmatrix} e\Id_2 & 2B \\ B^* & 0  \end{smallmatrix} \right)$, where $B\in {\bf M}_2(\Z)$, 
$B^*=J\cdot B \cdot J^{-1}$, and $T^*\omega=\lbd\omega$, with $\lbd>0$. 

Observe that $B$ can be regarded as a map from $H_1(X_1,\Z)$ to $H_1(X_0,\Z)$. Set 
$L_0=\Z\omega_0(a_0) + \Z\omega_0(b_0), \, L_1=\Z\omega_1(a_1)+\Z\omega_1(b_1)$. We can identify 
$(X_0,\omega_0)$ and $(X_1,\omega_1)$ with $(\C/L_0, dz)$ and $(\C/L_1, dz)$ respectively. 
The condition $T^*\omega=\lbd\omega$ reads
$$
\omega_0(2B(a_1))  = \lbd\cdot \omega_1(a_1) \qquad \mathrm{and} \qquad 
\omega_0(2B(b_1)) = \lbd\cdot \omega_1(b_1).
$$
Hence $\frac{\lbd}{2} L_1$ is a sublattice of $L_0$. It follows that there exists a covering map $p: \C/L_1 \rightarrow \C/L_0$ such that $p^*dz=\lbd/2 dz$. The degree of $p$ is given by $d=\det(B)>0$. Note that $T$ satisfies
$$
T^2=eT+2\det(B).
$$
Since $T$ is a generator of $\Ord_D$, we have $D=e^2+ 8\det(B)$. As $\lbd$ is an eigenvalue of $T$, $\lbd$ satisfies the same equation. 
\end{proof}

\begin{proof}[Proof of Proposition~\ref{prop:3tori:neighbor}]
Let $\eps>0$ be small enough so that  the set $D(P_j,\eps)=\{x \in X_j, \, \dist(x,P_j)<\eps\}$ is an embedded disc in 
$X_j, \, j=0,1,2$. The map $\Psi$ is defined as follows: for any $z\in \mathring{D}(\eps)$, let $\s_j$ be the geodesic segment 
in $X_j$ whose midpoint is $P_j$ such that $\omega(\s_j)=z$ (since $|z|< \eps$, $\s_j$ is an embedded segment). By slitting 
$X_j$ along $\s_j$, and  gluing $X_0,X_1,X_2$ along the slits in a cyclic order, we get a surface $(X,\omega)$ in $\H(2,2)$. 
It is easy to check that $(X,\omega)\in \PrymD(2,2)^{\rm odd}$.  We define $(X,\omega)=\Psi(z)$. Since we cannot distinguish 
the two zeros of $\omega$, one has $\Psi(z)=\Psi(-z)$. This ends the proof of Proposition~\ref{prop:3tori:neighbor}.
\end{proof}


\subsection{Collapsing surfaces to $\PrymD(4)$}
\label{sec:Collapsing:prym4}
\smallskip

This surgery already appears in~\cite{Kontsevich2003} (``Breaking up a zero''). As in the previous 
section, our aim is to show:
\begin{Proposition}
\label{prop:Prym4:neighbor}
For any $(X_0,\omega_0)\in \Omega E_D(4)$, there exist $\eps>0$ and a continuous locally injective map 
$\Psi :\mathring{D}(\eps) \rightarrow \Omega E_D(2,2)^{\rm odd}$ satisfying:
\begin{enumerate}
\item $\forall z\in \mathring{D}(\eps)$, the surface  $(X,\omega)=\Psi(z)$ has the same absolute periods 
as $(X_0,\omega_0)$,
\item There exists a saddle connection $\s$ in $X$ joining the zeros of $\omega$ such that $\omega(\s)=z^5$,
\item The map $\Psi$ induces an embedding $\mathring{D}(\eps)/(z\sim -z) \rightarrow \Omega E_D(2,2)^{\rm odd}$,
\item Up to the action of $\GL^+(2,\R)$, a neighborhood of $(X_0,\omega_0)\in \Omega E_D(4)$ in $\Omega E_D(2,2)^{\rm odd}$ 
is identified with $\Psi(\mathring{D}(\eps))$.
\end{enumerate}
\end{Proposition}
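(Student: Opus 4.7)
The plan is to realize $\Psi$ by performing the Kontsevich--Zorich ``breaking up a zero'' surgery of~\cite{Kontsevich2003} equivariantly with respect to the Prym involution. Let $P_0$ denote the unique zero of $\omega_0$; it has multiplicity $4$ and cone angle $10\pi$. Since $\inv_0$ permutes the (singleton) zero set of $\omega_0$ it fixes $P_0$, and combined with $\inv_0^*\omega_0 = -\omega_0$ and $\inv_0^2 = \Id$ this forces $\inv_0$ to act on a small embedded disc $D(P_0,\eps')$ as $\zeta \mapsto -\zeta$ in a local coordinate $\zeta$ for which $\omega_0 = \zeta^4\,d\zeta$.

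For $z \in \mathring D(\eps)$ with $\eps < \eps'$ small, I define $\Psi(z)$ by a local modification inside $D(P_0,\eps')$. Viewing $D(P_0,\eps')$ as a union of $2(4{+}1) = 10$ Euclidean half-discs glued cyclically around $P_0$, the surgery regroups them into a new polydisc $D'$ containing two conical singularities $P_1,P_2$ of cone angle $6\pi$ each, joined by a single saddle connection $\s$ whose lift to $\zeta$-coordinates is the segment from $-z$ to $z$. A direct computation gives $\omega(\s) = \int_{-z}^{z}\zeta^4\,d\zeta = \tfrac{2}{5}\,z^5$, which reduces to $\omega(\s) = z^5$ after a harmless rescaling of $z$, yielding item~(2). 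Because the slit $[-z,z]$ is invariant under $\zeta \mapsto -\zeta$, the involution $\inv_0$ on $X_0 \setminus D(P_0,\eps')$ extends to an involution $\inv$ of $(X,\omega) := \Psi(z)$ exchanging $P_1$ and $P_2$ and satisfying $\inv^*\omega = -\omega$; hence $(X,\omega) \in \Prym(2,2)^{\rm odd}$. Since the surgery leaves all absolute periods of $\omega$ equal to those of $\omega_0$ and creates only one new relative period (item~(1)), the argument of Proposition~\ref{prop:preserves} transports the $\Ord_D$-multiplication from $\Prym(X_0,\inv_0)$ to $\Prym(X,\inv)$ with $\omega$ remaining an eigenform, so $(X,\omega) \in \Omega E_D(2,2)^{\rm odd}$.

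For items~(3) and~(4): continuity of $\Psi$ is immediate. The identity $\Psi(z) = \Psi(-z)$ holds because these two surgeries are exchanged by the constructed involution $\inv$, and since the zeros of an element of $\Omega E_D(2,2)^{\rm odd}$ are unordered the two resulting surfaces coincide in the moduli space. Local injectivity of the induced map $\mathring D(\eps)/(z \sim -z) \to \Omega E_D(2,2)^{\rm odd}$ follows from the observation that any isomorphism between $\Psi(z)$ and $\Psi(z')$ restricts, near the cone points, to an isomorphism of the local polydiscs which must match the slits $[-z,z]$ and $[-z',z']$ up to the $\inv_0$-action, forcing $z' = \pm z$. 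For~(4), a dimension count gives $\dim_\C \Omega E_D(4) = 2$ and $\dim_\C \Omega E_D(2,2)^{\rm odd} = 3$, so the transverse direction to $\Omega E_D(4)$ in $\Omega E_D(2,2)^{\rm odd}$ has complex dimension one, matching $\mathring D(\eps)/(z \sim -z)$; conversely, any $(X,\omega) \in \Omega E_D(2,2)^{\rm odd}$ sufficiently close to $(X_0,\omega_0)$ must contain a short $\inv$-invariant saddle connection $\s$ between its two zeros whose collapse recovers $(X_0,\omega_0)$, and the period of $\s$ determines the surgery parameter up to sign.

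The main obstacle I anticipate is the careful combinatorial specification of the regluing of the ten half-discs: one must identify them in the unique way that produces two cone points of angle $6\pi$, that extends $\inv_0$ to an involution exchanging $P_1$ and $P_2$, and that assigns the saddle connection $\s$ the period $z^5$ (rather than, say, $0$). This is a local variant of the combinatorial surgery arguments appearing in Proposition~\ref{prop:cyl:dec:unstable}, and once it is fully spelled out the remaining analytic steps are routine.
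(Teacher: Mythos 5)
Your approach is essentially the same as the paper's: both realize $\Psi$ via the Kontsevich--Zorich ``breaking up a zero'' surgery performed equivariantly with respect to the Prym involution, with the five-fold ambiguity in the surgery absorbed into a degree-$5$ parameterization, and both then invoke preservation of absolute periods to stay inside the eigenform locus. The paper's actual proof is exactly your sketch with the combinatorial regluing of the ten half-discs (your self-declared ``main obstacle'') spelled out explicitly in Figure~\ref{fig:split:zero:H4}: that is where one verifies at once that the new gluing produces two cone points of angle $6\pi$ joined by a single saddle connection of prescribed period, and that the permutation $D_k \mapsto D_{k+5}$ extends $\inv_0$ to an involution $\inv$ with $\inv^*\omega=-\omega$.

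One conceptual slip is worth flagging: the identity $\omega(\s)=\int_{-z}^{z}\zeta^4\,d\zeta=\tfrac{2}{5}z^5$ does not actually compute the period of the saddle connection of $\Psi(z)$. That integral is the $\omega_0$-period of the $\zeta$-straight arc $[-z,z]$ inside the \emph{original} surface $X_0$, where the endpoints $\pm z$ are regular points of $\omega_0$. After the surgery the underlying flat (and indeed conformal) structure near the slit has changed --- the new form vanishes at $P_1,P_2$ while $\omega_0$ does not --- so the ``lift to $\zeta$-coordinates'' of $\s$ is not a priori the segment $[-z,z]$. The paper sidesteps this by working purely in the flat picture: the surgery is prescribed so that $\omega(\s)=v$ is the Euclidean displacement along the diameter of $D_k$, and one then \emph{defines} the parameter by $z^5=v$ to make $\Psi$ locally injective and $\Psi(z)=\Psi(-z)$. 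The proportionality constant is immaterial, so your rescaling rescues the statement, but the reasoning behind ``$\omega(\s)\propto z^5$'' needs to come from the flat surgery itself (as in the paper), not from integrating $\omega_0$ over a $\zeta$-slit.
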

The constructive proof we will give is on the level of Abelian differentials {\em i.e.} in $\Prym(2,2)$ and $\Prym(4)$.
One can interpret this construction on the level of quadratic differentials {\em i.e.} $\QQQ(-1^4,4)$ and 
$\QQQ(-1^3,3)$, respectively. This last approach is related to the surgery ``breaking up a singularity'' 
in~\cite{Kontsevich2003} (breaking up the zero of degree $3$ of the quadratic differential into a pole and a zero of degree $4$).
\begin{proof}[Proof of Proposition~\ref{prop:Prym4:neighbor}]
Let $(X_0,\omega_0) \in \Omega E_D(4)$ and let $P_0$ be the unique zero of $\omega_0$.
We consider $0<\eps<1$ small enough so that the euclidian disc $D(P_0,\eps)=\{x\in X_0, \, \dist(x,P_0)\leq \eps\}$ is embedded 
into $X_0$. Since the conical angle of the zero is $10\pi$ the neighborhood of $P_0$ can be identified 
with a polydisc, that is the union of the $10$ half-discs.

Let $v\in \R^2\setminus\{0\}$ be a vector such that $|v|<\eps/2$. It determines a 
collection of (oriented) geodesic rays emanating from $P_0$ in the direction of $\pm v$. These rays intersect the boundary 
$\partial D(P_0,\eps)$ at 10 points denoted by $a_1,\dots,a_{10}$ following the orientation of $\partial D(P_0,\eps)$, where $a_{2k-1}$  and $a_{2k}$ are respectively the intersections of $\partial D(P_0,\eps)$ with  rays indirection $v$  and  rays in direction $-v$. We denote by $u_i$ the segment from $P_0$ to $a_i$. The union of $u_i$ and $u_{i+1}$ is the diameter of an euclidian half-disc which will be denoted by $D_i$ (here we use the convention $i \sim i-10$ if $i>10$).

To get a surface $(X,\omega)$ in $\Omega E_D(2,2)^{\rm odd}$ with a saddle connection $\s$ such that $\omega(\s)=v$, we replace $D(P_0,\eps)\subset X_0$ 
by a domain $\tilde{D}(\eps)$ constructed from $D_1,\dots,D_{10}$ by gluing them in such a way that there are two singular points, with angle $6\pi$, which are joined by a segment contained in the diameter of two half-discs $D_{k}$ and $D_{k+5}$ (see Figure~\ref{fig:split:zero:H4} for $k=3$).

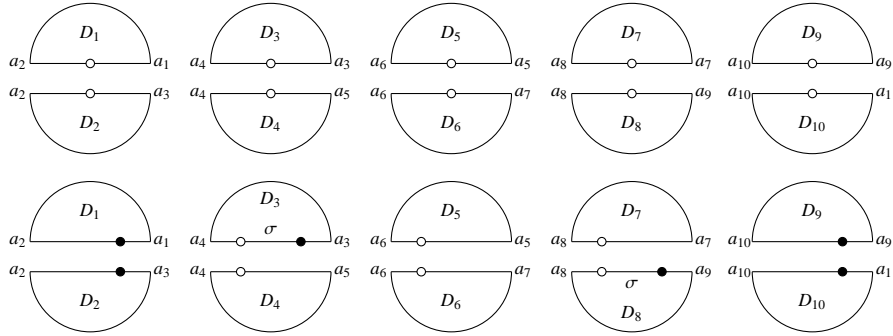
\begin{figure}[htb]
\begin{minipage}[t]{0.8\linewidth}
\begin{tikzpicture}[scale=0.8]
 \foreach \x in {(-5,0.5), (-2,0.5), (1,0.5), (4,0.5), (7,0.5)} {\draw \x arc (0:180:1); \draw \x -- +(-2,0);}
 
 \foreach \x in {(-7,0), (-4,0), (-1,0), (2,0), (5,0)} {\draw \x arc (180:360:1); \draw \x -- +(2,0);}
 
 \foreach \x in {(-6,0.5), (-6,0), (-3,0.5), (-3,0), (0,0.5), (0,0), (3,0.5), (3,0), (6,0.5), (6,0)} \filldraw[fill=white] \x circle (2pt);
 
 \draw (-4.8,0.5) node {\tiny $a_1$} (7.2,0) node {\tiny $a_1$};
 \draw (-7.2,0.5) node {\tiny $a_2$} (-7.2,0) node {\tiny $a_2$};
 \draw (-4.8,0) node {\tiny $a_3$} (-1.8,0.5) node {\tiny $a_3$};
 \draw (-4.2,0.5) node {\tiny $a_4$} (-4.2,0) node {\tiny $a_4$};
 \draw (-1.8,0) node {\tiny $a_5$} (1.2,0.5) node {\tiny $a_5$};
 \draw (-1.2,0.5) node {\tiny $a_6$} (-1.2,0) node {\tiny $a_6$};
 \draw (1.2,0) node {\tiny $a_7$} (4.2,0.5) node {\tiny $a_7$};
 \draw (1.8,0.5) node {\tiny $a_8$} (1.8,0) node {\tiny $a_8$};
 \draw (4.2,0) node {\tiny $a_9$} (7.2,0.5) node {\tiny $a_9$};
 \draw (4.8,0.5) node {\tiny $a_{10}$} (4.8,0) node {\tiny $a_{10}$};
 
 \draw (-6,1) node {\tiny $D_1$} (-6,-0.5) node {\tiny $D_2$};
 \draw (-3,1) node {\tiny $D_3$} (-3,-0.5) node {\tiny $D_4$};
 \draw (0,1) node {\tiny $D_5$} (0,-0.5) node {\tiny $D_6$};
 \draw (3,1) node {\tiny $D_7$} (3,-0.5) node {\tiny $D_8$};
 \draw (6,1) node {\tiny $D_9$} (6,-0.5) node {\tiny $D_{10}$};
\end{tikzpicture}
\end{minipage}

\bigskip

\begin{minipage}[t]{0.8\linewidth}
\begin{tikzpicture}[scale=0.8]
 \foreach \x in {(-5,0.5), (-2,0.5), (1,0.5), (4,0.5), (7,0.5)} {\draw \x arc (0:180:1); \draw \x -- +(-2,0);}
 
 \foreach \x in {(-7,0), (-4,0), (-1,0), (2,0), (5,0)} {\draw \x arc (180:360:1); \draw \x -- +(2,0);}
 
 \foreach \x in {(-3.5,0.5), (-3.5,0), (-0.5,0.5), (-0.5,0), (2.5,0.5), (2.5,0)} \filldraw[fill=white] \x circle (2pt);
 \foreach \x in {(-5.5,0.5), (-5.5,0), (-2.5,0.5), (3.5,0), (6.5,0.5), (6.5,0)} \filldraw[fill=black] \x circle (2pt);
 
 \draw (-4.8,0.5) node {\tiny $a_1$} (7.2,0) node {\tiny $a_1$};
 \draw (-7.2,0.5) node {\tiny $a_2$} (-7.2,0) node {\tiny $a_2$};
 \draw (-4.8,0) node {\tiny $a_3$} (-1.8,0.5) node {\tiny $a_3$};
 \draw (-4.2,0.5) node {\tiny $a_4$} (-4.2,0) node {\tiny $a_4$};
 \draw (-1.8,0) node {\tiny $a_5$} (1.2,0.5) node {\tiny $a_5$};
 \draw (-1.2,0.5) node {\tiny $a_6$} (-1.2,0) node {\tiny $a_6$};
 \draw (1.2,0) node {\tiny $a_7$} (4.2,0.5) node {\tiny $a_7$};
 \draw (1.8,0.5) node {\tiny $a_8$} (1.8,0) node {\tiny $a_8$};
 \draw (4.2,0) node {\tiny $a_9$} (7.2,0.5) node {\tiny $a_9$};
 \draw (4.8,0.5) node {\tiny $a_{10}$} (4.8,0) node {\tiny $a_{10}$};

 \draw (-6,1) node {\tiny $D_1$} (-6,-0.5) node {\tiny $D_2$};
 \draw (-3,1.2) node {\tiny $D_3$} (-3,-0.5) node {\tiny $D_4$};
 \draw (0,1) node {\tiny $D_5$} (0,-0.5) node {\tiny $D_6$};
 \draw (3,1) node {\tiny $D_7$} (3,-0.7) node {\tiny $D_8$};
 \draw (6,1) node {\tiny $D_9$} (6,-0.5) node {\tiny $D_{10}$};
 
 \draw (-3,0.7) node {\tiny $\s$} (3,-0.2) node {\tiny $\s$};

\end{tikzpicture}
\end{minipage}

 \caption{Splitting a zero of order $4$ into two zeros of order $2$.}
 \label{fig:split:zero:H4} 
\end{figure}

Note that we have a  Prym involution $\inv_0$ on $X_0$ which fixes $P_0$ and sends $D_k$ to $D_{k+5}$. By construction, there exists an involution on $\tilde{D}(\eps)$ which sends $D_{k}$ to $D_{k+5}$. In particular, this involution agrees with the restriction of $\inv_0$ on $\partial \tilde{D}(\eps)=\partial D(P_0,\eps)$. Therefore, we also have an involution $\inv$ on $X$ that exchanges the two zeros of $\omega$. It is easy to check that $(X,\omega)\in \Prym(2,2)$.

Since we have 5 choices for the pair of half-discs which contain $\s$ in their boundary, we see that there are five surfaces 
$(X,\omega)$ in $\Prym(2,2)$ close to $(X_0,\omega_0)$ satisfying the following  conditions:
\begin{itemize}
 \item[$\bullet$] The absolute periods of $\omega$ and $\omega_0$  coincide,
 \item[$\bullet$] There exists a saddle connection $\s$ in $X$, invariant by the Prym involution, joining the two zeros of $\omega$ such that $\omega(\s)=v$.  
\end{itemize}
Since the absolute periods of $\omega$ and $\omega_0$  coincide,  the new surface 
actually belongs to the real multiplication locus {\em i.e.} to $\Omega E_D(2,2)^{\rm odd}$.
This defines the desired map $\Psi :\mathring{D}(\eps) \rightarrow \Omega E_D(2,2)^{\rm odd}$ where $\Psi(z) = (X,\omega)$.
Observe that since we cannot distinguish the zeros of $\omega$, the surfaces corresponding to 
$\pm z$ are the same (with different choices for the orientation of $\s$).
\end{proof}

\begin{Remark}
The ``breaking up a zero'' surgery is clearly invertible: we can collapse the two zeros of $(X,\omega)$ along $\s$ to get the surface $(X_0,\omega_0)\in \PrymD(4)$. 
More generally, let $P,Q$ denote the zeros of $\omega$, where $(X,\omega)\in\PrymD(2,2)^\mathrm{odd}$, and let $\s$ be a saddle 
connection, that we assume to be horizontal, joining $P$ to $Q$ that is invariant by the involution $\inv$ 
(such a saddle connection always exists, for instance the union of a path of minimal length joining a fixed point of 
$\inv$ to $P$ or $Q$, and its image by $\inv$). If for any other horizontal saddle connection $\s'$ we have $|\s'|>2|\s|$ then one can collapse 
the zeros of $\omega$ along $\s$ by using the kernel foliation (see Section~\ref{sec:degenerating:surfaces}). The 
resulting surface $(X_0,\omega_0)$ belongs to $\PrymD(4)$. However if $\s$ has twins, that is another saddle connection $\s'$ such that $\omega(\s')=\omega(\s)$,  then the 
limit surface is no longer in $\PrymD(4)$ as we will see in the sequel.
\end{Remark}

\subsection{Collapsing surfaces to $\PrymD(2)^\ast$}
\label{sec:collapsing:prym2}
\smallskip
In this  section, we investigate degenerations by shrinking a pair of saddle connections that are exchanged by the Prym involution.
Let $\Omega E_{D'}(2)^*$ be the space of triples $(X,\omega,W)$, where $(X,\omega) \in \Omega E_{D'}(2)$, and 
$W$ is a Weierstrass point of $X$ which is not the zero of $\omega$. We will prove
\begin{Proposition}
\label{prop:H2:neighbor}
For any $(X_0,\omega_0,W_0) \in \Omega E_{D'}(2)^*$ there exist $0<\eps<1$, $D \in \{D',4D'\}$, 
and a continuous locally injective map $\Psi :\mathring{D}(\eps) \rightarrow \PrymD(2,2)^{\rm odd}$ with the 
following properties:
\begin{enumerate}
\item $\forall z \in \mathring{D}(\eps)$ the surface  $(X,\omega)=\Psi(z)$ has the same absolute periods as 
$(X_0,\omega_0,W_0)$,
\item there exists a pair of saddle connections $(\s_1,\s_2)$ on $X$ that are exchanged by the 
Prym involution and satisfy $\omega(\s_1)=\omega(\s_2)=z^3$.
\item The map $\Psi$ induces an embedding $\Psi: \mathring{D}(\eps)/(z\sim -z) \rightarrow \PrymD(2,2)^{\rm odd}$,
\item Up to action of $\GL^+(2,\R)$, $\Psi(\mathring{D}(\eps))$ is a neighborhood of $(X_0,\omega_0,W_0)$ in $\PrymD(2,2)^{\rm odd}$.
\end{enumerate}
\end{Proposition}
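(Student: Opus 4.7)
My plan is to construct, in parallel with Propositions~\ref{prop:3tori:neighbor} and~\ref{prop:Prym4:neighbor}, an $\inv_0$-equivariant surgery on $X_0$ producing a one-complex-parameter family of Prym eigenforms $(X,\omega)=\Psi(z)\in\PrymD(2,2)^{\rm odd}$ that degenerates to $(X_0,\omega_0,W_0)$ as $z\to 0$. Since $W_0$ is a Weierstrass point of $X_0$ distinct from the zero $P_0$, it is a regular fixed point of $\inv_0$, with a local flat chart $\omega_0=dw$, $\inv_0(w)=-w$. The surgery acts simultaneously near $W_0$ and $P_0$: on one hand, break up $P_0$ (cone angle $6\pi$) into two cone points $P_1,P_2$ of order $2$ via the ``breaking up a zero'' template of Section~\ref{sec:Collapsing:prym4}; on the other hand, cut $X_0$ along an $\inv_0$-invariant slit through $W_0$ and reglue with a twist of opposite orientation. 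The exponent $3$ in $\omega(\s_i)=z^3$ reflects the threefold choice of pair of opposite half-discs at $P_0$ (a zero of angle $6\pi$ has three such pairs, as in the fivefold choice of the Prym(4) case), and the parameterization of $\mathring{D}(\eps)$ covers these three branches via $z\mapsto z^3$. The result is a closed translation surface $(X,\omega)\in \H(2,2)^{\rm odd}$ carrying a new involution $\inv$ that agrees with $\inv_0$ outside the surgery region, combines with the twist inside, exchanges $P_1$ and $P_2$, and has exactly four fixed points (the four Weierstrass points of $X_0$ distinct from $W_0$ and $P_0$); hence $(X,\omega)\in \Prym(2,2)^{\rm odd}$, together with a pair of saddle connections $\s_1,\s_2$ from $P_1$ to $P_2$ satisfying $\omega(\s_i)=z^3$ and $\inv(\s_1)=-\s_2$.

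For the Prym-eigenform structure, the surgery induces a natural map $H_1(X_0,\Z)^-\to H_1(X,\Z)^-$ between lattices of the same rank four, with image of index $I\in\{1,2\}$ depending on the specific twist chosen at $W_0$. Transporting the generator $T$ of $\Ord_{D'}\subset\mathrm{End}(\mathrm{Jac}(X_0))$ through this map and invoking the automatic $\C$-linearity argument of Proposition~\ref{prop:preserves} (valid since $\dim_{\C}\Prym(X,\inv)=2$), one obtains an element of $\mathrm{End}(\Prym(X,\inv))$ with $\omega$ as eigenform: if $I=1$, $T$ itself preserves $H_1(X,\Z)^-$ and generates $\Ord_{D'}$, giving $D=D'$; if $I=2$, only $2T$ preserves the lattice and the generated order is $\Z[2T]=\Ord_{4D'}$, giving $D=4D'$. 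In either case $(X,\omega)\in \PrymD(2,2)^{\rm odd}$.

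Continuity of $\Psi$ is built into the surgery. The identity $\Psi(z)=\Psi(-z)$ comes from the indistinguishability of $P_1,P_2$: relabeling them reverses the common orientation of $\s_1,\s_2$, realizing $z\mapsto -z$. Local injectivity follows by recovering $z^3=\omega(\s_1)$ (together with the branch index among the three choices at $P_0$) once a labeling is fixed. The neighborhood claim follows from the dimension count $\dim_{\C}\PrymD(2,2)^{\rm odd}=3=\dim_{\C}\Omega E_{D'}(2)^*+1$, together with the fact that $\Psi$ supplies the missing kernel-foliation direction transverse to $\Omega E_{D'}(2)^*$ (the direction along which $\{\s_1,\s_2\}$ collapse), up to the $\GL^+(2,\R)$-action. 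The main obstacle is the combinatorial-topological bookkeeping in the surgery: ensuring compatibility between the three branches of the splitting at $P_0$ and the twist at $W_0$ so that $\inv$ has exactly four fixed points (naively combining $\inv_0$ with the twist would give six), and computing the resulting lattice index $I$ to determine which of $\{D',4D'\}$ arises for a given branch. Mirroring the degree-determination of Proposition~\ref{prop:3tori:decomp}, this amounts to an analysis of the monodromy of the double cover $X\to X/\inv$ near the surgery region in the spirit of~\cite{Mc6}, and is the one technical ingredient where I would expect most of the work to concentrate.
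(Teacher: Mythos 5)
Your geometric outline is in the right direction, but two concrete gaps coincide with where the paper's argument actually lives. First, the surgery is misdescribed at the level of cone angles: splitting $P_0$ (order $2$, cone angle $6\pi$) yields two zeros of order $1$ (angle $4\pi$ each), not two zeros of order $2$. The order-$2$ zeros of $\omega$ only arise \emph{after} the slit near $P_0$ is identified with the slit at $W_0$, with each endpoint then accumulating $4\pi+2\pi=6\pi$. Presenting the breakup at $P_0$ and the reglue at $W_0$ as independent sub-surgeries, each with its own outcome, obscures that they form a single connected slit-and-reglue raising the genus by one. The paper handles this cleanly by passing through an intermediate surface $(X',\omega')\in\H(1,1)$: break $P_0$ to obtain a saddle connection $\s_1$ invariant under the hyperelliptic involution, take $\s_2$ to be the parallel segment centered at $W_0$, and only then cut along both and reglue.

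Second, the eigenform and fixed-point computations --- which you explicitly defer as ``the one technical ingredient where I would expect most of the work to concentrate'' --- are exactly what Lemma~\ref{lm:collapse:g2:eigen} supplies, and the proof cannot go through without an equivalent. That lemma constructs an explicit symplectic basis of $H_1(X,\Z)^-$ in which the intersection form becomes $\left(\begin{smallmatrix}2J & 0 \\ 0 & J\end{smallmatrix}\right)$ (not the standard $\left(\begin{smallmatrix}J & 0 \\ 0 & J\end{smallmatrix}\right)$ on $H_1(X',\Z)$), writes the generator $T$ of $\Ord_D$ explicitly, verifies that the induced $T'$ is self-adjoint with ${T'}^2 = eT' + 2(ad-bc)\Id$, and extracts $D'\in\{D,D/4\}$ from $k=\gcd(2a,2b,c,d,e)\in\{1,2\}$ --- a \emph{properness} check on the subring generated by $T'$, not a lattice-index computation. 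Your ``image of index $I\in\{1,2\}$'' picture is too coarse, because the symplectic pairing itself changes across the surgery, so there is no naive sublattice index carrying the discriminant jump, and the fixed-point count of $\inv$ (your ``six versus four'' worry) is resolved there as a byproduct of identifying $\inv$ with the descent of the hyperelliptic involution $\inv'$ of $X'$. The minor points --- the $z^3$ from three choices of half-disc pair at a $6\pi$ cone angle, the $\C$-linearity upgrade via Proposition~\ref{prop:preserves}, and the dimension-count sketch for the neighborhood claim --- are consistent with what the paper does.
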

As for above surgeries, we will describe how one can degenerate some $(X,\omega) \in \PrymD(2,2)^{\rm odd}$ 
to the boundary of the stratum {\em i.e.} to $(X_0,\omega_0,W_0) \in \Omega E_{D'}(2)^*$, by using the kernel foliation.
The inverse procedure will give the map $\Psi$ of Proposition~\ref{prop:H2:neighbor}. Hence let us show:

\begin{Theorem}
\label{thm:collapse:Prym2}
Let $(\s_1,\s_2)$ be a pair of non-homologous saddle connections in $X$ that are exchanged by the Prym involution $\inv$. 
Suppose that for any other saddle connection $\s'$ joining $P$ to $Q$ in the same direction as $\s_1$, we have 
$|\s'| >|\s_1|$. Then as the length of $\s_1$ tends to zero (in the leaf  of the kernel foliation), $(X,\omega)$ tends to a point 
in the boundary of $\PrymD(2,2)^{\rm odd}$ which is represented by  a triple 
$(X_0,\omega_0,W_0)\in \Omega E_{D'}(2)^\ast$ for some $D' \in \{D,D/4\}$.
\end{Theorem}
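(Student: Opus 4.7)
I normalize so that $\s_1$ is horizontal with $\omega(\s_1) = z > 0$. Since $\s_2 = \inv(\s_1)$ and $\inv^*\omega = -\omega$, a direct computation gives $\omega(\s_2) = z$ as well. Both relative periods therefore move in lockstep along the kernel foliation: applying the kernel foliation by $(-t,0)$ for $t \in [0,z)$ yields a continuous family $(X_t,\omega_t) \in \PrymD(2,2)^{\rm odd}$ in which both $\s_1$ and $\s_2$ shrink simultaneously. The minimality hypothesis on $\s_1$ ensures that no other horizontal saddle connection between $P$ and $Q$ collides first, so the family stays inside the Prym eigenform locus for all $t \in [0,z)$.

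The topological content of the degeneration is that the closed curve $c := [\s_1] - [\s_2]$ is a $\inv$-invariant non-separating simple closed curve with vanishing $\omega$-period. Indeed, $\inv_*\s_j = -\s_{3-j}$ as oriented paths, so $\inv_*(c) = c$; disjointness of $\s_1,\s_2$ (as two horizontal segments) makes $c$ an embedded loop; and the non-homologous hypothesis gives $[c] \neq 0 \in H_1(X,\Z)$, forcing $c$ to be non-separating. As $t \to z$, the length of $c$ tends to zero, so $c$ is a vanishing cycle. The limit point in the Deligne--Mumford compactification is a stable curve $\bar X_0$ of arithmetic genus $3$ with one non-separating node; its normalization $X_0$ is a smooth Riemann surface of genus $2$, and because $\omega(c) = 0$ throughout the family the residue of the limit form at the node vanishes, so $\omega_0$ is a holomorphic Abelian differential on $X_0$.

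The Prym involution descends to an involution $\iota_0$ on $X_0$ with $\iota_0^*\omega_0 = -\omega_0$, which in genus $2$ is necessarily the hyperelliptic involution. A local flat-geometry analysis near the collapsing cycle, using the fact that $\s_1$ and $\s_2$ split the $6\pi$ cone at each of $P,Q$ into sectors of angles $2\pi$ and $4\pi$, shows that in the limit the two order-$2$ zeros of $\omega$ coalesce into a single order-$2$ zero $Z_0$ of $\omega_0$ located at a fixed point of $\iota_0$ (a Weierstrass point of $X_0$); thus $(X_0,\omega_0) \in \Omega E_{D'}(2)$ for some discriminant $D'$. The marked Weierstrass point $W_0 \neq Z_0$ of the triple records the data of the pinched cycle (namely, the canonical Weierstrass point of $X_0$ at which the inverse surgery producing the pair $(\s_1,\s_2)$ is to be performed). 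Finally, the real multiplication structure on $\Prym(X,\inv)$ persists in the limit because the anti-invariant lattice $H_1^-(X,\Z)$ pushes forward to a sub-lattice of $H_1^-(X_0,\iota_0)$ of index $1$ or $2$, since the vanishing cycle $c$ lies in the $\inv$-invariant part $H_1^+(X,\Z)$. Correspondingly the quadratic order $\Ord_D$ limits either to itself or to an overorder of index $2$, which gives $D' \in \{D,D/4\}$.

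The principal obstacle is the local analysis around the vanishing cycle: verifying that the two order-$2$ zeros coalesce into a single order-$2$ zero of $\omega_0$ at a Weierstrass point (rather than landing in $\H(1,1)$ as a pair of $\iota_0$-conjugate simple zeros). The key inputs are (i) the hyperelliptic symmetry, which forces orders of zeros to agree on $\iota_0$-orbits; (ii) the vanishing of the residue at the node, forcing $\omega_0$ to be holomorphic; and (iii) the explicit cone-angle decomposition at $P$ and $Q$ induced by the pair $(\s_1,\s_2)$, which dictates how the zero data rearranges during the pinching.
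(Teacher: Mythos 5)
Your proposal takes a genuinely different route from the paper. You try to run the degeneration directly inside the genus-$3$ surface and read off the limit from a Deligne--Mumford picture: $c=\s_1-\s_2$ is an $\inv$-invariant vanishing cycle, $\omega(c)=0$ kills the residue at the node, and the normalization is a smooth genus-$2$ surface. The paper instead performs a purely topological surgery \emph{before} degenerating: Lemma~\ref{lm:collapse:g2:eigen} cuts $X$ along $c$ and refolds each boundary circle onto itself, producing a smooth genus-$2$ surface $(X',\omega')\in \Omega E_{D'}(1,1)$ carrying a saddle connection $c_1$ and a distinguished marked segment $c_2$ (with regular endpoints) invariant by the descended hyperelliptic involution. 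The theorem is then the easy $\H(1,1)\to\H(2)$ collapse of $c_1$ on $X'$, with $W_0$ arising as the limit of the $\iota'$-fixed midpoint of $c_2$. The paper's route buys three things cheaply that your plan leaves open.

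First, you explicitly concede that the core local analysis---that the two order-$2$ cone points coalesce into a single order-$2$ zero at a Weierstrass point, rather than some other configuration---is ``the principal obstacle,'' and you only list ingredients without carrying it out. This is precisely the content of Lemma~\ref{lm:collapse:g2:eigen}: the observation that $(\s_1,\s_2)$ cut the $6\pi$ cone at each of $P,Q$ into angles $(2\pi,4\pi)$, with the $2\pi$-sides on the same side of $c$ (because $\inv$ preserves orientation and sends $\s_1\mapsto -\s_2$), is what shows the refolded surface is in $\H(1,1)$, and the Weierstrass-point conclusion in the limit then comes for free. Second, your account of where $W_0$ comes from (``records the data of the pinched cycle'') is not an argument; in the paper $W_0$ is the explicit limit of the midpoint of $c_2$, and its being a Weierstrass point follows from $c_2$ being $\iota'$-invariant. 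Third, your claim that the index of the push-forward of $H_1^-(X,\Z)$ into $H_1^-(X_0,\iota_0)$ is $1$ or $2$ ``since the vanishing cycle $c$ lies in $H_1^+(X,\Z)$'' is not a valid deduction. In the paper the factor $2$ has a precise source: in the symplectic basis adapted to the surgery, the generator $\alpha'_1$ of $H_1(X',\Z)$ corresponds to $\tfrac12\alpha_1\in H_1(X,\Q)^-$, the intersection form changes from $\left(\begin{smallmatrix}2J & 0\\ 0 & J\end{smallmatrix}\right)$ to $\left(\begin{smallmatrix}J & 0\\ 0 & J\end{smallmatrix}\right)$, and the proper subring is generated by $T'/k$ with $k=\gcd(2a,2b,c,d,e)\in\{1,2\}$ because $\gcd(a,b,c,d,e)=1$. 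You get the right answer $D'\in\{D,D/4\}$, but the argument you give for it would not survive scrutiny.
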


Observe that we consider $\thetaup$ and $-\thetaup$ ($\thetaup \in \mathbb{S}^1$) as two distinct directions.
As usual, we choose the orientation for any saddle connection joining $P$ and $Q$ to be {\em from $P$ to $Q$}.
For the remaining of this section, we fix a pair of saddle connections $(\s_1,\s_2)$ satisfying assumption of 
Theorem~\ref{thm:collapse:Prym2}. We will need of the following:

\begin{Lemma}
\label{lm:collapse:g2:eigen}
Let us construct the translation surface $(X',\omega')$ by first cutting $(X,\omega)$ along $c=\s_1*(-\s_2)$ 
and then gluing the resulting pair of geodesic segments in each boundary component. Then
$$
(X',\omega') \in \Omega E_{D'}(1,1) \qquad \mathrm{for\ some} \qquad D' \in \{D,D/4\}.
$$
(the involution $\inv$ of $X$ descends to the hyperelliptic involution of $X'$).
\end{Lemma}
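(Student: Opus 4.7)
The strategy splits into three parts: identify the topology and stratum of $(X',\omega')$ and descend the Prym involution; relate the period lattices of $\Prym(X,\inv)$ and $\mathrm{Jac}(X')$ through the cut surface; then transport the real multiplication across the resulting isogeny and track the discriminant.

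First I would record that $c = \s_1 * (-\s_2)$ is a closed loop with $\omega(c)=0$ (since $\inv_* \s_1 = -\s_2$ gives $\omega(\s_1)=\omega(\s_2)$) and with $\inv_*[c]=-[c]$, so $[c] \in H_1(X,\Z)^-$. After checking that $c$ is non-separating, cutting along $c$ yields a genus-two surface $\tilde X$ with two boundary bigons $c^\pm$ having vertices $P^\pm, Q^\pm$ and edges $\s_1^\pm, \s_2^\pm$; an Euler characteristic count after folding each bigon then gives $g(X')=2$. At the zero $P$ (and similarly at $Q$), the three outgoing rays in direction $\omega(\s_1)/|\omega(\s_1)|$ are equally spaced at angle $2\pi$ inside the $6\pi$ cone, so $\s_1$ and $\s_2$ partition the cone into two sectors of angles $2\pi$ and $4\pi$. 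Cutting splits $P$ into $P_1, P_2$ with these respective angles, and folding identifies the two radial sides of each sector, so each $P_i$ becomes an interior cone point of angle $2\pi$ (a regular point) or $4\pi$ (a simple zero). Exactly two of $\{P_1,P_2,Q_1,Q_2\}$ are simple zeros, giving $(X',\omega') \in \H(1,1)$. Since $\inv$ swaps $c^+$ and $c^-$ and is compatible with each folding identification, it descends to a holomorphic involution $\inv'$ on $X'$ with $(\inv')^*\omega' = -\omega'$, which on a surface of genus two forces $\inv'$ to be the hyperelliptic involution; in particular $\mathrm{Jac}(X')=\Prym(X',\inv')$.

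Next I would compare the period lattices via the cut surface. Choose a symplectic basis $\{e_1,e_2,e_3,e_4\}$ of $H_1(X,\Z)^-$ with $e_1 = [c]$, $\langle e_1, e_3\rangle = \langle e_2, e_4\rangle = 1$, and with $e_2, e_4$ represented by cycles disjoint from $c$. These cycles persist in $X'$ with the same $\omega'$-periods as their $\omega$-periods in $X$; the vanishing cycle $[c]$ becomes null-homologous in $X'$ because it collapses to the union of the two arcs obtained by folding; and a suitable multiple of $e_3$ can be closed up in $X'$ by inserting segments of these arcs. This should produce a lattice embedding $\phi : H_1(X,\Z)^-/\Z\,[c] \hookrightarrow H_1(X',\Z)$ of index $1$ or $2$, matching $\omega$ with $\omega'$ on periods. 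The generator $T$ of $\Ord_D$ acts on $H_1(X,\R)^-$ and preserves the eigenspace $\ker \omega$ (where it acts as the Galois conjugate $\lambda'$), hence preserves the line $\R [c]$; therefore $T$ descends through $\phi$ to a self-adjoint endomorphism $T'$ of $\mathrm{Jac}(X')$ with $(T')^*\omega' = \lambda\,\omega'$, so $\omega'$ is an eigenform.

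The main obstacle will be verifying that the order $\Z[T'] \subset \mathrm{End}(\mathrm{Jac}(X'))$ has discriminant exactly in $\{D, D/4\}$. When $\phi$ is an isomorphism of lattices, $T$ transports to an integral endomorphism $T'$ generating $\Ord_D$, so $D' = D$. In the index-$2$ case, $T$ may fail to preserve the sublattice $\phi(H_1(X,\Z)^-/\Z[c])$ integrally, and one has to replace $T'$ by $T'/2$; this rescaling changes the discriminant by a factor of $4$ and yields $D' = D/4$. The dichotomy is governed by the divisibility of $[c]$ in $H_1(X,\Z)^-$ together with the parity of the intersection numbers $\langle [c], e_i \rangle$. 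Establishing this dichotomy cleanly, and checking that $\Z[T']$ is self-adjoint and proper inside $\mathrm{End}(\mathrm{Jac}(X'))$, is the delicate arithmetic point that a detailed proof must address, since the topological steps above are largely routine once the surgery is set up carefully.
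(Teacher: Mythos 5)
Your topological set-up (angle partition $(2\pi,4\pi)$, Euler-characteristic count for $g(X')=2$, descent of $\inv$ to an involution with $\inv'^*\omega'=-\omega'$, hence the hyperelliptic involution) matches the paper's and is fine. The arithmetic part, however, contains a genuine error that derails the rest of the argument.

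You assert that $\inv_*[c]=-[c]$, so $[c]\in H_1(X,\Z)^-$. This is backwards. Since $\inv$ exchanges $P$ and $Q$ and sends $\s_1$ to $\s_2$ (reversing orientation, so $\inv_*\s_1=-\s_2$ and $\inv_*\s_2=-\s_1$), one computes $\inv_*[c]=\inv_*([\s_1]-[\s_2])=-[\s_2]-(-[\s_1])=[\s_1]-[\s_2]=[c]$; thus $[c]$ lies in $H_1(X,\Z)^+$, not $H_1(X,\Z)^-$. (This is consistent with Lemma~\ref{lm:twins:sc:Dsq}(1): a nonzero anti-invariant class killed by $\omega$ would force $D$ to be a square, which is not assumed here.) Because of this, the eigenspace argument --- that $T$ preserves $\R[c]$ since it preserves $\ker\omega$ inside $H_1(X,\R)^-$ --- does not apply, as $[c]$ is not in that space at all. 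Moreover, the proposed lattice map $\phi:H_1(X,\Z)^-/\Z[c]\hookrightarrow H_1(X',\Z)$ is already numerically impossible if taken at face value: $H_1(X,\Z)^-$ has rank $4$, so modding out by a primitive class would leave rank $3$, which cannot embed with finite index in the rank-$4$ lattice $H_1(X',\Z)$.

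The true source of the $D'\in\{D,D/4\}$ dichotomy is not divisibility of $[c]$ but the polarization types: the restriction of the intersection form to $H_1(X,\Z)^-$ in a natural basis is $\left(\begin{smallmatrix}2J & 0\\ 0 & J\end{smallmatrix}\right)$ (a $(1,2)$-polarization), while $H_1(X',\Z)$ carries the standard unimodular form. The paper's proof writes down explicit adapted symplectic bases (with $\alpha_1=\alpha_{1,1}+\alpha_{1,2}$, $\beta_1=\beta_{1,1}+\beta_{1,2}$ on the $X$-side and $\alpha'_1=\alpha_{1,1}$, $\beta'_1=c_1\cup\beta_{1,1}\cup c_2\cup\beta_{1,2}$ on the $X'$-side), transports the self-adjoint generator $T$ of $\Ord_D$ to an explicit integral self-adjoint $T'$ on $H_1(X',\Z)$ satisfying $T'^2=eT'+2(ad-bc)\mathrm{Id}$ and $T'^*\omega'=\lbd\omega'$, and then reads off $D'=D/k^2$ with $k=\gcd(2a,2b,c,d,e)\in\{1,2\}$. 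Any correct argument needs to engage with this polarization mismatch and exhibit (or abstractly characterize) the transported endomorphism on the period lattice of $X'$; the quotient-by-$[c]$ framework as you have set it up cannot reach this.
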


\begin{proof}[Proof of Lemma~\ref{lm:collapse:g2:eigen}]
We first show that $(X',\omega') \in \H(1,1)$. For that, we remark that the pair of angles specified by these two rays at 
the zeros $P$ and $Q$ are $(2\pi,4\pi)$. Since $\inv$ sends $\s_1$ to $-\s_2$ and preserves the orientation of 
$X$, necessarily the angle $2\pi$ at $P$ and the angle $2\pi$ at $Q$ belong to the same side of $c$ which prove 
the first fact.

The surface $(X',\omega')$ has two marked segments $c_1,c_2$, where $c_1$ is a saddle connection, and $c_2$ is 
simply a geodesic segment which has the same length and the same direction as $c_1$. We denote the endpoints of 
$c_1$ (respectively, $c_2$) by $P_1,Q_1$ (respectively, $P_2,Q_2$). Hence $P_1,P_2$ correspond to $P$ and 
$Q_1,Q_2$ correspond to $Q$. Note that $P_1,Q_1$ are the zeros of $\omega'$. We choose the orientation of 
$c_1$ (respectively, $c_2$) to be from $P_1$ to $Q_1$ (respectively, from $P_2$ to $Q_2$).

With these notations, $\inv$ induces an involution $\inv'$ on $X'$ such that $\inv'(c_1)=-c_1$ and $\inv'(c_2)=-c_2$.
It turns out that $\inv'$ has six fixed points on $X'$: these are the four fixed points of $\inv$ (none of them are contained 
in $c$)  and two additional fixed points in $c_1$ and $c_2$. By uniqueness $\inv'$ is therefore the hyperelliptic 
involution.

To conclude the proof, one needs to show that $(X',\omega')$ is an eigenform. For that we first need to choose 
a symplectic basis of $H_1(X',\Z)$. We proceed as follows (see Figure~\ref{fig:collapse:H2:basis}). 
Let $\alpha_{1,1},\alpha_{1,2},\alpha_2, \beta_2$ be the simple closed curves, and $\beta_{1,1}$ and $\beta_{1,2}$ 
be simple arcs in $X'$ as shown in Figure~\ref{fig:collapse:H2:basis}, where $\alpha_{1,2}=-\inv'(\alpha_{1,1})$ and 
$\beta_{1,2}=-\inv'(\beta_{1,1})$. Let $\beta'_1$ denote the simple closed curve which is the concatenation 
$c_1\cup\beta_{1,1}\cup c_2\cup\beta_{1,2}$. Set $\alpha'_1=\alpha_{1,1}$ 
(the orientations are chosen so that  $(\alpha'_1,\beta'_1, \alpha_2,\beta_2)$ is a symplectic basis of $H_1(X',\Z)$). 
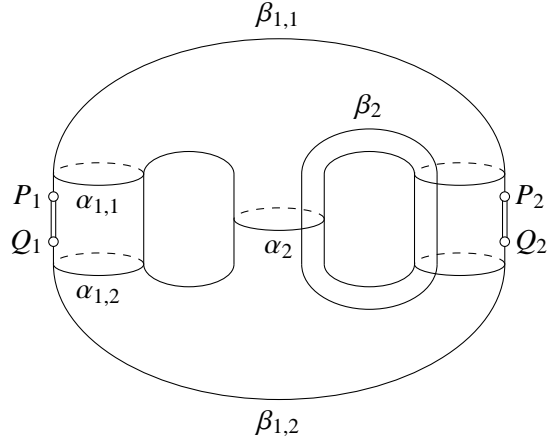
\begin{figure}[htbp]
\begin{center}
\begin{tikzpicture}[scale=0.6]
\draw (0,1) arc  (180:0: 5 and 3);
\draw (0,-1) arc (180:360: 5 and 3);
\draw (0.05,-0.5) -- (0.05,0.5) (-0.05,-0.5) -- (-0.05,0.5) (9.95,-0.5) -- (9.95,0.5) (10.05,-0.5)--(10.05,0.5);
\draw (0,-1) -- (0,-0.5) (0,0.5) -- (0,1) (10,-1) -- (10,-0.5) (10,0.5) -- (10,1);
\draw (2,-1) -- (2,1) (4,-1)-- (4,1)  (6,-1) -- (6,1) (8,-1) -- (8,1);
\draw (4,1) arc (0:180:1 and 0.5) (2,-1) arc (180:360: 1 and 0.5) (8,1) arc (0:180: 1 and 0.5) (6,-1) arc (180:360: 1 and 0.5);
\draw  (0,1) arc (180:360: 1 and 0.25) (0,-1) arc (180:360:1 and 0.25) (4,0) arc (180:360: 1 and 0.25) (8,1) arc (180:360: 1 and 0.25) (8,-1) arc (180:360: 1 and 0.25);
\foreach \x in {(2,1), (2,-1), (6,0), (10,1), (10,-1)} \draw[dashed] \x arc(0:180:1 and 0.25);
\draw (5.5,-1) -- (5.5,1) (8.5,-1) -- (8.5,1); 
\draw (8.5,1) arc (0:180: 1.5 and 1) (5.5,-1) arc (180:360: 1.5 and 1);
\draw (0,0.5) node[left] {$P_1$} (0,-0.5) node[left] {$Q_1$} (10,0.5) node[right] {$P_2$} (10,-0.5) node[right] {$Q_2$};
\draw (5,4) node[above] {$\beta_{1,1}$} (5,-4) node[below] {$\beta_{1,2}$} (1,0.75) node[below] {$\alpha_{1,1}$} (1,-1.25) node[below] {$\alpha_{1,2}$} (5,-0.25) node[below] {$\alpha_2$} (7,2) node[above] {$\beta_2$};
\foreach \x in {(0,0.5), (0,-0.5), (10,0.5), (10,-0.5)} \filldraw[fill=white] \x circle (3pt);
\end{tikzpicture}
\end{center}
\caption{Surface in $\H(1,1)$ obtained by cutting and gluing along a pair of saddle connections exchanged by the Prym involution.
The hyperelliptic involution $\inv'$ exchanges the upper and the lower halves of $X'$.}
\label{fig:collapse:H2:basis}
\end{figure}

Observe that $\beta_{1,1}$, $\beta_{1,2}$ correspond to two simple closed curves in $X$, and that 
$\alpha_{1,1}$, $\alpha_{1,2}$ are not homologous in $H_1(X,\Z)$. In other words  
$(\alpha_1,\beta_1,\alpha_2,\beta_2)$ is a symplectic basis of $H_1(X,\Z)^-$, where 
$\alpha_1=\alpha_{1,1}+\alpha_{1,2}, \beta_1=\beta_{1,1}+\beta_{1,2}$, and the intersection form is given by 
the matrix
$\left(\begin{smallmatrix}
2J & 0\\
0 & J\\
\end{smallmatrix}\right)$.

Since $(X,\omega) \in \PrymD(2,2)^{\rm odd}$, by definition there exists a unique generator $T$ of $\Ord_D$
that can be expressed (in the basis $(\alpha_1,\beta_1,\alpha_2,\beta_2)$ of $H_1(X,\Z)^-$) by the matrix
$$
T=\left( \begin{smallmatrix}
            e & 0 & a & b\\
        0 & e & c & d \\
        2d & -2b & 0 & 0\\
            -2c & 2a & 0 & 0\\
\end{smallmatrix}\right),
$$
where $D=e^2+8(ad-bc)$, $\gcd(a,b,c,d,e)=1$ and $T^\ast \omega = \lbd \cdot \omega$,  with $\lbd>0$. In the 
symplectic basis $(\alpha'_1,\beta'_1,\alpha_2,\beta_2)$  of $H_1(X',\Z)$ we define the endomorphism:
$$
T'=\left(\begin{smallmatrix}
e & 0 & 2a & 2b\\
0 & e & c  & d\\
d & -2b & 0 & 0\\
-c & 2a & 0 & 0\\
\end{smallmatrix}\right).
$$
Obviously $T'$ is self-adjoint with respect to the symplectic form $\left(\begin{smallmatrix} J & 0 \\ 0 & J\\ \end{smallmatrix}\right)$
and ${T'}^2=eT'+2(ad-bc)\mathrm{Id}$. 
Let us show that $\omega'$ is an eigenform for $T'$, namely  $(T')^\ast \omega' = \lbd' \cdot \omega'$,  with $\lbd'>0$.
This last equation reads (in the symplectic basis $(\alpha'_1,\beta'_1,\alpha_2,\beta_2)$):
\begin{equation}
\label{eq:symplectic}
(x,y,z,t)\cdot T'=\lbd'(x,y,z,t),
\end{equation}
where $(x,y,z,t) = (\omega'(\alpha'_1),\omega'(\beta'_1),\omega'(\alpha_2),\omega'(\beta_2))\in \C^4$. But
$$
\begin{array}{l}
\omega'(\alpha'_1)=\omega(\alpha_{1,1})=\frac{1}{2}\omega(\alpha_1), \\
\omega'(\beta'_1)=-\omega'(c_1)+\omega'(\beta_{1,1}) +\omega'(c_2) +\omega'(\beta_{1,2})= 
\omega(\beta_{1,1})+\omega(\beta_{1,2}) =\omega(\beta_1), \\
\omega'(\alpha_2)=\omega(\alpha_2), \\
\omega'(\beta_2)=\omega(\beta_2).
\end{array}
$$
Consequently in the basis $(\alpha_1,\beta_1,\alpha_2,\beta_2)$ the $1$-form $\omega$ is represented by the row vector 
$(2x,y,z,t)$. Now by assumption $T^\ast \omega = \lbd \cdot \omega$ or equivalently $(2x,y,z,t)\cdot T=\lbd(2x,y,z,t)$.
We easily check this implies the desired Equation~\eqref{eq:symplectic} with $\lbd'=\lbd$. 

Hence $T'$ generates a subring isomorphic to $\Ord_D$ in ${\rm End}({\bf Jac}(X'))$ for which $\omega'$ is an eigenform.
In other words $(X',\omega') \in \Omega E_{D'}(1,1)$ for some $D'$ dividing $D$. The proper subring isomorphic 
to $\Ord_{D'}$ is generated by the matrix $T'/k \in \rm{End}({\bf Jac}(X'))$ where $k=\gcd(2a,2b,c,d,e)$. 
By assumption $\gcd(a,b,c,d,e)=1$, therefore $k\in\{1,2\}$. Since $D=k^2D'$, the lemma follows. 
\end{proof}

We can now proceed to the proof of our results.
\begin{proof}[Proof of Theorem~\ref{thm:collapse:Prym2}]
We keep the notations of Lemma~\ref{lm:collapse:g2:eigen}. By construction, there is no obstruction to  
collapse the two zeros of $\omega'$ along $c_1$ along the kernel foliation through $(X',\omega')$:
the resulting surface belongs to $\Omega E_{D'}(2)$. Note that when $c_1$ is shrunken to a point, so is $c_2$. 
Since $c_2$ is invariant by the hyperelliptic involution of $X'$, in the limit $c_2$ becomes a marked Weierstrass point.
\end{proof}

\begin{proof}[Proof of Proposition~\ref{prop:H2:neighbor}]
The surgery ``collapse a pair of saddle connections exchanged by $\inv$'', as described above, 
is invertible: this is the map $\Psi$ of the proposition. Let us give a more precise definition of this map.

We fix a point $(X_0,\omega_0,W_0) \in \Omega E_{D'}(2)^*$, and choose $\eps >0$ small enough so that 
the sets $D(P_0,\eps)=\{x\in X_0, \, \dist(x,P_0) < \eps\}$ and $D(W_0,\eps)=\{x\in X_0, \, \dist(x, W_0) < \eps\}$, 
are two embedded (disjoint) discs ($P_0$ is the zero of $\omega_0$).  

Given any vector $v\in \R$, with  $|v|<\eps$,  we construct a Prym form in $\Prym(2,2)$ as follows. 
We break up the zero $P_0$ into two zeros in order to get a surface $(X',\omega') \in \H(1,1)$ (having the same 
absolute periods as $\omega$) with a marked  saddle 
connection, say $\s_1$, that is invariant by the hyperelliptic involution and such that $\omega'(\s_1)=v$.
Note that by assumption  $\s_1$ is disjoint from $D(W_0,\eps)$. Let $\s_2$ be a geodesic segment in $D(W_0,\eps)$ 
such that $\omega'(\s_2)=v$, and $W_0$ is the midpoint of $\s_2$.  
Cutting $X'$ along $\s_1$ and $\s_2$, then regluing the resulting boundary components, we get a new surface 
$(X,\omega) \in \H(2,2)$ together with an involution $\inv: X \rightarrow X$ (induced by the hyperelliptic involution of
$X'$). Since by construction $\inv^*\omega =-\omega$ one has $(X,\omega) \in \Prym(2,2)$.

The arguments of the proof of Lemma~\ref{lm:collapse:g2:eigen} actually show that  $(X,\omega)\in \Omega E_D(2,2)$
for some $D \in\{D',4D'\}$. We then define $\Psi(z) = (X,\omega)$, where $z$ is a complex number such that  $v=z^3$ (this is related to the fact that we have three choices for the segment $\s_1$). It is now straightforward to check the properties of the map $\Psi$. The proposition is proved.
\end{proof}

\section{Degenerating surfaces of $\PrymD(2,2)^{\rm odd}$}
\label{sec:degenerating:surfaces}


In this section, we show that the surgeries described in Section~\ref{sec:surgeries}
are sufficient to describe the all the degenerations (along the kernel foliation) 
of Prym eigenforms in $\PrymD(2,2)^{\rm odd}$ having an unstable cylinder when $D$ is not a square
(compare with~\cite{Lanneau:Manh:composantes}).

\begin{Theorem}
\label{thm:unstable:dec:collapse}
Assume $D$ is not a square, and let $(X,\omega)\in \PrymD(2,2)^{\rm odd}$ with 
an unstable cylinder decomposition in the horizontal direction. Then there exists a finite interval $[s_\mathrm{min},s_\mathrm{max}]$ 
such that for any $x \in ]s_\mathrm{min},s_\mathrm{max}[$, the surface $(X,\omega)+ (x,0)$ is well-defined 
and belongs to $\PrymD(2,2)^{\rm odd}$. Moreover when $x$ tends to $\partial [s_\mathrm{min},s_\mathrm{max}]$, 
$(X,\omega)+(x,\omega)$ converges to a surface $(Y,\eta)$ which belongs to
$$
\PrymD(0,0,0), \ \PrymD(4)\ \mathrm{or} \ \Omega E_{D'}(2)^* \ \text{ with } \ D' \in \{D, D/4\}.
$$
\end{Theorem}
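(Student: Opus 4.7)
The plan is to exploit the unstability assumption to force parabolicity, parametrize the horizontal kernel foliation as long as possible, and then classify the degenerations at the boundary using the surgery constructions of Section~\ref{sec:surgeries}. First I would note that since $(X,\omega)$ has an unstable horizontal decomposition, Lemma~\ref{lm:nbr:cyl:max3} forces the number of horizontal cylinders modulo the Prym involution $\inv$ to be at most $2$; since $D$ is not a square, Corollary~\ref{cor:parabolic} then shows that the horizontal direction is parabolic, so all cylinder moduli are commensurable. Denote by $P,Q$ the two zeros of $\omega$, exchanged by $\inv$ (the Prym involution swaps them in the $\mathrm{odd}$ component); by unstability there exists at least one horizontal saddle connection joining $P$ to $Q$.

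Next I would define $\mathcal{I}\subset\R$ to be the maximal open interval containing $0$ such that $(X,\omega)+(x,0)$ is well-defined and remains in $\PrymD(2,2)^{\rm odd}$. Proposition~\ref{prop:preserves} and continuity of the kernel foliation make $\mathcal{I}$ open, and the horizontal periodicity is preserved throughout since the widths of horizontal cylinders are absolute periods and hence invariant under the foliation. The point is that $\mathcal{I}$ is bounded: the lemma from Section~\ref{sec:kernel} shows that the horizontal period of any saddle connection $\s$ from $P$ to $Q$ changes by $\pm x$ (since the endpoints are exchanged by $\inv$), so the length of such an $\s$ reaches zero at some finite value of $x$. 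Thus $\mathcal{I}=(s_{\min},s_{\max})$ with $-\infty<s_{\min}<0<s_{\max}<\infty$.

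The heart of the proof is the identification of the limit as $x\to s_{\max}^-$ (the other endpoint being symmetric). Let $\mathcal{S}$ be the set of horizontal saddle connections from $P$ to $Q$ whose length tends to $0$ at $x=s_{\max}$. Since all such lengths vary linearly with the same slope $\pm 1$, the elements of $\mathcal{S}$ are mutually parallel, homologous in $H_1(X,\{P,Q\};\Z)$, and form an $\inv$-stable set. I would then claim $|\mathcal{S}|\in\{1,2,3\}$ and split into cases. If $|\mathcal{S}|=1$ the unique element is $\inv$-invariant as an oriented segment up to reversal, and collapsing it fuses $P$ and $Q$ into a single zero of order $4$ at a fixed point of the limit involution; Proposition~\ref{prop:Prym4:neighbor} then identifies the limit with a point of $\PrymD(4)$. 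If $|\mathcal{S}|=2$ the two saddle connections form a pair exchanged by $\inv$, and Proposition~\ref{prop:H2:neighbor} places the limit in $\Omega E_{D'}(2)^\ast$ for some $D'\in\{D,D/4\}$. If $|\mathcal{S}|=3$ one element of $\mathcal{S}$ is $\inv$-invariant and the other two are exchanged by $\inv$; the complement $X\setminus\bigcup_{\s\in\mathcal{S}}\s$ decomposes into three tori which are glued at $P\sim Q$ in the limit, and Proposition~\ref{prop:3tori:neighbor} identifies the limit with a triple tori in $\PrymD(0,0,0)$.

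The main obstacle will be the combinatorial claim that $|\mathcal{S}|\leq 3$ together with the requirement that each of the three configurations above is actually realized by exactly one of the three surgeries. This reduces to a topological classification of maximal families of mutually parallel homologous saddle connections between the two zeros of a form in $\Prym(2,2)^{\rm odd}$: by analyzing how $X\setminus\bigcup_{\s\in\mathcal{S}}\s$ splits into connected components and imposing the constraints that each component must carry a piece compatible with $\inv$ and with the Prym condition $\dim_\C\Omega^-(X,\inv)=2$, one finds that the only possibilities are the three listed, each matching the inverse of precisely one of the surgeries constructed in Section~\ref{sec:surgeries}. Once this classification is in place, identifying the limit surface as a point in the corresponding boundary stratum follows by reading the respective surgery backwards.
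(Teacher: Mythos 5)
Your overall strategy matches the paper's: define the maximal horizontal kernel-foliation interval, identify the set $\mathcal{S}$ of saddle connections that collapse at an endpoint, and split into the three cases $|\mathcal{S}|\in\{1,2,3\}$, matching each to the inverse of one of the surgeries. However, two of your intermediate claims are wrong or unjustified, and the third is the hardest step of the whole proof which you only gesture at.

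First, the assertion that the elements of $\mathcal{S}$ are homologous in $H_1(X,\{P,Q\};\Z)$ is false in general. When $|\mathcal{S}|=2$ and the pair $\{\s_1,\s_2\}$ is exchanged by $\inv$, the $1$-cycle $c=\s_1*(-\s_2)$ satisfies $\inv(c)=c$ (it lies in $H_1(X,\Z)^+$, not $H_1(X,\Z)^-$), so the vanishing of $\omega(c)$ gives no contradiction with $D$ not a square, and indeed $c$ is \emph{non-separating}: cutting along it yields a connected surface of genus $2$ in $\H(1,1)$ (Lemma~\ref{lm:collapse:g2:eigen}). Reasoning about ``the connected components of $X\setminus\bigcup_{\s\in\mathcal{S}}\s$'' under the incorrect premise that they are homologous would lead you astray precisely in this case. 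Second, you assert without argument that in the $|\mathcal{S}|=2$ case the pair must be exchanged by $\inv$, and in the $|\mathcal{S}|=3$ case exactly one is fixed by $\inv$ and the cycle $c_0=\s_{i_1}*(-\s_{i_2})$ is separating. Ruling out the alternatives (two saddle connections individually invariant by $\inv$, or $c_0$ non-separating) is exactly the content of Lemma~\ref{lm:twins:sc:Dsq}, which is where the hypothesis that $D$ is not a square is actually used; omitting it leaves a genuine hole. Third, the heart of the $|\mathcal{S}|=3$ case is showing that the complement of the three homologous saddle connections really is a disjoint union of three one-holed tori with the right $\inv$-symmetry. The paper proves this with a nontrivial argument: after cutting along $c_0$ one obtains a flat torus and a genus-$2$ piece in $\H(1,1)$, and one then has to show (using that the induced involution on the genus-$2$ piece is \emph{not} the hyperelliptic involution, and exploiting commutativity of automorphisms with the hyperelliptic involution) that a further separating curve exists splitting off two more tori exchanged by $\inv$. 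Your phrase ``by analyzing how $X\setminus\bigcup\s$ splits into connected components and imposing constraints compatible with $\inv$'' is where this whole argument lives; as written it is a placeholder, not a proof. Minor remarks: the observation that unstability forces the horizontal direction to be parabolic is correct but unnecessary for this theorem, and to justify finiteness of \emph{both} $s_{\min}$ and $s_{\max}$ you should note that the holonomies $s_i$ cannot all have the same sign (a positively oriented $\s_i$ in the lower boundary of a cylinder is paired with a negatively oriented one), which you leave implicit.
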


We will use the following elementary lemma.
\begin{Lemma}
\label{lm:twins:sc:Dsq}
Let $(X,\omega) \in \PrymD(2,2)^{\rm odd}$. Assume that one of the following occurs:
\begin{enumerate}
\item There exists a non trivial homology class $c\in H_1(X,\Z)^-$ such that $\omega(c)=0$.
\item There exists two twins saddle connections in $X$ joining the two zeros of $\omega$, which are both invariant by the Prym involution.
\item There exists a triple of twins saddle connections $(\s_0,\s_1,\s_2)$ where $\s_0$ is invariant and $(\s_1,\s_2)$ are 
exchanged by the Prym involution, such that $c_0=\s_1*(-\s_2)$ is non-separating.
\end{enumerate}
Then $D$ is a square.
\end{Lemma}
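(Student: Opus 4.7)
The plan is to establish (1) directly from the real multiplication structure, and then to deduce (2) and (3) by exhibiting suitable classes in $H_1(X,\Z)^-$ with vanishing period.

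For (1), I will use that a generator $T$ of $\Ord_D \subset \mathrm{End}(\Prym(X,\inv))$ acts as an integer matrix on the rank-four lattice $H_1(X,\Z)^-$, with minimal polynomial $x^2 - bx + c_0$ where $b^2 - 4c_0 = D$. Its eigenvalues on $H_1(X,\R)^-$ are thus the Galois-conjugate pair $\lambda = (b+\sqrt{D})/2$ and $\lambda' = (b-\sqrt{D})/2$. Since $T^*\omega = \lambda \omega$, the $\lambda$-eigenspace coincides with $\R\cdot\mathrm{Re}(\omega)\oplus\R\cdot\mathrm{Im}(\omega)$, while the $\lambda'$-eigenspace is exactly $\ker(\omega|_{H_1(X,\R)^-})$. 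Any nonzero $c \in H_1(X,\Z)^-$ with $\omega(c) = 0$ therefore lies in the $\lambda'$-eigenspace, yielding $Tc = \lambda' c$ with both sides integer vectors. This forces $\lambda' \in \Q$, so $D$ is a square.

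For (2) and (3), I will produce an explicit class $c \in H_1(X,\Z)^-$ with $\omega(c) = 0$ and then verify it is nonzero, so that (1) applies. In case (2), since the two zeros are exchanged by $\inv$, the invariance of each $\s_i$ forces $\inv_*\s_i = -\s_i$; then $c := \s_1 - \s_2$ satisfies $\omega(c) = 0$ (twins) and $\inv_* c = -c$. In case (3), using $\inv_*\s_0 = -\s_0$ together with $\inv_*\s_1 = -\s_2$ and $\inv_*\s_2 = -\s_1$, I obtain $c := \s_1 + \s_2 - 2\s_0$ with the same two properties.

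The main obstacle is showing that $c \neq 0$ in $H_1(X,\Z)$. For (2), assuming (as is generic) that $\s_1$ and $\s_2$ are disjoint in their interiors, the cycle $c$ is a simple closed curve; were it separating, then since $\inv$ reverses its orientation while preserving that of $X$, $\inv$ would have to exchange the two components of $X\setminus c$, so every fixed point of $\inv$ would lie on $c$. But $c$ only contains the midpoints of $\s_1$ and $\s_2$ as fixed points, whereas Riemann--Hurwitz applied to $X \to X/\inv$ (genera $3$ and $1$) yields four, a contradiction. For (3), setting $c' := \s_1 - \s_0$ gives $\inv_*c' = -(\s_2 - \s_0)$, so the hypothesis $c = 0$ forces $\inv_*c' = c'$ and consequently $c_0 = \s_1 - \s_2 = c' + \inv_*c' = 2c'$ in $H_1(X,\Z)$; this violates the primitivity of the non-separating simple closed curve $c_0$. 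In each case, (1) then gives the conclusion.
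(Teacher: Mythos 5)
Your proof is correct, and the overall strategy — produce a non-trivial class in $H_1(X,\Z)^-$ with vanishing period and then appeal to part (1) — is exactly the paper's. For (1), the paper cites that, when $D$ is not a square, the period map $H_1(X,\Q)^- \to K(i)$ is a $\Q$-linear isomorphism; your eigenvalue argument (an integral matrix cannot have a nonzero integral eigenvector with irrational eigenvalue) unwinds essentially the same fact. For (3), both you and the paper derive $c_0 = 2c'$ from $c=0$ and contradict the primitivity of the non-separating simple closed curve $c_0$, so your argument is the paper's. For (2) you diverge genuinely: the paper observes that if $c=\s_1*(-\s_2)$ separated $X$, then cutting along $\s_1\cup\s_2$ and regluing would produce translation surfaces each with a single cone point of angle $4\pi$, i.e.\ in the empty stratum $\H(1)$; you instead count fixed points of $\inv$, noting Riemann--Hurwitz for the degree-two map $X\to X/\inv$ (genera $3$ and $1$) forces exactly four fixed points, that $\inv$ must swap the two sides of a separating $c$ because it reverses the orientation of $c$ while preserving that of $X$, so all four fixed points would have to lie on $c$, yet $c$ carries only the two midpoints of $\s_1,\s_2$. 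Both are valid; yours uses more of the Prym structure, while the paper's is more elementary, resting only on the emptiness of $\H(1)$. One small wording issue: you hedge with ``assuming (as is generic) that $\s_1$ and $\s_2$ are disjoint in their interiors,'' but this is automatic rather than an assumption — distinct twin saddle connections are parallel geodesics with equal holonomy, hence cannot meet off their endpoints — and you should say so explicitly so the argument does not read as having a gap.
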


\begin{proof}[Proof of Lemma~\ref{lm:twins:sc:Dsq}]
For the first condition, we set $K=\Q(\sqrt{D})$. If $D$ is not a square then $K$ is a real quadratic field over $\Q$ 
and, up to a rescaling by $\GL^+(2,\R)$, the map $H_1(X,\Q)^- \ni c \mapsto \omega(c)\in K(i)$ is an 
isomorphism of $\Q$-vector spaces. Thus $\omega(c)=0$ implies $c=0$ in $H_1(X,\Z)^-$. \medskip

For the second condition, let $\s_1,\s_2$ be a pair of twin saddle connections which are both invariant by  the Prym 
involution $\inv$. If $c=\s_1*(-\s_2) \in H_1(X,\Z)^-$ is separating then by cutting $X$ along $\s_1,\s_2$ and 
regluing the segments of the boundary of the two components, we get a pair of translation surfaces, each of which 
having a unique singularity with cone angle $4\pi$ (they thus belong to the stratum $\H(1)$). Since this stratum is empty
we get a contradiction and $c$ is non-separating {\em i.e.} $c \neq 0 \in H_1(X,\Z)^-$. One has 
$\omega(c)=\omega(\s_1) -\omega(\s_2)=0$ hence the first condition applies and $D$ is a square. \medskip

For the last condition, we set $c_j=\s_0*(-\s_j), \, j=1,2$. Remark that we have $\inv(c_1)=-c_2$ and 
$c_0=c_2-c_1$ in $H_1(X,\Z)$. 
Since $c_0$ is non-separating by assumption, it is a primitive element of $H_1(X,\Z)$. 
Observe that if one of the curves $c_1$ or $c_2$ is separating then the other is 
also separating (as $\inv(c_1)=-c_2$) and in this case $c_0=c_1-c_2= 0 \in H_1(X,\Z)$ contradicting the 
assumption. Hence both $c_1,c_2$ are non-separating. Let $c=c_1+c_2$. We have  $\inv(c)=-c$ so that 
$c\in H_1(X,\Z)^-$. If $c=0 \in H_1(X,\Z)$ then $c_2=-c_1$ {\em i.e.} $c_0=c_1-c_2=2c_1$: contradiction with 
the primitivity of $c_0\in H_1(X,\Z)$. Thus $c\neq 0 \in H_1(X,\Z)^-$. Since $\s_0,\s_1,\s_2$ are twin saddle connections, 
we have 
$$
\omega(c)=\omega(c_1)+\omega(c_2)=2\omega(\s_0)-\omega(\s_1)-\omega(\s_2)=0.
$$
Again the first condition applies and $D$ is a square.
\end{proof}

\begin{proof}[Proof of Theorem~\ref{thm:unstable:dec:collapse}]
We denote by $\{\s_i, \, i\in I\}$ the set of horizontal saddle connections on $(X,\omega)$ whose endpoints are the 
two distinct zeros of $\omega$ denoted by $P$ and $Q$. Recall that we always define the orientation of such a saddle connection to be from $P$ to $Q$, it is said to be {\em positively oriented} if the orientation is from the left to the right, otherwise it is said to be {\em negatively oriented}.  The corresponding holonomy vectors are $\{(s_i,0)=\omega(\s_i)\in \R^2,\, i\in I\}$.
For every $i\in I$, $\s_i$ is contained on the lower boundary of a unique cylinder. If $\s_i$ is positively oriented 
(namely $s_i>0$) then there exists $\s_j$ in the same lower boundary component as $\s_i$ which is 
negatively oriented. In particular, all the numbers $\{s_i\}$ cannot have the same sign.

Let us define 
$$
s_\mathrm{min}=\max\{-s_i, \, s_i>0\} \text{ and } s_\mathrm{max}=\min\{-s_i,\, s_i<0\}.
$$ 

\noindent If $(Y,\eta)=(X,\omega)+(x,0)$ then by construction $\eta(\s_i)=(s_i+x,0)$ and the surface $(Y,\eta)$ can be constructed 
from  the same cylinders as $(X,\omega)$. For all $x\in ]s_\mathrm{min},s_\mathrm{max}[$, $(X,\omega)+(x,0)$ is a well-defined surface in 
$\PrymD(2,2)^{\rm odd}$ since $s_i+x \neq 0, \, \forall i\in I$,  proving the first statement. We now prove the second assertion.

Let us analyze the case when $x$ tends to $s_\mathrm{min}$ (the case $x$ tends to $s_\mathrm{max}$ being similar).
Letting $\mathcal C_\mathrm{min}=\{\s_i, \, s_i=-s_\mathrm{min}\}$ and 
$\mathcal C_\mathrm{max}=\{\s_i, \, s_i=-s_\mathrm{max} \}$ (necessarily $|\mathcal C_\mathrm{min}| \leq 3$, and $|\mathcal C_\mathrm{max}|\leq 3$).  When 
$x\rightarrow s_\mathrm{min}$, only the saddle connections of $\mathcal C_\mathrm{min}$ can 
collapse to a point. We thus have three cases, parameterized by the number of elements of $\mathcal C_\mathrm{min}$.
\begin{enumerate}
\item $\mathcal C_\mathrm{min}=\{\s_{i_0}\}$: the unique saddle connection $\s_{i_0}$ is invariant by $\inv$
and $(X,\omega)+(x,0)$ converges to a surface in $\PrymD(4)$. \medskip

\item $\mathcal C_\mathrm{min}=\{\s_{i_1},\s_{i_2}\}$: $\s_{i_1}$ and $\s_{i_2}$ are exchanged by $\inv$
(otherwise the closed curve $c=\s_{i_1}*(-\s_{i_2}) \in H_1(X,\Z)^-$ represents a non zero element and, since
$\omega(c)=0$, Lemma~\ref{lm:twins:sc:Dsq} implies that $D$ is a square). 
By Theorem~\ref{thm:collapse:Prym2}, $(X,\omega)+(x,0)$ converges to a surface in $\Omega E_{D'}(2)^*$, for 
some $D'\in \{D, D/4\}$.\medskip

\item \label{case3} $\mathcal C_\mathrm{min}=\{i_0,i_1,i_2\}$: if there are two saddle connections in $\{\s_{i_0},\s_{i_1},\s_{i_2}\}$ that 
are invariant by $\inv$ then $D$ must be square (see Lemma~\ref{lm:twins:sc:Dsq}).
Hence one can assume that $\inv$ preserves $\s_{i_0}$ while it exchanges $\s_{i_1}$ and $\s_{i_2}$. 
If the closed curve $c_0=\s_{i_1}*(-\s_{i_2})$ is non-separating then $D$ must be a square (again by 
Lemma~\ref{lm:twins:sc:Dsq}). Thus $c_0$ is separating and $\{\s_{i_0},\s_{i_1},\s_{i_2}\}$ are homologous saddle 
connections. We only need to show that $X$ decomposes into three tori. Indeed, as $x$ tends to $s_\mathrm{min}$ 
the length of these saddle connections tends to  zero, and the limit surface is an element of $\PrymD(0,0,0)$. 
\end{enumerate}
Hence, in view of the above discussion, in order to finish the proof of the theorem, we need to show 
that, in case~\eqref{case3}, the complement of $\s_{i_0}\cup \s_{i_1}\cup \s_{i_2}$ has three connected components, each of which is a 
one-holed torus.

We begin by observing that $\s_{i_1},\s_{i_2}$ determine a pair of angle $(2\pi,4\pi)$ at $P$ and $Q$. 
Since $\inv$ exchanges $P$ and $Q$ and preserves the orientation of $X$, a careful look at the geodesic rays emanating from $P$ and $Q$ shows that the angles $2\pi$ at $P$ and the angle $2\pi$ at $Q$ belong to the same side of $c_0$. Cut $X$ along $c_0$, then glue the two segments in each boundary components together, we then obtain two closed translation surfaces, one of which has no singularities, hence must be a flat torus that will be denoted by $(X',\omega')$, the other one is then a surface $(X'',\omega'')$ in $\H(1,1)$. 

We have in $X'$ a marked geodesic segment $\s'$ which is the identification of $\s_1$ and $\s_2$, we denote the endpoints of this segment by $P'$ and $Q'$  such that $P'$ (resp. $Q'$) corresponds to $P$ (resp. to $Q$). For $(X'',\omega'')$, we denote the zeros of $\omega''$ by $P''$ and $Q''$ such that $P''$ (resp. $Q''$) corresponds to $P$ (resp. to $Q$). In $X''$ we have a pair of twin saddle connections $\s_0$ and $\s''$, where $\s''$ is the identification of $\s_1$ and $\s_2$. 

The involution $\inv$ induces an involution $\inv'$ on $X'$ and an involution $\inv''$ on $X''$. We can consider $\inv'$ and $\inv''$ as the restrictions of $\inv$ in $X'$ and $X''$ respectively. Note that $\inv'$ exchanges $P'$ and $Q'$ and $\inv'(\omega')=-\omega'$. Since $X'$ is an elliptic curve, there exists one such involution. We deduce in particular that $\inv'$ has four fixed points in $X'$, one of which is the midpoint of $\s'$, the other three are the fixed points of $\inv$. 

Recall that $\inv$ has four fixed points in $X$. Therefore, $\inv''$ has exactly two fixed points, one of which is the midpoint of $\s_0$ by assumption (recall that $\s_0$ is invariant by $\inv$), and the other one is the midpoint of $\s''$. Let $\iotaup$ denote the hyperelliptic involution of $X''$. Remark that $\iotaup$ has six fixed points. From the observations above, we can conclude that $\inv''\neq \iotaup$. 

We now claim that $\iotaup(\s_0)=-\s''$. Indeed, since $\iotaup$ is in the center of the group ${\rm Aut}(X'')$, we have $\iotaup\circ\inv''=\inv''\circ\iotaup$. Therefore $\iotaup$ preserves the set of fixed points of $\inv''$. If $\iotaup$ fixes the midpoint of $\s_0$, then it follows that $\iotaup\circ\inv''=\Id$, since both $\iotaup$ and $\inv''$ are involutions. Hence $\inv''=\iotaup$, and we have a contradiction. Therefore, $\iotaup$ must send the midpoint of $\s_0$ to the midpoint of $\s''$. Remark that $\iotaup^*\omega''=-\omega''$, which means that $\iotaup$ is an isometry of $(X'',\omega'')$. Thus $\iotaup$ maps $\s_0$ to another saddle connection such that $\omega''(\iotaup(\s_0))=-\omega''(\s_0)$. Since $\iotaup$ exchanges the zeros of $\omega''$, we conclude that $\iotaup(\s_0)=-\s''$.

Now, the element in $H_1(X'',\Z)$ represented by the closed curve $\s_0\cup \s''$ is preserved by $\iotaup$, which implies that this curve is separating. Cut $X''$ along $\s_0\cup \s''$, then glue the segments in the boundary of each component together, we then get two flat tori $(X''_1,\omega''_1)$ and $(X''_2,\omega''_2)$ which are exchanged by $\inv''$. This finishes the proof of Theorem~\ref{thm:unstable:dec:collapse}.
\end{proof}

\section{Cylinder decomposition of surfaces near $\PrymD(4)$ and $\PrymD(2)^\ast$}

Let $(X_0,\omega_0)$ be a surface in $\PrymD(4)$, and $\Psi:  \mathring{D}(\eps) \rightarrow \PrymD(2,2)^{\rm odd}$ be the map in Proposition~\ref{prop:Prym4:neighbor}.

\begin{Proposition}\label{prop:dec:near:Prym4}
Assume that the horizontal direction is completely periodic for $(X_0,\omega_0)$. Then there exists $0<\eps_1 < \eps$ such that for every $(X,\omega)\in \Psi(\mathring{D}(\eps_1))$, the horizontal direction is also completely periodic. Set $R_{(k,5)}(\eps_1)=\{\varrhoup e^{k \imath\frac{\pi}{5}}, \, 0 <  \varrhoup < \eps_1\}$, for $k=0,\dots,9$, and $\mathring{D}_{(k,5)}(\eps_1)=\{\varrhoup e^{\imath\thetaup}, \, 0< \varrhoup < \eps_1, \, (k-1)\pi/5 < \thetaup < k\pi/5\}$, for $k=1,\dots,10$. Then

\begin{enumerate}
  \item The cylinder decompositions in the horizontal direction of all surfaces in $\Psi(R_{(k,5)}(\eps_1))$ are unstable and have the same combinatorial data.
  \item The cylinder decompositions in the horizontal direction of all surfaces in $\Psi(\mathring{D}_{(k,5)}(\eps_1))$ are stable and have the same combinatorial data.
\end{enumerate}

\end{Proposition}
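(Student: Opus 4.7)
The plan is to prove the proposition in three connected steps, using continuity of the surgery map $\Psi$ and a bookkeeping of the ten half-disc configuration defining $\tilde{D}$.

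First, complete periodicity of the horizontal direction on every $\Psi(z)$ will be established by a persistence argument. Since $(X_0,\omega_0)\in\PrymD(4)$ is completely periodic horizontally, it decomposes into finitely many horizontal cylinders; I will pick a core curve $\gamma_j$ in the interior of each one and choose $\eps_1\in(0,\eps)$ small enough that every $\gamma_j$ is disjoint from $D(P_0,\eps_1)$. The surgery defining $\Psi(z)$ for $z\in\mathring{D}(\eps_1)$ modifies $X_0$ only inside this disc, so each $\gamma_j$ remains a closed horizontal geodesic in $\Psi(z)$, and Theorem~\ref{thm:cp} then upgrades periodicity to complete periodicity.

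Next, I will classify the horizontal saddle connections of $\Psi(z)$. Inside the modified disc $\tilde{D}$, the only saddle connection between the two zeros $P,Q$ is $\s$ itself, with holonomy $z^5$; it is horizontal precisely when $z^5\in\R$, i.e.\ when $z\in R_{(k,5)}(\eps_1)$ for some $k$. Every other horizontal saddle connection of $\Psi(z)$ is obtained by following a horizontal separatrix at $P$ or $Q$, crossing $\partial\tilde{D}$, traversing $X_0\setminus D(P_0,\eps_1)$ along the unchanged horizontal trajectory of $(X_0,\omega_0)$, and re-entering $\tilde{D}$ to hit $P$ or $Q$. Whether a given boundary point of $\partial\tilde{D}$ is attached to $P$ or to $Q$ is determined by the half-disc $D_i$ containing it, and the partition of $\partial\tilde{D}$ into the ten half-discs depends only on $\arg(v)=\arg(z^5)\bmod 2\pi$.

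The third step exploits this to prove local constancy. As $z$ varies inside a connected component of $\mathring{D}(\eps_1)\setminus\bigcup_{k=0}^{9} R_{(k,5)}(\eps_1)$, the angle $\arg(z^5)$ stays inside an open interval containing no integer multiple of $\pi$, so no $v$-ray ever coincides with the horizontal direction and the half-disc partition of $\partial\tilde{D}$ is combinatorially fixed. Combined with the previous step, this forces the combinatorial data of the horizontal decomposition of $\Psi(z)$ to be constant on each sector $\mathring{D}_{(k,5)}(\eps_1)$ and on each ray $R_{(k,5)}(\eps_1)$; on sectors $\s$ is not horizontal, while on rays it is, producing an unstable decomposition there. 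Stability on the sectors will then follow from a cylinder count: I will show that the surgery creates one additional horizontal cylinder in $\tilde{D}$ whenever $\arg(z^5)\not\in\pi\Z$, so that $\Psi(z)$ reaches the maximum number $g+|\kappa|-1=4$ of horizontal cylinders allowed in $\H(2,2)$, at which point stability is forced by Lemma~\ref{lm:max:nb:cyl}.

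The main obstacle will be the combinatorial analysis underlying this cylinder count: I must verify from the explicit half-disc configuration of Figure~\ref{fig:split:zero:H4} that an additional horizontal cylinder really does appear in $\tilde{D}$ for $\arg(z^5)\not\in\pi\Z$ and is destroyed as $z$ crosses to a ray. This will be carried out by an $\inv$-equivariant bookkeeping of how the ten horizontal separatrices at $P_0$ are re-attached to $P$ and to $Q$ by the surgery, using the fact that $\inv$ exchanges $D_i\leftrightarrow D_{i+5}$ and swaps $P$ with $Q$; continuity reduces the verification to one representative sector in each of the two families ($\arg(z^5)\in(0,\pi)$ and $\arg(z^5)\in(\pi,2\pi)$).
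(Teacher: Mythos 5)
Your steps 1 and 2 are fine and follow the paper's plan: persistence of core curves gives periodicity, Theorem~\ref{thm:cp} upgrades it to complete periodicity, and the combinatorics of the gluing of the ten half-discs depends only on the sector containing $z$. The gap is in your route to \emph{stability} on the sectors via a cylinder count and Lemma~\ref{lm:max:nb:cyl}.

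That route does not work, for two reasons. First, $(X_0,\omega_0)\in\PrymD(4)$ need not have the maximal number $g+|\kappa|-1=3$ of horizontal cylinders; a one- or two-cylinder decomposition is perfectly possible. Second, the surgery does not always produce exactly one additional horizontal cylinder. If $\piup$ denotes the permutation of $\{1,\dots,5\}$ recording how the five horizontal separatrices at $P_0$ reconnect, then the new cylinders created inside $\tilde D$ are in bijection with the cycles of $(k,k+1,k+2)\circ\piup$ (this is what the explicit description of the regluing gives), and the number of such cycles is in general different from $1$ and varies with $k$. So the total number of horizontal cylinders of $\Psi(z)$ can be anything from $2$ up to $4$, and when it is less than $4$, Lemma~\ref{lm:max:nb:cyl} gives no information. (The paper itself remarks after Lemma~\ref{lm:nbr:cyl:max3} that stable decompositions with fewer than $|\kappa|+g-1$ cylinders do occur in these Prym loci.)

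The fix is to prove stability directly from the bookkeeping you already set up, rather than trying to reduce it to a count. The correct statement, visible from the half-disc gluing of $\tilde D$, is that when $\arg(z^5)\notin\pi\Z$ every horizontal separatrix emanating from $P$ re-enters $\tilde D$ at a boundary point attached to $P$, and symmetrically for $Q$; equivalently, under the composed permutation $(k,k+1,k+2)\circ\piup$ the two blocks of half-discs attached to $P$ and to $Q$ are each closed under the return map. This is exactly what yields a stable decomposition, and it also gives the constancy of the combinatorial data on each sector as a by-product (the data is determined by $\piup$ and $k$). No appeal to a maximal cylinder count, nor to Lemma~\ref{lm:max:nb:cyl}, is needed or valid here.
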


\begin{proof}
Let $\CCC_i, \, i=1,\dots,n$, denote the horizontal cylinders of $X_0$, and $\gamma_i$ denote the simple closed geodesic in $\CCC_i$ whose distances to the two boundary components of $\CCC_i$ are equal. Pick an $0<\eps_1<\eps$ small enough so that $D(P_0,\eps_1)=\{x\in X_0, \, \dist(x,P_0) < \eps_1\}$ is an embedded disc disjoint from the curves $\gamma_i$. 

By the choice of $\eps_1$, we  see that the map $\Psi$ is defined on the disc $\mathring{D}(\eps_1)$.  By definition, the surface $\Psi(\varrhoup e^{\imath\thetaup})$ has a small saddle connection (of length $\varrhoup^5$) in direction $5\thetaup$. It follows immediately that the horizontal direction is periodic for the surfaces in $\Psi(R_{(k,5)}(\eps_1))$. Since we have a horizontal saddle connection with distinct endpoints, the corresponding cylinder decomposition is unstable. Clearly, the combinatorial data of the decomposition of $\Psi(z)$ does not change as $z$ varies in $R_{(k,5)}(\eps_1)$.

Let us consider a surface $(X,\omega)=\Psi(z)$, where $z\in \mathring{D}_{(k,5)}(\eps_1)$. To simplify the proof, we will assume in addition that $z^5=(0,2h)$ with $0<h<\eps_1$, the general case can be proved by the same arguments. Recall that the cone angle at $P_0$ is $10\pi$, hence $D(P_0,\eps_1)$ is the union of $10$ half-discs $D^+_j=\{z\in \C, \, |z| < \eps_1, \, {\rm Re}(z) \geq 0\}, D^-_j=\{z \in \C, \, |z|< \eps_1, {\rm Re}(z)\leq 0\}, j=1,\dots,5$, which are glued together with the following rules (see Figure~\ref{fig:H4:splitting:H22})
\begin{itemize}
\item[$\bullet$] $D^+_j$ is glued to $D^-_j$ along the segment $\{{\rm Re}(z)=0, 0\leq {\rm Im}(z) < \eps_1\}$,
\item[$\bullet$] $D^-_j$ is glued to $D^+_{j+1}$ along the segment $\{{\rm Re}(z)=0, -\eps_1 < {\rm Im}(z) \leq  0\}$,
\end{itemize}

Set 
\begin{itemize}
\item[$\bullet$] $a^+_j=\{ z \in D^+_j, \, {\rm Im}(z)=0\}, \, a^-_j=\{z\in D^-_j, \, {\rm Im}(z)=0\}$,
\item[$\bullet$] $b^+_j=\{ z \in D^+_j, \, {\rm Im}(z)=h\}, \, b^-_j=\{z\in D^-_j, \, {\rm Im}(z)=h\}$,
\item[$\bullet$] $c^+_j=\{ z \in D^+_j, \, {\rm Im}(z)=-h\}, \, c^-_j=\{z\in D^-_j, \, {\rm Im}(z)=-h\}$,
\end{itemize}

Since the horizontal direction is periodic for $(X_0,\omega_0)$, we have a permutation $\piup$ of the set $\{1,\dots,5\}$ such that $a^-_{\piup(j)}$ and $a^+_j$ belong to the same saddle connection, which implies that $b^-_{\piup(j)}$ and $c^-_{\piup(j)}$ belong to the same geodesic rays which contain $b^+_j$ and $c^+_j$ respectively. 

Now the surface $(X,\omega)=\Psi(z)$ can be obtained from $(X_0,\omega_0)$ by replacing the disc $D(P_0,\eps_1)$ by another disc $\tilde{D}(\eps_1)$ constructed from the same half-discs $D^\pm_j$ with the following gluings (see Figure~\ref{fig:H4:splitting:H22} for the case $k=2$), here we use the convention $j\sim (j-5)$ if $j>5$, 

\begin{itemize}
\item[$\bullet$] $D^+_j$ is glued to $D^-_j$ along the segment $\{{\rm Re}(z)=0, \, h\leq {\rm Im}(z)<\eps_1\}$ for $ j \in\{k,k+1,k+2\}$.
\item[$\bullet$] $D^+_j$ is glued to $D^-_j$ along the segment $\{{\rm Re}(z)=0, \, -h\leq {\rm Im}(z)<\eps_1\}$ for $j \not\in\{k,k+1,k+2\}$.
\item[$\bullet$] $D^-_j$ is glued to $D^+_{j+1}$ along the segment $\{{\rm Re}(z)=0, \, -\eps_1 < {\rm Im}(z)\leq h\}$ for $j\in\{k,k+1\}$.
\item[$\bullet$] $D^-_j$ is glued to $D^+_{j+1}$ along the segment $\{{\rm Re}(z)=0, \, -\eps_1 < {\rm Im}(z)\leq -h\}$ for $j \not\in\{k,k+1\}$
\item[$\bullet$] $D^+_k$ is glued to $D^-_{k+2}$ along the segment $\{{\rm Re}(z)=0, -h\leq {\rm Im}(z) \leq h\}$.
\end{itemize}

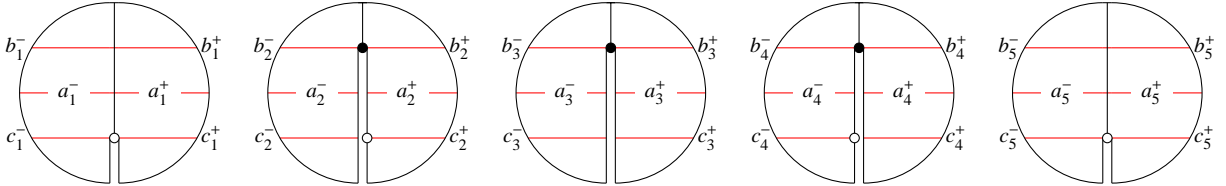
\begin{figure}[htb]
\centering
\begin{tikzpicture}[scale=0.6]
\foreach \x in {(-11,0),(-5.5,0),(0,0),(5.5,0), (11,0)} {\draw[red] \x +(-0.1,0) -- +(-2.1,0) \x +(0.1,0) -- +(2.1,0); \draw[red] \x ++(-0.1,0) +(0,1) -- +(150:2) +(0,-1) -- +(210:2); \draw[red] \x ++(0.1,0) +(0,1) -- +(30:2) +(0,-1) -- +(-30:2); }

\foreach \x in {(-11,0), (11,0)} {\draw[red] \x +(-0.1,1) -- +(0.1,1) +(-0.1,0) -- +(0.1,0);}

\draw (-12,0) node[fill=white] {$\scriptstyle a^-_1$} (-10,0) node[fill=white] {$\scriptstyle a^+_1$} (-12.7,1) node[left] {$\scriptstyle b^-_1$}  (-12.7,-1) node[left] {$\scriptstyle c^-_1$} (-9.3,1) node[right] {$\scriptstyle b^+_1$} (-9.3,-1) node[right] {$\scriptstyle c^+_1$};

\draw (-6.5,0) node[fill=white] {$\scriptstyle a^-_2$} (-4.5,0)  node[fill=white] {$\scriptstyle a^+_2$} (-7.2,1) node[left] {$\scriptstyle b^-_2$} (-7.2,-1) node[left] {$\scriptstyle c^-_2$} (-3.8,1) node[right] {$\scriptstyle b^+_2$} (-3.8,-1) node[right] {$\scriptstyle c^+_2$};

\draw (-1,0) node[fill=white] {$\scriptstyle a^-_3$} (1,0) node[fill=white] {$\scriptstyle a^+_3$} (-1.7,1) node[left] {$\scriptstyle b^-_3$}  (-1.7,-1) node[left] {$\scriptstyle c^-_3$} (1.7,1) node[right] {$\scriptstyle b^+_3$} (1.7,-1) node[right] {$\scriptstyle c^+_3$};

\draw (4.5,0) node[fill=white] {$\scriptstyle a^-_4$} (6.5,0) node[fill=white] {$\scriptstyle a^+_4$} (3.8,1) node[left] {$\scriptstyle b^-_4$}  (3.8,-1) node[left] {$\scriptstyle c^-_4$} (7.2,1) node[right] {$\scriptstyle b^+_4$} (7.2,-1) node[right] {$\scriptstyle c^+_4$};

\draw (10,0) node[fill=white] {$\scriptstyle a^-_5$} (12,0) node[fill=white] {$\scriptstyle a^+_5$} (9.3,1) node[left] {$\scriptstyle b^-_5$}  (9.3,-1) node[left] {$\scriptstyle c^-_5$} (12.7,1) node[right] {$\scriptstyle b^+_5$} (12.7,-1) node[right] {$\scriptstyle c^+_5$};

\foreach \x in {(0,0), (-5.5,0), (-11,0), (5.5,0), (11,0)} {\draw \x +(0.1,-2) arc (-90:90:2) \x +(-0.1,2) arc (90:270:2); \draw \x +(-0.1,2) -- +(0.1,2);}
\foreach \x in {(0,0), (-5.5,0), (5.5,0)} {\draw \x +(-0.1,2) -- +(0.1,2) \x +(0,1) -- +(0,2) \x +(-0.1,1) -- +(-0.1,-2) \x +(0.1,1) -- +(0.1,-2); \filldraw[fill=black] \x +(0,1) circle (3pt);  }
\foreach \x in {(11,0),(-11,0)} {\draw \x +(0,-1) -- +(0,2) \x +(-0.1,-1) -- +(-0.1,-2) \x +(0.1,-1) -- +(0.1,-2); \filldraw[fill=white] \x +(0,-1) circle (3pt);}

\filldraw[fill=white] (-5.5,0) +(0.1,-1) circle (3pt);
\filldraw[fill=white] (5.5,0) +(-0.1,-1) circle (3pt);

\end{tikzpicture}
\caption{Splitting a zero of order $4$ to two zeros of order $2$ ($k=2$).}
\label{fig:H4:splitting:H22}
\end{figure}

Let $P$ (resp. $Q$) denote the zero of $\omega$ corresponding to the point $(0,-h) \in D^+_k$ (resp. $(0,h) \in D^+_k$). It is clear from the gluing rules that any horizontal geodesic ray emanating from $P$ (reps. $Q$) ends up at $P$ (resp. $Q$). Thus $(X,\omega)$ admit as stable cylinder decomposition in the horizontal direction. Remark that the combinatorial data of the cylinder decomposition are encoded in the permutation $\piup$. Namely, $(X,\omega)$ has $n$ cylinders associated to the geodesics $\gamma_i, \, i=1,\dots,n,$ and $m$ additional cylinders, each of which corresponds to a  cycle of the permutation $(k,k+1,k+2)\circ\piup$. The core curves of the new cylinders contain the segments $a^{\pm}_j$. It is easy to check that the set of saddle connections contained in the upper and lower boundary components of a cylinder is completely determined by $\piup$ and $k$. The proposition is then proved.
\end{proof}

\begin{Remark}
 In general, the topological model of the decomposition of $(X,\omega)$ changes if we change the sector $\mathring{D}_{(k,5)}(\eps_1)$.
\end{Remark}

\medskip

By a saddle connection on $(X_0,\omega_0,W_0)\in \Omega E_{D'}(2)^*$, we refer to a geodesic segment whose endpoints are in the set $\{P_0,W_0\}$. We consider, by convention, a cylinder in $(X_0,\omega_0,W_0)$ as the union of all simple closed geodesics in the same  free homotopy  class  in $X_0\setminus\{P_0,W_0\}$.  Obviously, a direction $\thetaup$ is periodic for $(X_0,\omega_0,W_0)$ if and only if it is periodic for $(X_0,\omega_0)$, but the associated cylinder decomposition of $(X_0,\omega_0,W_0)$ may have one more cylinder than the one of $(X_0,\omega_0)$, since a simple closed geodesic passing through $W_0$ will cut the corresponding cylinder in $(X_0,\omega_0)$ into two cylinders in $(X_0,\omega_0,W_0)$. The following proposition follows from completely similar arguments as Proposition~\ref{prop:dec:near:Prym4}.

\begin{Proposition}\label{prop:cyl:dec:near:H2}
 Let $(X_0,\omega_0,W_0)$ be a surface in $\Omega E_{D'}(2)^*$. Assume that the horizontal direction is periodic for $(X_0,\omega_0,W_0)$. Let $\Psi :\mathring{D}(\eps) \rightarrow \PrymD(2,2)^{\rm odd}$ be the map defined in Proposition~\ref{prop:H2:neighbor}. Then there exists $0<\eps_1 <\eps$ such that for all $(X,\omega) \in \Psi(\mathring{D}(\eps_1))$, the horizontal direction is also periodic. Set $R_{(k,3)}(\eps_1)=\{\varrhoup e^{k\imath\frac{\pi}{3}}, 0 < \varrhoup < \eps_1\}, \, k=0,\dots,5$, and $\mathring{D}_{(k,3)}(\eps_1)=\{\varrhoup e^{\imath\thetaup}, \, 0 < \varrhoup < \eps_1,\, (k-1)\pi/3 < \thetaup < k\pi/3 \}, k=1,\dots,6.$ We have

\begin{enumerate}
 \item The associated cylinder decomposition of surfaces in $\Psi(R_{(k,3)}(\eps_1))$ are unstable and have the same combinatorial data.
 \item The associated cylinder decomposition of surfaces in $\Psi(\mathring{D}_{(k,3)}(\eps_1))$ are  stable and have the same combinatorial data.
\end{enumerate}
\end{Proposition}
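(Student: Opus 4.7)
The plan is to mirror the proof of Proposition~\ref{prop:dec:near:Prym4} with two modifications dictated by the surgery of Proposition~\ref{prop:H2:neighbor}: the zero $P_0$ of $\omega_0$ now has order $2$, so $D(P_0,\eps_1)$ decomposes into $6$ half-discs (rather than $10$), explaining the exponent $3$ in $z^3$ and the angular step $\pi/3$; and a second local modification must be analysed at the Weierstrass point $W_0$, which is a regular point of $\omega_0$.

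First I would fix $0<\eps_1<\eps$ small enough that the embedded discs $D(P_0,\eps_1)$ and $D(W_0,\eps_1)$ are disjoint from each other and from the core curves $\gamma_i$ of the horizontal cylinders of $(X_0,\omega_0,W_0)$; if some $\gamma_i$ happens to pass through $W_0$, it can be slid within its cylinder to avoid $D(W_0,\eps_1)$, since $W_0$ is isolated. For $z\in R_{(k,3)}(\eps_1)$ the vector $z^3$ is horizontal, so by Proposition~\ref{prop:H2:neighbor} the surface $(X,\omega)=\Psi(z)$ carries a horizontal pair $(\s_1,\s_2)$ of saddle connections joining the two distinct zeros of $\omega$; hence the horizontal direction is periodic and the decomposition is unstable. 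Its combinatorial data is determined by the horizontal ``diameter'' of $D(P_0,\eps_1)$ used to break up $P_0$ and by the topological model of the decomposition of $(X_0,\omega_0,W_0)$, both of which are manifestly constant along each ray.

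For $z\in \mathring{D}_{(k,3)}(\eps_1)$ one may assume, without loss of generality, that $z^3=(0,2h)$ with $h>0$ (the general case follows by the same argument). Following the proof of Proposition~\ref{prop:dec:near:Prym4}, I would write $D(P_0,\eps_1)$ as the union of half-discs $D^\pm_j$, $j=1,2,3$, cyclically glued along their vertical diameters to form a cone of angle $6\pi$, and then describe the new gluing produced by the $\Psi$-surgery, which creates two cone points of angle $4\pi$. The horizontal periodicity of $(X_0,\omega_0)$ induces a permutation $\piup$ of $\{1,2,3\}$ matching $a^+_j$ with $a^-_{\piup(j)}$, and the same bookkeeping as in Proposition~\ref{prop:dec:near:Prym4} shows that after the new identification every horizontal separatrix from a new zero closes back to the same zero, yielding a stable decomposition whose combinatorial data depends only on $\piup$ and $k$.

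The main obstacle, which has no counterpart in Proposition~\ref{prop:dec:near:Prym4}, is to handle the second local surgery at $W_0$: one cuts $D(W_0,\eps_1)$ along the non-horizontal segment $\s_2$ of holonomy $(0,2h)$ centred at $W_0$ and identifies its sides with those of the slit $\s_1$ near $P_0$ by the connected-sum rule prescribed in Proposition~\ref{prop:H2:neighbor}. The effect on horizontal leaves is transparent: a horizontal ray entering $D(W_0,\eps_1)$ either misses $\s_2$ and passes through unchanged, or crosses $\s_2$ and is transported into $D(P_0,\eps_1)$ through the paired half-disc, re-emerging on the other side. A direct case analysis (the three choices of sector at $P_0$, combined with the two sides of $\s_2$) shows that every horizontal separatrix issuing from a new zero of $\omega$ traverses $D(W_0,\eps_1)$ at most once before returning to the same zero, so stability is preserved. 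The constancy of the combinatorics on each sector is then immediate by continuity in $z$, completing the proof.
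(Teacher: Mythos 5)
Your plan correctly reads the paper's intent: the official proof is the one-line remark that this ``follows from completely similar arguments as Proposition~\ref{prop:dec:near:Prym4}'', and you fill in the right adaptations (six half-discs instead of ten, exponent $3$ and step $\pi/3$, the small saddle connections $\omega(\s_1)=\omega(\s_2)=z^3$ being horizontal exactly on the rays, hence unstable there, the choice of $\eps_1$ keeping $D(P_0,\eps_1)$ and $D(W_0,\eps_1)$ away from the core curves). Up to that point the proposal is sound and matches the paper's approach.

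The one place where your argument is not actually a proof is the treatment of the second slit near $W_0$. You write that a case analysis shows each horizontal separatrix ``traverses $D(W_0,\eps_1)$ at most once before returning to the same zero, so stability is preserved''. Two problems: first, the number of times a separatrix visits the disc is irrelevant to whether it ends at $P$ rather than $Q$, so the inference ``at most one traversal $\Rightarrow$ stable'' is a non-sequitur; second, the case analysis itself is only announced, not carried out. The mechanism that actually gives stability here is the same height-bookkeeping as in Proposition~\ref{prop:dec:near:Prym4}: after fixing $z^3=(0,2h)$ with $h>0$, the new zero $P$ sits at local height $+h$ both as the top endpoint of $\s_1$ in $D(P_0,\eps_1)$ and as the top endpoint of $\s_2$ in $D(W_0,\eps_1)$; the slit re-gluing identifies $\s_1$ with $\s_2$ by a translation, hence preserves this height; the permutation $\piup$ coming from the periodicity of $(X_0,\omega_0,W_0)$ sends the segments $b^+_j,c^+_j$ at height $+h$ to segments at the same height; and away from the two discs the flow is unchanged. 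Writing this out (with a separate, trivial, pair of half-discs at $W_0$ carrying segments $a^\pm,b^\pm,c^\pm$, and incorporating the swap induced by the $\s_1\!\leftrightarrow\!\s_2$ identification into the permutation governing the cycles) is what replaces your ``direct case analysis'' and closes the gap. The rest of the argument, in particular that the combinatorial data depends only on $\piup$ and the sector index $k$ and hence is constant on each $\mathring{D}_{(k,3)}(\eps_1)$, then goes through exactly as you say.
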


\section{The set of Veech surfaces is not dense}
In this section we will prove the following theorem:
\begin{Theorem}
\label{thm:exist:open:noVeech}
If $D$ is not a square then for any connected component $\mathscr C$ of  $\Omega E_D(2,2)^{\rm odd}$,
there exists an open subset $\mathcal{U} \subset \mathscr C$ which contains no Veech surfaces.
\end{Theorem}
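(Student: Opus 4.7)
The plan is to exhibit an open subset of $\mathscr C$ free of Veech surfaces by showing that the Veech locus is contained in a countable union of proper complex-analytic subvarieties, hence nowhere dense in $\mathscr C$ by Baire category. The central input is an arithmetic rigidity property of Veech surfaces in $\mathscr C$: being a Prym eigenform, such an $(X,\omega)$ has absolute periods in $\Q(\sqrt D)(i)$ by the definition of real multiplication; being a Veech surface, every completely periodic direction is parabolic, meaning the cylinder moduli in each such direction are $\Q$-commensurable. Combined with the algebraicity of widths (which are absolute periods), the commensurability forces the heights and twists in each periodic decomposition to also lie in $\Q(\sqrt D)(i)$, and iterating over the dense set of periodic directions together with the classical fact that the saddle-connection holonomies on a Veech surface form a discrete set generated over the trace field by finitely many elements, one concludes that \emph{every} relative period of $(X,\omega)$ lies in $\Q(\sqrt D)(i)$ as well.

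Granting this arithmetic constraint, countability of Veech $\GL^+(2,\R)$-orbits in $\mathscr C$ follows. Up to the $\GL^+(2,\R)$-action, a Veech surface is described by a combinatorial cylinder decomposition (of which there are only finitely many types in $\Prym(2,2)^{\rm odd}$) together with the widths, heights, and twists of its cylinders, all lying in the countable set $\Q(\sqrt D)(i)$. Hence there are at most countably many closed $\GL^+(2,\R)$-orbits in $\mathscr C$. By Proposition~\ref{prop:local} the kernel foliation is transverse to each such orbit, so every closed orbit has complex dimension $2<3=\dim_{\C}\mathscr C$. A countable union of proper closed complex-analytic subvarieties of the irreducible $\mathscr C$ has empty interior, so its complement is open dense in $\mathscr C$ and provides the desired open subset $\mathcal U$.

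The main obstacle is justifying that all \emph{relative} periods of a Veech surface in $\mathscr C$ lie in $\Q(\sqrt D)(i)$, since the kernel-foliation direction (which generates the transverse direction to closed orbits) varies precisely the relative periods. While the case of absolute periods is immediate, the general case rests on McMullen's identification~\cite{Mc6} of the trace field of a Prym eigenform with $\Q(\sqrt D)$ together with the rigidity of saddle-connection holonomies on Veech surfaces. A more self-contained alternative, more in the spirit of the present paper, would start from a surface $(X_0,\omega_0)\in \mathscr C$ admitting a stable horizontal cylinder decomposition with three cylinder classes under the Prym involution (whose existence is a consequence of Theorem~\ref{thm:cp} combined with Proposition~\ref{prop:cyl:dec:unstable}), and then use the real-multiplication relation $\sum\beta_i N(w_i)\mu_i=0$ of Corollary~\ref{cor:parabolic} together with Lemma~\ref{lm:cyl:dec:3class:rel}: commensurability of moduli for a Veech surface in a stable three-class decomposition forces the moduli triple $(\mu_1,\mu_2,\mu_3)$ to lie along a countable union of rational lines in the two-dimensional solution plane of the real-multiplication relation, which, combined with the algebraicity of widths, again yields countability of the closed orbits meeting any kernel-foliation leaf through $(X_0,\omega_0)$ and hence of the Veech locus in $\mathscr C$.
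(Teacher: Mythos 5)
Your approach is genuinely different from the paper's, which constructs an explicit Veech--free open set by working near the boundary stratum $\PrymD(0,0,0)$: starting from a three--tori decomposition, Lemma~\ref{lm:switch:small:slit} produces two additional three--tori decompositions by switching, Lemma~\ref{lm:switch:cond:periodic} expresses the ratio $s=|\s_0|/|\widetilde{a}_0|$ as a rational function of the periodicity data, and Proposition~\ref{prop:finite:sol} shows that requiring \emph{both} switched directions to be periodic over--determines $s$ and rules out small $s$. Your route instead tries to reduce the statement to a counting estimate and Baire category.

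There is a genuine gap at the final Baire step. Granting (for the sake of argument) that the Veech locus $V\subset\mathscr C$ is a countable union of closed orbits, each a proper closed analytic subset of complex codimension one, Baire category yields only that $V$ has empty interior, equivalently that $\mathscr C\setminus V$ is \emph{dense}. It does \emph{not} yield that $\mathscr C\setminus V$ contains an open set, which is what the theorem asserts. The sentence ``has empty interior, so its complement is open dense'' conflates these: the complement of a countable union of closed sets is a $G_\delta$, not an open set, and a dense $G_\delta$ can have empty interior (e.g.\ the irrationals in $\R$). What would rescue the argument is knowing that $V$ itself is closed, i.e.\ that the closed orbits do not accumulate in $\mathscr C$ --- but that is essentially Theorem~\ref{theo:main:finiteness}, which is proved \emph{using} Theorem~\ref{thm:exist:open:noVeech}, so invoking it here would be circular. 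You would at least need to show that the set of accumulation points of closed orbits is itself a proper closed subset, which your argument does not address.

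A secondary concern is the arithmetic input. You assert that every relative period of a Veech surface in $\mathscr C$ lies in $\Q(\sqrt D)(\imath)$, citing the trace--field identification, and you candidly flag this as the ``main obstacle.'' Commensurability of moduli in each parabolic direction together with algebraicity of the widths does force the heights into $\Q(\sqrt D)$, but it does not by itself control the twists, and neither the McMullen reference nor the ``more self--contained alternative'' sketch at the end is developed into a proof: the latter stops at ``yields countability'' without verifying the claim, and even if it did, it would only feed into the broken Baire step. The paper's argument sidesteps both difficulties by producing the Veech--free open set directly.
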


\subsection{Cylinder decomposition and prototypes}

We first prove the following lemma, which says that if we have a three tori decomposition such that the direction of the slits is periodic, then up to $\GL^+(2,\R)$, the surface belongs to the real kernel foliation leaf of some ``prototypical surface'' in a finite family.

\begin{Lemma}
\label{lm:3cyl:prototype}
Let $(X,\omega)\in\Omega E_D(2,2)^{\rm odd}$ be an eigenform with 
a triple of homologous saddle connections $\{\s_0,\s_1,\s_2\}$ so that
$(X,\omega)$ admits a three tori decomposition into tori $(X_j,\omega_j), j=0,1,2$. Assume that $(X,\omega)$ is periodic in the direction of $\s_0$. 
Let $(\widetilde{a}_j,\widetilde{b}_j)$ be a basis of $H_1(X_j,\Z)$ with $\widetilde{a}_j$ parallel to $\s_j$, and $\inv(\widetilde{a}_1)=-\widetilde{a}_2, \ \inv(\widetilde{b}_1)=-\widetilde{b}_2$, where $\inv$ is the Prym involution. Then there exists a tuple $(w,h,t,e)\in \Z^4$ satisfying
$$
(\mathcal{P}_D(0,0,0))\ 
\left\{\begin{array}{l}
w>0, h>0, 0\leq t<\gcd(w,h), \gcd(w,h,t,e)=1,\\
D=e^2+8wh
\end{array}\right.
$$

\noindent such that up to the action of $\GL^+(2,\R)$ and Dehn twists, we have
$$
\begin{array}{lll}
\omega(\Z\widetilde{a_0}\oplus \Z \widetilde{b_0})&=&\lbd\cdot \Z^2,\\
\omega(\Z\widetilde{a_j}\oplus \Z \widetilde{b_j}) &=& \Z(w,0) \oplus \Z(t,h) \qquad \text{ for }\ j=1,2,
\end{array}
$$
where  $\lbd \in \Q(\sqrt{D})$ is the unique positive root of the equation $\lbd^2-e\lbd -2wh=0$.
\end{Lemma}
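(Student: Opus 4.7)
The plan is to combine Proposition~\ref{prop:3tori:decomp} with successive normalizations by $\GL^+(2,\R)$ and integer changes of bases (Dehn twists). Since $\s_0,\s_1,\s_2$ are homologous they are parallel, and each $\widetilde{a}_j$ is parallel to this common direction. I will first rotate $(X,\omega)$ by $\GL^+(2,\R)$ so that this direction is horizontal; then $\omega_j(\widetilde{a}_j)\in\R$ for $j=0,1,2$. The residual $\GL^+(2,\R)$-freedom preserving the horizontal axis is three-dimensional (horizontal scaling, vertical scaling, and a horizontal shear).

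I will then apply Proposition~\ref{prop:3tori:decomp} to obtain an integer $e$ and an integer matrix $(p_{ij})$ for the covering $p:X_1\to X_0$ of degree $d=\det(p_{ij})$, satisfying $\gcd(e,p_{11},p_{12},p_{21},p_{22})=1$, $D=e^2+8d$, and $\lbd^2=e\lbd+2d$. Using the three residual $\GL^+(2,\R)$-parameters, I will normalize $L_0:=\omega_0(\Z\widetilde{a}_0\oplus\Z\widetilde{b}_0)=\lbd\cdot\Z^2$, i.e.\ $\omega_0(\widetilde{a}_0)=(\lbd,0)$ and $\omega_0(\widetilde{b}_0)=(0,\lbd)$; this is a direct linear algebra computation with three unknowns and three conditions. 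Writing $p_*\widetilde{a}_1=p_{11}\widetilde{a}_0+p_{12}\widetilde{b}_0$ and $p_*\widetilde{b}_1=p_{21}\widetilde{a}_0+p_{22}\widetilde{b}_0$, the horizontality of $\widetilde{a}_1$ combined with the covering compatibility of $\omega_0$ and $\omega_1$ will force $p_{12}=0$ and produce positive integers $w,h$ and an integer $t$ with $\omega_1(\widetilde{a}_1)=(w,0)$ and $\omega_1(\widetilde{b}_1)=(t,h)$. The case $j=2$ will follow by symmetry from $\widetilde{a}_2=-\inv(\widetilde{a}_1)$, $\widetilde{b}_2=-\inv(\widetilde{b}_1)$ together with $\inv^*\omega=-\omega$, yielding $\omega_2(\widetilde{a}_2)=(w,0)$ and $\omega_2(\widetilde{b}_2)=(t,h)$.

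The only remaining freedom will be integer basis changes on $H_1(X_j,\Z)$ preserving the horizontal direction, namely the Dehn twists $\widetilde{b}_0\mapsto\widetilde{b}_0+n\widetilde{a}_0$ and $\widetilde{b}_1\mapsto\widetilde{b}_1+m\widetilde{a}_1$, which shift $t$ by $-nh$ and $+mw$ respectively. Hence $t$ is well-defined modulo $\gcd(w,h)$ and I will choose $0\leq t<\gcd(w,h)$. The prototype conditions $\mathcal{P}_D(0,0,0)$ will then all follow: $w,h>0$ and the range of $t$ by construction; $\gcd(w,h,t,e)=1$ from the $\gcd$ condition in Proposition~\ref{prop:3tori:decomp} together with $p_{12}=0$; and $D=e^2+8wh$, $\lbd^2-e\lbd-2wh=0$ from the identification $d=\det(p_{ij})=p_{11}p_{22}=wh$. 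The main technical subtlety I expect is carefully tracking the factors of $2$ arising from the non-standard intersection pairing $\langle\widehat{a},\widehat{b}\rangle=2$ on $H_1(X,\Z)^-$ used in Proposition~\ref{prop:3tori:decomp}, to ensure that after the normalization $d$ really equals $wh$ (rather than, say, $wh/4$) so that the equation $D=e^2+8wh$ matches the prototype relation.
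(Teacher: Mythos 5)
Your proposal takes a genuinely different route from the paper's, and the comparison is worth making. The paper's argument is self-contained: in the basis $(\widetilde{a}_0,\widetilde{b}_0,\widetilde{a}_1+\widetilde{a}_2,\widetilde{b}_1+\widetilde{b}_2)$ of $H_1(X,\Z)^-$ (with intersection form $\left(\begin{smallmatrix}J&0\\0&2J\end{smallmatrix}\right)$), self-adjointness forces the generator $T$ of $\Ord_D$ into the block form $\left(\begin{smallmatrix}e\Id_2&2B\\B^*&0\end{smallmatrix}\right)$ with $B=\left(\begin{smallmatrix}w&t\\c&h\end{smallmatrix}\right)$; after normalizing $\omega(\Z\widetilde{a}_0\oplus\Z\widetilde{b}_0)=\lbd\Z^2$ by $\GL^+(2,\R)$, the single equation $T^*\omega=\lbd\omega$ yields $\omega(\widetilde{a}_1+\widetilde{a}_2)=(2w,0)$, $\omega(\widetilde{b}_1+\widetilde{b}_2)=(2t,2h)$, and $c=0$; the quadratic relation $T^2=eT+2wh\,\Id$ then gives $D=e^2+8wh$, properness gives the gcd condition. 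Your proposal replaces this computation with an appeal to Proposition~\ref{prop:3tori:decomp}, which is conceptually appealing since the prototype data is visibly encoded in the covering $p:X_1\to X_0$, but that route makes the factor-of-$2$ bookkeeping load-bearing rather than incidental.

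That bookkeeping is precisely where the proposal has a gap that is flagged but never closed. Taking Proposition~\ref{prop:3tori:decomp} at face value ($p^*\omega_0=\tfrac{\lbd}{2}\omega_1$, equivalently $\tfrac{\lbd}{2}L_1\subset L_0$), your normalization $L_0=\lbd\Z^2$ and the identity $\omega_0(p_*\widetilde{a}_1)=\tfrac{\lbd}{2}\omega_1(\widetilde{a}_1)$ give $\omega_1(\widetilde{a}_1)=(2p_{11},0)$, hence $w=2p_{11}$, $t=2p_{21}$, $h=2p_{22}$, and $d=\det(p_{ij})=p_{11}p_{22}=wh/4$. This produces $D=e^2+8d=e^2+2wh$, which is not the prototype relation $D=e^2+8wh$; there is no rescaling left to fix it because $e$, $w$, $h$ are already integers with the stated normalizations. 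The discrepancy comes from $\omega(\widetilde{a}_1+\widetilde{a}_2)=2\omega_1(\widetilde{a}_1)$ (since $\inv^*\omega=-\omega$ and $\widetilde{a}_2=-\inv(\widetilde{a}_1)$), so the eigenform equation $\omega\big(T(\widetilde{a}_1+\widetilde{a}_2)\big)=\lbd\,\omega(\widetilde{a}_1+\widetilde{a}_2)$ actually reads $\omega_0(B\widetilde{a}_1)=\lbd\,\omega_1(\widetilde{a}_1)$, i.e.\ $\lbd L_1\subset L_0$ with index $d=\det B$, and correspondingly $p^*\omega_0=\lbd\,\omega_1$ (not $\tfrac{\lbd}{2}\omega_1$). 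With that corrected relation, your computation does give $p_{12}=0$, $w=p_{11}$, $t=p_{21}$, $h=p_{22}$, $d=wh$, and every prototype condition follows as you indicate. So the argument is recoverable, but only after identifying and fixing this normalization of the covering; as written, the proof does not establish the prototype relation $D=e^2+8wh$, and indeed verbatim use of the cited proposition leads to the wrong discriminant equation. A smaller issue: you normalize $L_0=\lbd\Z^2$ before $\lbd$ (equivalently $e$) has been determined; this should be rephrased as first fixing the generator $T$ (hence $\lbd$) and only then normalizing, as the paper does.
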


\begin{proof}
We include a sketch of this result (compare with~\cite[Proposition 4.2]{Lanneau:Manh:H4}). 
Set  $\widetilde{a} = \widetilde{a_1} + \widetilde{a_2}$ and 
$\widetilde{b} = \widetilde{b_1} + \widetilde{b_2}$. We have  
$(\widetilde{a_0},\widetilde{b_0},\widetilde{a},\widetilde{b})$ is a symplectic basis 
of $H_1(X,\Z)^-$.  The restriction of the intersection form is given by the matrix
$\left(\begin{smallmatrix}
J & 0\\
0 & 2J\\
\end{smallmatrix}\right)$.

Since $(X,\omega)\in\Omega E_D(2,2)^{\rm odd}$, let us denote by $T$ a generator of the order $\Ord_D$.
In the above coordinates, since $T$ is self-adjoint, $T$ has the following form (up to replacing $T$ by $T-f\cdot \mathrm{Id}$)
$$
T=\left(\begin{smallmatrix}
e & 0 & 2w & 2t\\
0 & e & 2c  & 2h\\
h & -t & 0 & 0\\
-c & w & 0 & 0\\
\end{smallmatrix}\right),
$$
for some $(w,h,t,e,c)\in \Z^5$. Since $\omega$ is an eigenform, we have $T^\ast \omega = \lbd\cdot \omega$ 
for some $\lbd$ (that can be chosen to be positive by changing $T$ to $-T$). Now up to the 
action of $\GL^+(2,\R)$, one can always assume that $\omega(\Z\widetilde{a_0}\oplus \Z \widetilde{b_0})=
\lbd\cdot \Z^2$. Now in our coordinates, $\mathrm{Re}(\omega)=(\lbd,0,x,y)$ and 
$\mathrm{Im}(\omega)=(0,\lbd,0,z)$, for some $x,y,z>0$. Reporting into the equation $T^\ast \omega = \lbd\cdot \omega$,
we draw $x=2w$, $y=2t$, $z=2h$ and $c=0$. Since $T$ satisfies the quadratic equation $T^2-eT -2wh\mathrm{Id}=0$, we get
$D=e^2+8wh$. We can renormalize further using Dehn twists so that 
$0\leq t<\gcd(w,h)$. Finally properness of $\Ord_D$ implies $\gcd(w,h,t,e)=1$.
All the conditions of $\mathcal{P}_D(0,0,0)$ are now fulfilled and the lemma is proved.
\end{proof}

\begin{Definition}\label{def:prototype:3:tori}
For each $D$, let $\mathcal{P}_D(0,0,0)$ denote the set $\{ (w,h,t,e) \in \Z^4, \ (w,h,t,e) \text{ satisfies } ({\mathcal P}_D(0,0,0))\}$. 
We call an element of $\mathcal{P}_D(0,0,0)$ a {\em prototype}. The set of prototypes is clearly finite. 
\end{Definition}

\subsection{Switching decompositions} 
Let $(X,\omega)$ be a surface  in $\Omega E_D(2,2)^{\rm odd}$ which admits a three-tori decomposition by a triple of saddle connections  $\{\s_0,\s_1,\s_2\}$. We also assume that the direction of $\s_j$ is periodic. Let $(X_j,\omega_j)$ and $(\widetilde{a}_j,\widetilde{b}_j)$ be as in Lemma~\ref{lm:3cyl:prototype}.  We wish now to investigate the situation where $X$ admits other three-tori decompositions.

By Proposition~\ref{prop:3tori:decomp}, for any primitive element $b_0\in H_0(X_0,\Z)$, 
there  exists a unique primitive element $b_j\in H_1(X_j,\Z), j=1,2$ such that 
$$
\omega(b_j)=\frac{2\beta_j}{\lbd}\omega(b_0)
$$
with $\beta_j\in \N$. This is because $L(X_j,\omega_j)$ is a sublattice of $\frac{2}{\lbd}L(X_0,\omega_0)$ (see Proposition~\ref{prop:3tori:decomp}), hence it contains a vector parallel to $2/\lbd\omega_0(b_0)$ ($L(X_j,\omega_j)$ is the lattice associated to $(X_j,\omega_j)$). We call $b_j$ the {\em shadow} of $b_0$ in $X_j$.

The following lemma provides us with a sufficient condition of the existence of many other three-tori decompositions. Its proof is inspired from~\cite[Theorem~5.3]{Mc5}.

\begin{Lemma}\label{lm:switch:small:slit}
Let $b_0$ be a primitive element of $H_1(X_0,\Z)\setminus\{\pm \widetilde{a}_0\}$ and let $b_j$ be the shadows of $b_0$ in $X_j, j=1,2$. Set $c= b_0+b_1+b_2$. Then there exists $s_0>0$ such that if the ratio  $s=|\s_0|/|\widetilde{a}_0|$ is smaller than $s_0$, then the surface $(X,\omega)$ admits a three-tori decomposition by a triple of saddle connections $\{\delta_0,\delta_1,\delta_2\}$ such that $\delta_j*(-\s_j)=c$. 
\end{Lemma}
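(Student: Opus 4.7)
The proof adapts McMullen's splitting/switching technique from genus two. The intuition is that for small $s$, the surface $(X,\omega)$ is a small perturbation of its degeneration in $\PrymD(0,0,0)$ (three tori glued at a common point $P_*$), and the cycle $c = b_0+b_1+b_2$ is represented in the degeneration by a piecewise geodesic loop $\gamma_0*\gamma_1*\gamma_2$ based at $P_*$, where $\gamma_j$ is a simple closed geodesic on $X_j$ representing $b_j$. For $s>0$, the plan is to straighten this broken loop into a genuine saddle connection $\delta_0$ from $P$ to $Q$ on $(X,\omega)$, and then to produce $\delta_1,\delta_2$ by symmetry.

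First I would choose, on each torus $X_j$, a simple closed geodesic $\gamma_j$ representing $b_j$ and passing through the slit endpoint $A_j$ (identified with the zero $P$ in $X$). The hypothesis $b_0 \neq \pm\widetilde{a}_0$ guarantees that $b_0$, and therefore the shadows $b_1,b_2$, is not parallel to the slit direction $\widetilde{a}_j$, so $\gamma_j$ crosses $\s_j$ transversely at $A_j$; the shadow relations $\omega(b_j)=(2\beta_j/\lbd)\omega(b_0)$ moreover ensure that the three $\gamma_j$ share a common holonomy direction, so the concatenation $\gamma_0*\gamma_1*\gamma_2$ makes sense as a single straight-line cycle in developed coordinates.

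Next I would construct $\delta_0$ as follows. The cone angle $6\pi$ at $P$ partitions, via the cyclic gluing of the slits, into three $2\pi$-sectors associated to $X_0, X_1, X_2$. Consider the straight geodesic emanating from $P$ into the $X_0$-sector, in direction close to $\omega(c)/|\omega(c)|$ and of length close to $|\omega(c)|$. For $s$ small, this geodesic traces a path close to $\gamma_0$; after length approximately $|\omega(b_0)|$ it re-approaches $P$, transversally crosses the slit $\s_0$, and emerges in $X_1$ via the cyclic identification. It then traces close to $\gamma_1$, crosses $\s_1$ into $X_2$, traces close to $\gamma_2$, and terminates at $Q$ after total length $|\omega(c)+\omega(\s_0)|$. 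By construction $\omega(\delta_0)=\omega(c)+\omega(\s_0)$, so the loop $\delta_0*(-\s_0)$ represents $c$ in $H_1(X,\Z)$, which is the required relation. The analogous construction in the two other sectors of $P$ produces $\delta_1,\delta_2$, exchanged by $\inv$ (since $\inv$ fixes $X_0$ and swaps $X_1, X_2$).

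The main obstacle is verifying that this geodesic is a genuine saddle connection---that it avoids all singularities in its interior, transitions through the correct slits in the correct order, and lands precisely at $Q$. This reduces to a continuity/intermediate-value argument: the terminal endpoint of a straight geodesic of prescribed length from $P$ is a continuous function of the initial direction, and as the direction varies over an arc of size $O(s)$ around $\omega(c)/|\omega(c)|$, the endpoint sweeps out a disc of radius $O(s)$ around $P$, which for $s<s_0$ contains the target $Q$ (which lies at distance $|\omega(\s_0)|=O(s)$ from $P$). The threshold $s_0$ is chosen depending on the injectivity radii of the $X_j$ and the angles that $\gamma_j$ make with $\s_j$, so that each of the three transitions through a slit happens cleanly and no intermediate singularity is hit. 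Once $\{\delta_0,\delta_1,\delta_2\}$ is constructed, they automatically form a three-tori decomposition: they share the common holonomy $\omega(c)+\omega(\s_0)$ and are pairwise homologous ($\delta_j-\delta_k=\s_j-\s_k=0$ in $H_1(X,\Z)$), and cutting along their union separates $X$ into three subsurfaces of Euler characteristic zero, each therefore a one-holed torus.
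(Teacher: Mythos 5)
Your geometric picture is essentially the same as the paper's, but the argument you use to realize it has a genuine gap, and you miss the quantitative estimate that makes the statement true.

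The paper does not trace a geodesic from $P$ and argue that it lands at $Q$. Instead it observes that in each torus $X_j$, the set of all geodesics parallel to $\gamma_j$ that cross the slit $\hat\s_j$ forms an \emph{embedded cylinder} $\hat\CCC_j$, provided that $\gamma_j$ crosses $\hat\s_j$ at most once; this last condition is equivalent to the bound $0 < s\,|\omega(b_j)\wedge\omega(\widetilde{a}_j)| < \mathbf{Area}(X_j)$, and this bound is exactly what determines $s_0$. The three cylinders $\hat\CCC_j$ glue across the slits into a single embedded cylinder $\CCC\subset X$ whose waist curve represents $c$, and $\delta_j$ is then defined as the image of $\s_j$ under the Dehn twist in $\CCC$. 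Because the Dehn twist of a transversal saddle connection in an embedded cylinder is again a saddle connection (the diagonal of $\CCC$), $\delta_j$ is automatically a saddle connection from $P$ to $Q$ with $\delta_j*(-\s_j)=c$. Everything comes for free once the cylinder is shown to be embedded. You never identify this cylinder or the wedge/area estimate, and your choice of $s_0$ (``depending on the injectivity radii of the $X_j$ and the angles that $\gamma_j$ make with $\s_j$'') is not made precise.

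The specific step in your argument that fails is the intermediate-value claim. Varying the initial direction of a geodesic of prescribed length from $P$ traces a one-parameter family of endpoints, i.e.\ a curve, not a disc; so ``the endpoint sweeps out a disc of radius $O(s)$ around $P$, which contains $Q$'' does not follow from continuity over an arc. If you allow both direction and length to vary, so that the holonomy vector ranges over a disc, then the endpoint map is a local isometry only where the geodesic stays away from singularities and stays on one sheet of the $6\pi$-cone at $P$; verifying that the specific geodesic with holonomy $\omega(c)+\omega(\s_0)$ starting in the $X_0$-sector satisfies these constraints is precisely the non-trivial content of the lemma, and you reduce it back to an IVT assertion that has not been justified. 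Moreover, $\omega(\delta_0)$ is not ``close to'' $\omega(c)$ -- the homological relation $\delta_0*(-\s_0)=c$ pins it down to exactly $\omega(c)+\omega(\s_0)$, so what must be shown is that the geodesic with this exact holonomy is a saddle connection, not that a nearby one is. The cylinder/Dehn-twist argument resolves this without any limiting process, and I would strongly encourage you to rewrite your proof along those lines.
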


\begin{proof}
For $v_1=(x_1,y_1), v_2=(x_2,y_2)$ in $\R^2$, let us define $v_1\wedge v_2=\det\left(\begin{smallmatrix} x_1 & x_2 \\ y_1 & y_2 \end{smallmatrix}\right)$.
By assumption, we have $b_0\not\in \Z\widetilde{a}_0$, hence $|\omega(b_0)\wedge\omega(\widetilde{a}_0)|>0$. Since $\omega(b_j)$ is parallel to $\omega(b_0)$, and $\omega(\widetilde{a}_j)$ is parallel to $\omega(\widetilde{a}_0)$, we also have $|\omega(b_j)\wedge\omega(\widetilde{a}_j)|>0$. 

Choose $s_0$ small enough so that if $0<s< s_0$, then $0< s|\omega(b_j)\wedge\omega(\widetilde{a}_j)| < \mathbf{Area}(X_j)$. Assume that $|\s_j|<s_0|\widetilde{a}_j|$ for  $j=0,1,2$. Note that $|\s_0|=|\s_1|=|\s_2|$, and $|\widetilde{a}_1|=|\widetilde{a}_2|=w/\lbd |\widetilde{a}_0|$.

Let $\hat{\s}_j$ be the marked geodesic segment corresponding to $\{\s_0,\s_1,\s_2\}$ in the torus $X_j$, and let $\gamma_j$ be a simple closed geodesic representing the homology class $b_j\in H_1(X_j,\Z)$. By assumption, we have $0<|\omega(\gamma_j)\wedge\omega(\hat{\s}_j)|<\mathbf{Area}(X_j)$, hence $\gamma_j$ intersects $\hat{\s}_j$ at at most one point. Thus the union of all the geodesics representing $b_j$ which intersect $\hat{\s}_j$ is an embedded cylinder $\hat{\CCC}_j$ in $X_j$.

Recall that $(X,\omega)$ is obtained from $X_0,X_1,X_2$ by slitting and regluing along $\hat{\s}_j$. As a consequence, we see that the union of the cylinders $\hat{\CCC}_j, \, j=0,1,2$, is an embedded cylinder $\CCC$ whose waist curves represent the homology class $c=b_0+b_1+b_2$.  Let $\delta_j $ be the image of $\s_j$ under a Dehn twist in $\CCC$, then $\{\delta_j, \, j=0,1,2\}$ is also a triple of homologous saddle connections which decompose $X$ into three tori (see Figure~\ref{fig:switch:small:slit}). By definition, we have $\delta_j*(-\s_j)=c$,  and the lemma follows. Remark that the direction of $b_0$ is periodic.
\begin{figure}[htb]

\centering
\begin{tikzpicture}[scale=0.8]
\fill[red!20!yellow!40] (0,4) -- (0,2) -- (8,2) -- (8,4) -- cycle;

\draw (0,4) -- (0,0) -- (2,0) -- (2,2) -- (6,2) -- (6,0) -- (8,0) -- (8,4) -- (6,4) -- (6,7) -- (2,7) -- (2,4) -- cycle;

\draw (2,4) -- (2,2) (6,4) -- (6,2) (2,4) -- (6,4) (0,2) --(2,2) (6,2) -- (8,2);

\draw[red] (0,4) -- (8,2)  (2,4) -- (8,2.5) (6,4) -- (8,3.5) (0,3.5) -- (6,2) (0,2.5) -- (2,2);

\foreach \x in {(2,7), (6,7), (0,2), (2,2), (6,2), (8,2)} \filldraw[fill=white] \x circle (2pt);

\foreach \x in {(0,4), (2,4), (6,4), (8,4), (0,0), (2,0), (6,0), (8,0)} \filldraw[fill=black] \x circle (2pt);

\draw (1,-0.5) node {\small $X_1$} (4,1.5) node {\small $X_0$} (7,-0.5) node {\small $X_2$};
\draw (1,2) node[below] {\tiny $b_1$} (4,4) node[above] {\tiny $b_0$} (7,2) node[below] {\tiny $b_2$};
\draw[red] (4,3.2) node {\tiny $\delta_0$} (5,3.5) node {\tiny $\delta_1$} (3.2,2.5) node {\tiny $\delta_2$} ;
\draw (-0.2,3) node {\tiny $\s_0$} (8.2,3) node {\tiny $\s_0$}  (1.8,2.6) node {\tiny $\s_1$} (6.2,3.5) node {\tiny $\s_2$};

\end{tikzpicture}

\caption{ Switching three-tori decomposition.}
\label{fig:switch:small:slit}
\end{figure}
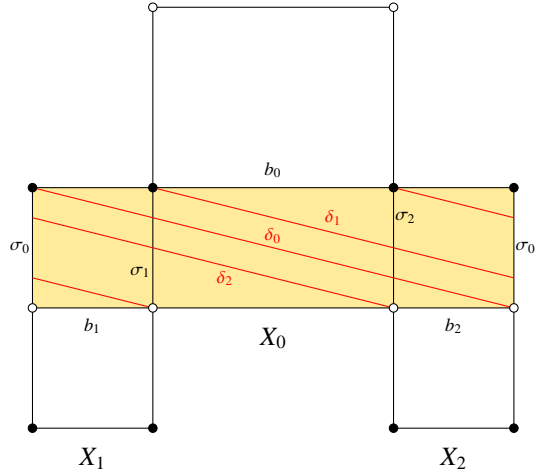
\end{proof}

Using the same notations as in Lemma~\ref{lm:switch:small:slit}. Let $(X'_j,\omega'_j), \; j=0,1,2$, denote the tori in the decomposition specified by $\{\delta_0,\delta_1,\delta_2\}$ ($X'_0$ is the torus which is fixed by $\inv$). We regard $X_j$ and $X'_j$ as  subsurfaces of $X$. The following elementary lemma provides us with an explicit basis of $H_1(X'_0,\Z)$, its proof is left to the reader.

\begin{Lemma}\label{lm:switch:decomp:basis}
Let $a_0$ be a primitive  element of $H_1(X_0)$ such that $(a_0,b_0)$ is a basis of $H_1(X_0,\Z)$. Then we have $H_1(X'_0,\Z)=\Z\cdot(a_0+c)+\Z\cdot b_0$.
\end{Lemma}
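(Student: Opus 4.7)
The plan is to realize the passage from the decomposition $\{\s_0,\s_1,\s_2\}$ to the decomposition $\{\delta_0,\delta_1,\delta_2\}$ as the action of a single Dehn twist on $X$, and then to apply the standard Picard--Lefschetz formula on homology.

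More precisely, since $\delta_j$ is obtained from $\s_j$ by a Dehn twist in the embedded cylinder $\CCC=\hat\CCC_0\cup\hat\CCC_1\cup\hat\CCC_2$ constructed in the proof of Lemma~\ref{lm:switch:small:slit}, I would first observe that there is a homeomorphism $\tauup:X\to X$, supported in a neighborhood of $\CCC$, which sends $\s_j$ to $\delta_j$ for $j=0,1,2$. Consequently, $\tauup$ maps the old decomposition to the new one, and in particular $\tauup(X_0)=X'_0$. Since $\tauup$ is a homeomorphism, $\tauup_*$ is an isomorphism of homology groups, and therefore $(\tauup_*(a_0),\tauup_*(b_0))$ is a basis of $H_1(X'_0,\Z)\subset H_1(X,\Z)$.

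The second step is to compute these two images using the Dehn-twist formula
\[
\tauup_*(\alpha)=\alpha+\langle\alpha,c\rangle_X\,c,
\]
where $c\in H_1(X,\Z)$ is the class of the core curve of $\CCC$ (with a fixed orientation) and $\langle\cdot,\cdot\rangle_X$ denotes the intersection form on $X$. For this we need:
\begin{itemize}
\item $\langle b_0,c\rangle_X=0$: indeed $b_0$ is realized as a waist curve of $\hat\CCC_0\subset\CCC$, hence is disjoint from a parallel waist curve representing $c$.
\item $\langle a_0,c\rangle_X=\pm1$: since $a_0\subset X_0$ and $c\cap X_0$ consists of the arc $c\cap\hat\CCC_0$, which is parallel to the waist curve $b_0$ of $\hat\CCC_0$, this intersection number equals $\langle a_0,b_0\rangle_{X_0}=\pm1$ because $(a_0,b_0)$ is a symplectic basis of $H_1(X_0,\Z)$.
\end{itemize}
Choosing compatible orientations for $c$ and $a_0$, we obtain $\tauup_*(a_0)=a_0+c$ and $\tauup_*(b_0)=b_0$, which yields $H_1(X'_0,\Z)=\Z\cdot(a_0+c)+\Z\cdot b_0$.

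The only real subtlety, and thus the ``hard part'', is justifying the identification $\tauup(X_0)=X'_0$ and pinning down the sign/orientation conventions so that the class obtained is exactly $a_0+c$ (rather than $a_0-c$, say); once these are settled, the intersection calculations above are completely elementary and the conclusion is immediate. Everything else reduces to the standard Dehn-twist computation in homology.
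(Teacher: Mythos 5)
Your approach is correct, and it is almost certainly the intended one: the paper itself produces $\{\delta_0,\delta_1,\delta_2\}$ as ``the image of $\s_j$ under a Dehn twist in $\CCC$'', so transporting $H_1(X_0,\Z)$ by that same twist $\tauup$ and invoking Picard--Lefschetz is the natural route, and the paper leaves the verification to the reader precisely because nothing deeper is needed.

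Two small remarks on the points you flag as subtle. First, the identification $\tauup(X_0)=X'_0$: since $\inv(c)=-c$, the cylinder $\CCC$ is $\inv$-invariant, so the twist $\tauup$ commutes with the Prym involution; hence $\tauup(X_0)$ is the unique $\inv$-invariant piece of the new decomposition, which by definition is $X'_0$. (Without this $\inv$-equivariance you would only know $\tauup(X_0)\in\{X'_0,X'_1,X'_2\}$, so this is indeed the step that needs saying.) Second, the computation of $\langle a_0,c\rangle_X$ is cleaner if you simply expand $c=b_0+b_1+b_2$ and note that $\langle a_0,b_1\rangle_X=\langle a_0,b_2\rangle_X=0$ because $a_0$ and $b_j$ ($j=1,2$) have representatives in the disjoint interiors of $X_0$ and $X_j$; this avoids the slightly awkward phrase ``$c\cap X_0$''. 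Your observation about the sign is genuine: one does need $\langle a_0,b_0\rangle_{X_0}=+1$ (and the Dehn twist taken in the direction fixed by $\delta_j*(-\s_j)=+c$) to land on $a_0+c$ rather than $a_0-c$; since $a_0,b_0,c$ are independent in $H_1(X,\Z)$ the two lattices $\Z(a_0\pm c)+\Z b_0$ really are different, so the lemma is implicitly choosing the positively oriented basis.

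Otherwise the argument is complete and correct.
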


Next, we have

\begin{Lemma}\label{lm:switch:cond:periodic}
Let $(X,\omega)$ be a surface in $\Omega E_D(2,2)^{\rm odd}$ satisfying the hypothesis of Lemma~\ref{lm:switch:small:slit}. Let $a_0$ be a primitive element of $H_1(X_0,\Z)$ such that $(a_0,b_0)$ is a basis of $H_1(X_0,\Z)$, then we can write $\widetilde{a}_0=pa_0+qb_0$ with $(p,q) \in \Z^2$. Set $\beta=2\beta_1+2\beta_2=4\beta_1 \in \Z$, where $\omega(b_j)=(2\beta_j/\lbd)\omega(b_0)$.
Assume that the direction of $\delta_0$ is completely periodic, then we have 

\begin{equation}\label{eq:switch:dec:per:cond}
s=\frac{\lbd+\beta}{(rp+p-q)\lbd +p\beta}
\end{equation}

\noindent with $r \in \Q$.
\end{Lemma}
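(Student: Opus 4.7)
I would begin by fixing the normalization from Lemma~\ref{lm:3cyl:prototype}: after acting by $\GL^+(2,\R)$ and Dehn twists, one may assume $\omega(\widetilde{a}_0)=(\lbd,0)$, $\omega(\widetilde{b}_0)=(0,\lbd)$ and $\omega(\widetilde{a}_j)=(w,0)$, $\omega(\widetilde{b}_j)=(t,h)$ for $j=1,2$. Since the three saddle connections $\s_0,\s_1,\s_2$ are homologous in $X$ (so share the same holonomy) and each is parallel to $\widetilde{a}_j$, one has $\omega(\s_j)=(s\lbd,0)$ with $s=|\s_0|/|\widetilde{a}_0|$. Writing $b_0=m\widetilde{a}_0+n\widetilde{b}_0$ with $\gcd(m,n)=1$ gives $\omega(b_0)=(m\lbd,n\lbd)$, and necessarily $n\neq 0$ because $b_0\neq\pm\widetilde{a}_0$. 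Combining the shadow relation $\omega(b_j)=(2\beta_j/\lbd)\omega(b_0)$ from Proposition~\ref{prop:3tori:decomp} with $\beta_1=\beta_2$ (forced by symmetry under $\inv$), I get $\omega(c)=\frac{\lbd+\beta}{\lbd}\omega(b_0)$, and hence
\[
\omega(\delta_0)\;=\;\omega(\s_0)+\omega(c)\;=\;\bigl(s\lbd+m(\lbd+\beta),\ n(\lbd+\beta)\bigr).
\]

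Next, using $\widetilde{a}_0=pa_0+qb_0$, I solve for $\omega(a_0)=\bigl((1-qm)\lbd/p,\ -qn\lbd/p\bigr)$, so that by Lemma~\ref{lm:switch:decomp:basis} the lattice $L'_0:=\omega(H_1(X'_0,\Z))$ is generated by $\omega(b_0)$ and
\[
\omega(a_0+c)\;=\;\Bigl(\tfrac{(1+m(p-q))\lbd}{p}+m\beta,\ \tfrac{n(p-q)\lbd}{p}+n\beta\Bigr).
\]
The hypothesis that the direction of $\delta_0$ is completely periodic on $X$ forces, in particular, that direction to be periodic on the flat sub-torus $X'_0$, which amounts to $\omega(\delta_0)$ being parallel to some nonzero $r_1\omega(a_0+c)+r_2\omega(b_0)$ with $(r_1,r_2)\in\Z^2$.

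I would then translate this parallelism as the vanishing of the $2\times 2$ determinant of the pair. Setting $\mu:=(r_1(p-q)+r_2p)/p$ and $\nu:=r_1\beta$, the vertical component of $r_1\omega(a_0+c)+r_2\omega(b_0)$ is $n(\mu\lbd+\nu)$ and the horizontal one is $r_1\lbd/p+m(\mu\lbd+\nu)$; dividing the determinant equation by $n$, the terms proportional to $m(\lbd+\beta)(\mu\lbd+\nu)$ cancel on the two sides, leaving after a further division by $\lbd$ the clean identity
\[
s(\mu\lbd+\nu)\;=\;\frac{r_1(\lbd+\beta)}{p}.
\]
An elementary check shows $r_1\neq 0$ (otherwise $\omega(\s_0)$ would be parallel to $\omega(b_0)$, forcing $s=0$), so dividing numerator and denominator by $r_1$ and setting $r:=r_2/r_1\in\Q$ yields exactly
\[
s\;=\;\frac{\lbd+\beta}{(rp+p-q)\lbd+p\beta}.
\]
The only genuinely delicate point is the translation of the periodicity hypothesis into the parallelism condition on $L'_0$, which rests on the explicit description of $H_1(X'_0,\Z)$ supplied by Lemma~\ref{lm:switch:decomp:basis}; everything else is routine bookkeeping whose key surprise is the cancellation that removes the $m(\lbd+\beta)(\mu\lbd+\nu)$ terms.
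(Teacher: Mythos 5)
Your argument is correct and arrives at the same equation by essentially the same mechanism as the paper: translate the periodicity hypothesis on $\delta_0$ into a parallelism condition on $\omega(\delta_0)$ with respect to the lattice $L(X'_0)$, use Lemma~\ref{lm:switch:decomp:basis} to identify a $\Z$-basis $(a_0+c,b_0)$ of $H_1(X'_0,\Z)$, and extract a linear relation in $s$ with a rational parameter.

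The only real difference is a choice of bookkeeping. The paper never introduces the Euclidean parametrization $b_0=m\widetilde{a}_0+n\widetilde{b}_0$: it works abstractly in the coordinates $A=\omega(a_0+c)$, $B=\omega(b_0)$, expresses $\omega(\delta_0)=spA+\bigl(sq+(1-sp)(1+\beta/\lbd)\bigr)B$, and reads off the periodicity condition directly as the rationality of the ratio of these two coefficients. Your approach writes out $\omega(\delta_0)$, $\omega(a_0+c)$, $\omega(b_0)$ in explicit $\R^2$-coordinates and then imposes the vanishing of a $2\times 2$ determinant; the ``key cancellation'' of the $m(\lbd+\beta)(\mu\lbd+\nu)$ terms you observe is not luck but precisely the manifestation of the fact that the answer must be independent of $m,n$ -- it is what the paper's $(A,B)$-coordinates build in from the start. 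Your check that $p\neq 0$, $n\neq 0$, and $r_1\neq 0$ correctly covers the degenerate cases. Net effect: same proof, slightly heavier computation.
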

\begin{proof}
We know that the saddle connections $\{\delta_0,\delta_1,\delta_2\}$ decompose $X$ into three tori $X'_0,X'_1,X'_2$, where $X'_0$ is preserved by $\inv$. By Lemma~\ref{lm:switch:decomp:basis} we  have $H_1(X'_0,\Z)=\Z\cdot (a_0+b_0+b_1+b_2)+\Z\cdot b_0$. Set $A=\omega(a_0+b_0+b_1+b_2), B=\omega(b_0)$, then we have  $L(X'_0)=\Z A+ \Z B$, where $L(X'_0)$ is the lattice associated to $X'_0$. Set $v=\omega(\s_0)$, $w=\omega(\delta_0)$. We have 
$$A=\omega(a_0)+\omega(b_0) +\frac{\beta}{\lbd}\omega(b_0)=\omega(a_0)+(1+\frac{\beta}{\lbd})B.$$
Thus
$$ \omega(a_0)=A-(1+\frac{\beta}{\lbd})B.$$
Since $ \widetilde{a}_0=pa_0+qb_0$, we have
$$v=s\omega(\widetilde{a}_0)=s(p\omega(a_0)+q\omega(b_0))=s(p(A-(1+\frac{\beta}{\lbd})B) +qB)=s(pA+(q-p(1+\frac{\beta}{\lbd}))B).$$
Now
\begin{eqnarray*}
w & = & v+\omega(b_0+b_1+b_2)\\
  & = & spA+s(q-p(1+\frac{\beta}{\lbd}))B +(1+\frac{\beta}{\lbd})B\\
  & = & spA +(sq+(1-sp)(1+\frac{\beta}{\lbd}))B.
\end{eqnarray*}
The direction of $\delta_0$  is periodic if and only if $w$ is parallel to a vector in the lattice $\Z A+\Z B$, which is equivalent to
$$ r=\frac{sq +(1-sp)(1+\frac{\beta}{\lbd})}{sp} =\frac{sq\lbd+(1-sp)(\lbd+\beta)}{sp\lbd} \in \Q.$$
\noindent It follows
$$ srp\lbd=sq\lbd+(\lbd+\beta)-sp(\lbd+\beta),$$
\noindent or equivalently
$$s=\frac{\lbd+\beta}{rp\lbd-q\lbd+p(\lbd+\beta)}=\frac{\lbd+\beta}{(rp+p-q)\lbd+p\beta}.$$
\end{proof}
 
We can now prove 
 
\begin{Proposition}\label{prop:finite:sol}
Let $(X,\omega)$ be a surface in $\Omega E_D(2,2)^{\rm odd}$, where $D$ is not a square. Assume that there exists a triple of homologous saddle connections $\{\s_0,\s_1,\s_2\}$ which decompose $(X,\omega)$ into three tori, and the direction of $\s_j$ is periodic. Set $\DS{s=\frac{|\s_0|}{|\widetilde{a}_0|}}$, where $\widetilde{a}_0$ is a simple closed geodesic parallel to $\s_0$ in the torus which is preserved by the involution. Then there exists a constant $s_0>0$ depending only on $D$ such that if $s<s_0$ then $(X,\omega)$ is not a Veech surface.
\end{Proposition}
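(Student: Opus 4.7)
The plan is to argue by contradiction, showing that under the assumption that $(X,\omega)$ is a Veech surface with $s$ small we can over-determine $s$ by producing many auxiliary three-tori decompositions via Lemma~\ref{lm:switch:small:slit}. First, I would normalize using Lemma~\ref{lm:3cyl:prototype}: after the action of $\GL^+(2,\R)$ we may assume $(X,\omega)$ is associated to some prototype $(w,h,t,e)\in\mathcal{P}_D(0,0,0)$ (a finite set depending only on $D$), with the explicit coordinates $\omega(\widetilde{a}_0)=(\lbd,0)$, $\omega(\widetilde{b}_0)=(0,\lbd)$, and $\omega(\Z\widetilde{a}_j\oplus\Z\widetilde{b}_j)=\Z(w,0)\oplus\Z(t,h)$ for $j=1,2$, where $\lbd=(e+\sqrt D)/2$ satisfies $\lbd^2=e\lbd+2wh$. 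The irrationality of $\lbd$ (which uses that $D$ is not a square) is crucial; Veechness forces $s\in K=\Q(\sqrt D)$, so I write $s=u+v\lbd$ with $u,v\in\Q$.

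Next I would apply Lemmas~\ref{lm:switch:small:slit} and~\ref{lm:switch:cond:periodic} to a list of $b_0=m\widetilde{b}_0+n\widetilde{a}_0$ with $\gcd(m,n)=1$, so that $p:=\widetilde{a}_0\wedge b_0=m$. The bound in Lemma~\ref{lm:switch:small:slit} depends only on $(m,n)$ and the prototype; fixing a finite list $\mathcal{L}$ of $(m,n)$-pairs sufficient for the argument below (including $m=1$, $m=2$, and several $n$'s for each), one can set $s_0=s_0(D)>0$ small enough that for $s<s_0$ every $b_0$ indexed by $\mathcal{L}$ satisfies the hypothesis and produces a new triple of homologous slits $\{\delta_0,\delta_1,\delta_2\}$. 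Since $\delta_0$ is a saddle connection on a Veech surface, by the Veech dichotomy its direction is completely periodic, so Lemma~\ref{lm:switch:cond:periodic} yields relation~\eqref{eq:switch:dec:per:cond} with $p=m$ and $\beta=4\beta_1$ determined by the shadow of $b_0$ in $X_1$. A direct lattice computation of the unique primitive $b_1\in\Z\widetilde{a}_1\oplus\Z\widetilde{b}_1$ parallel to $\omega(b_0)$ gives $\beta=2wh/\gcd(mw,nh-mt)$, so $\beta$ depends nontrivially on $(m,n)$.

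I would then substitute $s=u+v\lbd$ into~\eqref{eq:switch:dec:per:cond} and reduce using $\lbd^2=e\lbd+2wh$, separating the $\Q$-part and the $\lbd$-part thanks to the irrationality of $\lbd$. The existential $r\in\Q$ absorbs one linear degree of freedom, and eliminating it between the two resulting equations collapses everything to the single clean relation
$$
\beta\bigl((u+ve)-p\,N(s)\bigr)\;=\;2vwh,\qquad(\star)
$$
where $N(s)=u^2+uev-2v^2wh$ is the Galois norm of $s$. This equation must hold for every $b_0\in\mathcal{L}$.

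The endgame is a case split on $v$. If $v=0$, then $s=u\in\Q$ and $(\star)$ collapses to $\beta u(1-pu)=0$, forcing $s=1/m$; applying this to two admissible $b_0$'s with distinct values of $m$ (for instance $m=1$ and $m=2$) gives two incompatible values of $s$. If $v\neq 0$, then $(\star)$ determines $\beta$ uniquely from $p$, so two admissible $b_0$'s with the same $m$ but with $n$'s yielding distinct values of $g=\gcd(mw,nh-mt)$ force two distinct $\beta$'s to satisfy the same equation, a contradiction. The hard part will be to confirm that for every prototype $(w,h,t,e)\in\mathcal{P}_D(0,0,0)$ the list $\mathcal{L}$ can be chosen to include, for some $m$, two values of $n$ giving distinct $g$'s; this is a combinatorial fact readily verified prototype by prototype using the arithmetic of $\gcd$, and finiteness of $\mathcal{P}_D(0,0,0)$ guarantees that the minimum of the resulting $s_0$-thresholds is strictly positive and depends only on $D$.
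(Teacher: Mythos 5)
Your strategy is the same as the paper's at a high level: apply the switching Lemmas~\ref{lm:switch:small:slit}--\ref{lm:switch:cond:periodic} to several auxiliary $b_0$'s, reduce modulo $\lbd^2=e\lbd+2wh$ and exploit the irrationality of $\lbd$ (here $D$ not a square is essential) to over-determine $s$, then take a uniform $s_0$ over the finite set $\mathcal{P}_D(0,0,0)$. The algebraic packaging you use is a genuine streamlining: writing $s=u+v\lbd$, eliminating $r$ from~\eqref{eq:switch:dec:per:cond}, and arriving at $\beta\bigl((u+ve)-pN(s)\bigr)=2vwh$ with $N(s)$ the Galois norm is correct (I re-derived it), and your shadow computation $\beta=2wh/\gcd(mw,nh-mt)$ is also right. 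The case $v=0$ is fine. By contrast, the paper cross-multiplies the two instances of~\eqref{eq:switch:dec:per:cond} directly and reads the real and $\lbd$-parts off as a $2\times 2$ linear system~\eqref{eqn:Vch:per:cond:2} in $(R,R')$.

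The gap is in the $v\neq 0$ endgame, where you require, for some fixed $m$, two values of $n$ producing distinct $g=\gcd(mw,nh-mt)$, and you defer this to a ``readily verified'' combinatorial check. That check is in fact \emph{false} for some prototypes. Take $(w,h,t,e)=(1,1,0,0)$, so $D=8$ (which is allowed: $D\equiv 0 \bmod 8$ and $\Omega E_8(2,2)^{\rm odd}\neq\emptyset$). Then $g=\gcd(m,n)=1$ for every primitive $(m,n)$, hence $\beta=2wh/g=2$ is the same for every admissible $b_0$, and your argument produces no contradiction. The missing case is exactly the paper's ``$\beta=\beta'$'' branch: when two choices give equal $\beta$ but distinct $p$, your relation $(\star)$ forces $N(s)=0$, whence $u=-v\lbd'$ and $A=u+ve=v\lbd$; substituting back into $\beta A=2vwh$ yields $\lbd=2wh/\beta\in\Q$, contradicting the irrationality of $\lbd$ (since $v\neq 0$). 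This is the unwrapped version of the paper's remark that if $\beta=\beta'$ then~\eqref{eqn:Vch:per:cond:1} gives $(R-R')\lbd=\beta\notin\lbd\Q$. So the correct structure is: fix two $b_0$'s with distinct $p$ (the paper uses $p=1,2$), and split on whether the associated $\beta$'s coincide; do not rely on $\beta$ varying with $n$ at fixed $m$. Two smaller points to patch: the inference ``Veechness forces $s\in K$'' should be justified (it is not needed a priori; one application of Lemma~\ref{lm:switch:cond:periodic} already places $s\in\Q(\lbd)=K$, which is in fact how the paper obtains it), and the derivation of $(\star)$ divides by $u+ve$, so the degenerate case $u+ve=0$, $v\neq 0$ needs a sentence (it forces $vp\beta=1$, incompatible with two choices having different $p\beta$).
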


\begin{proof}
Let $(\widetilde{a}_j,\widetilde{b}_j), \, j=0,1,2,$ be as in Lemma~\ref{lm:3cyl:prototype}. Let $(e,w,h,t)$ be the prototype in ${\mathcal P}_D(0,0,0)$ which is associated to the cylinder decomposition in the direction of $\s_0$. Set $(a_0,b_0)=(\widetilde{a}_0,\widetilde{b}_0)$, and $(a'_0,b'_0)=(\widetilde{a}_0+\widetilde{b}_0,\widetilde{a}_0+2\widetilde{b}_0)$. Let $b_j$ (resp. $b'_j$) be the shadow of $b_0$ (resp. $b'_0$) in $X_j, \, j=1,2$. We have 
$$\omega(b_1+b_2)=\frac{\beta}{\lbd}\omega(b_0), \, \omega(b'_1+b'_2)=\frac{\beta'}{\lbd}\omega(b'_0),$$

\noindent where $\beta,\beta'\in \N$ are determined by the prototype $(e,w,h,t)$. From Lemma~\ref{lm:switch:small:slit}, there exists $s_1>0$ such that if $s<s_1$, then $(X,\omega)$ admits three-tori decompositions by the triples of saddle connections $\{\delta_j, \, j=0,1,2\}$ and $\{\delta'_j, \,  j=0,1,2\}$, where $\delta_0$ and $\delta'_0$ satisfy
$$\delta_0*(-\s_0)=b_0+b_1+b_2 \in H_1(X,\Z), \, \text{ and } \delta'_0*(-\s_0)=b'_0+b'_1+b'_2 \in H_1(X,\Z).$$

By definition, we have $\DS{\widetilde{a}_0=a_0=2a'_0-b'_0}$. Assume that $(X,\omega)$ is a Veech surface, then the directions of $\delta$ and $\delta'$ must be periodic, hence, from Lemma~\ref{lm:switch:cond:periodic}, we have
\begin{equation}\label{eqn:Vch:per:cond:1}
s=\frac{\lbd+\beta}{(r+1)\lbd+\beta}=\frac{\lbd +\beta'}{(2r'+3)\lbd+2\beta'}
\end{equation}

\noindent with $r,r'\in \Q$. Set $R=r+1, R'=2r'+3$, we see that the equation (\ref{eqn:Vch:per:cond:1}) is equivalent to
$$R'\lbd^2+(R'\beta+2\beta')\lbd +2\beta\beta'= R\lbd^2+(R\beta'+\beta)\lbd +\beta\beta'$$

\noindent Using $\lbd^2=e\lbd+2wh$, we get
\begin{eqnarray*}
R'(e\lbd+2wh)+(R'\beta+2\beta')\lbd + 2\beta\beta' & = & R(e\lbd+2wh) +(\beta+R\beta')\lbd +\beta\beta'\\
\Leftrightarrow (R'e+R'\beta+2\beta')\lbd +(2whR'+2\beta\beta') & = & (Re+\beta+R\beta')\lbd + (2whR+\beta\beta')
\end{eqnarray*}

\noindent It follows
$$\left\{ \begin{array}{l}
R'(e+\beta)+2\beta'=R(e+\beta')+\beta\\
2whR'+2\beta\beta'=2whR+\beta\beta'
\end{array}\right.$$

\noindent or
\begin{equation}\label{eqn:Vch:per:cond:2}
\left\{\begin{array}{l}
R(e+\beta')-R'(e+\beta)=2\beta'-\beta\\
\DS{R-R'=\frac{\beta\beta'}{2wh}}.
\end{array}\right.
\end{equation}

We first remark that $\beta\neq\beta'$, otherwise Equation(\ref{eqn:Vch:per:cond:1}) would imply that $(R-R')\lbd=\beta$, and hence $R-R'\not\in \Q$ since $\beta\neq 0$. It follows that the linear system (\ref{eqn:Vch:per:cond:2}) has a unique solution. Let $s_2$ be the value of $s$ corresponding to this solution which given by Equation (\ref{eqn:Vch:per:cond:1}). It follows that if $s<\min\{s_1,s_2\}$ then the directions of $\delta_0$ and $\delta'_0$ cannot be both periodic, hence $(X,\omega)$ cannot be a Veech surface. Since the set ${\mathcal P}_D(0,0,0)$ is finite, the proposition follows.
\end{proof}

The next proposition  is a direct consequence of Proposition~\ref{prop:finite:sol}. 

\begin{Proposition}\label{prop:no:Vee:near:3tori}
Let $\{(X_j,\omega_j,P_j), \, j=0,1,2\}$ be an element of $\PrymD(0,0,0)$, and $\Psi$ be the map in Proposition~\ref{prop:3tori:neighbor}. Then  there exists $0<\delta<\eps$ such that if $(X,\omega)\in \Psi(\mathring{D}(\delta))$, then $(X,\omega)$ is not a Veech surface.
\end{Proposition}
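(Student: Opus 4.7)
The plan is to combine Proposition~\ref{prop:finite:sol} with the Veech dichotomy. Let $\ell>0$ denote the length of the shortest closed geodesic on the torus $(X_0,\omega_0)$ (the one preserved by the involution), and let $s_0=s_0(D)>0$ be the constant of Proposition~\ref{prop:finite:sol}. I choose $\delta=\min\{\eps,\,s_0\cdot\ell\}$ and argue by contradiction: suppose $(X,\omega)=\Psi(z)$ with $0<|z|<\delta$ were a Veech surface. By construction of $\Psi$ (Proposition~\ref{prop:3tori:neighbor}), $(X,\omega)$ admits a three-tori decomposition by a triple of homologous saddle connections $\{\s_0,\s_1,\s_2\}$ satisfying $\omega(\s_j)=z$, so the direction $\R\cdot z$ contains the saddle connection $\s_0$.

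The first key step is to show that this direction is periodic not only on $(X,\omega)$ (by the Veech dichotomy applied to the saddle connection $\s_0$) but also on the torus $(X_0,\omega_0)$. Since the slit $\s_0$ lies in the direction of $z$, any trajectory in $X$ parallel to $z$ through a regular point $p\in X_0$ whose ``height'' (coordinate transverse to $z$) differs from that of $\s_0$ keeps that height constant and hence never interacts with the slit. Such a trajectory is therefore identical in $X$ and in $X_0$; being closed in $X$ by the Veech dichotomy, it is also closed when viewed in the torus $X_0$. This produces a closed geodesic in $(X_0,\omega_0)$ parallel to $z$, hence a primitive element $\widetilde{a}_0\in H_1(X_0,\Z)$ parallel to $z$ whose length satisfies $|\widetilde{a}_0|\geq \ell$.

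The second step is immediate: the quantity $s=|\s_0|/|\widetilde{a}_0|=|z|/|\widetilde{a}_0|$ satisfies
$$s \;\leq\; \frac{|z|}{\ell} \;<\; \frac{\delta}{\ell}\;\leq\; s_0,$$
so Proposition~\ref{prop:finite:sol} (applicable because $D$ is assumed not to be a square throughout this section) asserts that $(X,\omega)$ is not a Veech surface, contradicting our standing hypothesis and completing the argument.

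The main technical point requiring care is the descent of periodicity from $X$ to $X_0$. The argument above is essentially trajectory-by-trajectory, but one must be slightly careful because the slit identifications glue one side of $\s_0\subset X_0$ to a side of $\s_1\subset X_1$ and the other to $\s_2\subset X_2$. The point that makes the descent work is the direction of the slit: because $\s_0$ is parallel to $z$, a trajectory parallel to $z$ never meets $\s_0$ transversely, so no sheet transition occurs and one may regard the trajectory equally well as a trajectory on the unslit torus $X_0$. This can be verified with the explicit local model of the three-tori gluing given in Section~\ref{sec:triple:tori}, or alternatively by observing that a cylinder of the periodic decomposition of $X$ whose boundary avoids $\s_0$ is automatically contained in one of the sheets $X_j$.
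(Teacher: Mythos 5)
Your proposal is correct and follows essentially the same approach as the paper: choose $\delta$ using the shortest-geodesic length on $X_0$ and the constant from Proposition~\ref{prop:finite:sol}, observe that a Veech hypothesis (via the Veech dichotomy applied to the saddle connection $\s_0$) forces the slit direction to be a lattice direction for $X_0$, and then apply Proposition~\ref{prop:finite:sol} via the bound $s=|\s_0|/|\widetilde{a}_0|<s_0$. The paper frames this as a two-case analysis ($z$ parallel to a vector of $L(X_0)$ or not) rather than a single contradiction argument, but the content is identical; your explicit discussion of why periodicity descends from $X$ to $X_0$ is a useful elaboration of a point the paper leaves implicit.
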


\begin{proof}
Let $\ell_0$ be the length of the shortest simple closed geodesic in $(X_0,\omega_0)$, and $s_0$ be the constant in Proposition~\ref{prop:finite:sol}.  Pick $\delta< \min\{\eps, s_0\ell_0\}$. By definition, if $(X,\omega)=\Psi(z)$, then we have a triple of homologous saddle connections $\{\s_0,\s_1,\s_2\}$ which decompose $X$ into three tori such that $\omega(\s_j)=z$. Assume that $z\in \mathring{D}(\delta)$. We have two cases
\begin{itemize}
 \item $z$ is not parallel to any vector in $L(X_0)$, the lattice associated to $X_0$. In this case, the direction of $\s_j$ is not periodic, hence $(X,\omega)$ is not a Veech surface.
 \item $z$ is parallel to some vector in $L(X_0,\omega_0)$. Let $v$ be the primitive vector in $L(X_0,\omega_0)$ in the same direction as $z$, then $(X,\omega)$ admits a decomposition into three cylinder in the direction of $z$, and the width of the cylinder invariant by the Prym involution is $|v|$. By assumption, we have 

$$\frac{|\s_0|}{|v|} \leq \frac{|\s_0|}{\ell_0} < s_0.$$

\noindent Therefore, $(X,\omega)$ cannot be a Veech surface by Proposition~\ref{prop:finite:sol}
\end{itemize}

The proposition is then proved. 
\end{proof}

Using Proposition~\ref{prop:no:Vee:near:3tori}, we can now prove the theorem announced at the beginning of the
section.

\begin{proof}[Proof of Theorem~\ref{thm:exist:open:noVeech}]
Fix a connected component $\mathscr C$ of  $\Omega E_D(2,2)^{\rm odd}$. By the main result of \cite{Lanneau:Manh:composantes}, we know that there exists a surface $(X,\omega)\in \mathscr{C}$ which admits a three-tori decomposition by a triple of homologous saddle connections $\{\s_0,\s_1,\s_2\}$. 

We can assume that the direction of $\s_j$ is periodic. By Lemma~\ref{lm:3cyl:prototype}, we get a
prototype $(w,h,t,e)$ in $\mathcal{P}_D(0,0,0)$. Set $L_0=\Z(\lbd,0)+\Z(0,\lbd)$, $L_1=L_2=\Z(w,0)+\Z(t,h)$, and $(X_j,\omega_j)=\C/L_j, \, j=0,1,2$. The triple $\{(X_j,\omega_j), _, j=0,1,2\}$ belongs to $\PrymD(0,0,0)$. Let $\Psi: \mathring{D}(\eps) \rightarrow \Omega E_D(2,2)^{\rm odd}$ be the map in Proposition~\ref{prop:3tori:neighbor}. It is  easy to see that $\Psi(\mathring{D}(\eps)) \subset \mathscr{C}$. From Proposition~\ref{prop:no:Vee:near:3tori},  we know that there exists $0<\delta < \eps$ such that the set $\mathcal{V}=\Psi(\mathring{D}(\delta))$ does not contain any Veech surface. As a consequence the set $\mathcal{U}=\GL^+(2,\R)\cdot \mathcal{V}$ does not contain any 
Veech surface either. It is easy to see that $\mathcal{U}$ is an open subset of $\mathscr C$. The theorem is then proved.
\end{proof}

\section{Finiteness of closed orbits}
\label{sec:proof:finiteness}

In this section we will prove our main second main result, namely:

\begin{Theorem}
\label{thm:fin:close:orb}
If $D$ is not a square then the number of closed $\GL^+(2,\R)$-orbits in $\PrymD(2,2)^{\rm odd}$ is finite.
\end{Theorem}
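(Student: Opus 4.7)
The plan is to argue by contradiction, following the sketch already laid out in the introduction. Suppose there is an infinite family of distinct closed $\GL^+(2,\R)$-orbits in $\PrymD(2,2)^{\rm odd}$, generated by Veech surfaces $(X_i,\omega_i)$ for $i \in I$. Using that each Veech surface admits completely periodic directions and that the $\GL^+(2,\R)$-action is transitive on a closed orbit, I can normalize each $(X_i,\omega_i)$ so that the horizontal direction is periodic and gives rise to an unstable cylinder decomposition (the latter can always be arranged by a small rotation and application of Proposition~\ref{prop:cyl:dec:unstable}; alternatively, one appeals to the finiteness of stable prototypes as in \cite{Lanneau:Manh:H4}).

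Next I would invoke the finiteness result for prototypes of unstable cylinder decompositions, Theorem~\ref{thm:prot:unstable:dec}, which supplies a finite set $\mathbb{P}_D \subset \PrymD(2,2)^{\rm odd}$ such that, up to rescaling by $\GL^+(2,\R)$, every $(X_i,\omega_i)$ can be written as $(X_{k_i}, \omega_{k_i}) + (x_i,0)$ for some $(X_{k_i},\omega_{k_i}) \in \mathbb{P}_D$. Passing to a subsequence, I may assume that the base surface is a fixed $(X,\omega) \in \mathbb{P}_D$. By Theorem~\ref{thm:unstable:dec:collapse}, the parameter $x_i$ lies in a bounded interval $(a,b)$ common to all $i$. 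Extracting a further subsequence, $x_i \to x \in [a,b]$, so $(X_i,\omega_i) \to (Y,\eta) := (X,\omega) + (x,0)$.

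I now split into cases according to whether $x$ is interior to or on the boundary of $[a,b]$. If $x \in (a,b)$, then $(Y,\eta) \in \PrymD(2,2)^{\rm odd}$ and the horizontal direction is periodic (being the limit of periodic stable decompositions; see Proposition~\ref{prop:stable:dec} and Proposition~\ref{prop:cyl:dec:unstable}). If $x \in \{a,b\}$, then Theorem~\ref{thm:unstable:dec:collapse} identifies the limit with a point of $\PrymD(0,0,0)$, $\PrymD(4)$, or $\Omega E_{D'}(2)^*$ with $D' \in \{D, D/4\}$, and the surgeries of Section~\ref{sec:surgeries} (Propositions \ref{prop:3tori:neighbor}, \ref{prop:Prym4:neighbor}, \ref{prop:H2:neighbor}), combined with Proposition~\ref{prop:dec:near:Prym4} and Proposition~\ref{prop:cyl:dec:near:H2}, provide a local parameterization by $(X_0,\omega_0) + $ kernel-foliation data in which the horizontal periodicity persists on the approaching subsurfaces and the combinatorics of the decomposition stabilize. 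In either case, the hypotheses of Theorem~\ref{thm:byproduct} are satisfied by $\mathcal{Y} := \bigcup_{i} \GL^+(2,\R)\cdot (X_i,\omega_i)$ in place of $\Orb$, since each $(X_i,\omega_i)$ is periodic and parabolic in the horizontal direction (Veech surfaces satisfy Veech dichotomy) and $(X_i,\omega_i) = (Y,\eta) + v_i$ with $v_i \to 0$ and $v_i$ having nonzero vertical component infinitely often (which one arranges by a further rotation, exactly as in Claim~\ref{claim:1} in the proof of Theorem~\ref{theo:main}).

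Applying Theorem~\ref{thm:byproduct} then yields $\eps > 0$ such that $(Y,\eta) + (s,0) \in \overline{\mathcal{Y}}$ for all $s \in (-\eps,\eps)$, and Corollary~\ref{cor:1} promotes this to a two-dimensional neighborhood of $(Y,\eta)$ inside $\overline{\mathcal{Y}}$. Transporting by $\GL^+(2,\R)$ and repeating the openness argument from the proof of Theorem~\ref{theo:weaker}, I conclude that $\overline{\mathcal{Y}}$ is open, hence contains a full connected component $\mathscr{C}$ of $\PrymD(2,2)^{\rm odd}$. But $\mathcal{Y}$ is a union of Veech orbits, so $\mathscr{C}$ would be contained in the closure of the set of Veech surfaces. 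This directly contradicts Theorem~\ref{thm:exist:open:noVeech}, which produces an open set in $\mathscr{C}$ containing no Veech surface. The contradiction forces $I$ to be finite, proving the theorem. The main obstacle is the boundary case $x \in \{a,b\}$: one must carefully verify that the surgeries of Section~\ref{sec:surgeries} produce sequences in $\mathcal{Y}$ satisfying the combinatorial stability hypothesis of Theorem~\ref{thm:byproduct}, so that the horocycle-plus-kernel-foliation argument transfers verbatim to the degenerate limit.
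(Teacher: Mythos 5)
Your overall strategy mirrors the paper's, and the interior case $x\in(a,b)$ plus the two boundary cases $\PrymD(4)$ and $\Omega E_{D'}(2)^*$ are handled essentially as the paper does (via Propositions~\ref{prop:dec:near:Prym4} and~\ref{prop:cyl:dec:near:H2} and the byproduct-style density argument). Two points need correction, one minor and one substantive.

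The minor point is your justification for finding an unstable cylinder decomposition on each Veech surface. Proposition~\ref{prop:cyl:dec:unstable} goes in the wrong direction: it takes an unstable decomposition and produces a stable one after a small vertical kernel-foliation shear. The correct observation, and the one the paper uses, is that a Veech surface in $\PrymD(2,2)^{\rm odd}$ has two distinct zeros, hence infinitely many saddle connections joining them; by Veech dichotomy the direction of any such saddle connection is periodic, and it is automatically unstable because it contains a saddle connection between distinct zeros. No rotation or appeal to Proposition~\ref{prop:cyl:dec:unstable} is needed.

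The substantive gap is your treatment of the boundary case $(Y,\eta)\in\PrymD(0,0,0)$. You lump all three boundary cases together and claim the hypotheses of Theorem~\ref{thm:byproduct} are satisfied ``in either case.'' That cannot work for $\PrymD(0,0,0)$: the limit is a triple of tori joined at a point, not a translation surface in $\PrymD(\kappa)$, so Theorem~\ref{thm:byproduct} does not literally apply, and the paper provides no analogue of Proposition~\ref{prop:dec:near:Prym4} or Proposition~\ref{prop:cyl:dec:near:H2} that would give you the required combinatorial stability near a three-tori degeneration. The paper handles this case by a different and much shorter argument: Proposition~\ref{prop:no:Vee:near:3tori} shows that a punctured neighborhood of the triple of tori (under the map $\Psi$ of Proposition~\ref{prop:3tori:neighbor}) contains \emph{no} Veech surfaces at all, which contradicts the assumption that the Veech surfaces $(X_{i_k},\omega_{i_k})$ converge to $(Y,\eta)$. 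No density argument is needed in this case. You should replace your unified treatment of the boundary cases by the case distinction: for $\PrymD(4)$ and $\Omega E_{D'}(2)^*$ the limit is itself a Veech surface and the byproduct-style density argument transfers; for $\PrymD(0,0,0)$ invoke Proposition~\ref{prop:no:Vee:near:3tori} directly.
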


We first show a useful finiteness result up to the kernel foliation for surfaces in  $\PrymD(2,2)^{\rm odd}$.
Recall that $(X,\omega)$ admits an unstable cylinder decomposition in the horizontal direction if and only if this 
direction is periodic, and there exists (at least) one horizontal saddle connection whose endpoints are distinct zeros of 
$\omega$.

\begin{Theorem}
\label{thm:prot:unstable:dec}
If $D$ is not a square then there exists a finite family ${\mathcal P}_D$ of surfaces in $ \PrymD(2,2)^{\rm odd}$ 
such that for any $(X,\omega)\in \PrymD(2,2)^{\rm odd}$ with an unstable cylinder decomposition one has, 
up to rescaling by $\GL^+(2,\R)$:
$$
(X,\omega) = (X_i,\omega_i)+ (x,0) \qquad \textrm{for some } (X_i,\omega_i)\in \mathcal P_D.
$$
\end{Theorem}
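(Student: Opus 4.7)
The plan is to reduce the problem to a finite enumeration problem in three steps: classify the combinatorial types of unstable decompositions, impose the real multiplication constraints as Diophantine equations on the widths and heights, and show that the horizontal kernel foliation together with $\GL^+(2,\R)$ absorbs the only continuous parameter that remains.

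First, I would enumerate the topological models of unstable horizontal cylinder decompositions in $\Prym(2,2)^{\rm odd}$. By Lemma~\ref{lm:max:nb:cyl} a stable decomposition in this stratum has $g+|\kappa|-1=4$ cylinders, so an unstable decomposition has at most $3$ horizontal cylinders; moreover by Lemma~\ref{lm:nbr:cyl:max3}, the number $n$ of equivalence classes of cylinders modulo the Prym involution $\inv$ satisfies $n\leq 2$ (since $n=3$ forces stability). Together with the constraint that $\inv$ must act on the decomposition preserving the combinatorial data and exchanging the two zeros, this leaves only finitely many separatrix diagrams. For each such diagram I would fix a labelling of the cylinders $\CCC_1,\dots,\CCC_k$ (with $k\in\{2,3,4\}$), of their widths $w_i$, heights $h_i$, twists $t_i$, and record the unique relative period $\xi$ connecting the two zeros $P,Q$ through an unstable saddle connection.

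Second, I would exploit complete periodicity and the eigenform condition. Since $D$ is not a square and $n\leq 2$, Corollary~\ref{cor:parabolic} forces the horizontal direction to be parabolic, so all moduli $h_i/w_i$ are rational multiples of a single modulus $\mu\in\Q$. Up to $\GL^+(2,\R)$ I normalize so that the width of a chosen $\inv$-invariant cylinder equals $1$; then by the Calta/McMullen relation $\sum_i \beta_i w'_i h_i=0$ from Theorem~\ref{theo:Cal:eq}, all widths lie in $\Q(\sqrt{D})$ and satisfy an explicit linear equation over $\Q$. Combined with parabolicity, these constraints express each $w_i$ and each $h_i$ as an element of $\Q+\Q\sqrt{D}$ whose coordinates satisfy a finite system of rational linear equations with coefficients bounded in terms of the combinatorial data and of $D$ (this is entirely parallel to the prototype analysis in Lemma~\ref{lm:3cyl:prototype}, see also~\cite{Mc4,Lanneau:Manh:H4}). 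A standard integrality argument for quadratic orders, as in Proposition~\ref{prop:3tori:decomp}, using that $\Ord_D$ is proper and $\gcd$-primitivity of the matrix of $T$, then bounds the solutions of this system: only finitely many tuples $(w_i,h_i)$ are possible once the area is normalized.

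Third, I would absorb the remaining parameter with the horizontal kernel foliation. The twists $t_i$ are determined modulo $w_i\Z$; after applying suitable Dehn twists and translating in the horizontal kernel foliation leaf (which, by Lemma~\ref{lm:ker:hor:twist}, shifts each twist $t_i$ by $\alpha_i s$ and preserves both widths and heights), one can bring all twists except one into a preferred fundamental domain. The remaining free parameter is precisely the real number $x$ in the statement, corresponding to the horizontal relative period $\xi$ between the two zeros. Thus for each combinatorial type and each admissible tuple $(w_i,h_i,t_i)$ from the finite list, one picks a distinguished representative $(X_i,\omega_i)\in\PrymD(2,2)^{\rm odd}$, and every unstable $(X,\omega)$ is, up to $\GL^+(2,\R)$, of the form $(X_i,\omega_i)+(x,0)$ for some $x\in\R$.

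The main obstacle will be Step~2: carrying out the algebraic argument uniformly across all combinatorial models and ensuring that the integrality/primitivity hypotheses defining $\Ord_D$ really do cut out a finite set in each case. The bookkeeping with the $\inv$-equivariance (distinguishing $\inv$-invariant cylinders from those swapped by $\inv$, and carefully tracking the coefficients $\beta_i\in\{1,2\}$ and $\alpha_i\in\{0,\pm 1/2,\pm 1\}$) is where errors are easiest to make; once the linear system is written down in the correct basis of $H_1(X,\Z)^-$, finiteness follows by the same mechanism as in the three-tori prototypes $\mathcal{P}_D(0,0,0)$ of Definition~\ref{def:prototype:3:tori}.
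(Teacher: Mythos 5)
Your route is genuinely different from the paper's. The paper proves Theorem~\ref{thm:prot:unstable:dec} by degenerating along the horizontal kernel foliation leaf: Theorem~\ref{thm:unstable:dec:collapse} shows that the leaf through an unstable surface terminates on one of the boundary strata $\PrymD(0,0,0)$, $\PrymD(4)$, or $\Omega E_{D'}(2)^*$, and each of these has an \emph{a priori} finite family of prototypes (Lemma~\ref{lm:3cyl:prototype}, \cite{Lanneau:Manh:H4}, and McMullen's $\mathbb{P}_{D'}(2)$ respectively). The surgery maps $\Psi$ of Section~\ref{sec:surgeries} then reconstruct a representative $(X_i,\omega_i)$ from each prototype and each sector $R_{(k,n)}(\eps)$, and the finite family $\mathcal{P}_D$ is read off. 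You instead try to carry out a prototype-style argument directly inside $\PrymD(2,2)^{\rm odd}$, combining the combinatorial enumeration of separatrix diagrams with the arithmetic constraints coming from $\Ord_D$. The upside of your route is that it is self-contained and would not require the boundary surgeries of Sections~\ref{sec:surgeries} and~\ref{sec:degenerating:surfaces}; the upside of the paper's route is that it outsources all the delicate arithmetic to already-proved finiteness theorems in lower strata.

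There is, however, a real gap in your Step~2/Step~3. You establish (plausibly) that the absolute period lattice, hence the widths $w_i$ and heights $h_i$, are pinned down to a finite list by the integrality of $T$ and the flux relation. But you never actually constrain the \emph{twists} $t_i$. After using $u_s$ (one parameter), Dehn twists (integer lattice), and the horizontal kernel foliation (one more parameter), you are left in general with a torus of positive dimension parameterizing the remaining twists; ``bringing all twists except one into a preferred fundamental domain'' does not reduce this to a finite set. In the McMullen prototype $(w,h,t,e)$, the twist parameter $t$ is forced to be an \emph{integer} modulo $\gcd(w,h)$, and this comes from properness of the order, not from Dehn twists alone. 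You would need to show the analogous integrality of the twists here, which you have not done. The paper sidesteps this entirely: the prototypes of $\PrymD(0,0,0)$, $\PrymD(4)$, and $\Omega E_{D'}(2)^*$ already encode the twist integrality, so the reconstruction via $\Psi$ inherits it for free. So your plan could be made to work, but Step~2 is the hard part (as you anticipated), and it would essentially mean re-proving McMullen-type twist integrality in the unstable $(2,2)$ setting rather than citing it.
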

If we label the zeros of $\omega$ by $P$ and $Q$, we always choose the orientation for any saddle connection joining 
$P$ and $Q$ to be {\em from $P$ to $Q$}: this defines in a unique way the surface $(X,\omega)+ (x,0)$.

\begin{proof}[Proof of Theorem~\ref{thm:prot:unstable:dec}]
By~\cite{Mc4}, for any $D' \equiv 0,1 \mod 4, D'>0$, the set $\Omega E_{D'}(2)^*$ is a finite union of 
Teichm\"uller curves. More precisely there exists a finite family ${\mathbb P}_{D'}(2)$ of 
surfaces ({\em prototypical splittings}) such that any $(X,\omega)\in \Omega E_{D'}(2)^*$
that is horizontally periodic belongs to the $\mathrm P$-orbit (here ${\mathrm P}=\{\left(\begin{smallmatrix} * & * \\ 0 & *  \end{smallmatrix} \right) \subset \GL^+(2,\R)\}$) of some surface in ${\mathbb P}_{D'}(2)$.

In~\cite{Lanneau:Manh:H4}, we have proved the same result for the stratum $\PrymD(4)$:
there exists a finite family ${\mathbb P}_D(4)$ of surfaces such that any horizontally periodic surface 
$(X,\omega)\in \Omega E_{D}(4)$ belongs to the $\mathrm P$-orbit of a surface in ${\mathbb P}_D(4)$.
The related statement for the stratum $\PrymD(0,0,0)$ corresponds to Lemma~\ref{lm:3cyl:prototype}: 
let ${\mathbb P}_D(0,0,0)$ be the  set of corresponding surfaces in $\PrymD(0,0,0)$. We will call the surfaces in the families ${\mathbb P}_{D'}(2)$, ${\mathbb P}_D(4)$, $\mathbb{P}_D(0,0,0)$  {\em prototypical surfaces}.

Given a discriminant $D>0$, for each prototypical surface $X_\infty$ in these finite families ${\mathbb P}_D(0,0,0)$, ${\mathbb P}_D(4)$ and ${\mathbb P}_{D'}(2)$, where $D'\in \{D, D/4\}$, we apply, respectively, Propositions~\ref{prop:3tori:neighbor}, \ref{prop:Prym4:neighbor} and \ref{prop:H2:neighbor}. This furnishes a map $\Psi: \mathring{D}(\eps) \rightarrow \PrymD(2,2)^{\rm odd}$ where $\eps>0$.  

By construction, surfaces in $\Omega E_D(2,2)^{\rm odd}$ whose horizontal kernel foliation leaf contains $X_\infty$, \ie $X_\infty$ is a limit of the real kernel foliation leaf through such surfaces, and close enough to $X_\infty$ are contained in the set $\Psi(R_{(k,n)}(\eps))$, where  $n\in\{1,3,5\}, \, k \in \{0,\dots,2n-1\}$, depending on the space to which $X_\infty$ belongs. For each prototypical surface, and each admissible pair $(k,n)$, we pick a surface in $\Psi(R_{(k,n)}(\eps))$. Let $\mathbb{P}_D$ denote this (finite) family. 
Note that for all the surfaces in this family, the cylinder decomposition in the horizontal direction is unstable. 
Now, thanks to Theorem~\ref{thm:unstable:dec:collapse}, if $(X,\omega) \in \PrymD(2,2)^{\rm odd}$ admits an unstable cylinder  decomposition, 
then up to action of $\GL^+(2,\R)$, the horizontal kernel foliation leaf through $(X,\omega)$ contains  some prototypical 
surface. Therefore $(X,\omega)$ belongs to the same horizontal leaf of a surface in the family $\mathbb{P}_D$, 
and the theorem follows.
\end{proof}

We have now all necessary tools to prove our main result.
\begin{proof}[Proof of Theorem~\ref{thm:fin:close:orb}]
Let $\{(X_i,\omega_i), \, i \in I\}$ be a family of Veech surfaces that generates an infinite family of closed $\GL^+(2,\R)$-orbits 
in $\PrymD(2,2)^{\rm odd}$. We will show that the set
$$
\Orb =\bigcup_{i\in I} \GL^+(2,\R)\cdot(X_i,\omega_i)
$$
is dense in a component of $\PrymD(2,2)^{\rm odd}$ contradicting Theorem~\ref{thm:exist:open:noVeech}.

Since the direction of any saddle connection on a Veech surface is periodic, each surface in the family 
$\{(X_i,\omega_i), \, i \in I\}$ admits infinitely many unstable cylinder decompositions. Therefore, we can assume that 
each of the surfaces $(X_i,\omega_i)$ belongs to the horizontal kernel foliation leaf of one of the surfaces in the family 
$\mathcal{P}_D$ of Theorem~\ref{thm:prot:unstable:dec}. Since the set $\mathcal{P}_D$ is finite, there exists a surface 
$(X,\omega)\in\mathcal{P}_D$ and an infinite subfamily $I_0\subset I$ such that $(X_i,\omega_i)=(X,\omega)+(x_i,0)$ for 
any $i \in I_0$. By Theorem~\ref{thm:unstable:dec:collapse}, $x_i \in ]a,b[$, where $a,b$ does not depend on $i$.

Compactness of the interval $[a,b]$ implies the existence of a subsequence $\{i_k\}_{k \in \N} \subset I_0$ 
such that $\{x_{i_k}\}$ converges to some $x\in [a,b]$. The sequence $(X_{i_k},\omega_{i_k})=(X,\omega)+(x_{i_k},0)$ thus 
converges to $(Y,\eta):=(X,\omega)+(x,0)$. If $x \in ]a,b[$ then $(Y,\eta)$ belongs to $\PrymD(2,2)^{\rm odd}$. However 
if $x\in \{a,b\}$ then $(Y,\eta)$ belongs to the boundary of the stratum $\PrymD(2,2)^{\rm odd}$, namely 
$\PrymD(4), \Omega E_{D'}(2)^*$ with $D'\in \{D,D/4\}$, or $\PrymD(0,0,0)$. We distinguish separately the four cases below.

\noindent {\bf Case $(Y,\eta) \in \PrymD(2,2)^{\rm odd}$.}\\
Let $v$ be a periodic direction on $(Y,\eta)$ that is different from $(1,0)$.
By Propositions~\ref{prop:stable:dec} and~\ref{prop:cyl:dec:unstable}, for $k$ large enough, $(X,\omega)+(x_{i_k},0)$ 
admits a stable cylinder decomposition in this direction. Moreover, we can assume that the decompositions of $(X_{i_k},\omega_{i_k})$ in direction 
$v$ share the same combinatorial data, and the same widths of cylinders. Finally, since $(X_{i_k},\omega_{i_k})$
are Veech surfaces, the direction $v$ is parabolic. The assumptions of Theorem~\ref{thm:byproduct}
are therefore fulfilled and there exists $\eps>0$ such that $(Y,\eta) + xv \in \overline{\mathcal O}$ 
for all  $x\in (-\eps,\eps)$. By Corollary~\ref{cor:1} there exists $\eps' >0$ so that 
$(Y,\eta) +w \in \overline{\mathcal O}$ for any $w\in \B(\varepsilon')$ proving that $\mathcal O$ is dense in the 
corresponding component of $\PrymD(2,2)^{\rm odd}$. \medskip

\noindent {\bf Case $(Y,\eta) \in \PrymD(4)$.}\\
In this case $(Y,\eta)$ is a Veech surface. Choose a periodic direction $v$ for $(Y,\eta)$
that is different from $(1,0)$. Let $\Psi: \mathring{D}(\eps) \rightarrow \PrymD(2,2)^{\rm odd}$  be the map in 
Proposition~\ref{prop:dec:near:Prym4}. We can assume that $\Psi(R_{(k,5)}(\eps))$ consists of surfaces in 
$\PrymD(2,2)^{\rm odd}$ which have a small saddle connection in direction $v$. 
There exists a sector $\mathring{D}_{(k,5)}(\eps)$ such that $\Psi(\mathring{D}_{(k,5)}(\eps))$ contains infinitely 
many elements of the family $\{(X_{i_k},\omega_{i_k})\}$. Note that every surface in $\Psi(\mathring{D}_{(k,5)}(\eps))$ 
admits  a stable cylinder decomposition in direction $v$ with the same combinatorial data and the same widths of cylinders. 
A statement similar to Theorem~\ref{thm:byproduct} also holds for this case, showing that there exists $0<\delta < \eps$ such that 
$\Psi(R_{(k-1,5)}(\delta))$ is included in $\overline{\Orb}$. Hence $\overline{\Orb}$ is dense in the corresponding
component of $\PrymD(2,2)^{\rm odd}$. \medskip

\noindent {\bf Case $(Y,\eta) \in \Omega E_{D'}(2)^*$.}\\
In particular $(Y,\eta)$ is a Veech surface (viewed as a surface of 
$\Omega E_{D'}(2)$). The same arguments as above show that $\overline{\Orb}$ is dense in the corresponding
component of $\PrymD(2,2)^{\rm odd}$. \medskip

\noindent {\bf Case $(Y,\eta) \in \PrymD(0,0,0)$.}\\
In this case $(X,\omega)$ has a triple of horizontal saddle connections $\{\s_0,\s_1,\s_2\}$ that decompose the 
surface into a connected sum of three tori, and $(Y,\eta)$ can be viewed as the limit when the length of $\s_j$ 
goes to zero. By Proposition~\ref{prop:no:Vee:near:3tori}, there is no Veech surface in the neighborhood of 
$(Y,\eta)$. This is a contradiction. \medskip

From above discussion, we draw that $\overline{\Orb}$ is dense in a component of 
$\PrymD(2,2)^{\rm odd}$: this is a contradiction with Theorem~\ref{thm:exist:open:noVeech}.
The proof of Theorem~\ref{thm:fin:close:orb} is now complete.
\end{proof}

\bigskip

\appendix

\section{Existence of Veech surfaces in infinitely many Prym eigenform loci}

It follows from the work of McMullen~\cite{Mc6bis} that there exists only finitely many $\GL^+(2,\R)$ closed orbits in the union $\underset{D \text{ not a square}}{\bigcup} \Omega E_D(1,1)$ (see \cite{Lanneau:Moeller} for a similar result in $\PrymD(1,1,2)$). However the situation is different in $\PrymD(2,2)^{\rm odd}$. We will show that for infinitely  many discriminants $D$ that are not squares, the locus $\PrymD(2,2)^{\rm odd}$ contains at least one $\GL^+(2,\R)$ closed orbit (the fact that $\Omega E_{D_1}(2,2)^{\rm odd}$ and $\Omega E_{D_2}(2,2)^{\rm odd}$ are disjoint if $D_1\neq D_2$ will be proved in \cite{Lanneau:Manh:composantes}). Remark that the corresponding Veech surfaces we found are not primitive, they are double coverings of surfaces in $\PrymD(2)$. It is unknown to the authors if there exists any primitive Veech surface in $\underset{D \text{ not a square}}{\bigcup}\Omega E_D(2,2)^{\rm odd}$.

Following~\cite{Mc4} we say that a quadruple of integers $(w,h,t,e)$ is a 
{\em splitting prototype} of discriminant $D$ if the conditions below are fulfilled:
$$
\left\{\begin{array}{l}        w>0,h>0,\;        0\leq
  t<\gcd(w,h),\\    \gcd   (w,h,t,e)    =1,\\    D=e^2+4wh,\\    0<
  \lbd:=\frac{e+\sqrt{D}}{2}<w.\\
\end{array}
\right.
$$
To each splitting prototype one can associate a Veech surface $(X,\omega)\in \PrymD(2)$ as follows
(see Figure~\ref{fig:H2}).
\begin{figure}[htbp]
\begin{center}
\begin{tikzpicture}[scale=0.8]
%
\draw (0,0)  -- (0,3)  -- (5,3)  -- (5,2) --  (2,2) --  (2,0)-- cycle; 

\draw[->,>= angle 45, thick,  dashed] (0,1) -- (2,1); 
\draw[->,>= angle 45, thick,  dashed] (0,2.5) -- (5,2.5); 
\draw[->,>= angle 45, thick,  dashed] (3.5,2) -- (3.5,3); 
\draw[thick,  dashed, ->,  >=angle 45] (0,0)  .. controls (-1.5,0.5)  and (-1.5,  1.5) ..   (0,2);

\filldraw[fill=white,  draw=black]  (0,0)  circle (2pt)  (0,2)  circle
(2pt) (0,3) circle (2pt)  (2,3) circle (2pt) (5,3) circle (2pt) (5,2) 
circle  (2pt) (2,2)  circle (2pt) (2,0)  circle (2pt);

\draw  (1,1) node[above]  {$\scriptstyle a_1$}  
(-1.5,1) node {$\scriptstyle b_1$}  
(2.5, 2.5) node[above] {$\scriptstyle a_2$} 
(5,2.5) node[right] {$\scriptstyle b_2$};
\end{tikzpicture}
\end{center}
\caption{Prototypical splitting of type $(w,h,0,e)$ where $\omega(a_1)=(\lbd,0)$,
$\omega(b_1)=(0,\lbd)$, $\omega(a_2)=(w,0)$ and $\omega(b_2)=(0,h)$. 
Parallel edges are identified to obtain a surface $(X,\omega) \in \PrymD(2)$
}
\label{fig:H2}
\end{figure}
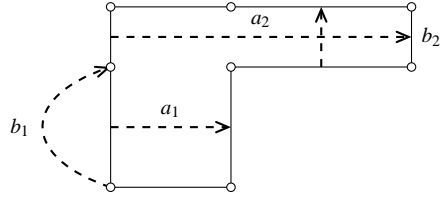

Define a pair of lattices in $\C$ by $\Lambda_1=\Z(\lbd,0)\oplus\Z(0,\lbd)$ and
$\Lambda_2=\Z(w,0)\oplus\Z(t,h)$ (recall that $\lbd:=\frac{e+\sqrt{D}}{2}>0$). 
We construct the corresponding tori $(E_i,\omega_i) = (\C/\Lambda_i,dz)$
and the genus two surface $(X,\omega)$ where $X=E_1\#E_2$ and $\omega=\omega_1+\omega_2$.

Geometrically, the surface $(X,\omega)$ is made of two horizontal cylinders whose core curves are denoted by 
$a_1$ and $a_2$ (see~\cite{Mc4} and Figure~\ref{fig:H2} for details). 

Let $\{a_1,b_1,a_2,b_2\}$ be the symplectic basis of $H_1(X,\Z)$ such that $\omega(a_1)=(\lbd,0)$, $\omega(b_1)=(0,\lbd)$, 
$\omega(a_2)=(w,0)$ and $\omega(b_2)=(t,h)$. A generator of the order $\Ord_D$ is given (in the above 
basis) by the following matrix
$$
T=\left( \begin{smallmatrix}
            e & 0 & w & t\\
        0 & e & 0 & h \\
        h & -t & 0 & 0\\
            0 & w & 0 & 0\\
\end{smallmatrix}\right).
$$

It is straightforward to check that $T$ is a self-adjoint with respect to the intersection form of $H_1(X,\Z)$, $T^2=eT+wh\Id$, and $T$ satisfies $T^*\omega=\lbd\omega$. It follows that $T$ generates a proper subring in ${\rm End}(\mathbf{Jac}(X))$ for which $\omega$ is an eigen vector. Thus $(X,\omega)\in \Omega E_D(2)$, and therefore $(X,\omega)$ is a Veech surface (see \cite{Mc6} for  more details).

\begin{Theorem}\label{thm:inf:Veech}
Let $(w,h,t,e)$ be a splitting prototype for a discriminant $D$, and $(X,\omega)$ be the associated Veech surface in $\Omega E_D(2)$. Let $(Y_1,\eta_1)$ and $(Y_2,\eta_2)$ be two surfaces in $\H(2,2)$ constructed from $(w,h,t,e)$ as shown in Figure~\ref{fig:double:cov:model1:2}. Then both $(Y_1,\eta_1)$ and $(Y_2,\eta_2)$ are Veech surfaces in  some Prym eigenform loci in $\H(2,2)^{\rm odd}$. More specifically,  we have 

\begin{itemize}
 \item[(i)] $(Y_1,\omega_1)\in \Omega E_{4D}(2,2)^{\rm odd}$ if $h$ is odd, otherwise $(Y_1,\eta_1)\in \Omega E_D(2,2)^{\rm odd}$,
 \item[(ii)] $(Y_2,\omega_2) \in \Omega E_{4D}(2,2)^{\rm odd}$ if $w$ is odd, otherwise $(Y_2,\eta_2)\in \Omega E_D(2,2)^{\rm odd}$.
\end{itemize}

\end{Theorem}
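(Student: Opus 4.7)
My plan is to realize both $(Y_1,\eta_1)$ and $(Y_2,\eta_2)$ as translation double covers of the Veech surface $(X,\omega)\in \Omega E_D(2)$ from which they are constructed, and then to compute the relevant discriminants explicitly. Since $(X,\omega)$ lies in $\H(2)$, an unramified degree--two translation cover $\pi_i:Y_i\to X$ produces a form $\eta_i=\pi_i^*\omega$ with two zeros of order two, and by Riemann--Hurwitz $g(Y_i)=3$, so $(Y_i,\eta_i)\in \H(2,2)$. The first step is therefore to read off from Figure~\ref{fig:double:cov:model1:2} an explicit description of the non-trivial deck transformation $\s_i$ (a fixed-point-free involution of $Y_i$ preserving $\eta_i$), confirming that the two polygonal models are indeed unramified doubles of the L-shape of Figure~\ref{fig:H2}.

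Next I would produce the Prym involution $\inv_i$ on $Y_i$. Because the hyperelliptic involution $\iota$ of $X$ acts as $-1$ on $H^1(X,\Z)$, every unramified double cover of $X$ is $\iota$-invariant, and $\iota$ admits two lifts $\{\inv_i, \inv_i\circ\s_i\}$ to $Y_i$; one of them realises $(Y_i,\eta_i)$ in $\Prym(2,2)^{\rm odd}$, the other in $\Prym(2,2)^{\rm even}$. I would fix $\inv_i$ to be the correct lift and check on the polygonal model that $\inv_i^\ast\eta_i=-\eta_i$ and that the quotient $Y_i/\inv_i$ has the correct parity invariant, so that $(Y_i,\eta_i)\in \Prym(2,2)^{\rm odd}$. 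The Veech property is then essentially for free: $(X,\omega)$ is a primitive Veech surface and $\pi_i$ is a translation cover, so by the standard principle (Vorobets, Gutkin--Judge) any unramified translation cover of a Veech surface is again Veech, and hence $(Y_i,\eta_i)$ is a Veech surface.

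The bulk of the work, and the expected main obstacle, is to identify the discriminant. The strategy is to build an explicit symplectic basis of $H_1(Y_i,\Z)^-$ from the basis $\{a_1,b_1,a_2,b_2\}$ of $H_1(X,\Z)$ used in Section~A: taking pullbacks by $\pi_i$ together with classes in the $(-1)$-eigenspace of $\s_i$ one produces a symplectic basis $(\alpha_1,\beta_1,\alpha_2,\beta_2)$ of $H_1(Y_i,\Z)^-$ in which the restricted intersection form has matrix $\bigl(\begin{smallmatrix} 2J & 0 \\ 0 & J \end{smallmatrix}\bigr)$ or $\bigl(\begin{smallmatrix} J & 0 \\ 0 & 2J \end{smallmatrix}\bigr)$ according to which of the two cylinders of $X$ is ``doubled'' by the covering construction --- this is exactly what distinguishes $Y_1$ from $Y_2$. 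One then lifts the generator $T$ of $\Ord_D\subset\mathrm{End}(\mathbf{Jac}(X))$ to an endomorphism $\widetilde T$ of $\Prym(Y_i,\inv_i)$ (self-adjoint for the new form, with the same characteristic polynomial up to the doubling), and reads off whether $\widetilde T$ is primitive as an integer matrix or is divisible by $2$. The dichotomy in the statement will come precisely from this primitivity check: it reduces to the parity of the entries of $T$ lying in the ``doubled'' row and column, which by the explicit shape of $T$ from Section~A is controlled by the parity of $h$ for $Y_1$ and by the parity of $w$ for $Y_2$. When that parameter is odd, $\widetilde T$ is primitive, its discriminant is $4D$, and $(Y_i,\eta_i)\in \Omega E_{4D}(2,2)^{\rm odd}$; when it is even one can divide $\widetilde T$ by $2$, producing a proper self-adjoint suborder isomorphic to $\Ord_D$, and $(Y_i,\eta_i)\in \Omega E_{D}(2,2)^{\rm odd}$.
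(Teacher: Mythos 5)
Your proposal matches the paper's proof step by step: realize $(Y_i,\eta_i)$ as unramified translation double covers of $(X,\omega)$, conclude the Veech property by the covering principle, exhibit a Prym involution $\inv_i$ exchanging the two zeros, build an explicit symplectic basis of $H_1(Y_i,\Z)^-$ with restricted intersection form $\bigl(\begin{smallmatrix} J & 0 \\ 0 & 2J \end{smallmatrix}\bigr)$ or $\bigl(\begin{smallmatrix} 2J & 0 \\ 0 & J \end{smallmatrix}\bigr)$, lift the generator $T$ of $\Ord_D$ to a self-adjoint $\widetilde T_i$ and determine the discriminant by the gcd $k_i\in\{1,2\}$ of its entries, with $k_1$ (resp.\ $k_2$) controlled by the parity of $h$ (resp.\ $w$); this is exactly what the paper does. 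One small slip to flag: the two lifts of the hyperelliptic involution of $X$ do not put $(Y_i,\eta_i)$ into $\Prym(2,2)^{\rm odd}$ versus $\Prym(2,2)^{\rm even}$ --- the spin parity of the Abelian differential is intrinsic, and $\Prym(2,2)$ lies entirely in $\H(2,2)^{\rm odd}$ --- rather, only one lift has the right fixed-point structure (four fixed points, quotient of genus $1$) to satisfy $\dim_\C\Omega^-(Y_i,\inv_i)=2$, which the paper verifies by exhibiting $\inv_i$ directly on the polygonal model.
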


\begin{figure}[htb]
\begin{minipage}[t]{0.45\linewidth}
\centering
\begin{tikzpicture}
\fill[yellow!50!red!20] (0,2) -- (0,1) -- (1,1) -- (1,2) -- cycle;
\fill[yellow!50!red!20] (0,0) -- (0,-1) -- (1,-1) -- (1,0) -- cycle;

\draw (0,3) -- (0,-1) -- (1,-1) -- (1,0) -- (3,0) -- (3,1) -- (1,1) -- (1,2) -- (3,2) -- (3,3) -- cycle;

\foreach \x in {(0,2), (0,1), (0,0)} \draw \x -- +(1,0);

\foreach \x in {(0,1.5), (0,-0.5)} \draw[dashed, ->, >=angle 45] \x -- +(1,0);

\foreach \x in {(0,2.5), (0,0.5)} \draw[dashed, ->, >=angle 45] \x -- +(3,0);

\foreach \x in {(1,-1), (3,0), (1,1), (3,2)} \draw[dashed, ->, >=angle 45] \x .. controls +(0.5,0.25) and +(0.5,-0.25) .. +(0,1) ;

\foreach \x in {(0,3), (0,2), (0,-1), (1,3), (1,2), (1,-1),( 3,3), (3,2) } \filldraw[fill=black] \x circle (2pt);
\foreach \x in {(0,1), (0,0), (1,1), (1,0), (3,1), (3,0) } \filldraw[fill=white] \x circle (2pt);

\draw (0.5,1.5) node[above] {$\sst a_{11}$} (0.5,-0.5) node[above] {$\sst a_{12}$} (2,2.5) node[above] {$\sst a_{21}$} (2,0.5) node[above] {$\sst a_{22}$};

\draw (1.3,1.5) node[right] {$\sst b_{11}$} (1.3,-0.5) node[right] {$\sst b_{12}$} (3.3,2.5) node[right] {$\sst b_{21}$} (3.3,0.5) node[right] {$\sst b_{22}$};

\draw (1.5,-1.5) node {$(Y_1,\eta_1)$};
\end{tikzpicture}
\end{minipage}
\begin{minipage}[t]{0.45\linewidth}
\centering
\begin{tikzpicture}
\fill[yellow!50!red!20] (0,3) -- (0,2) -- (3,2) -- (3,3) -- cycle;
\fill[yellow!50!red!20] (0,1) -- (0,0) -- (3,0) -- (3,1) -- cycle;

\draw (0,3) -- (0,-1) -- (1,-1) -- (1,0) -- (3,0) -- (3,1) -- (1,1) -- (1,2) -- (3,2) -- (3,3) -- cycle;

\foreach \x in {(0,2), (0,1), (0,0)} \draw \x -- +(1,0);

\foreach \x in {(0,1.5), (0,-0.5)} \draw[dashed, ->, >=angle 45] \x -- +(1,0);

\foreach \x in {(0,2.5), (0,0.5)} \draw[dashed, ->, >=angle 45] \x -- +(3,0);

\foreach \x in {(1,-1), (3,0), (1,1), (3,2)} \draw[dashed, ->, >=angle 45] \x .. controls +(0.5,0.25) and +(0.5,-0.25) .. +(0,1) ;

\foreach \x in {(0,3), (0,0), (0,-1), (1,3), (1,0), (1,-1),( 3,3), (3,0) } \filldraw[fill=black] \x circle (2pt);
\foreach \x in {(0,2), (0,1), (1,2), (1,1), (3,2), (3,1) } \filldraw[fill=white] \x circle (2pt);

\draw (0.5,1.5) node[above] {$\sst a_{11}$} (0.5,-0.5) node[above] {$\sst a_{12}$} (2,2.5) node[above] {$\sst a_{21}$} (2,0.5) node[above] {$\sst a_{22}$};

\draw (1.3,1.5) node[right] {$\sst b_{11}$} (1.3,-0.5) node[right] {$\sst b_{12}$} (3.3,2.5) node[right] {$\sst b_{21}$} (3.3,0.5) node[right] {$\sst b_{22}$};

\draw (1.5,-1.5) node {$(Y_2,\eta_2)$};
\end{tikzpicture} 
\end{minipage}

\caption{Double coverings of a surface in $\Omega E_D(2)$: $\eta_i(a_{11})=\eta_i(a_{12})=\lbd, \eta_i(b_{11})=\eta_i(b_{12})=\imath\lbd$, $\eta_i(a_{21})=\eta_i(a_{22})=w, \eta_i(b_{21})=\eta_i(b_{22})=t+\imath h, \, i=1,2$.  The cylinders fixed by the Prym involution are colored.}
\label{fig:double:cov:model1:2}
\end{figure}
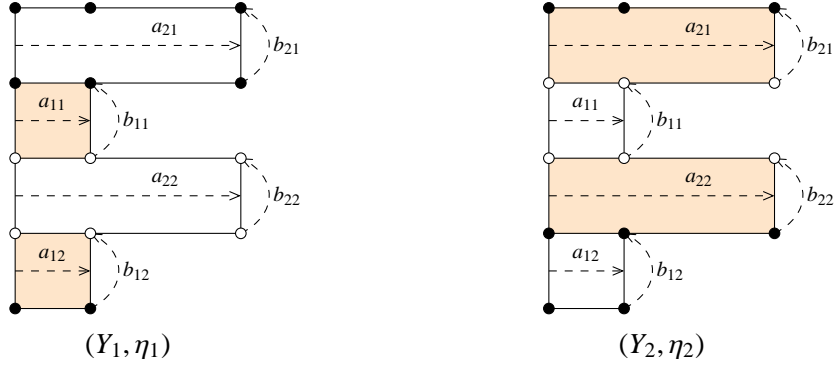

\medskip

\begin{Remark}\hfill
\begin{itemize}
 \item[$\bullet$] In general, the Teichm\"uller discs generated by $(Y_1,\omega_1)$ and by $(Y_2,\omega_2)$ are different, for instance when $h$ is odd, and $w$ is even.

 \item[$\bullet$] If $D \equiv 5 \mod 8$, then it is easy to see that $e,w,h$ are all odd. Therefore, in both construction $(Y_i,\eta_i)$ belongs to $\Omega E_{4D}(2,2)^{\rm odd}$.  
\end{itemize} 
\end{Remark}

\begin{proof}
It is easy to see that both $(Y_1,\eta_1)$ and $(Y_2,\eta_2)$ are double coverings of $(X,\omega)$, the deck transformation sends $a_{ij}$ to $a_{i j+1}$ and $b_{ij}$ to $b_{ij+1}$ (here we use the convention $(i3)\sim (i1)$). Since $(X,\omega)$ is a Veech surface both $(Y_1,\omega_1) $ and $(Y_2,\omega_2)$ are Veech surfaces (see \cite{GutJud00} and \cite{Masur2002}).

Remark that $Y_i$ has an involution $\inv_i$ that exchanges the zeros of $\eta_i$ such that $\inv_i^*\eta_i=-\eta_i$, in Figure~\ref{fig:double:cov:model1:2} the cylinders fixed by $\inv_i$ are colored. It follows that $(Y_i,\eta_i)$ belongs to the Prym locus $\Prym(2,2) \subset \H(2,2)^{\rm odd}$ ($\Prym(2,2)$ consists of double coverings of quadratic differentials in $\QQQ(-1^4,4)$). By some standard arguments (see \cite{Lanneau:Manh:H4} and \cite{Mc6}), we can conclude that $(Y_i,\eta_i)$ is a Prym eigenform, thus $(Y_i,\eta_i)$ is contained in some locus $\Omega E_{\widetilde{D}}(2,2)^{\rm odd}$. It remains to determine the discriminant $\widetilde{D}$.

Set $H_1(Y_i,\Z)^-=\{\alpha \in H_1(Y_i,\Z) \, | \, \inv_i(\alpha)=-\alpha\}$. Since $(Y_i,\eta_i)\in \Prym(2,2)$, we have $H_1(Y_i,\Z)^- \simeq \Z^4$.  We choose a basis of $H_1(Y_i,\Z)^-$ as follows: 

\begin{itemize}
\item[$\bullet$] for $(Y_1,\eta_1)$, set $\alpha_1 = a_{11} = a_{12}$ and $\alpha_2 = a_{21}+a_{22}$, we choose 
$\beta_1=b_{11}+b_{12}$ and $\beta_2=b_{21}+b_{22}$. In particular the restriction of the 
symplectic form has the following matrix $\left(\begin{smallmatrix} J & 0 \\ 0 & 2J\\ \end{smallmatrix}\right)$. 

\item[$\bullet$] for $(Y_2,\eta_2)$, set $\alpha_1=a_{11}+a_{12}, \alpha_2=a_{21}=a_{22}, \beta_1=b_{11}+b_{12}, \, \beta_2=b_{21}+b_{22}$. In this basis, the restriction of the intersection form to $H_1(Y_2,\Z)^-$ is given by $\left(\begin{smallmatrix} 2J & 0 \\ 0 & J\\ \end{smallmatrix}\right)$. 

\end{itemize}

In the above bases, the coordinates of $\eta_i$ are the following:
$$
\mathrm{Re}(\eta_1)= (\lbd,0,2w,2t) \qquad \mathrm{and} \qquad \mathrm{Im}(\eta_1)=(0,2\lbd,0,2h).
$$

$$
\mathrm{Re}(\eta_2)= (2\lbd,0,w,2t) \qquad \mathrm{and} \qquad \mathrm{Im}(\eta_2)=(0,2\lbd,0,2h).
$$


Let $\widetilde{T}_1$ be the following self-adjoint endomorphism of $H_1(Y_1,\Z)^-$ (given in the basis $\{\alpha_1,\beta_1,\alpha_2,\beta_2\}$):
$$
\widetilde{T}_1=\left( \begin{smallmatrix}
            2e & 0 & 4w & 4t\\
        0 & 2e & 0 & 2h \\
        h & -2t & 0 & 0\\
            0 & 2w & 0 & 0\\
\end{smallmatrix}\right).
$$

Similarly, let $\widetilde{T}_2$ be the self-adjoint  endomorphism of $H_1(Y_2,\Z)^-$ (given in the basis $\{\alpha_1,\beta_1,\alpha_2,\beta_2\}$) by the following matrix

$$
\widetilde{T}_2: = \left(\begin{smallmatrix}
 2e & 0 & w & 2t \\
   0 & 2e & 0 & 2h \\
   4h & -4t & 0 & 0 \\
   0  & 2w & 0 & 0 \\   
   \end{smallmatrix}\right)
$$

It is straightforward to check that $\widetilde{T}_i^\ast \eta_i=(2\lbd)\cdot \eta_i$ thus $\eta_i$ is an eigenform of $\widetilde{T}_i$. Remark that both $\widetilde{T}_i$ satisfy $\widetilde{T}_i^2-2e\widetilde{T}_i-4wh\mathrm{Id}=0$, which implies  that  $\widetilde{T}_i$ generates a self-adjoint  subring of ${\rm End}(\Prym(Y_i))$ isomorphic to $\Ord_{D'}$, where $D'= (2e)^2+16wh=4(e^2+4wh)=4D$.  

There exists a unique proper subring of  $\mathrm{End}(\Prym(Y_i))$ for which $\eta_i$ is an eigenform, this proper subring is isomorphic to a quadratic order $\Ord_{\widetilde{D}_i}$. Clearly, this subring must contain $\widetilde{T}_i$, hence it is generated by $\widetilde{T}_i/k_i$, where $k_1=\gcd(2e,4w,2h,2w,h,4t,2t)=\gcd(2e,2w,h,2t)$, and $k_2=\gcd(2e,w,2h,2t)$. Since 
$\gcd(w,h,t,e)=1$ we have $k_i\in\{1,2\}$. Note that $4D=k_i^2\widetilde{D}_i$, therefore  $\widetilde{D}_i=4D$ if $k_i=1$, and $\widetilde{D}_i=D$ if $k_i=2$. We can now conclude by noticing that $k_1=1$ if and only if $h$ is odd, and $k_2=1$ if and only if $w$ is odd.
\end{proof}


\end{document}